\newcommand{\D}{\mathbb{D}}
\newcommand{\F}{\mathcal{F}}
\newcommand{\X}{\mathcal{X}}
\newcommand{\G}{\mathcal{G}}
\newcommand{\C}{\mathbb{C}}
\newcounter{tmp}
\newtheorem{theorem}{Theorem}[section]
\newtheorem{coro}[theorem]{Corollary}
\newtheorem{pro}[theorem]{Proposition}
\newtheorem{lem}[theorem]{Lemma}
\theoremstyle{definition}
\newtheorem{defi}[theorem]{Definition}
\theoremstyle{remark}
\newtheorem{aff}{Affirmation}[section]
\newtheorem{rem}{Remark}
\begin{document}
\begin{titlepage}
    \begin{center}
        \vspace*{1cm}
        \includegraphics[scale=0.1]{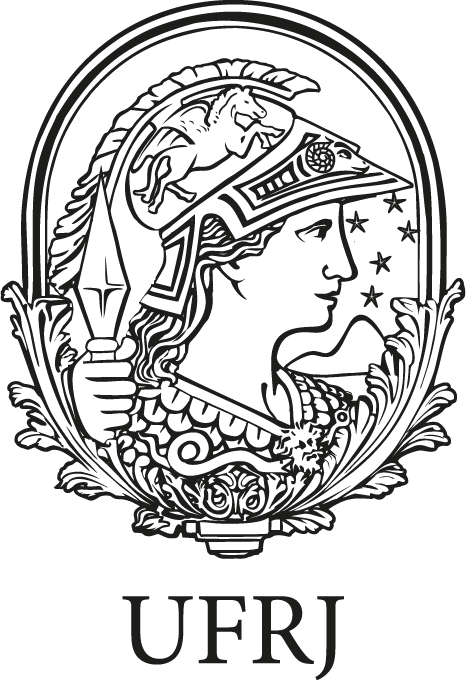}

        \textsc{Mathematics Institute\\
        Federal University of Rio de Janeiro}\\

        \vspace{3 cm}
        \textsc{Ph.D. Thesis}\\
        \vspace{0.5 cm}
        \Large
        \textbf{On first integrals of holomorphic foliations}

        \vfill     
               
        \textsc{Jonny Ardila Ardila}
        
        \vfill

	\normalsize
	\textsc{Brazil}\\
        2016        
    \end{center}
\end{titlepage}
\tableofcontents

\chapter{Groups of germs diffeomorphisms}\label{Chap1}
An important tool in the study of foliations (real and holomorphic) are the holonomy groups two clear examples (among many other) of this affirmation are the 
stability theorems of Reeb (see \cite{livCamNet} chap. IV) and the theorem of existence and uniqueness of first integrals of Mattei-Moussu \cite{M-M}. In the context of holomorphic 
foliation the holonomy groups are just finitely generated groups of germs of diffeomorphisms in $\C^n$ fixing the origin, those groups have been highly studied for many authors and 
important results have been obtained in dimension $1$ and in general dimension.\par      
In the next section, after introduce some definitions and notation, we will mention some of this results that although they are interesting by they own, the 
way how they intervene throughout this work is what transform them in a fundamental piece of this thesis.\par   
Sections two and three are based on the Theorem 3.1 in \cite{Brochero}. The Theorem \ref{BroN} is its generalization to dimension $n>
2$ (as the author points out in \cite{Brochero} ) and Theorem \ref{BroG} is its version for finite generated groups. In Theorems \ref{Ard&Bro1} and \ref{Ard&Bro2} we
make few changes to its hypothesis, maintaining valid the original conclusion, obtaining in this way two new versions of it.\par It is worth to say that only small changes in the 
original proof in \cite{Brochero} are needed to 
demonstrate the previous theorems. Nevertheless, we will write down each one of the proofs in order to make easy to note the difference among them.\par
We end this chapter with some comments of recent results in this topic (see \cite{Reb-Reis,BS-closedorbits}).
\section{Preliminaries}
Let Diff$(\C^n,0)$ be the group of germs of diffeomorphisms at $0$, the germ $G\in\mathrm{Diff}(\C^n,0)$ will be represented by the map $G$ in a domain $U$ where $G(U)$ and
$G^{-1}(U)$ are well defined, and $U$ is an open neighborhood of the origin with compact closure. We will use the following notation, 
\begin{align*}
O_U(x,G)&=\{G^p(x)\,|\,G(x),\dots,G^p(x)\in U\}\cup\\ &\quad\ \{G^{-q}(x)\,|\,G^{-1}(x),\dots,G^{-q}(x)\in U\}\cup\{x\}
\end{align*}
for the $G$-orbit of $x$ in $U$,  $|O_U(x,G)|$ for the number of elements in its $G$-orbit and 
\begin{align*}
\mu_U(x,G)&=\sup\{p>0\,|\,G^p(x)\in O_U(x,G)\}+\\&\quad\,\sup\{p>0\,|\,G^{-p}(x)\in O_U(x,G)\}+1, 
\end{align*}
for the number of iterates of $x$ in $U$. If $\mu_U(x,G)=\infty$ and $|O_U(x,G)|<\infty$ we say that the point $x$ is \emph{periodic} in $U$, if $\mu_U(x,G)$ is finite it 
is equal to $|O_U(x,G)|$. We say that $G$ has \emph{finite orbits} if $|O_U(x,G)|<\infty$ for all $x\in U$ \par 
Regarding the finiteness of groups generated by a germ of diffeomorphisms, Mattei-Moussu give in \cite{M-M} p. 477 the following criteria for the one 
dimensional case. 
\begin{theorem}\label{fund-theo}
A element $G\in\mathrm{Diff}(\C,0)$ is periodic if and only if it has finite orbits.
\end{theorem}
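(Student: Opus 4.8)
The implication ``periodic $\Rightarrow$ finite orbits'' is immediate: if $G^{k}=\mathrm{id}$ as a germ, then for every representative $(G,U)$ and every $x$ one has $O_{U}(x,G)\subseteq\{x,G(x),\dots,G^{k-1}(x)\}$, a set with at most $k$ elements. The substance of the statement is the converse, which I would establish in contrapositive form: \emph{if $G$ is not periodic, then every representative $(G,U)$ contains a point with infinite orbit.} Write $\lambda=G'(0)$. First I would dispose of $|\lambda|\neq1$: after replacing $G$ by $G^{-1}$ if necessary, assume $|\lambda|<1$; on a small closed disc $\overline{D}_{r}\subseteq U$ one then has $|G(z)|\le c|z|$ for some $c<1$, so $G(\overline{D}_{r})\subseteq D_{r}$, every forward iterate of $x\in D_{r}$ stays in $U$, and $G^{n}(x)\to0$. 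For $x\neq0$ the points $G^{n}(x)$ are pairwise distinct, since a coincidence $G^{n}(x)=G^{m}(x)$ with $n<m$ would exhibit $G^{n}(x)$ as a fixed point of the contraction $G^{m-n}$ of $\overline{D}_{r}$, forcing $G^{n}(x)=0$ and hence $x=0$; so $O_{U}(x,G)$ is infinite. From now on $|\lambda|=1$, say $\lambda=e^{2\pi i\alpha}$.

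If $\alpha\in\mathbb{Q}$, let $q\ge1$ be minimal with $\lambda^{q}=1$ and put $H=G^{q}$, so $H'(0)=1$; since $G$ is not periodic neither is $H$, hence $H(z)=z+az^{p+1}+\cdots$ with $a\neq0$, i.e.\ $H$ is parabolic. By the Leau--Fatou flower theorem $H$ has an attracting petal, on which the iterates of $H$ tend to $0$; exactly as above the $H$-iterates of a nonzero point of the petal are pairwise distinct, because $H^{m-n}(z)-z=(m-n)a\,z^{p+1}+\cdots$ vanishes to order exactly $p+1$ at $0$. Choosing the petal inside a disc $W$ so small that $W\cup G(W)\cup\cdots\cup G^{q-1}(W)\subseteq U$, the full $G$-orbit of such a point stays in $U$ and is infinite. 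If, instead, $\alpha\notin\mathbb{Q}$ and $G$ is analytically linearizable, $G=\phi^{-1}\circ(\lambda\,\cdot)\circ\phi$, then the $G$-orbit of a small $x\neq0$ is $\phi^{-1}(\{\lambda^{n}\phi(x)\}_{n})$, which is infinite because $\lambda$ is not a root of unity.

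There remains the non-linearizable irrational (Cremer) case, and this is where I expect the real obstacle. Here $0$ lies on the Julia set of $G$, and I would invoke P\'erez-Marco's hedgehog theorem: every sufficiently small neighborhood of $0$ contains a compact connected invariant set $K\ni0$ with more than one point, hence uncountable. If every $G$-orbit in $K$ were finite, then $K$ would be contained in $\bigcup_{k\ge1}\{z:G^{k}(z)=z\}$; but $G$ being non-periodic, each set $\{z:G^{k}(z)=z\}$ is a proper analytic subset, hence discrete, so this union is countable --- a contradiction. Therefore $K$, and hence $U$, carries a non-periodic, thus infinite, $G$-orbit, which completes the contrapositive.

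The hard part is thus the Cremer case, which I have routed through hedgehog theory. A more self-contained alternative would be to prove first that ``finite orbits'' forces $\{G^{n}\}_{n\ge0}$ to be a normal family at $0$: one could then build, by a Koenigs-type averaging $\phi=\lim_{N}\frac1N\sum_{n=0}^{N-1}\lambda^{-n}G^{n}$, a linearizing coordinate and reduce everything to the linearizable case. But proving this normality is itself essentially the whole difficulty, since it must by itself exclude both the parabolic and the Cremer behaviours above; the remaining steps are routine.
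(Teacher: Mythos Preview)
Your proof is correct. Note, however, that the paper does not supply its own proof of this theorem: it cites the original argument of Mattei--Moussu \cite{M-M} and remarks that an alternative proof via P\'erez-Marco's work appears in \cite{Moussu}. Your approach --- a case analysis on the multiplier $\lambda$, with the attracting/parabolic cases handled by elementary contraction and Leau--Fatou petals, and the Cremer case settled through hedgehog theory --- is precisely in the spirit of this second, P\'erez-Marco--based route.

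The original Mattei--Moussu argument, by contrast, is more elementary and makes no case distinction on $\lambda$; in fact the paper assembles all its ingredients in Lemmas~\ref{firstLem} and~\ref{secLem}. A Lewowicz-type topological lemma produces a compact, connected, uncountable set $C\ni 0$ of points all of whose forward iterates remain in the domain; the finite-orbits hypothesis then forces every point of $C$ to be periodic. Writing $C=\bigcup_{m}\{x\in C: G^{m!}(x)=x\}$, uncountability of $C$ forces some $G^{m!}$ to have a non-isolated fixed point, and the one-variable identity theorem gives $G^{m!}=\mathrm{id}$. This avoids Leau--Fatou and hedgehogs entirely, at the price of the mild topological Lemma~\ref{firstLem}; your route, on the other hand, gives a clearer dynamical picture of \emph{where} the infinite orbit actually lives in each regime, and your identification of the Cremer case as the only place requiring genuinely deep input is exactly right.
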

Another prove of this theorem (using P\'erez-Marco's work) is given in \cite{Moussu}.\smallskip\\
It is easy to see that Theorem \ref{fund-theo} is not true in dimension grater than one, but with an additional hypothesis Theorem \ref{Bro1} (which is Theorem 3.1 in 
\cite{Brochero}) attempts to generalize this criteria. The reason we say "attempts" is because the prove presented in \cite{Brochero} is inaccurate, we believe in the result but our 
attempt to prove it did not succeed it, for this reason we give an additional hypothesis that allow us to prove it as we do below in Theorem \ref{BroN}. 
\begin{theorem}[Brochero]\label{Bro1}
Let $G\in\mathrm{Diff}(\C^2,0)$. Then $G$ generates a finite group if and only if, there exists a neighborhood $V$ of $0$ such that
$|O_V(x,G)| < \infty$ for all $x\in V$ and $G$ leaves invariant infinitely many analytic varieties at $0$.
\end{theorem}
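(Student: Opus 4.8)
The plan is to treat the two implications separately; the ``only if'' direction is routine and the ``if'' direction carries all the weight.

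For the ``only if'' direction, suppose $\langle G\rangle$ is finite of order $N$. Then $G^{N}=\mathrm{id}$, so $|O_V(x,G)|\le N$ for every $x$ and every neighborhood $V$. Setting $A=DG(0)$, the average $H=\tfrac1N\sum_{k=0}^{N-1}A^{-k}G^{k}$ satisfies $DH(0)=\mathrm{id}$ and $H\circ G=A\circ H$, so $G$ is holomorphically conjugate to $A$; and $A$, being of finite order, is diagonalizable with eigenvalues roots of unity, say of orders $p$ and $q$ in coordinates $(x,y)$. If the two eigenvalues coincide, every line through $0$ is $A$-invariant; otherwise $x^{p}$ and $y^{q}$ are $A$-invariant, so the germs $\{x^{p}-c\,y^{q}=0\}$, $c\in\C$, are infinitely many pairwise distinct $A$-invariant curves through $0$. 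Pulling these back by $H$ produces infinitely many $G$-invariant analytic varieties at $0$.

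For the ``if'' direction, assume $|O_V(x,G)|<\infty$ for all $x\in V$ and fix an infinite family of pairwise distinct $G$-invariant analytic varieties at $0$; discarding $\{0\}$ and the ambient germ itself, these are curves. Since a curve has finitely many branches, infinitely many distinct irreducible germs occur as branches of these curves, $G$ permutes them, and the $G$-orbit of any such branch is finite (its iterates lie among the finitely many branches of the invariant curve containing it). Hence infinitely many irreducible germs $C$ satisfy $G^{n_C}(C)=C$ for some $n_C\ge 1$. Normalizing, $G^{n_C}$ lifts to an element of $\mathrm{Diff}(\C,0)$ which inherits finiteness of orbits from $G$ (taking the normalizing disc small enough that the finitely many intermediate $G$-iterates stay in $V$), so by Theorem \ref{fund-theo} it is periodic and some iterate of $G$ fixes $C$ pointwise. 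Reading this off the tangent cone of $C$ forces the corresponding eigenvalue of $A=DG(0)$ to be a root of unity (for a smooth branch the tangent line is $A$-invariant and the multiplier of the lifted map is the eigenvalue along it; for a singular branch a power of that eigenvalue is the multiplier). Since $A$ has at most two eigendirections, and using a Poincar\'e--Dulac normal form to rule out that all but finitely many of the invariant curves are tangent to a single non-resonant eigendirection, both eigenvalues of $A$ are roots of unity. Let $p$ be the order of $A$; then $F:=G^{p}$ is tangent to the identity and, being a power of $G$, again has finite orbits near $0$. If $F\ne\mathrm{id}$, Abate's theorem on parabolic curves of tangent-to-the-identity germs in $(\C^2,0)$ gives a holomorphic disc through $0$ on which the $F$-orbits converge to $0$; a tail of the corresponding $G$-orbit then lies in $V$ and is infinite, contradicting the hypothesis. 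Therefore $G^{p}=\mathrm{id}$ and $\langle G\rangle$ is finite.

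The hard part is the ``if'' direction, and within it the step passing from ``$G$ is periodic on each of infinitely many invariant curves'' to genuine control of $DG(0)$: one must exclude the possibility that all but finitely many invariant curves are tangent to one eigendirection while the transverse eigenvalue is arbitrary, and, relatedly, bound the periods $n_C$ uniformly (were they unbounded one would instead have to derive a contradiction with the finite-orbit hypothesis directly). This is precisely where the proof in \cite{Brochero} is inaccurate and where the additional hypothesis of Theorem \ref{BroN} enters; a natural attempt to close the gap is to first show that an infinite family of invariant curve germs through $0$ must form the fibres of a meromorphic first integral, so that $G$ preserves a pencil and one can invoke generic constancy of its period along the fibres, but making that argument complete — or isolating the weakest hypothesis under which it works — is the crux.
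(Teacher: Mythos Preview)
Your ``only if'' direction is essentially the paper's: average to linearize, diagonalize, and exhibit the pencil $\{x^N-cy^N=0\}$ (you refine the exponents to $p,q$, which is harmless).

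Your ``if'' direction, however, follows a genuinely different route from the one the paper reproduces from \cite{Brochero}. The paper's argument is purely dynamical--topological: it applies the Lewowicz-type Lemma~\ref{secLem} with $M=\C^2$ to produce a compact connected non-countable set $C$ of periodic points, writes $C=\bigcup_n D_n$ with $D_n=\{x\in C: G^{n!}(x)=x\}$, and then analyzes the fixed-point locus $L=\{G^{n!}=\mathrm{id}\}$ as an analytic set, using the infinitely many invariant varieties only to rule out $\dim L=1$ via Noetherianity. Your argument instead normalizes each invariant branch, reduces to the one-dimensional Theorem~\ref{fund-theo}, extracts root-of-unity information on $DG(0)$, and finishes with Abate's parabolic-curve theorem. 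What your approach buys is a more structural endgame (tangent-to-identity plus finite orbits forces the identity via a Fatou flower); what the paper's approach buys is that it never looks at $DG(0)$ or at normal forms at all.

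Both arguments have a gap, and you are right that at a conceptual level it is the same gap: one needs a \emph{uniform} bound on periods. But the technical place where the gap sits is different. In the paper's proof it is the sentence (highlighted there) ``then there exists $n$ such that $C=D_n$'': an increasing union of closed sets with empty interior need not stabilize, and the corresponding chain of ideals is decreasing, not increasing. In your proof it is the passage from ``some power of $G$ fixes each invariant branch pointwise'' to ``both eigenvalues of $DG(0)$ are roots of unity'': your Poincar\'e--Dulac remark does not by itself exclude that all but finitely many invariant curves share one tangent direction, leaving the transverse eigenvalue unconstrained, and you also do not control the exponents $n_C$. You flag this honestly. One further point you do not flag: Abate's existence of parabolic curves in $(\C^2,0)$ is stated for tangent-to-the-identity germs with $0$ an \emph{isolated} fixed point; if $F=G^{p}$ already fixes a curve through $0$ (which your earlier steps may well produce), you would need the more refined version dealing with non-isolated fixed-point sets, and should say so.
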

In fact, in the previous two theorems we can change the diffeomorphism $G$ by a finite generated group $\mathcal{G}\subset\mathrm{Diff}(\C,0)$ (or $\mathrm{Diff}(\C^2,0)$ 
respectively) according to the second affirmation of Lemma 3.3 in \cite{Fabio} that says:
\begin{lem}\label{lemFab}
Let $\G\subset\mathrm{Diff}(\C^k,0)$ be a finitely generated subgroup. Assume that there is an invariant connected neighborhood $W$ of the origin in $\C^k$ such that each
point $x$ is periodic for each element $G\in\G$. Then $\G$ is a finite group.
\end{lem}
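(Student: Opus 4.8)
The plan is to deduce the statement from the classical theorem of Schur that a finitely generated torsion subgroup of $\mathrm{GL}_k(\C)$ is finite, after first promoting the pointwise periodicity hypothesis to the stronger assertion that \emph{every} element of $\G$ has finite order. Both reductions are short; only the first is not purely formal.

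\textbf{Step 1: every $G\in\G$ has finite order.} Fix $G\in\G$. Since $W$ is $\G$-invariant, $G$ restricts to a biholomorphism $W\to W$, and the hypothesis gives $W=\bigcup_{n\ge 1}\mathrm{Fix}_W(G^n)$, where $\mathrm{Fix}_W(G^n)=\{x\in W:G^n(x)=x\}$ is the zero locus of the holomorphic map $x\mapsto G^n(x)-x$, hence a closed analytic subset of $W$. As $W$ is an open subset of $\C^k$ it is a Baire space, so some $\mathrm{Fix}_W(G^{n_0})$ fails to be nowhere dense and, being closed, has nonempty interior; an analytic subset of the \emph{connected} manifold $W$ with nonempty interior equals $W$ by the identity principle. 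Therefore $G^{n_0}=\mathrm{id}$ on $W$, i.e.\ $G$ has finite order in $\mathrm{Diff}(\C^k,0)$. I expect this to be the one genuinely non-formal point: pointwise periodicity only provides a period $n(x)$ for each $x$, a priori with no common bound, and it is exactly the Baire-category argument together with connectedness of $W$ that manufactures a uniform period.

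\textbf{Step 2: linearization.} Let $L\colon\mathrm{Diff}(\C^k,0)\to\mathrm{GL}_k(\C)$, $L(G)=DG(0)$, be the group homomorphism given by the linear part. By Step 1 the finitely generated group $L(\G)$ is torsion, hence finite by Schur's theorem. Finally, $L$ is injective on $\G$: if $G\in\G$ with $L(G)=\mathrm{id}$ and $G\ne\mathrm{id}$, write $G(x)=x+h_m(x)+\cdots$ with $h_m\ne 0$ homogeneous of degree $m\ge 2$; iterating gives $G^n(x)=x+n\,h_m(x)+\cdots$, so $G^{n_0}=\mathrm{id}$ forces $n_0 h_m\equiv 0$, a contradiction. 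Hence $\ker L\cap\G=\{\mathrm{id}\}$ and $\G\cong L(\G)$ is finite. (One could try to bypass Schur by using that each $L(G)$ is of finite order, hence diagonalizable with roots of unity as eigenvalues, and then studying the group they generate; but controlling that group is essentially re-proving Schur, so quoting it directly is the economical route.)
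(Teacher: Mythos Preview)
Your proof is correct. Note, however, that the paper does not supply its own proof of this lemma: it is quoted as ``the second affirmation of Lemma 3.3 in \cite{Fabio}'' and used as a black box (for instance in the proof of Theorem~\ref{BroG}, where Theorem~\ref{BroN} first gives that every element of $\G$ has finite order, and then this lemma upgrades that to finiteness of $\G$). So there is nothing in the present paper to compare your argument against.

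That said, your argument is a clean and standard route: the Baire-category step in Step~1 correctly converts pointwise periodicity on the connected invariant domain $W$ into a uniform period via the identity principle, and Step~2 is the usual reduction to linear parts---the tangent-map homomorphism is injective on torsion elements (your $G^n(x)=x+n\,h_m(x)+\cdots$ computation), and Schur's theorem finishes the job on $\mathrm{GL}_k(\C)$. Both steps are sound; the only implicit assumption worth flagging is that ``$W$ invariant'' is being read as ``every $G\in\G$ has a representative defined on all of $W$ with $G(W)=W$'', which is the intended interpretation here and is needed for the Baire argument to apply on $W$ rather than on a shrinking sequence of domains.
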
 
The following topological lemma (which is a modification of the Lewowicz's Lemma) plays an important role through this chapter.
\begin{lem}\label{firstLem}
Let $M$, $0\in M$, be a complex analytic variety of $\C^n$ and $K$ a connected component of $0$ in $\overline{B}_r(0)\cap M$. Suppose that $f$ is a homeomorphism from $K$ to
$f(K)\subset M$ such
that $f(0) = 0$. Then there exists $x\in\partial K$ such that the number of iterations $f^m (x)\in K$ is infinity.
\end{lem}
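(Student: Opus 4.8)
The plan is to track, for each $n\ge 0$, the set of points of $K$ whose first $n$ forward (resp.\ backward) $f$-iterates remain in $K$, and to show that one of the two resulting fully stable sets must reach $\partial K$. Concretely, set $K_n^+=\{x\in K:\ f^j(x)\in K\ \text{for }1\le j\le n\}$ and $K_n^-=\{x\in K:\ f^{-j}(x)\ \text{is defined and lies in }K\ \text{for }1\le j\le n\}$; each is compact, the two sequences decrease with $n$, every term contains $0$ because $f(0)=0$, and $f^N$ restricts to a homeomorphism of $K_N^+$ onto $K_N^-$. Write $U=\operatorname{int}_M K$ and $\partial K=K\setminus U$; the degenerate cases $0\in\partial K$ and $\partial K=\emptyset$ are trivial (in the first $x=0$ works; in the second $K$ is a compact analytic set, hence the single point $0$), so assume $0\in U\neq\emptyset$ and $\partial K\neq\emptyset$. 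If $K_n^+\cap\partial K\neq\emptyset$ for every $n$, then $\{K_n^+\cap\partial K\}_n$ is a decreasing family of nonempty compact sets, so $K_\infty^+\cap\partial K\neq\emptyset$ where $K_\infty^+=\bigcap_nK_n^+$; any $x$ in this intersection lies on $\partial K$ and satisfies $f^m(x)\in K$ for all $m\ge 0$, so it has infinitely many iterates in $K$ and we are done. The same argument applied to $f^{-1}$ handles the case $K_n^-\cap\partial K\neq\emptyset$ for all $n$.

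It therefore suffices to derive a contradiction from the remaining alternative: $K_N^+\subseteq U$ and $K_N^-\subseteq U$ for some common $N$. Since the iterates $f,f^2,\dots,f^N$ are continuous at the fixed point $0$ and send it into the open set $U$, a sufficiently small relative neighbourhood of $0$ in $K$ lies in $K_N^+$, so $K_N^+$ — and hence, by invariance of domain on $M$, also its homeomorphic image $K_N^-=f^N(K_N^+)$ — contains a neighbourhood of $0$ in $M$. (Invariance of domain is available here because $M$ is a complex analytic variety: the local homology groups $H_*(M,M\setminus\{p\})$ are topological invariants, nontrivial in the top degree $2\dim_p M$.) Thus $K_N^+$ and $K_N^-$ are compact neighbourhoods of $0$ in $M$, both contained in the interior $U$, and interchanged by the homeomorphism $f^N$.

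Excluding this configuration is the crux, and the step I expect to be the main obstacle. Heuristically it cannot occur: $K_N^+\subseteq U$ forces $f$ to be \emph{expanding} on the shell $K\setminus K_N^+$ — every point there leaves $K$ within $N$ forward steps — while $K_N^-\subseteq U$ forces the same of $f^{-1}$, yet no homeomorphism of a positive-dimensional space fixing a point can be simultaneously expanding and \emph{co-expanding} near that point. Turning this into a proof — by a degree argument for $f^N$ on the $M$-neighbourhood $K_N^+$, using the nontriviality of the local homology of $M$ at $0$ together with the connectedness of the link of $0$ in $M$, or by adapting Lewowicz's original argument to the variety $M$ — shows that in fact some $K_n^+$ or some $K_n^-$ must meet $\partial K$, contradicting the standing assumption. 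Everything preceding this last step is soft point-set topology; the genuine difficulty, and the only place the analytic structure of $M$ is used, lies in this degree-theoretic conclusion and in carrying it out at the singular points of $M$.
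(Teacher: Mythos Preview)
Your reduction is sound up to the point where you assume $K_N^+\subseteq U$ and $K_N^-\subseteq U$, but you then leave the proof unfinished, proposing degree theory, invariance of domain, and local homology at singular points of $M$ as the tools to close the gap. This is where you diverge from the paper, and the divergence is substantial: the paper's argument is purely elementary and uses none of that machinery. The missing idea is a semicontinuity--connectedness argument, not an algebraic--topological one.

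Concretely, your standing assumption $K_N^\pm\subseteq U$ says precisely that every $x\in\partial K$ has forward count $<N$ and backward count $<N$ in $K$, hence the total count $\overline{\mu}(x)<2N$ on $\partial K$. Now observe three soft facts: the total count $\overline{\mu}$ (iterates in the closed set $K$) is upper semicontinuous; the count $\mu$ of iterates in the open set $U$ is lower semicontinuous on $U$; and $\overline{\mu}$ is constant along any orbit segment lying entirely in $K$ (a shift of index leaves the segment unchanged). Set $A=\{x\in K:\overline{\mu}(x)<2N\}$ and $B=\{x\in U:\mu(x)\ge 2N\}$. Both are open in $K$, they are disjoint since $\overline{\mu}\ge\mu$ on $U$, $A\supset\partial K$ by hypothesis, and $0\in B$ since $f(0)=0\in U$. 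Connectedness of $K$ then forces $K\setminus(A\cup B)\neq\emptyset$; pick $x_0$ there. From $x_0\notin A$ we get $\overline{\mu}(x_0)\ge 2N$, so $x_0\in U$; from $x_0\notin B$ we get $\mu(x_0)<2N$. Thus some iterate $y=f^j(x_0)$ of the $K$-orbit segment lies on $\partial K$. But orbit-invariance gives $\overline{\mu}(y)=\overline{\mu}(x_0)\ge 2N$, contradicting $y\in\partial K\subset A$.

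That is the entire content of the paper's proof. Nothing about the analytic structure of $M$, its singular locus, or the dimension enters; only compactness and connectedness of $K$ and continuity of $f$ are used. Your heavier proposed route might be made to work, but it is not needed and introduces exactly the difficulties (behaviour at singular points) that the elementary argument avoids.
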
 
\begin{proof}
 Denote by $\overline{\mu} = \mu|_K$ and $\mu = \mu|_{\accentset{\circ}{K}}$ the number of iteration in $K$ and $\accentset{\circ}{K}$. It is easy to see that $\overline{\mu}$ is 
upper semicontinuous, $\mu$ is under semicontinuous and $\overline{\mu}(x) \geq \mu(x)$ for all $x\in\accentset{\circ}{K}$. Suppose by contradiction that $\overline{\mu}(x)<\infty$ 
for
all $x\in\partial K$, therefore exists $n\in\mathbb{N}$ such that $\overline{\mu}(x) < n$ for all $x\in\partial K$. Let
$A = \{x\in K\,|\,\overline{\mu}(x) < n\}\supset ∂K$ and $B = \{x\in\accentset{\circ}{K}\,|\,\mu(x)\geq n\}\ni 0$ open set,
and $A\cap B=\emptyset$ since $\overline{\mu}(x)\geq\mu(x)$.\\
Using the fact that $K$ is a connected set, there exists $x_0\in K\setminus(A\cup B)$ i.e
$\overline{\mu}(x_0)\geq n> \mu(x_0)$, then the orbit of $x_0$ intersects the border of $K$, which is a contradiction since $\partial K\subset A$ implies $x_0\in A$.
\end{proof}

In our framework Lemma \ref{firstLem} implies: 
\begin{lem}\label{secLem}
Let $G\in\mathrm{Diff}(\C^n,0)$ and $M$ be a $G$-invariant complex analytic variety passing through $0\in\C^n$. It exists a compact, connected, and non-enumerable set $C_M$ such 
that $0\in C_M$ and, for 
all $x\in
C_M$ and $n\in\mathbb{N}$ we have $G^n(x)\in M\cap V$ for a domain $V$ where $G(V)$ and $G^{-1}(V)$ are well defined. 
\end{lem}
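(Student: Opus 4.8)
The plan is to take $C_M$ to be the connected component of the origin in the set of points of $M$ whose forward $G$-orbit never leaves a fixed compact slice of $M$, and then to extract both the connectedness and the non-enumerability of that set from Lemma \ref{firstLem}. (We may assume $M$ has positive dimension at $0$, since otherwise $\overline{B}_r(0)\cap M=\{0\}$ and the statement is vacuous.)

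First I would fix a representative $G\colon V\to G(V)$ of the germ, with $V$ a neighborhood of $0$ of compact closure on which $G^{-1}$ is also defined, and pick $r>0$ with $\overline{B}_r(0)\subset V$. As $M$ is closed, $\overline{B}_r(0)\cap M$ is compact; let $K$ be the connected component of $0$ in it, a continuum containing $0$. Since $M$ is $G$-invariant and $G(0)=0$, the germ $G$ (and likewise $G^{-1}$) restricts to a homeomorphism of a neighborhood of $0$ in $M$ fixing $0$, so $G|_K\colon K\to G(K)\subset M$ is a homeomorphism onto its image. For $m\ge 0$ set
\[
E_m=\{\,x\in K\ :\ G^{j}(x)\in K\ \text{ for }\ j=0,1,\dots,m\,\},
\]
which is well defined inductively (if $G^{j}(x)\in K\subset V$ for $j\le m-1$, then $G^{m}(x)$ is defined), is a compact subset of $K$, and satisfies $K=E_0\supset E_1\supset\cdots$ with $0\in E_m$ for all $m$. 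Let $K_m$ be the connected component of $0$ in $E_m$; each $K_m$ is a continuum, and $E_{m+1}\subset E_m$ forces $K_{m+1}\subset K_m$. Put $C_M:=\bigcap_{m\ge 0}K_m$. A nested intersection of continua in a compact metric space is a continuum, so $C_M$ is compact and connected; it contains $0$; and every $x\in C_M$ belongs to all the $E_m$, hence $G^{j}(x)\in K\subset M\cap V$ for every $j$. (For a two-sided statement one intersects in addition with the corresponding sets built from $G^{-1}$; the whole construction is symmetric under $G\leftrightarrow G^{-1}$, and, as the next step shows, Lemma \ref{firstLem} may force one to use the $G^{-1}$-version instead of the $G$-version.)

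It remains to prove $C_M$ non-enumerable. A nondegenerate continuum has the cardinality of $\mathbb{R}$ --- any non-constant continuous real function on it maps it onto a nondegenerate interval --- so it suffices to find a point of $C_M$ different from $0$. This is where Lemma \ref{firstLem} enters: applied to $K$ and $G|_K$ it produces $x^{*}\in\partial K\setminus\{0\}$ one of whose one-sided orbits stays in $K$ for all time (replace $G$ by $G^{-1}$ from the start if it is the backward orbit). Such an $x^{*}$ lies in $\bigcap_m E_m$, whose connected component of $0$ is exactly $C_M$; so everything reduces to showing that $x^{*}$ --- or any other point $\neq 0$ of $\bigcap_m E_m$ --- can be joined to $0$ inside every $E_m$, equivalently that the confined continua $K_m$ do not collapse to $\{0\}$. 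This last point is, I expect, the real obstacle: it is not soft topology, and must exploit the structure of $M$ (a complex analytic variety, hence locally connected at $0$) together with the dynamical content of Lemma \ref{firstLem} --- and it is precisely here that, in the theorems this lemma serves, the finiteness of the orbits $O_V(x,G)$ is used. Granting it, let $\rho_m:=\max_{z\in K_m}|z|$, which is positive (as $K_m\neq\{0\}$) and nonincreasing; if $\delta:=\lim_m\rho_m>0$, then for any $\rho\in(0,\delta]$ each connected set $K_m$ meets $\partial B_\rho(0)$, so $\{K_m\cap\partial B_\rho(0)\}_m$ is a decreasing sequence of non-empty compacta and $C_M\cap\partial B_\rho(0)=\bigcap_m\bigl(K_m\cap\partial B_\rho(0)\bigr)\neq\varnothing$, yielding the required point.
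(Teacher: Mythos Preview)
Your construction coincides with the paper's: your $E_m$ is the paper's $A_{m+1}$, your $K_m$ its $C_{m+1}$, and $C_M=\bigcap_m K_m$ is defined in the same way; compactness and connectedness of $C_M$ are obtained exactly as you say.

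The gap is precisely where you flag it, but your diagnosis of what is needed to close it is wrong. You apply Lemma~\ref{firstLem} once, to $K$, obtain a point $x^*\in\partial K$ whose forward orbit stays in $K$, and then hope to connect $x^*$ to $0$ inside every $E_m$. That hope is unfounded: $x^*$ lies in $\bigcap_m E_m$, but nothing prevents it from sitting in a component of $E_m$ disjoint from $K_m$. You then suggest the finite-orbits hypothesis might be the missing ingredient; it is not --- this lemma has no such hypothesis, and the argument is purely topological.

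The paper's move is to apply Lemma~\ref{firstLem} a \emph{second} time, to a smaller set, in order to show directly that $K_m\cap\partial K\neq\emptyset$ for every $m$ (which gives $\delta=r$ in your notation and finishes your last paragraph). Argue by contradiction: if $C_M\cap\partial K=\emptyset$, then since $\{K_m\cap\partial K\}_m$ is a nested sequence of compacta with empty intersection, some $K_j\cap\partial K=\emptyset$, so $K_j\subset\mathring K$. Because $K_j$ is a connected component of the compact set $E_j$, one can choose a compact connected neighborhood $B\subset K$ of $K_j$ with $B\cap(E_j\setminus K_j)=\emptyset$; then $E_j\cap B=K_j\subset\mathring B$, so every $x\in\partial B$ lies outside $E_j$ and therefore has fewer than $j$ forward $G$-iterates in $K$, hence in $B$. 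Now Lemma~\ref{firstLem} applied to $G|_B$ (with $0\in\mathring B$, $G(0)=0$) produces a point of $\partial B$ with infinitely many forward iterates in $B$ --- a contradiction. Thus $K_m$ reaches $\partial K$ for every $m$, $C_M\cap\partial K\neq\emptyset$, and $C_M\neq\{0\}$.

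In short: do not try to track a specific point from $\partial K$ into $C_M$; instead, use Lemma~\ref{firstLem} on a shrunken neighborhood to forbid the continua $K_m$ from ever detaching from $\partial K$.
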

\begin{proof}
 Without loss of generality we suppose that $V = \overline{B}_r(0)$. Let $M$ be a $G$-invariant complex analytic variety and $K= M\cap V$ the connected component of
$M\cap V$ in $0$. Let $A_1 = K$, $A_{j+1} = K\cap G^{-1}(A_j)$ and $C_n$ the connected component of $A_n$ in $0$. It is clear, by construction that $A_n$ is the set of points of $K$
with $n$ or more iterates by $G$ in $K$. Moreover, since $A_n$ is compact and $C_n$ is compact and connected, it follows that $C_M =\bigcap_n C_n$ is compact and connected too, and
therefore $C_M = \{0\}$ or $C_M$ is non enumerable.\par
We claim that $C_M\cap\partial K \neq \emptyset$ and then it is non enumerate. In fact, if $C_M\cap\partial K=\emptyset$ then there exists $j$ such that $C_j\cap\partial K =
\emptyset$. Let $B$ be a compact connected neighborhood of $C_j$ such that $(A_j\setminus C_j )\cap B = \emptyset$, therefore for all $x\in\partial B$ we have $\mu_B(x,G) < j$, that
is a contradiction by the Lemma \ref{firstLem}.
\end{proof}
The previous lemma is part of the proof of Theorem \ref{Bro1} in \cite{Brochero}, but due to its importance and constant use throughout the chapter, we decided to write it as an 
independent result. 
\section{Groups of diffeomorphisms in dimension $n$ fixing $0$}
We start by presenting a proof of Theorem \ref{Bro1} in dimension $n$. In this prove we follow the original one just adapting some argument to this case and changing one of the 
hypothesis in order to avoid an imprecision found in the original proof (later on we will discuss this topic). \par\smallskip
The following proposition is the analytic case of Proposition 3.1 in \cite{Brochero}, it is also true in the formal case (the demonstration is the same) and it will be use in 
the proof of the Theorem \ref{BroN}. 
\begin{pro}\label{fingroup}
Let $\G$ be a finite subgroup of $\mathrm{Diff}(\C^n,0)$ then $\G$ is analytic linearizable, and it is isomorphic to a finite subgroup of $Gl(n, \C)$.
\end{pro}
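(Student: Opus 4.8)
The plan is to use the classical Bochner--Cartan averaging trick. Write $\G=\{g_1=\mathrm{id},g_2,\dots,g_m\}$ with $m=|\G|$, set $L_i=Dg_i(0)\in Gl(n,\C)$, and first do the domain bookkeeping: choose a neighborhood $U$ of $0$ on which every $g_i$ and $g_i^{-1}$ is defined, and then a smaller neighborhood $U_0$ with $U_0\subset\bigcap_i g_i^{-1}(U)$, so that the map
\[
\phi=\frac{1}{m}\sum_{i=1}^{m}L_i^{-1}\circ g_i
\]
is holomorphic on $U_0$. One then observes $\phi(0)=0$ and $D\phi(0)=\frac{1}{m}\sum_i L_i^{-1}L_i=\mathrm{Id}$, so by the holomorphic inverse function theorem $\phi$ is a germ of biholomorphism at $0$, i.e.\ $\phi\in\mathrm{Diff}(\C^n,0)$.

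Next I would check the conjugation identity. Fix $g\in\G$, put $L=Dg(0)$, and compute $\phi\circ g=\frac{1}{m}\sum_i L_i^{-1}\circ(g_ig)$. As $i$ ranges over $\{1,\dots,m\}$ the elements $g_ig$ range over all of $\G$, and $D(g_ig)(0)=L_iL$; reindexing the sum by $g_j=g_ig$ (so $L_i^{-1}=L\,(L_jL^{-1}\cdot L)^{-1}\cdot L=L\,L_j^{-1}$ after using $L_i=L_jL^{-1}$) gives $\phi\circ g=L\circ\bigl(\frac{1}{m}\sum_j L_j^{-1}\circ g_j\bigr)=L\circ\phi$. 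Hence $\phi\circ g\circ\phi^{-1}=Dg(0)$ for every $g\in\G$, which is precisely the assertion that $\G$ is analytically linearizable, conjugate via $\phi$ to the subgroup $\widehat\G=\{Dg(0):g\in\G\}\subset Gl(n,\C)$.

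Finally, the map $\rho\colon\G\to Gl(n,\C)$, $g\mapsto Dg(0)$, is a homomorphism by the chain rule, with image $\widehat\G$; it is injective because $Dg(0)=\mathrm{Id}$ forces $g=\phi^{-1}\circ\mathrm{Id}\circ\phi=\mathrm{id}$. Therefore $\G\cong\widehat\G$ is a finite subgroup of $Gl(n,\C)$, completing the proof. The only delicate point is the initial domain bookkeeping --- arranging that all the finitely many germs $g_i$, and the compositions $g_ig$, are simultaneously represented on one neighborhood of $0$ so that $\phi$ is a genuine germ of diffeomorphism; once that is in place everything reduces to the formal identity $\phi\circ g=Dg(0)\circ\phi$ above, which is why the same argument proves the statement verbatim in the formal category (interpreting $\phi$ as a formally invertible power series map).
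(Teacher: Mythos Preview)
Your proof is correct and follows essentially the same approach as the paper: both use the Bochner--Cartan averaging map $\sum_i (Dg_i(0))^{-1}\circ g_i$ (the paper omits your normalizing factor $1/m$, so its linearizing map has derivative $mI$ at $0$ rather than $I$, a cosmetic difference), reindex the sum via the group structure to obtain $\phi\circ g=Dg(0)\circ\phi$, and conclude injectivity of $g\mapsto Dg(0)$. Your version is a bit more careful about the domain bookkeeping and the injectivity argument, but the underlying idea is identical.
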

\begin{proof}
  If $\G=\{G_1,\dots,G_r\}$, let $h^{-1}(x)=\sum_j^r(\mathrm{d}G_j)^{-1}_0G_j(x)$, Note that $h$ is a diffeomorphism because $\mathrm{d}h(0)=rI$ and
  \begin{align*}
      h^{-1}\big(G_i(x)\big)&=\sum_j^r(\mathrm{d}G_j)^{-1}_0G_j\big(G_i(x)\big)=(\mathrm{d}G_i)_0\sum_j^r(\mathrm{d}G_i)^{-1}_0(\mathrm{d}G_j)^{-1}_0G_j\big(G_i(x)\big),\\
			    &=(\mathrm{d}G_i)_0\sum_j^r\big((\mathrm{d}G_j)_0(\mathrm{d}G_i)_0\big)^{-1}_0G_j\big(G_i(x)\big),\\
			    &=(\mathrm{d}G_i)_0\sum_j^r\mathrm{d}\big(G_j\circ G_i\big)_0^{-1}G_j\big(G_i(x)\big)=(\mathrm{d}G_i)_0h^{-1}(x).
  \end{align*}
  Thus $h^{-1}\circ G_i\circ h(x)=(\mathrm{d}G_i)_0(x)$. In fact, we obtain a injective groups homomorphism
\begin{align*}
      \G&\overset{\Lambda}{\longrightarrow} Gl(n, \C)\\
		G&\longrightarrow (h^{-1}\circ G\circ h)'(0).\qedhere
\end{align*} 
\end{proof}\noindent
Furthermore, in \cite{Brochero} is proved (after the proposition above) that the group $\Lambda(\G)\subset Gl(n, \C)$ of linear parts of the diffeomorphisms
in $\G$ is diagonalizable.\par
The following theorem is the generalization of Theorem \ref{Bro1} to dimension $n$ but, as we mention above, it was necessary to change one of the hypothesis. To be precise, instead 
of "$G$ leaves invariant infinitely many analytic varieties at $0$" we put "$G$ leaves invariant a non-countable number of hypersurfaces at $0$". In order to clarify this point, 
after Theorem \ref{BroG} we write down the proof of Theorem \ref{Bro1} and we explain why it was necessary for us to make this change.
\begin{theorem}\label{BroN}
Let $G\in\mathrm{Diff}(\C^n,0)$. Then $G$ generates a finite group if and only if, there exists a neighborhood $V$ of $0$ such that
$|O_V(x,G)| < \infty$ for all $x\in V$ and $G$ leaves invariant a non-countable number of hypersurfaces at $0$.
\end{theorem}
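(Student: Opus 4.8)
Throughout write $A:=DG(0)$ for the linear part; we also assume $n\ge2$, since for $n=1$ the only germ of hypersurface at $0$ is $\{0\}$.

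\emph{Necessity.} If $\langle G\rangle$ is finite of order $N$ then $G^{N}=\mathrm{id}$, so $|O_V(x,G)|\le N$ for every $x$ and every neighborhood $V$, which is the finite‑orbit condition. By Proposition \ref{fingroup} and the remark following it we may conjugate $G$ to a diagonal map $D=\mathrm{diag}(\lambda_1,\dots,\lambda_n)$ with $\lambda_i^{N}=1$. The monomials $x_1^{N}$ and $x_2^{N}$ are then $D$‑semi‑invariant with trivial character, hence every $ax_1^{N}+bx_2^{N}$ is, and $\{ax_1^{N}+bx_2^{N}=0\}$, $[a:b]\in\mathbb{P}^1$, is a pencil of $D$‑invariant hypersurfaces at $0$; these are pairwise distinct because a unit relating two homogeneous polynomials of the same degree is constant. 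Transporting the pencil by the conjugacy gives uncountably many $G$‑invariant hypersurfaces.

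\emph{Sufficiency.} Assume $|O_V(x,G)|<\infty$ for all $x\in V$ and that $\mathcal{M}$ is an uncountable family of pairwise distinct $G$‑invariant hypersurface germs through $0$. The goal is an integer $N$ with $G^{N}=\mathrm{id}$ (which makes $\langle G\rangle$ finite); failing a uniform $N$, one settles instead for a $G$‑invariant connected neighborhood of $0$ consisting of periodic points and invokes Lemma \ref{lemFab}. \emph{Step 1} is essentially free: for each $M\in\mathcal{M}$, Lemma \ref{secLem} applied to $G$ and to $G^{-1}$ (which also preserves $M$) gives a compact, connected, non‑enumerable set $C_M\ni0$ whose points have their entire $G$‑orbit inside $M\cap V$; being finite and never leaving $V$, that orbit makes every point of $C_M$ periodic in $V$ for $G$. \emph{Step 2} pins down $A$: from finite orbits alone each eigenvalue of $A$ has modulus $1$ (otherwise a local stable or unstable manifold of $G$ at $0$ would carry an infinite orbit accumulating at $0$), and one would like the tangent cones at $0$ of the members of $\mathcal{M}$ — each an $A$‑invariant cone cut out by a homogeneous semi‑invariant of $A$ — together with the one‑dimensional criterion of Theorem \ref{fund-theo} along $A$‑invariant lines, to upgrade this to $A^{p}=\mathrm{id}$ for some $p$. \emph{Step 3} replaces $G$ by $H:=G^{p}$, which is tangent to the identity, still has finite orbits in $V$, and still leaves every $M\in\mathcal{M}$ invariant, and then forces $H=\mathrm{id}$: either one shows the invariant hypersurfaces plus finite orbits make every point near $0$ periodic for $H$ and applies Lemma \ref{lemFab}, or one exhibits a set of points — the members of $\mathcal{M}$, the $C_M$, or a Zariski‑dense subset thereof — all fixed by a common iterate $H^{N'}$ and invokes the identity principle, an uncountable union of pairwise distinct hypersurface germs being contained in no proper analytic subvariety, nor in any countable union of such. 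Then $G^{pN'}=\mathrm{id}$.

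The obstacle lies in Steps 2 and 3: extracting from ``many invariant hypersurfaces and many periodic points'' a \emph{uniform} control of the period. Lemma \ref{secLem} guarantees only that each $C_M$ is compact, connected and non‑enumerable — not that it is large inside $M$ — so the periodicity it furnishes is too weak to feed the identity principle directly; and, most delicately, the family $\mathcal{M}$ may consist of a pencil $\{af+bg=0\}_{[a:b]\in\mathbb{P}^1}$ with $f,g$ semi‑invariants of $G$ of the same character, all of whose members contain the common base locus $\{f=g=0\}$ and hence carry no dynamical information beyond that locus, so that the passage of the hypothesis to a generic member of $\mathcal{M}$ — or to a lower‑dimensional invariant subvariety — breaks down. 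Controlling this, which the proof of Theorem \ref{Bro1} in \cite{Brochero} does not, is where the real work lies.
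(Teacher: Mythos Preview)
Your proposal is not a proof: you yourself flag Steps 2 and 3 as incomplete and end by listing the obstacles rather than removing them. The paper's argument for sufficiency is both simpler and different from your outline, and in particular bypasses your Step 2 entirely --- the linear part $A=DG(0)$ plays no role.

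The paper first applies Lemma \ref{secLem} with $M=\C^n$ (not with the invariant hypersurfaces) to obtain the compact, connected, non-enumerable set $C=C_{\C^n}\subset V$ of points with infinitely many iterates in $V$; by the finite-orbit hypothesis every point of $C$ is periodic, so $C=\bigcup_{m\ge1}D_m$ with $D_m=\{x\in C:G^{m!}(x)=x\}$. For each $m$ the analytic set $L_m=\{x:G^{m!}(x)=x\}$ contains $D_m$. If $\dim L_m<n$ for every $m$, then $\bigcup_m L_m$ decomposes as a \emph{countable} union of irreducible varieties of dimension $\le n-1$. Now apply Lemma \ref{secLem} a second time, once for each invariant hypersurface $W$, to get a non-enumerable $C_W\subset W$ with $C_W\subset C$ (since iterates staying in $W\cap V$ a fortiori stay in $V$). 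There are \emph{uncountably} many pairwise distinct hypersurfaces $W$, hence uncountably many $C_W$'s sitting inside pairwise distinct hypersurfaces, and these cannot all be accommodated in the countably many components of $\bigcup_m L_m$. This forces $\dim L_m=n$ for some $m$, whence $G^{m!}=\mathrm{id}$ by the identity theorem.

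The idea you are missing is exactly the one you gestured at in your second option for Step 3 and then dismissed: one does \emph{not} need a single iterate fixing all the $C_W$'s simultaneously, nor any control on the linear part. The stratification $C=\bigcup_m D_m$ is countable, the family $\{W\}$ is uncountable, and it is the comparison of these two cardinalities --- not an identity principle on each $W$, and not a reduction to the tangent-to-identity case --- that closes the argument. This is also precisely why the paper strengthened Brochero's hypothesis from ``infinitely many'' to ``a non-countable number of'' invariant hypersurfaces: countably many would not beat the countable union $\bigcup_m L_m$.
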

\begin{proof}
$(\Rightarrow)$ If the group generated by $G$ is $\G=\{G,G^2,\dots,G^r\}$ obviously for all $x$ in a neighborhood $V$ where $G^i$ is defined for all $i$, we have that 
$O_V(x,\G)$ is finite, in fact $O_V(x,\G)=\{G(x),\dots,G^r(x)\}$.\par Now, consider as in Proposition \ref{fingroup} $h^{-1}(x)=\sum_j^r(\mathrm{d}G^j)_0^{-1}G^j(x)$ which 
is such that $h^{-1}\circ G^i\circ h(x)=(\mathrm{d}G^i)_0(x)$ for all $i$ where $(\mathrm{d}G^i)_0^{n_i}=I$ for some $n_i$ this implies that $(\mathrm{d}G^ i)_0$ is 
diagonalizable then suppose that it is diagonal, in general $h$ can be defined as a diffeomorphism who also diagonalize the group because in this case the group is cyclic then the 
linear parts are simultaneously diagonalizable, thus in the definition of $h^{-1}$ change $(\mathrm{d}G^j)_0$ by $P^{-1}(\mathrm{d}G^j)_0 P$ where $P$ is the matrix who diagonalize 
the group of linear parts and is easy to see that the prove of Proposition \ref{fingroup} works, with this in main define  
\begin{equation}\label{varinvar}
  M_c=\big\{h(x)\in V\,\big|\,c_1x_1^m+\dots+c_nx_n^{m}=0\big\}, 
\end{equation}
where $m=n_1\cdots n_r$ and $c=(c_1,\dots,c_n) $. $M_c$ is a $\G$-invariant complex
analytic variety of dimension $n-1$ for each $c\in\C^n$. In order to see this, take $y\in M_c$ who by definition is equal to $h(x)$ for some $x\in V$ satisfying \ref{varinvar} then 
we have to prove that $G^i(y)\in M_c$ for $i=1,\dots, r$,
\begin{align*}
   G^i(y)&=G^i(h(x))=h\big(h^{-1}\circ G^i\circ h(x)\big),\\
         &=h\big((\mathrm{d}G^i)_0x\big),
\end{align*}
 and using that $(\mathrm{d}G^i)_0$ is diagonal we have (in multi index notation)
 \[\big((\mathrm{d}G^i)_0x\big)^m={(\mathrm{d}G^i)}^m_0x^m=x^m,\]
 therefore, if $y=h(x)\in M_c$ then $ G^i(y)=h\big((\mathrm{d}G^i)_0x\big)\in M_c$.\medskip\\
$(\Leftarrow)$ Consider $M=\C^n$ in Lemma \ref{secLem} then, $C = C_{\C^n}$ is the compact, connected and non-enumerable set of points in $V$ such that $\mu_V(x,G)=\infty$ and 
therefore every point in $C$ is periodic. If we denote $D_m =\bigcup \{x\in C\,|\,G^{m!}(x)=x\}$, it is clear that $D_m$ is a close set and $D_m\subset D_{m+1}$, moreover $C =\bigcup 
D_m$. Fix a $m\in\mathbb{N}$ and consider $F=G^{m!}$ where it is well defined, observe that $C$ is in the domain $U$ of $F$ and take $L=\{x\in U|\,F(x) = x\}$. Since $L$ is a 
complex analytic variety of $V$ then it can be written as a finite union of complex analytic varieties of dimension ranging from $1$ to $n$, but even if all where of dimension 
$n-1$, using 
Lemma \ref{secLem} with every invariant analytic variety $W$ we conclude that there are non-enumerable $C_W\subset C$ not contained in the decomposition of $L$, as the $m$ fixed 
is arbitrary and $C$ is a enumerable union, it can be deduced that there exist a $m$ such that $L$ is of dimension $n$, and it follows that $G^{m!}(x) 
= x$ for all $x\in U$ by the identity theorem (see \cite{gunning1} pag 5), hence the group generated by $G$ is finite.
\end{proof}\noindent
The version of the previous theorem for groups of diffeomorphisms finitely generated is immediate,
\begin{theorem}\label{BroG}
Let $\G=\langle\{G_1,\dots,G_m\}\rangle\prec \mathrm{Diff}(\C^n,0)$ be a finitely generated subgroup of diffeomorphisms. Then $\G$ is finite if and only if, there exists a 
neighborhood $V$ of $0$ such
that $|O_V(x,\G)| <
\infty$ for all $x\in V$ and each $G_i$ leaves invariant a non-countable number of hypersurfaces at $0$.
\end{theorem}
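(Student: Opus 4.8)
The plan is to reduce everything to the one–diffeomorphism case of Theorem \ref{BroN}, applied to each generator, and then to pass from the generators to the whole group by means of Lemma \ref{lemFab}.

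$(\Rightarrow)$ This direction is short. If $\G$ is finite, of order $r$, shrink $V$ so that every element of $\G$ is represented by a biholomorphism of $V$; then $O_V(x,\G)\subseteq\{g(x):g\in\G\}$, so $|O_V(x,\G)|\le r$ for all $x\in V$. For each $i$ the cyclic group $\langle G_i\rangle$ is a finite subgroup of $\G$, so the forward implication of Theorem \ref{BroN} applied to $G_i$ directly furnishes a non-countable family of $G_i$-invariant hypersurfaces at $0$ (concretely: linearise $\langle G_i\rangle$ via Proposition \ref{fingroup}, diagonalise its linear parts — possible since $\langle G_i\rangle$ is cyclic and hence these are simultaneously diagonalisable — and take the surfaces $M_c$ of \eqref{varinvar}).

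$(\Leftarrow)$ Assume the existence of $V$ with $|O_V(x,\G)|<\infty$ for all $x\in V$, and that each $G_i$ leaves invariant a non-countable family of hypersurfaces at $0$. First, since $O_V(x,G_i)\subseteq O_V(x,\G)$, each $G_i$ has finite orbits in $V$; together with the hypersurface hypothesis, Theorem \ref{BroN} applied to $G_i$ shows that $\langle G_i\rangle$ is finite, so each $G_i$ is periodic, say $G_i^{k_i}=\mathrm{id}$ as a germ. The remaining task is to exhibit a connected, $\G$-invariant neighbourhood $W\subseteq V$ of $0$. Granting this, the proof closes immediately: $\G$-invariance of $W$ makes every $G\in\G$, and every finite prefix of a word in the $G_i^{\pm1}$, map $W$ into $W$, so for $x\in W$ the whole two–sided $G$-orbit of $x$ lies in $O_V(x,\G)$ and is therefore finite; being a finite orbit of the bijection $G|_W$, it is a periodic cycle. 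Hence every $x\in W$ is periodic for every element of $\G$, Lemma \ref{lemFab} applies, and $\G$ is finite.

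The step I expect to be the main obstacle is the construction of the invariant neighbourhood $W$, which is exactly where the global orbit–finiteness $|O_V(x,\G)|<\infty$ must enter. The natural attempt is: pick a smaller neighbourhood $V'$ with $\overline{V'}\subseteq V$ and let $W$ be the connected component of $0$ in $\{x\in V':O_V(x,\G)\subseteq V'\}$. One checks, using that $\G$ preserves this set and acts on it by homeomorphisms fixing the component of $0$, that $G_i(W)=W$ for every $i$; the delicate point is to verify that $W$ is actually a neighbourhood of $0$ (i.e. that the orbits do not ``spread'' arbitrarily close to $0$), and this is where the periodicity of the generators established above is used. Once $W$ is in hand, the rest is a routine combination of Theorem \ref{BroN} and Lemma \ref{lemFab}, which explains why the statement can be called ``immediate''.
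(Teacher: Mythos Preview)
Your strategy coincides with the paper's: apply Theorem~\ref{BroN} to each generator and then invoke Lemma~\ref{lemFab}. The paper's own argument for $(\Leftarrow)$ is a single sentence---it asserts that Theorem~\ref{BroN} yields finite order for every element of $\G$ (strictly speaking, the hypersurface hypothesis of Theorem~\ref{BroN} is only available for the generators $G_i$, so what one actually obtains from \ref{BroN} is periodicity of each $G_i$) and then appeals to Lemma~\ref{lemFab} directly, with no mention of the invariant connected neighbourhood $W$ that the lemma requires. The step you flag as ``the main obstacle''---producing such a $W$ and checking that every point of it is periodic for every element of $\G$---is therefore exactly the step the paper omits; your outline of how to build $W$, and your observation that the periodicity of the generators is what makes it a genuine neighbourhood of $0$, already supply more detail than the author does.
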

\begin{proof}
$(\Rightarrow)$ This part is the same as the previous theorem, note that in the hypothesis each generator of the group leaves invariant infinitely many analytic varieties, then we
can apply the same construction for each one.\smallskip\\
$(\Leftarrow)$ Using Theorem \ref{BroN} we have that every element in $\G$ has finite order and, $\G$ is finite generated so we can apply Lemma \ref{lemFab} concluding that
$\G$ is finite.  
\end{proof}
The following is the proof of Theorem \ref{Bro1} as can be seen in \cite{Brochero} page 7. 
\begin{proof}[Proof of Theorem \ref{Bro1}]
 $(\Rightarrow)$ Let $N = \#\langle F\rangle$ and $h\in \mathrm{Diff}(\C^2,0)$ such that $h\circ F\circ h^{-1}(x, y)=(\lambda_1 x,\lambda_2y)$ where $\lambda_1^N=\lambda_2^N=1$. It 
is clear than $|O(x,F)|\leq N$ for all $x$ in the domain of $F$ , and $M_c =\{h(x, y)\,|\, x^N-cy^N = 0\}$ is a complex analytic variety invariant by $F$ for all $c\in\C$.\medskip\\
$(\Leftarrow)$ Consider Lemma \ref{secLem} with $M=\C^2$, then $C = C_{\C^2}$ is a set of point with infinite orbits in a domain $V = \overline{B}_r(0)$ where $F$ and $F^{-1}$ are 
well defined and therefore every point in $C$ is periodic. If we denote $D_n = \{x\in C\,|\,F^{n!}(x) = x\}$, it is clear that $D_n$ is a closed set and $D_n \subset D_{n+1}$, 
moreover $C =\cup_{n=1} D_n$, {\color{red} then exists $n\in N$ such that $C = D_n$}. Let $G = F^{n!}$ where it is well defined, observe that $C$ is in the domain $U$ of $G$ and 
$C\subset \{x\in U\,|\,G(x) = x\} = L$. Since $L$ is a complex analytic variety of $U$ that contain $C$ then its dimension is $1$ or $2$. The case dim$\,L = 1$ is impossible because 
$C_M\subset C\subset  L$ for all $M$ analytic variety $F$-invariant, contradicting that fact that $\mathcal{O}_2$ is Noetherian ring. In the case dim$\,L = 2$ 
follows that $F^{n!}(x) = x$ for all $x\in U$, therefore $\langle F\rangle$ i is
finite. 
\end{proof}
The problem with the prove above is in the affirmation:
\begin{center}
 "\dots, then exists $n\in N$ such that $C = D_n$".  
\end{center}
which is not always true because the sets $D_n$ may have empty interior, in fact if one of them happens to have interior the proof ends by the Identity Theorem. Another way of see 
the problem with this affirmation is to note that the increasing sequence of analytic sets $D_n \subset D_{n+1}$ generates a decreasing sequence of ideals, and even in Noetherian 
rings (as $\mathcal{O}_n$) decreasing sequences of ideals do not always stabilize, they do when they are prime which is equivalent to the $D_n$ be irreducible (see \cite{gunning2} 
pag. 15). Now, if they are irreducible and of dimension $1$ all of them are the same one and the set $C$ consist of a single analytic curve which contradicts that by hypothesis there 
are infinitely many $G$-invariant analytic varieties at $0$, and we are done. It would remain the case when the sequence of ideals does not stabilize.\\ We could not get a different 
proof of the statement \ref{Bro1} and his importance in our work force us to change the hypothesis as you can see in Theorem \ref{BroN}.\par\smallskip
We close this section noting that Theorem \ref{BroN} is valid, as the author \cite{Brochero} mentions, if we consider analytic 
varieties of complex dimension $1$ in \emph{general position} instead of hypersurfaces,
\begin{defi}\label{genpos1}
 We say that infinitely many analytic varieties of complex dimension $1$ are in \emph{general position} if they are not contained in finitely many analytic
varieties of complex dimension $n-1$.
\end{defi}
The only change in the proof is in the "if'' part where is necessary one more step, note that choosing $n-1$ constants $c$ linearly independent, the 
intersection of the corresponding $M_c$ has a component of dimension $1$ passing through $0$. In this 
way we can obtain a non-countable number in general position. The reason why we state the theorem in terms of hypersurfaces is because is more natural and it does not require to add 
more conditions. However, it can be useful to think in dimension one as we see next. 
\begin{theorem}\label{Ard&Bro3}
  Let $G\in\mathrm{Diff}(\C^n,0)$. The group generated by $G$ is finite if and only if, there exist a neighborhood $V$ of $0$ such that $|O_V(x,G)| <
\infty$ for all $x\in V$, and $G$ leaves invariant a non-countable number of analytic varieties of complex dimension $1$, in general position, arbitrarily close to $0$, and each one 
intersecting the set $C=C_{\C^n}$ defined as in Lemma \ref{secLem}.
\end{theorem}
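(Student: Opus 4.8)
The plan is to follow the proof of Theorem \ref{BroN} closely. The forward implication is the construction already sketched in the remark preceding the statement, checked against the two extra conditions; the converse is the substance, and the only real novelty is that the Lewowicz–type argument behind Lemma \ref{secLem} has to be run at a periodic point of a given curve rather than at the fixed point $0$ — this is precisely where the hypothesis ``each one intersecting $C$'' enters.

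For the forward implication, assume $\langle G\rangle=\{G,\dots,G^{r}\}$ is finite; on a neighbourhood $V$ where all $G^{i}$ are defined one has $|O_{V}(x,G)|\le r<\infty$. Since $\langle G\rangle$ is cyclic its linear parts are simultaneously diagonalizable, so the argument of Proposition \ref{fingroup} gives $h\in\mathrm{Diff}(\C^{n},0)$ and $m\in\mathbb{N}$ with $h^{-1}\circ G^{i}\circ h$ equal to a diagonal matrix $A_{i}$, $A_{i}^{m}=I$. As in Theorem \ref{BroN}, since $(A_{i}x)^{m}=x^{m}$ in multi-index notation, the hypersurface $M_{c}=h\bigl(\{x\in V:\,c_{1}x_{1}^{m}+\dots+c_{n}x_{n}^{m}=0\}\bigr)$ is $G$-invariant for every $c\in\C^{n}$. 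Picking $n-1$ vectors $c^{(1)},\dots,c^{(n-1)}$ and setting $u_{j}=x_{j}^{m}$, the corresponding varieties become $n-1$ linear equations in $u$; for a generic choice they cut out a line through the origin in the $u$-coordinates, so $\bigcap_{k}M_{c^{(k)}}$ contains a $1$-dimensional analytic variety $N$ through $0$, and $N$ is $G$-invariant. Letting the $(n-1)$-tuple run over a non-countable generic set produces non-countably many such curves $N$; their union fills a set with non-empty interior in $V$, so no finite family of hypersurfaces contains all of them, i.e. they are in general position (Definition \ref{genpos1}); each passes through $0$, hence is arbitrarily close to $0$; and since $0\in C_{\C^{n}}$ by Lemma \ref{secLem}, each meets $C=C_{\C^{n}}$.

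For the converse, Lemma \ref{secLem} with $M=\C^{n}$ (legitimate because $|O_{V}(x,G)|<\infty$) produces the compact, connected, non-enumerable set $C=C_{\C^{n}}\subset V$ with $0\in C$ and every $x\in C$ satisfying $\mu_{V}(x,G)=\infty$ with finite orbit, hence periodic. Suppose, towards a contradiction, that $\langle G\rangle$ is infinite. Then $G^{m!}\ne\mathrm{id}$ for every $m$, so $L_{m}=\{x:G^{m!}(x)=x\}$ is, by the identity theorem (\cite{gunning1}), a proper analytic subvariety of $V$; since every $x\in C$ is periodic, $C=\bigcup_{m}(C\cap L_{m})$, so $C\subset\bigcup_{m}L_{m}$. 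Fix one of the given curves, which we may assume irreducible of dimension $1$ and call $W$; by hypothesis $W$ meets $C$, say at $p$, and then $p$ is periodic, $G^{N}(p)=p$ for some $N$, while $W$ stays $G^{N}$-invariant. Running the argument of Lemma \ref{firstLem} — exactly as in the proof of Lemma \ref{secLem}, but centred at $p$ instead of at $0$ — on the homeomorphism $G^{N}$ restricted to the component $K$ of $p$ in a small ball of $W$ around $p$ yields a compact connected set $C_{W}\subset W$ with $p\in C_{W}$, $C_{W}\cap\partial K\ne\emptyset$, and all $G^{N}$-iterates of its points remaining in $V$; thus $C_{W}$ has at least two points, hence is non-enumerable, and $C_{W}\subset C$, since such points have $\mu_{V}(\cdot,G)=\infty$ and finite orbit. (Equivalently, normalizing $W$ at $p$ and applying Theorem \ref{fund-theo} to $G^{N}|_{W}\in\mathrm{Diff}(\C,0)$, which has finite orbits, one sees directly that some power of $G$ fixes $W$ pointwise.) Now $C_{W}\subset C\subset\bigcup_{m}L_{m}$, so $C_{W}\subset\bigcup_{m}(W\cap L_{m})$, and for each $m$ with $W\not\subset L_{m}$ the set $W\cap L_{m}$ is a proper analytic subset of the irreducible curve $W$, hence discrete; a countable union of such sets cannot contain the non-enumerable $C_{W}$, so $W\subset L_{m}$ for some $m$. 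Thus each of the non-countably many given curves lies in one of the countably many proper analytic subvarieties $L_{1},L_{2},\dots$; by pigeonhole a single $L_{m_{0}}$ contains non-countably many of them, contradicting that they are in general position. Hence $\langle G\rangle$ is finite.

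The step I expect to be the main obstacle is this last one: the cardinality argument delivers only a countable cover by the $L_{m}$, so ``general position'' must be used in the robust form that no infinite subfamily of the curves lies in finitely many hypersurfaces — which, together with the requirements ``arbitrarily close to $0$'' and ``intersecting $C$'', is exactly what makes the present hypothesis do the work of the hypersurface hypothesis of Theorem \ref{BroN}; one should verify carefully (as one must already in Theorem \ref{BroN}) that the non-enumerable sets $C_{W}$ cannot all be squeezed into a countable union of proper analytic subvarieties. A secondary, purely technical, point is running Lemma \ref{firstLem} at the periodic point $p$ rather than at $0$: one must choose a ball inside the domain of $G^{N}$ on which the component $K\subset W$ has non-empty boundary, check that $G^{N}$ restricts to a homeomorphism of $K$ into $W$, and, when the given curve is singular or reducible at $p$, first pass to an irreducible branch and replace $G$ by a power preserving it; none of this is deep, since Lemma \ref{firstLem} is topological, but it has to be written out.
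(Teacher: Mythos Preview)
Your proof is correct and takes essentially the same approach as the paper: run Lemma~\ref{secLem} at a periodic point of $W\cap C$ to obtain a non-enumerable $C_W\subset W$, invoke the one-dimensional identity theorem (the paper's Remark~\ref{C_W-fixed}) to force $W\subset L_{m(W)}$, and then pigeonhole the uncountably many curves into the countably many $L_m$'s to contradict general position. Your write-up is in fact more explicit than the paper's on the pigeonhole step, and you correctly flag that ``general position'' must be read in the strong form (no infinite subfamily in finitely many hypersurfaces), a point the paper leaves implicit.
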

\begin{proof}
 Following the proof of Theorem \ref{BroN}, $C = C_{\C^n}$ is the compact, connected and non-enumerable set of points in $V$ such that $\mu_V(x,G)=\infty$ and 
therefore every point in $C$ is periodic. If we denote $D_m =\bigcup \{x\in C\,|\,G^{m!}(x)=x\}$, it is clear that $D_m$ is a close set and $D_m\subset D_{m+1}$, moreover $C =\bigcup 
D_m$. Fix a $m\in\mathbb{N}$ and consider $F=G^{m!}$ where it is well defined, observe that $C$ is in the domain $U$ of $F$ and take $L=\{x\in U|\,F(x) = x\}$. If some invariant 
analytic variety $W$ intersects $C$ in a periodic point $q\in U$ of order $k$ then, as in the previous proofs, Lemma \ref{secLem} can be applied to the map $G^k$ in some neighborhood 
of $q$ contained in $W$ and we obtaining a compact, connected and non-enumerable set $C_W\subset W$ which is fixed for some iterate of $G^k$ (see \ref{C_W-fixed}), each one of those 
$C_W$ belongs to $C$. Now, since $L$ is a 
complex analytic variety of $V$ then it can be written as a finite union of complex analytic varieties of dimension ranging from $1$ to $n$, but even if all where of dimension 
$n-1$ there are non-enumerable $C_W\subset C$ not contained in the decomposition of $L$, as the $m$ fixed 
is arbitrary and $C$ is a enumerable union, it can be deduced that there exist a $m$ such that $L$ is of dimension $n$, and it follows that $G^{m!}(x) 
= x$ for all $x\in U$ by the identity theorem (see \cite{gunning1} pag 5), hence the group generated by $G$ is finite.  
\end{proof}
\begin{rem}\label{C_W-fixed}
 The reason why $C_W\subset W$ is fixed for some iterate of $G^k$ is because $C_W$ is the set of $G^k$-periodic points and it is non-enumerable then there are infinitely many of some 
order $k'=km$, for $m\in\mathbb{N}$, and they accumulate by compactness. The dimension of $W$ is one hence the Identity Theorem implies that $C_W$ is $G^{k'}$-fixed.
\end{rem}

\section{Conditions over the set of periodic points}
The second part of the proof of Theorem \ref{BroN} make us think that what we really need is a sufficient amount of periodic points, but even in dimension one, infinitely many 
accumulating $0$ is not enough. To be precise, according to Perez-Marco in \cite{PM} is possible to construct map germs in Diff$(\C,0)$ exhibiting a sequence of periodic points 
converging to $0$ and not linearizable, obviously the order of the points in that sequence goes to infinity because if some subsequence has bounded order by some $m$ then after $m!$ 
iterates the function has a sequence of fixed points accumulating $0$ and by the identity theorem that iteration is the identity then the map is periodic. However, in dimension 
greater than 
$1$ to have a convergent sequence of fixed points is not enough to guarantee that a map is the identity that is why we asked for a dense set of periodic points while keeping the 
bound 
over the order.
\begin{theorem}\label{Ard&Bro1}
Let $G\in\mathrm{Diff}(\C^n,0)$. The group generated by $G$ is finite if and only if, it exists $m\in\mathbb{N}$ such that for an arbitrary neighborhood of $0$ the set of periodic
orbits of period at most $m$ is dense.
\end{theorem}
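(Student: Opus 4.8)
The plan is to use the same reduction as in the proof of Theorem \ref{BroN} --- pass to a fixed power of $G$ --- but this time the density hypothesis makes the conclusion immediate, with no appeal to Lemma \ref{secLem} and no dimension count.

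For the implication $(\Rightarrow)$, suppose $\langle G\rangle$ is finite and put $N=\#\langle G\rangle$, so that $G^{N}=\mathrm{id}$ as a germ. Then on a sufficiently small neighborhood of $0$ every point is periodic of period dividing $N$; taking $m=N$, the set of periodic orbits of period at most $m$ is the whole neighborhood, hence dense in every smaller neighborhood of $0$.

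For $(\Leftarrow)$, assume $m\in\mathbb{N}$ is such that for every neighborhood of $0$ the periodic orbits of period at most $m$ are dense. The key observation is that if $G^{k}(x)=x$ for some $1\le k\le m$, then $G^{m!}(x)=x$, because $k\mid m!$. Choose a connected open neighborhood $U$ of $0$ on which $F:=G^{m!}$ is a well-defined diffeomorphism, and set $\mathrm{Fix}(F)=\{x\in U\,:\,F(x)=x\}$. By the observation, $\mathrm{Fix}(F)$ contains every periodic point of period at most $m$ lying in $U$, hence it is dense in $U$; on the other hand $\mathrm{Fix}(F)$ is closed in $U$, being the zero set of the continuous map $F-\mathrm{id}$. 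A closed subset of $U$ that is dense in $U$ equals $U$, so $F\equiv\mathrm{id}$ on $U$, that is $G^{m!}=\mathrm{id}$ as a germ, and $\langle G\rangle$ is finite of order dividing $m!$.

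I do not expect a genuine obstacle: the whole weight of the argument rests on the single line $k\mid m!$, which converts the hypothesis into a statement about one analytic --- indeed merely continuous --- set, whereas Theorems \ref{BroN} and \ref{Ard&Bro3} had to contend with the failure of descending ideal chains in $\mathcal{O}_n$. Alternatively one can argue directly from $P:=\bigcup_{k=1}^{m}\mathrm{Fix}(G^{k})=U$, noting that by Baire one of the finitely many analytic sets $\mathrm{Fix}(G^{k})$ has nonempty interior and therefore, by the identity theorem, equals $U$. The only point requiring care is the one the examples of Perez-Marco recalled above make unavoidable: it is \emph{density} of the periodic points, not mere accumulation at $0$, together with the uniform bound $m$ on the periods, that is being used; dropping either would allow non-linearizable germs with infinitely many periodic orbits clustering at the origin.
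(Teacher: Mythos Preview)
Your proof is correct and follows the same approach as the paper: both pass to $F=G^{m!}$ and argue that $\mathrm{Fix}(F)=U$. Your final step---$\mathrm{Fix}(F)$ is closed and dense, hence equals $U$---is slightly more elementary than the paper's, which instead invokes the analytic-variety structure of $L=\mathrm{Fix}(F)$ to rule out $\dim L<n$; but the underlying idea is identical.
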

\begin{proof}
($\Rightarrow$) Suppose $\langle G\rangle=\{id,\dots,G^{r-1}\}$ for $r\in\mathbb{N}$ and $G$ well defined in a neighborhood $V$ of $0$. Consider
$U$ the connected component of $V\cap G^{-1}(V)\cap\dots\cap G^{r-1}(V)$ at $0$ then every point in $U$, which is an open set, is periodic.\smallskip\\ 
($\Leftarrow$) Consider $F=G^{m!}$ defined in some neighborhood $U$ of $0$ and  $L=\{x\in U\,|\, F(x)=x\}$. Since $L$ is a complex analytic variety of $U$ then it can be written as a
finite union of analytic varieties of dimension ranging from $1$ to $n$, it can not be $0$ because it contains infinite many points accumulating $0$. However, the union of finitely 
many
analytic varieties, even if all of them are of dimension $n-1$, can not contain a dense set of points accumulating $0$. Therefore $\mathrm{dim}\,L=n$ and we have that $G^{m!}(x)=x$
for
all $x\in U$ and we are done. 
\end{proof}
The following theorem shows that we do not need a dense set of periodic points if we have infinitely many let us say "well located".
\begin{theorem}\label{Ard&Bro2}
Let $G\in\mathrm{Diff}(\C^n,0)$. The group generated by $G$ is finite if and only if, it exists $m\in\mathbb{N}$ such that for an arbitrary neighborhood of $0$, $G$ leaves invariant
infinitely many analytic varieties of complex dimension $1$, in general position and each one having a convergent sequence of periodic points of order at most $m$.
\end{theorem}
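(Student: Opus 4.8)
The plan is to follow very closely the arguments already used for Theorems \ref{BroN} and \ref{Ard&Bro3}; the only genuinely new ingredient is that the uniform bound $m$ on the orders of the periodic points replaces the appeal to Lemma \ref{secLem} and lets us work with a single iterate $G^{m!}$. For the implication $(\Rightarrow)$, suppose $\langle G\rangle=\{\mathrm{id},G,\dots,G^{r-1}\}$. I would first pass to a connected neighbourhood $V$ of $0$ (the component of $0$ in $V_0\cap G^{-1}(V_0)\cap\cdots\cap G^{-(r-1)}(V_0)$, for $V_0$ a small domain on which a representative of $G$ is defined), on which every point is periodic of order dividing $r$, so that one may take $m=r$. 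By Proposition \ref{fingroup} there is $h\in\mathrm{Diff}(\C^n,0)$ with $h^{-1}\circ G\circ h=\mathrm{diag}(\lambda_1,\dots,\lambda_n)$ and $\lambda_i^{r}=1$; exactly as in the proof of Theorem \ref{BroN}, the hypersurfaces $M_c=h\big(\{\,c_1x_1^{r}+\cdots+c_nx_n^{r}=0\,\}\big)$ are $G$-invariant, and intersecting $n-1$ of them attached to linearly independent vectors $c$ produces, through $0$, non-countably many $G$-invariant analytic curves in general position (Definition \ref{genpos1}), which lie arbitrarily close to $0$ since they pass through the origin; every point of each such curve is periodic of order dividing $r\le m$, so each of them trivially carries a convergent sequence of periodic points of order at most $m$.

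For $(\Leftarrow)$, fix a representative of $G$ such that $F:=G^{m!}$ is defined on some neighbourhood $U$ of $0$, and set $L=\{x\in U\,|\,F(x)=x\}$, a complex analytic subvariety of $U$. Applying the hypothesis to the neighbourhood $U$, we obtain infinitely many $G$-invariant analytic curves $W_1,W_2,\dots$, in general position, whose periodic sequences lie inside $U$. Each $W_j$ is $F$-invariant, and on $W_j$ the restriction of $F$ fixes the points of a sequence $(p^j_k)_k$ of $G$-periodic points of order at most $m$ — hence dividing $m!$, so $F(p^j_k)=p^j_k$ — which converges to some $q_j\in W_j$. Passing to a subsequence lying on a single irreducible local branch of $W_j$ through $q_j$ (there are only finitely many), the Identity Theorem for one-dimensional analytic varieties (cf. \cite{gunning1}) forces that branch to be contained in $L$; assuming, as is implicit in the statement, that the $W_j$ are irreducible, this gives $W_j\subset L$ for every $j$.

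It then remains to prove $\dim L=n$. I would argue by contradiction: if $\dim L\le n-1$, write $L=L_1\cup\cdots\cup L_N$ with each $L_i$ irreducible of dimension $\le n-1$, and note that each $L_i$ is contained in some analytic variety of dimension exactly $n-1$ (locally one may take a single defining equation of $L_i$, padding if necessary). Then all the curves $W_j$ lie in finitely many analytic varieties of dimension $n-1$, contradicting that they are in general position. Hence $\dim L=n$, so $L=U$ by the Identity Theorem, i.e. $G^{m!}=\mathrm{id}$ on $U$, and $\langle G\rangle$ is finite.

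The step I expect to be the main obstacle is the handling of reducible or singular $W_j$: one must extract from each $W_j$ a genuine irreducible branch contained in $L$ and then be sure that these branches still form an infinite family in general position, so that the contradiction with Definition \ref{genpos1} is legitimate. If the $W_j$ are permitted to be reducible this is not automatic — a finite union of $(n-1)$-dimensional varieties could contain a branch of each $W_j$ without containing the $W_j$ — so one should either phrase the hypothesis directly in terms of irreducible components, or interpose a short argument ruling out that pathology (for instance using that the irreducible components of the $W_j$ are themselves infinitely many and in general position whenever the $W_j$ are).
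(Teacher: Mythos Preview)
Your proposal is correct and follows essentially the same route as the paper: set $F=G^{m!}$, use the one-dimensional Identity Theorem on each invariant curve to put it inside $L=\{F=\mathrm{id}\}$, and then contradict general position if $\dim L\le n-1$. The only difference is that the paper interposes an appeal to Lemma~\ref{secLem} at the limit point $q$ before invoking the Identity Theorem, whereas you go directly from the convergent sequence of $F$-fixed points on $W_j$ to the conclusion that the branch lies in $L$; your shortcut is legitimate, since the hypothesis already hands you the accumulating sequence of $F$-fixed points, and the paper's extra step is not doing additional work. Your closing caveat about reducible $W_j$ is apt and is treated with the same informality in the paper.
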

\begin{proof}
 ($\Rightarrow$) The same as Theorem \ref{BroN}. And we obtain infinitely many analytic varieties of complex dimension $1$ passing through $0$, and the periodicity of the group 
implies that every point on them is periodic of same order. \smallskip\\ 
 ($\Leftarrow$) First, take $F=G^{m!}$ defined in some neighborhood $U$ of $0$, with $m$ as in the statement, and take a $G$-invariant analytic variety $M$ in $U$, by hypothesis $M$ 
has a convergence sequence of periodic points of order at most $m$ converging to some point $p\in M$. We can apply Lemma \ref{secLem} taking $F$ as the map, $M$ the $F$-invariant 
complex analytic variety, $q$ the $F$-fixed point and $K_q$ the connected component of $M$ containing $q$, then there exist a $C_M$ (compact, connected and non-enumerable) 
containing $q$ and a sequence of $F$-fixed points converging to it, by the identity theorem (the one dimensional version because we are restricted to $M$) $K_q$ is formed by 
$F$-fixed points. Now, define $L=\{x\in U\,|\,F(x)=x\}$ which is a complex analytic variety in $U$ then it can be written as a finite union of analytic varieties of dimension ranging 
from $1$ to $n$, as before it can not be $0$ because it contains infinite many points accumulating $0$ (here we are using the hypothesis about the arbitrariness of the 
neighborhoods). 
The case $\mathrm{dim}\,L=1$ is impossible, in order of see this consider $M$ and $q$ as before and note by $L_q$ the irreducible component of $L$ containing $q$. Hence $L_q$ and 
$K_q$ are complex analytic varieties of dimension one equal in a set with an accumulation point then they are the same. The same argument can by applied infinitely many
times and as in the proof of Theorem \ref{BroN}, even if all the irreducible components where of dimension $n-1$ by hypothesis there are still infinitely many not contained in them 
therefore this is impossible. The
remaining case
is dim$\,L = n$ and it follows that $G^{m!}(x)=x$ for all $x\in U$ and we are done. 
\end{proof}
If in Theorem \ref{Ard&Bro2} we make the analytic varieties pass through $0$, we get as a corollary a version of Theorem \ref{BroN} changing the finite many orbits hypothesis by 
periodic points of bounded order accumulating $0$.
\begin{coro}\label{A&B2Cor1}
Let $G\in\mathrm{Diff}(\C^n,0)$. The group generated by $G$ is finite if and only if, it exists $m\in\mathbb{N}$ such that $G$ leaves invariant infinitely many analytic varieties of
complex dimension $1$, in general position and each one having a sequence of periodic points of order at most $m$, that accumulates $0$.
\end{coro}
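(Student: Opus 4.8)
The plan is to read off both implications from Theorem \ref{Ard&Bro2}, the only genuine task being to line up the two sets of hypotheses.

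For $(\Rightarrow)$ I would argue as in the forward implication of Theorem \ref{Ard&Bro2} (equivalently of Theorem \ref{BroN}): since $\langle G\rangle$ is finite it is cyclic, hence its linear part is diagonalizable and the diffeomorphism $h$ of Proposition \ref{fingroup} linearizes $G$, so the hypersurfaces $M_c$ of \eqref{varinvar} are $G$-invariant; intersecting $n-1$ of them attached to linearly independent constants produces, as explained after Definition \ref{genpos1}, a non-countable family of $G$-invariant analytic varieties of complex dimension $1$ in general position, all passing through $0$. If $N=\#\langle G\rangle$, then $G^N=\mathrm{id}$, so every point of each such curve is periodic of order dividing $N$; taking $m:=N$ and choosing on each curve an arbitrary sequence of points tending to $0$ gives the required sequence of periodic points of order at most $m$ accumulating $0$.

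For $(\Leftarrow)$ the observation is that the present hypothesis is a particular case of that of Theorem \ref{Ard&Bro2}. An analytic variety is closed, so a curve that carries a sequence of periodic points accumulating $0$ must contain $0$; hence every one of the infinitely many invariant curves $M_i$ in the statement passes through the origin. Fix now an arbitrary neighborhood $U$ of $0$. For each $i$ the periodic points of order at most $m$ on $M_i$ accumulate $0\in U$, so infinitely many of them lie in $U$ and provide a sequence converging to $0$; moreover the family $\{M_i\cap U\}_i$ is still in general position, because a failure of general position in $U$ would pass to the germs at $0$ of the curves and, by analytic continuation for the irreducible components through $0$, would already contradict general position in the original neighborhood. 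Thus all the hypotheses of Theorem \ref{Ard&Bro2} are met for every neighborhood of $0$, and that theorem yields that $\langle G\rangle$ is finite.

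I expect the only point deserving care to be this last one: verifying that the ``general position'' condition, and hence the ``arbitrary neighborhood'' clause of Theorem \ref{Ard&Bro2}, survive the restriction of the invariant curves to an arbitrarily small ball about $0$. Once one records that all the curves contain $0$, this is essentially a statement about germs and reduces, via the identity theorem for irreducible analytic sets, to the general position already assumed; with that in hand the corollary is a direct specialization of Theorem \ref{Ard&Bro2}.
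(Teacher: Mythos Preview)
Your proposal is correct and follows exactly the route the paper indicates: the paper does not give a separate proof but simply remarks that if the invariant curves in Theorem \ref{Ard&Bro2} pass through $0$, the corollary follows. You make this explicit by noting that a closed analytic curve carrying a sequence accumulating $0$ must contain $0$, and then checking that the ``arbitrary neighborhood'' and general-position hypotheses of Theorem \ref{Ard&Bro2} are inherited by restriction, which is precisely the content the paper leaves implicit.
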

\section{Advances found in the literature}
The final part of this chapter is dedicated to show some generalizations of Theorem \ref{fund-theo} existent in recent works, their proofs can be found in the referenced 
articles\par\smallskip
The first one we mention is taken from \cite{Reb-Reis} ,  
\begin{theorem}\label{rebreisA}
 Let $\G\subset\mathrm{Diff}(\C^n,0)$ be a finitely generated pseudogroup on a small neighborhood of the origin in $\C^n$. Given $G\in\G$, let Dom$(G)$ denote the domain of 
definition of $G$ as element of the pseudogroup in question. Suppose that for every $G\in\G$ and $p\in\mathrm{Dom}(G)$ satisfying $G(p) = p$, one of the following holds: either $p$ 
is an isolated fixed point of $G$ or $G$ coincides with the identity on a neighborhood of $p$. Then the pseudogroup $\G$ has finite orbits on a neighborhood of the origin if and only 
if $\G$ itself is finite.
\end{theorem}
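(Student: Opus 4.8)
\medskip

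\noindent\textbf{A possible line of proof.} The plan is to deduce Theorem~\ref{rebreisA} from the topological lemmas of Section~1 together with Lemma~\ref{lemFab}, the fixed-point hypothesis playing the role that invariant hypersurfaces played in Theorem~\ref{BroN}. The implication ``$\G$ finite $\Rightarrow$ finite orbits'' is trivial, so assume $\G$ has finite orbits and work on a small connected neighbourhood $W$ of $0$, contained in a relatively compact $V$ into which every generator and its inverse map $W$. The argument has two halves: first, show that every $G\in\G$ is periodic, i.e. $G^{N}=\mathrm{id}$ as a germ for some $N$; second, upgrade ``finitely generated with every element periodic'' to ``finite''.

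\medskip

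For the first half, fix $G\in\G$. Since the $\G$-orbit of a point contains its $\langle G\rangle$-orbit, $G$ has finite orbits on $W$. Apply Lemma~\ref{secLem} with $M=\C^{n}$ to obtain a compact, connected, non-enumerable set $C\ni 0$ all of whose forward iterates stay in a fixed compact neighbourhood of $0$; finiteness of orbits then forces every point of $C$ to be periodic, so $C\subset\bigcup_{k\ge 1}\mathrm{Fix}(G^{k})$. Suppose, for contradiction, that no power $G^{k}$ is the identity germ. By the hypothesis, any non-isolated point of $\mathrm{Fix}(G^{k})$ would be a point near which $G^{k}\equiv\mathrm{id}$, and the identity theorem would then force $G^{k}=\mathrm{id}$; hence every $\mathrm{Fix}(G^{k})$ is discrete, so it meets our fixed compact neighbourhood in a finite set. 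Consequently $\bigcup_{k\ge 1}\mathrm{Fix}(G^{k})$ is countable, contradicting that $C$ is non-enumerable. Therefore $G$ is periodic. (When $n=1$ the hypothesis is vacuous, so one recovers exactly Theorem~\ref{fund-theo}.)

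\medskip

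For the second half, each generator $G_{i}$ now has $G_{i}^{N_{i}}=\mathrm{id}$, and in fact every element of $\G$ is periodic. If all generators fix $0$, they generate a genuine finitely generated subgroup $\G'\subset\mathrm{Diff}(\C^{n},0)$; finiteness of the orbits on $W$ means that every point of $W$ is periodic for every element of $\G'$ (a finite set containing a full $\langle G\rangle$-orbit must repeat), so Lemma~\ref{lemFab} applies and $\G'$, hence $\G$, is finite. If some generator moves $0$, one first passes to the isotropy subpseudogroup of $0$, which has finite index because the orbit of $0$ is finite and is again finitely generated with the same properties, and then argues as above. As a comment, Proposition~\ref{fingroup} lets one simultaneously linearize each $\langle G_{i}\rangle$ and see that its linear part has finite order, which is convenient bookkeeping but not strictly needed once Lemma~\ref{lemFab} is invoked.

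\medskip

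The crux, and the reason the fixed-point hypothesis is present, is the step ``finite orbits $\Rightarrow$ periodic'' in dimension $n>1$: Theorem~\ref{fund-theo} genuinely fails there, and if the sets $\mathrm{Fix}(G^{k})$ were allowed to be positive-dimensional proper subvarieties their countable union could well swallow the non-enumerable set $C$ and the contradiction above would collapse. The secondary difficulty is bookkeeping inherent to \emph{pseudogroups}: elements are only partially defined, so one must check that $G^{k}$, its fixed-point set, and the set $C$ produced by Lemma~\ref{secLem} all live over connected domains containing $0$ before the identity theorem can be quoted, and the reduction of the second half to the honest-group hypotheses of Lemma~\ref{lemFab} must be justified rather than assumed.
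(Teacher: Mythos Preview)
Your proposal is correct and follows essentially the same route the paper sketches: first reduce to showing that each element $G$ with finite orbits is periodic---which you do via Lemma~\ref{secLem} producing a non-enumerable compact set $C$ of periodic points and then invoking the isolated-fixed-point hypothesis to force each $\mathrm{Fix}(G^k)$ to be finite, exactly the mechanism the paper describes as ``repeating the proof of Theorem~\ref{fund-theo} in \cite{M-M}''---and second apply Lemma~\ref{lemFab} to pass from elementwise periodicity to finiteness of $\G$. Your treatment is in fact a bit more explicit than the paper's sketch, particularly in flagging the pseudogroup bookkeeping (domains of $G^k$, the isotropy reduction when $0$ is not fixed) that the paper leaves implicit.
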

This theorem is consequence of the following proposition (Proposition 4. in \cite{Reb-Reis}) and an argument like Lemma \ref{lemFab}.
\begin{pro}
 Suppose that $\G\subset\mathrm{Diff}(\C^n,0)$ is a group satisfying the condition of isolated fixed points of Theorem \ref{rebreisA}. Let $G$ be an element of $\G$ and assume
that $G$ has only finite orbits. Then $G$ is periodic.
\end{pro}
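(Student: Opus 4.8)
The plan is to run the ``$(\Leftarrow)$'' argument of Theorem \ref{BroN}, with the isolated--fixed--point dichotomy of Theorem \ref{rebreisA} playing the role that the invariant hypersurfaces played there. First I would apply Lemma \ref{secLem} with $M=\C^{n}$ (trivially $G$-invariant) to obtain a compact, connected, non-enumerable set $C=C_{\C^{n}}\ni 0$ such that for every $x\in C$ all forward iterates $G^{n}(x)$ stay in the fixed domain $V$. Since $G$ has only finite orbits, $|O_{V}(x,G)|<\infty$, so the infinite forward orbit of $x$ takes only finitely many values and the injectivity of $G$ forces $G^{k}(x)=x$ for some $k\geq 1$; hence every point of $C$ is periodic. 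Exactly as in Theorem \ref{BroN}, setting $D_{m}=\{x\in C\,|\,G^{m!}(x)=x\}$ gives closed sets with $D_{m}\subseteq D_{m+1}$ and $C=\bigcup_{m}D_{m}$.

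Next I would use that $\G$ is a group: for each $m$ the element $F_{m}:=G^{m!}$ lies in $\G$, so it satisfies the dichotomy of Theorem \ref{rebreisA} --- each fixed point of $F_{m}$ is either isolated in $\mathrm{Fix}(F_{m})$ or has a full neighborhood on which $F_{m}$ is the identity. Hence $\mathrm{Fix}(F_{m})=I_{m}\sqcup W_{m}$, where $I_{m}$ consists of the isolated fixed points and $W_{m}$ of the remaining ones; by the dichotomy $F_{m}$ is the identity near every point of $W_{m}$, so $W_{m}$ is open in $\C^{n}$, while $I_{m}$ is closed and discrete and therefore $I_{m}\cap C$ is finite. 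From $D_{m}\subseteq\mathrm{Fix}(F_{m})$ we get $C\subseteq\big(\bigcup_{m}I_{m}\big)\cup\big(\bigcup_{m}W_{m}\big)$, and $\bigcup_{m}(I_{m}\cap C)$ is a countable union of finite sets.

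Because $C$ is non-enumerable it cannot be contained in $\bigcup_{m}(I_{m}\cap C)$, so there are $m_{0}$ and $x_{0}\in C\cap W_{m_{0}}$; thus $G^{m_{0}!}$ equals the identity on a non-empty open subset of $\C^{n}$, and the identity theorem (see \cite{gunning1} pag 5) forces $G^{m_{0}!}=\mathrm{id}$ on the connected domain where it is defined, i.e. $G$ is periodic. I do not expect a real obstacle here: the only step that might look delicate --- upgrading ``$p$ is a non-isolated fixed point of $F_{m}$'' to ``$F_{m}$ is the identity near $p$'' --- is precisely what the dichotomy of Theorem \ref{rebreisA} supplies, and it is also what lets the argument avoid the flawed claim that the increasing sets $D_{m}$ must stabilize, the claim that invalidates the original proof of Theorem \ref{Bro1}; here no stabilization is needed, only that an uncountable set is not covered by countably many discrete sets. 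The rest --- that Lemma \ref{secLem} together with the finiteness of orbits genuinely produces periodic points, and that all iterates involved live on a common connected domain --- is the routine bookkeeping already used in the earlier proofs, and for $n=1$ the statement recovers Theorem \ref{fund-theo}, since a non-identity germ of $\mathrm{Diff}(\C,0)$ automatically has only isolated fixed points.
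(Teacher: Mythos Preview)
Your proposal is correct and follows essentially the same approach the paper indicates: the paper does not write out a proof but remarks that the proposition ``is obtained repeating the proof of Theorem \ref{fund-theo} in \cite{M-M} p.~477 and noting that the \emph{isolated fixed points condition} replace[s] the argument that in dimension one is consequence of the Identity Theorem''. This is exactly what you do --- run the Lewowicz/Lemma \ref{secLem} argument to produce the uncountable compact set $C$ of periodic points, and then use the dichotomy on each $F_{m}=G^{m!}\in\G$ (isolated fixed point versus local identity) in place of the one-dimensional identity principle, so that the countable union $\bigcup_{m}(I_{m}\cap C)$ of finite sets cannot exhaust $C$ and some $G^{m_{0}!}$ is the identity on an open set.
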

As the authors observe, this proposition is obtained repeating the proof of Theorem \ref{fund-theo} in \cite{M-M} p. 477 and noting that the \emph{isolated fixed points condition} 
replace the argument that in dimension one is consequence of the Identity Theorem.\medskip\par
The next generalization of Theorem \ref{fund-theo} moves in another direction, instead of the dimension it deals with the hypothesis of "all orbits be finite" analyzing 
the case where a diffeomorphism has a positive measure sets of closed orbits. This result can be found in \cite{BS-closedorbits} and in its proof is used the work of 
Perez-Marco (\cite{PM,PM1,PM2}).\\\medskip
We need to introduce first some notation:\par\medskip 
Expand a germ of a complex diffeomorphism $f$ at the origin $0\in\C$ as
\[f(z) = e^{2\pi i\lambda}z + a_{k+1}z^{k+1} +\dots,\]
The multiplier $f'(0) =  e^{2\pi i\lambda}$ does not depend on the coordinate system. We shall say that the germ $f\in\mathrm{Diff}(\C, 0)$ is \emph{non-resonant} if 
$\lambda\in\C\setminus\mathbb{Q}$.
\begin{defi}
 A map germ  $f\in\mathrm{Diff}(\C, 0)$ is called a \emph{Cremer map germ} if it is non-linearizable and non-resonant.   
\end{defi}
Cremer gave the first proof of the existence of a such map in \cite{Cremermap}.
\begin{defi}
 We call \emph{(PCO) Cremer map germ} to a Cremer map germ, such that its representatives exhibit positive measure sets of closed orbits, in arbitrarily small neighborhoods of the 
origin.
\end{defi}
\begin{lem}
 Let $\G\subset\mathrm{Diff}(\C,0)$ be a finitely generated subgroup with the (PCO) property. Then either $\G$ is a cyclic finite (resonant) group or it is an abelian formally 
linearizable group, containing some (PCO) Cremer diffeomorphism.
\end{lem}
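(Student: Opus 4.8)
The plan is to use the (PCO) hypothesis first to pin down the local dynamical type of each element of $\G$, and then to split the argument according to whether $\G$ contains a non-periodic element. So I would begin by showing that every $G\in\G$ is either of finite order or a Cremer germ. Fix $G$ and a small neighbourhood $V$ of $0$ on which $\G$ acts; by hypothesis there is a positive-measure set of points $z\in V$ with closed $\G$-orbit, so choose such a $z\neq0$. Since $G^{n}\in\G$ for all $n$, the $G$-orbit of $z$ is contained in the $\G$-orbit of $z$, hence its closure in $V$ is compact and does not contain $0$ (if $0$ lay in the $\G$-orbit of $z$ then $z=h^{-1}(0)=0$ for some $h\in\G$). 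This already excludes three possibilities. If $|G'(0)|\neq1$ then, by Koenigs' linearization, the $G$-orbit of every $z\neq0$ near $0$ accumulates at $0$; if $G'(0)$ is a root of unity of order $k$ but $G$ has infinite order, then $G^{k}\neq\mathrm{id}$ is tangent to the identity and, by the Leau--Fatou flower theorem, the attracting and repelling petals cover a punctured neighbourhood of $0$, so the $G^{k}$-orbit (hence the $G$-orbit) of every $z\neq0$ near $0$ accumulates at $0$; and if $G$ is linearizable with $|G'(0)|=1$ and $G'(0)$ not a root of unity (a Siegel germ) then the $G$-orbit of $z\neq0$ is dense in a topological circle, so its closure is uncountable while the $\G$-orbit is countable. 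Each case contradicts the previous paragraph. What remains is exactly: $G$ of finite order — hence linearizable with a root-of-unity multiplier, i.e.\ resonant — or $G$ non-linearizable and non-resonant, i.e.\ a Cremer germ. In particular $\G$ has no non-identity parabolic element.

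If every element of $\G$ has finite order, then every point is periodic for every element of $\G$, so Lemma~\ref{lemFab} shows that $\G$ is finite (the required invariant neighbourhood being produced as in its proof), and Proposition~\ref{fingroup} then realizes $\G$ as a finite subgroup of $Gl(1,\C)=\C^{*}$, which is cyclic with resonant generator. This is the first alternative.

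Otherwise some $g\in\G$ is a Cremer germ. Since $\G$ has no non-identity parabolic element, the linear-part homomorphism $\mathrm{lin}\colon\G\to\C^{*}$, $G\mapsto G'(0)$, has trivial kernel, so $\G$ embeds into the abelian group $\C^{*}$ and is abelian; in particular $\G\subseteq Z(g)$. As $g$ is non-resonant it has a unique formal linearization $\hat\varphi^{-1}\circ g\circ\hat\varphi(z)=\lambda z$, with $\lambda=g'(0)=e^{2\pi i\alpha}$, $\alpha\notin\mathbb{Q}$; and any formal series commuting with $z\mapsto\lambda z$ is linear, since the coefficient equations reduce to $a_{n}(\lambda^{n-1}-1)=0$ and $\lambda^{n-1}\neq1$ for $n\ge2$. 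Hence $\hat\varphi$ simultaneously formally linearizes $Z(g)$, so $\G$ is abelian and formally linearizable. Finally $g$ itself is a (PCO) Cremer germ: by the first paragraph its orbits through the positive-measure set of points with closed $\G$-orbit are themselves closed orbits of $g$, and such sets sit in arbitrarily small neighbourhoods of $0$. Thus $\G$ contains the required (PCO) Cremer diffeomorphism, which is the second alternative; and the two cases are exhaustive.

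The step I expect to cause the most trouble is the first one: converting the hypothesis about closed orbits of the \emph{pseudogroup} $\G$ into control of a single generator — in particular, checking that the (PCO) property is genuinely inherited by an individual element and that the notion of ``closed orbit'' behaves as used above (which for Cremer germs rests on Perez-Marco's analysis of the dynamics near the fixed point and the hedgehog construction, cf.\ \cite{PM,PM1,PM2}). By comparison the remaining ingredients — the Leau--Fatou petals, Koenigs' theorem, the formal normal-form computation, and Lemma~\ref{lemFab} together with Proposition~\ref{fingroup} — are routine.
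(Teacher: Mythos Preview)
The paper does not give its own proof of this lemma: it appears in the section ``Advances found in the literature'' as a result quoted from \cite{BS-closedorbits}, with the explicit remark that the proof there relies on P\'erez-Marco's work \cite{PM,PM1,PM2}. So there is no in-paper argument to compare your proposal against; the only internal information is that P\'erez-Marco's machinery is the essential ingredient.

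Your sketch follows the expected lines and most of it is sound. The trichotomy in the first paragraph is correct: for a point $z$ with closed (hence countable) $\G$-orbit not containing $0$, a hyperbolic or parabolic $g\in\G$ is excluded because the $g$-orbit of $z$ would accumulate at $0$, and a Siegel $g$ is excluded because the closure of the $g$-orbit would be an uncountable topological circle sitting inside the countable closed $\G$-orbit. The injectivity of the linear-part map, the abelianness of $\G$, and the simultaneous formal linearization via the unique formal normalizing series of a non-resonant germ are all correct. In the all-finite-order case you can, if you like, bypass Lemma~\ref{lemFab} entirely: the linear-part map is already injective (no nontrivial tangent-to-identity elements), so $\G$ is a finitely generated abelian torsion group, hence finite and cyclic.

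The one genuine gap is the step you yourself flag. Your claim that the $g$-orbit of a point with closed $\G$-orbit is itself closed is not justified: a subset of a closed set need not be closed, and a countable closed $g$-invariant set can perfectly well carry a $g$-orbit that accumulates on another point of the set (one can build toy homeomorphisms of countable compacta with exactly this behaviour). To pass from ``closed $\G$-orbit'' to ``closed $g$-orbit'' --- equivalently, to show that the positive-measure set survives when one restricts to the single Cremer element --- one needs the structure of recurrence for Cremer germs coming from P\'erez-Marco's hedgehog theory, which is precisely where \cite{BS-closedorbits} invokes \cite{PM,PM1,PM2}. So your diagnosis of where the depth lies is right, but as written that sentence is an assertion rather than an argument, and it is the crux of the second alternative.
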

\chapter{Groups of formal diffeomorphisms and formal series}\label{Chap2}
This chapter is devoted to the study of formal difeomorphisms and formal series. Here we obtain some useful properties for our upcoming work.
\section{Preliminaries}
Let us introduce some standard notation, denote the ring of \emph{formal series} on $(\C^n,0)$ by $\hat{\mathcal{O}}_n$ and the group of \emph{formal diffeomorphisms} of 
$(\C^n,0)$ by $\mathrm{\widehat{Diff}}(\C^n,0)$. The  convergent versions of the previous sets are, the ring of \emph{germs of holomorphic functions} on $(\C^n , 0)$ denoted by 
$\mathcal{O}_n$, its maximal ideal denoted by $\mathcal{M}_n$ and the group of \emph{diffeomorphisms} of $(\C^n,0)$ by $\mathrm{Diff}(\C^n,0)$.\medskip\\ The first 
step is to study the properties we can get from the relationship $\hat{f}\circ \hat{G}=\hat{f}$ where $\hat{f}\in\hat{\mathcal{O}}_n$ and $\hat{G}\in\mathrm{\widehat{Diff}}(\C^n,0)$ 
in this case we say that $\hat{G}$ \emph{leaves} $\hat{f}$ \emph{invariant}, as we state in propositions \ref{pro1} and \ref{pro2} this relationship characterizes both maps. Our work 
will guarantee that we only need to analyze the case where $\hat{G}$ is linearizable. \par\smallskip We start with the following definitions:
\begin{defi}
Let $\Lambda\in\C^n$. We say that a multi-index $Q=(q_1,\dots,q_n)\in\mathbb{N}^n$ with $|Q|=q_1+\dots+q_n\geq 1$, gives a \emph{multiplicative resonant 
relation for $\Lambda$}  if \[\Lambda^Q:=\lambda_1^{q_1}\cdots\lambda_n^{q_n}=1,\]
and if exist a $Q$ giving this property we say that $\Lambda$ is \emph{multiplicative resonant}. 
\end{defi}
Observe that this definition is a particular case of the usual definition of multiplicative resonant that can be seen for example in \cite{Arnold} pp. 192-193, where you can see also 
that the existence of this kind of resonances are the obstruction to formal linearization. Latest results in this topic can be found in \cite{Raissy}.   
\begin{defi}
We shall say that a monomial $x^Q:=x_1^{q_1}\cdots x_n^{q_n}$ is \emph{resonant with respect to} $\Lambda=(\lambda_1,\dots,\lambda_n)\in\C^n$ \big(or simply
\emph{$(\lambda_1,\dots,\lambda_n)$-resonant}\big) if $|Q|\geq 1$ and $\Lambda^Q = 1$.
\end{defi}
\subsection{Formal chain rule}\label{chainrule}
The aim of this paragraph is to show that the Chain Rule holds in the formal case.
\begin{lem}
Let $\hat{F}\in\hat{\mathcal{O}}_n$ and $\hat{G}\in\widehat{\mathrm{Diff}}(\C^n,0)$ be given. Then
\[\mathrm{d}(\hat{F}\circ \hat{G})=\mathrm{d}\hat{F}\cdot\mathrm{d}\hat{G}.\]
\end{lem}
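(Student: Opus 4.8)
The plan is to prove the formal chain rule $\mathrm{d}(\hat{F}\circ\hat{G})=\mathrm{d}\hat{F}\cdot\mathrm{d}\hat{G}$ by reducing it to the classical chain rule for polynomials, using the fact that composition in $\hat{\mathcal{O}}_n$ is continuous for the Krull (order) topology on formal series. The point is that both sides of the identity are, degree by degree, polynomial expressions in finitely many coefficients of $\hat{F}$ and $\hat{G}$, so an identity that holds for all polynomial (hence convergent) data must hold formally.

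First I would fix notation: write $\hat{F}=\sum_{k\geq 1}F_k$ and $\hat{G}=\sum_{k\geq 1}G_k$ as sums of homogeneous components (we may assume $\hat F(0)=0$, $\hat G(0)=0$, since constants do not affect differentials), and recall that $\hat{F}\circ\hat{G}$ is well defined precisely because $\hat{G}$ has no constant term, so that for each $N$ only finitely many terms $F_k(G(x))$ contribute to the homogeneous part of degree $\leq N$. Then I would observe that the $j$-th truncation $T_N$ (projection onto terms of degree $\leq N$) satisfies $T_N(\hat F\circ\hat G)=T_N\big((T_N\hat F)\circ(T_N\hat G)\big)$, i.e. only the polynomial truncations $P=T_N\hat F$ and $Q=T_N\hat G$ matter up to order $N$. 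For genuine polynomials $P,Q$ with $Q(0)=0$, the map $P\circ Q$ is an honest composition of polynomial (hence holomorphic) maps near $0$, so the classical chain rule gives $\mathrm{d}(P\circ Q)=(\mathrm{d}P)\circ Q\cdot \mathrm{d}Q$ as an identity of matrices of polynomials.

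Next I would compare coefficients. The entries of $\mathrm{d}(\hat F\circ\hat G)$ of degree $<N$ depend only on the coefficients of $\hat F\circ\hat G$ of degree $\leq N$, which by the truncation remark equal those of $(T_N\hat F)\circ(T_N\hat G)$; similarly the entries of degree $<N$ of $\mathrm{d}\hat F\cdot\mathrm{d}\hat G$ — that is, of $(\mathrm{d}\hat F)\circ\hat G$ times $\mathrm{d}\hat G$ — depend only on finitely many coefficients of $\hat F,\hat G$, all captured once $N$ is large enough, and they agree with the corresponding entries computed from the polynomial truncations. Hence for every $N$ the two sides agree up to degree $N$; since $N$ is arbitrary, they are equal as formal power series matrices. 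One should take a little care that the notion of $\mathrm{d}\hat F$ for $\hat F\in\hat{\mathcal O}_n$ is defined termwise (the formal Jacobian), so that $\mathrm{d}$ commutes with truncation in the sense that $T_{N-1}(\mathrm{d}\hat F)=\mathrm{d}(T_N\hat F)$, which makes the bookkeeping above legitimate.

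The main obstacle is purely bookkeeping: making precise the claim that "composition followed by differentiation, truncated at degree $N$, depends only on the degree-$\leq N$ truncations of the inputs," and checking that this truncation behaves compatibly with matrix multiplication on the right-hand side. There is no analytic difficulty — everything is formal and finite at each degree — but one must be careful that $\hat G$ has vanishing constant term (otherwise the composition is not even defined) and that the formal differential is defined coefficient-wise so that the reduction to polynomials is valid. Once that is set up, invoking the classical chain rule for polynomial maps closes the argument.
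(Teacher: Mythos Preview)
Your argument is correct. Both your proof and the paper's rest on the same idea---reduce to the classical chain rule via truncation---but you organize it more efficiently. The paper proceeds by a sequence of special cases: first $\hat f\in\hat{\mathcal O}_1$ composed with a \emph{convergent} $g\in\mathcal O_1$ (truncating only $\hat f$), then passes to several variables by fixing coordinates, then does an explicit term-by-term computation for $\hat F\in\hat{\mathcal O}_2$ and holomorphic $G$, and only at the end truncates the inner map as well to handle the fully formal case. You instead truncate both $\hat F$ and $\hat G$ simultaneously via $T_N$, invoke the polynomial chain rule once, and observe that the identities $T_N(\hat F\circ\hat G)=T_N\big((T_N\hat F)\circ(T_N\hat G)\big)$ and $T_{N-1}(\mathrm d\hat F)=\mathrm d(T_N\hat F)$ make the comparison degree-by-degree immediate and dimension-independent. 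What you gain is uniformity and brevity; what the paper's longer route offers is a more hands-on verification that the limits $\lim_n f_n\circ g=\hat f\circ g$ behave as expected, which some readers may find reassuring even if it is logically subsumed by your truncation bookkeeping.
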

\begin{proof}
We start with $n=1$, let $\hat{f}\in \hat{\mathcal{O}}_1$ given by $\hat{f}(x)=\sum_{i=1}^{\infty}a_ix^i$, define $f_n\in\mathcal{O}_1$ by  $f_n(x)=\sum_{i=1}^{n}a_ix^i$ and take
$g\in\mathcal{O}_1$. We want to show that $\textrm{d}(\hat{f}\circ g)=\textrm{d}\hat{f}_g\,\textrm{d}g$.\par
We already have that $\textrm{d}(f_n\circ g)=(\textrm{d}f_n)_g\,\textrm{d}g$, because they are holomorphic functions, also by the definition of the derivative of a formal series,
we have $\lim_{n\to\infty}\mathrm{d}f_n=\mathrm{d}\hat{f}$. Therefore, what we need to justify is that $\lim_{n\to\infty}(\mathrm{d}f_n)_g=(\mathrm{d}\hat{f})_g$ and
$\lim_{n\to\infty}\mathrm{d}(f_n\circ g)=\mathrm{d}(\hat{f}\circ g)$, both are consequence of the equality   $\lim_{n\to\infty}f_n\circ g=\hat{f}\circ g$ and for this, think in the
coefficient $c_k$ of $x^k$ in $\hat{f}\circ g(x)=\sum^{\infty}_{i=1}c_ix^i=\sum_{i=1}^{\infty}a_i(\sum_{j=1}^{\infty}b_jx^j)^i$, where $g(x)=\sum_{j=1}^{\infty}b_jx^j$. This 
coefficient is formed after algebraic
computation by some of the coefficients in $\sum_{i=1}^{k}a_i(\sum_{j=1}^{k}b_jx^j)^i$, indeed after $i,j=k$ all the elements in $\sum_{i=1}^{\infty}a_i(\sum_{j=1}^{\infty}b_jx^j)^i$
are of order greater than $k$, thus the same coefficients of $x^k$ belongs to both sides of
$\lim_{n\to\infty}f_n\circ g=\hat{f}\circ g$. \par Hence \[\textrm{d}(\hat{f}\circ g)=\textrm{d}\hat{f}_g\,\textrm{d}g,\]
as we wanted.\par
Consider now $g\in\mathcal{O}_2$ and the same $\hat{f}$ that before. In this case the chain rule is consequence of the previous one, because if we fix one of the variables for example
$y=y_0$, then $g(\cdot,y_0)\in\mathcal{O}_1$ and $\frac{\partial}{\partial x}(\hat{f}\circ g)=\textrm{d}\hat{f}_{g(x,y_0)}\,\frac{\partial}{\partial x} g|_{(x,y_0)}$ by the
previous case.\par
The two dimensional case works in a similar way, just take $\hat{F}\in \hat{\mathcal{O}}_2$ and $G(x,y)=(g_1(x,y),g_2(x,y))$ given by $\hat{F}(x)=\sum_{I}a_Ix^iy^j$ and 
$g_1,g_2\in\mathcal{O}_2$, then we
have.
\begin{align*}
\hat{F}\circ G(x,y)&=\sum_{I}a_I\big(g_1(x,y)\big)^i\big(g_2(x,y)\big)^j,\\
&=\sum_i(g_1(x,y))^i\Big(\sum_j a_{i,j}\big(g_2(x,y)\big)^j\Big),\text{ note }\hat{F}_i(x)=\sum_ja_{i,j}x^j,\\
&=\sum_i(g_1(x,y))^i\hat{F}_i\big(g_2(x,y)\big).
\end{align*}
So, $\hat{F}\circ G$ can be written as a sum of products of two formal series $(g_1(x,y))^i$ and $\hat{F}_i\big(g_2(x,y)\big)$, whose derivatives are known by the previous case.
Now note that $\hat{F}\circ G$ is a formal series then is derivation is made term by term, and in the previous paragraph we only rearrange those terms, thus
\begin{align*}
\frac{\partial}{\partial x}\big(\hat{F}\circ G\big)(x,y)&=\sum_{i}\frac{\partial}{\partial x}\Big((g_1(x,y))^i\hat{F}_i\big(g_2(x,y)\big)\Big),\\
&=\sum_i\Big(ig_1^{i-1}\frac{\partial g_1}{\partial x}\hat{F}_i(g_2)+g_1^i\frac{\partial\hat{F}_i}{\partial x}\Big|_{g_2}\frac{\partial g_2}{\partial x}\Big)(x,y),
\end{align*}
\begin{align*}
&=\sum_i\Big(ig_1^{i-1}\frac{\partial g_1}{\partial x}\sum_j a_{i,j}g_2^j+g_1^i\big(\sum_j ja_{i,j}g_2^{j-1}\big)\frac{\partial g_2}{\partial x}\Big)(x,y),\\
&=\sum_{i,j}\Big(ia_{i,j}\big(g_1(x,y)\big)^{i-1}\big(g_2(x,y)\big)^j\frac{\partial g_1}{\partial x}+ja_{i,j}\big(g_1(x,y)\big)^i\big(g_2(x,y)\big)^{j-1}\Big)\frac{\partial
g_2}{\partial x},\\
&=\frac{\partial \hat{F}}{\partial x}\Big|_G\frac{\partial G}{\partial x}(x,y).
\end{align*}
Now consider $\hat{f},\hat{g}\in\hat{\mathcal{O}}_1$, by the previous step $\mathrm{d}(\hat{f}\circ g_n)=\mathrm{d}\hat{f}_{g_n}\mathrm{d}g_n$ where $g_n$ is the truncated series, 
and the chain rule is consequence of $\lim_{n\to\infty}\hat{f}\circ g_n=\hat{f}\circ g$, as before just note that the coefficient of $x^r$ of $\hat{f}\circ g$ appear in $\hat{f}\circ 
g_n$ for all $n>N$ for some $N$. The case $\hat{f}\in\hat{\mathcal{O}}_2$, $\hat{G}\in\widehat{\mathrm{Diff}}(\C^2,0)$ is the same as above.\par
In conclusion, for the case $\hat{F}\in\hat{\mathcal{O}}_2$ and $\hat{G}\in\widehat{\mathrm{Diff}}(\C^2,0)$ the chain rule, $\mathrm{d}(\hat{F}\circ 
\hat{G})=\mathrm{d}\hat{F}\cdot\mathrm{d}\hat{G}$, holds and the process above is easily 
generalized to greater dimension.
\end{proof}
\section{Invariance relationship}
Let us motivate the following proposition with the one dimensional case, take $G(x)=ax$ with $a\in\C\setminus_0$ and $\hat{f}$ the formal series $\hat{f}(x)=\sum_{i\geq 1}a_ix^i$, 
suppose that $\hat{f}\circ G=\hat{f}$ and that $\hat{f}$ is not a power, meaning by this that if $\hat{f}=f_1^{p_1}\cdots f_r^{p_r}$ where $f_1,\dots f_r$ are the $r$ different 
irreducible components of $\hat{f}$ then gcd$(p_1,\dots,p_r)=1$. 
\[\hat{f}(x)=\sum_{i\geq 1}a_ix^i=\hat{f}\circ G(x)=\sum_{i\geq 1}a_i(ax)^i\]  
which implies $a_ia^i=a_i$ for all $i=1,2\dots$, if $\hat{f}\not\equiv 0$ there is a $a_\nu\neq 0$ so, $a^\nu=1$ (i.e. $a$ is a root of the unity) then $a_i=0$ if $i\neq m\nu$ where 
$n\in\mathbb{Z}^+$. In conclusion for this case 
\[G(x)=e^{2\pi i/\nu}x\text{ and } \hat{f}(x)=\hat{l}(x^\nu)\text{ where }\hat{l}\in\hat{\mathcal{O}}_1,\]
if $\hat{f}$ is not a power $\hat{l}$ is invertible i.e., $\hat{l}'(0)\neq 0$ and we have that $(\hat{l}^{-1}\circ\hat{f})(x)=x^\nu$. Therefore, if a formal series 
$\hat{f}$ is invariant by a rotation, there exist and invertible formal series $\hat{l}$ such that $\hat{l}^{-1}\circ\hat{f}$ is holomorphic. Now we explain why $\hat{l}$ is 
invertible, suppose that $\hat{l}(x)= a_px^p+a_{p+1}x^{p+1}+\cdots$ where $p>1$ and $a_p\neq0$ then
\begin{align*}
  \hat{f}(x)&=\hat{l}(x^\nu)= a_px^{p\nu}+a_{p+1}x^{(p+1)\nu}+\cdots,\\
            &=x^{p\nu}(a_p+a_{p+1}x^\nu+\cdots),\\
	 &=\big(g(x^{\nu})\big)^{p},\quad\text{where}\quad g(x)=x(a_p+a_{p+1}x+\cdots)^{1/p}
\end{align*}
as $a_p$ is not $0$, $g$ is well defined and this contradicts the fact that $\hat{f}$ is not a power.\\
The part above is a portion of the Proposition 1.2. in \cite{M-M} and our intention is to generalize it to arbitrary dimension. In order to do that we start with,
\begin{pro}\label{pro1}
Let $\hat{f}\in\hat{\mathcal{O}}_n$ and $\hat{G}\in\mathrm{\widehat{Diff}}(\C^n,0)$ formally linearizable such that $\hat{G}$ leaves $\hat{f}$ invariant. If the linear part of 
$\hat{G}$ is a diagonal matrix, $$\mathrm{d}\hat{G}_0=\mathrm{diag}(\lambda_1,\dots,\lambda_n),$$ then its elements are multiplicative resonant and, $\hat{f}$ after a formal change 
of coordinates is the 
sum of only $(\lambda_1,\dots,\lambda_n)$-resonant monomials.
\end{pro}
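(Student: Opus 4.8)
The plan is to use the formal linearizability of $\hat G$ to reduce at once to the case in which $\hat G$ \emph{is} its own (diagonal) linear part, and then to read off both conclusions from a term-by-term comparison of coefficients. Concretely, since $\hat G$ is formally linearizable there is $\hat h\in\widehat{\mathrm{Diff}}(\C^n,0)$ with $\hat h^{-1}\circ\hat G\circ\hat h=L$, where $L=\mathrm{diag}(\lambda_1,\dots,\lambda_n)$. Put $\hat g=\hat f\circ\hat h$. Using associativity of formal composition (legitimate because all the inner maps lie in $\mathcal M_n^{\,n}$, with the conventions of \S\ref{chainrule}) one computes
\[
\hat g\circ L=\hat f\circ\hat h\circ L=\hat f\circ\hat G\circ\hat h=\hat f\circ\hat h=\hat g ,
\]
so after the formal change of coordinates $\hat h$ the series $\hat g$ is invariant under the diagonal linear map $L$; this is exactly the ``after a formal change of coordinates'' of the statement, so it suffices to analyse $\hat g\circ L=\hat g$.

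Next I would expand $\hat g=\sum_{Q\in\mathbb N^n}a_Q x^Q$. Since the substitution $x_i\mapsto\lambda_i x_i$ sends $x^Q$ to $\Lambda^Q x^Q$ with $\Lambda^Q=\lambda_1^{q_1}\cdots\lambda_n^{q_n}$, we have $\hat g\circ L=\sum_Q a_Q\Lambda^Q x^Q$, and comparing the coefficient of each $x^Q$ in $\hat g\circ L=\hat g$ gives $a_Q(\Lambda^Q-1)=0$ for all $Q$. Hence $a_Q\neq 0$ forces $\Lambda^Q=1$, i.e. only $(\lambda_1,\dots,\lambda_n)$-resonant monomials appear in $\hat g$; this is the second conclusion. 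For the first conclusion, note that $\hat g$ is nonconstant whenever $\hat f$ is (as $\hat h$ is invertible), so there is some $Q$ with $|Q|\geq1$ and $a_Q\neq0$, and for that $Q$ we get $\Lambda^Q=1$ with $|Q|\geq1$; thus $Q$ gives a multiplicative resonant relation for $\Lambda$ and $\Lambda=(\lambda_1,\dots,\lambda_n)$ is multiplicative resonant.

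I do not anticipate a genuine difficulty here: the substitution $x_i\mapsto\lambda_i x_i$ is monomial-preserving, so the coefficient comparison involves no reordering or convergence issue, and the only real checks are the bookkeeping of formal composition (associativity of $\circ$ on $\mathcal M_n^{\,n}$, already implicit in \S\ref{chainrule}) and the harmless standing assumption that $\hat f$ be nonconstant — needed only for the multiplicative resonance of $\Lambda$, and if $\hat f$ has a nonzero constant term one should of course discard it, since a resonant monomial carries $|Q|\geq1$ by definition.
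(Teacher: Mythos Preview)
Your proof is correct and follows essentially the same approach as the paper: reduce via the formal conjugacy $\hat h$ to the diagonal linear case, then compare coefficients term by term to obtain $a_Q(\Lambda^Q-1)=0$. The only cosmetic difference is the order of presentation---the paper treats the linear case first and appends the reduction at the end, while you front-load the reduction---and the paper adds a side remark (not needed for the stated conclusion) that if there are $n$ independent resonance multi-indices then each $\lambda_i$ is a root of unity.
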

\begin{proof}
Start with the linear case taking $G(x)=Ax$ and $\hat{f}(x)=\sum_{|I|\geq 1}a_Ix$,  where $A$ is a non-singular, diagonal 
$n\times n$ matrix and $x=(x_1\dots, x_n)$,  \par
\[G(x_1,\dots,x_n)=(\lambda_1x_1,\dots,\lambda_nx_n),\]
Thus, \[\hat{f}\circ G(x_1,\dots,x_n)=\sum_{|I|\geq 1}a_I(\lambda_1 x_1)^{i_1}\cdots (\lambda_nx_n)^{i_n}=\sum_{|I|\geq 1}a_Ix_1^{i_1}\cdots x_n^{i_n},\] which means that
\[\lambda_1^{i_1}\cdots\lambda_n^{i_n}=1,\text{ for all }I\text{ such that }a_I\neq 0,\]
i.e. $\lambda=(\lambda_1,\dots,\lambda_n)$ is multiplicative resonant.
If $\hat{f}\not\equiv 0$ it is formed only by resonant monomials, furthermore there exist at most $n$ independent (as a vectors) $n$-tuples 
$I=(i_1,\dots,i_n)\in\mathbb{N}^n\setminus_0$ such
that $\lambda_1^{i_1}\cdots\lambda_n^{i_n}=1$.\par In case we have $n$ independent $n$-tuples, all $\lambda_i$'s are roots of the unity as we can see taking logarithm in each one of
the $n$ equalities $\lambda_1^{i_{1\,1}}\cdots\lambda_n^{i_{1,n}}=1$ and solving a linear system like the following 
\[\begin{bmatrix}
i_{1\,1}&\dots&i_{1,n}\\
\vdots&\ddots&\vdots\\
i_{n\,1}&\dots&i_{n,n}\\
\end{bmatrix}
\begin{bmatrix}
\log\lambda_1\\
\vdots\\
\log\lambda_n
\end{bmatrix}=
\begin{bmatrix}
2\pi ik_1\\
\vdots\\
2\pi ik_n
\end{bmatrix},\]
its real part is a homogeneous linear system whose solution implies that $\log|\lambda_j|=0$ for all $j$ and, from the imaginary part of the system we obtain that the argument of 
each $\lambda_j$ is a
rational factor of $2\pi$.\par
Finally, if $\hat{G}\in\mathrm{\widehat{Diff}}(\C^n,0)$ is formally diagonalizable then, there is a formal change of coordinates such that $g^{-1}\circ \hat{G}\circ
g(x)=\mathrm{d}\hat{G}(0)x$ and we make the previous analysis over its linear part $G(x)=\mathrm{d}\hat{G}(0)x$ concluding that, has to be a diagonal one with multiplicative resonant
entries.
\end{proof}
\begin{pro}\label{pro2}
Let $\hat{f}\in\hat{\mathcal{O}}_n$ and $\hat{G}\in\mathrm{\widehat{Diff}}(\C^n,0)$ formally linearizable such that $\hat{G}$ leaves $\hat{f}$ invariant. If the linear part of 
$\hat{G}$ in its Jordan form has a block 
\[
  \begin{pmatrix}
     \lambda&1\\
     0&\lambda
   \end{pmatrix}
,\]
i.e, $G(x_1,\dots,x_n)=(\dots,\lambda x_j+x_{j+1},\lambda x_{j+1},\dots)$
then $\lambda^m=1$ for some $m\in \mathbb{Z}^{+}$ and, $\hat{f}$ after a formal change of coordinates, in the variables related to that block, is a formal series in the $mth$ power 
of 
the second variable, 
\[
  \hat{f}(0,\dots,0,x_j,x_{j+1},0,\dots,0)=l(x_{j+1}^m)\text{ for }l\in\hat{\mathcal{O}}_1.  
\]
\end{pro}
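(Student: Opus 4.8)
The plan is to use formal linearizability of $\hat G$ to replace $\hat G$ by its linear part, restrict the invariance relation to the two‑dimensional coordinate plane carrying the Jordan block, and then solve the resulting homogeneous problem degree by degree. Concretely, since $\hat G$ is formally linearizable one can pick $g\in\widehat{\mathrm{Diff}}(\C^n,0)$ with $g^{-1}\circ\hat G\circ g=L$, where (after composing $g$ with a linear change of coordinates) $L=\mathrm{d}\hat{G}_0$ is in Jordan form; then $\tilde f:=\hat f\circ g$ satisfies $\tilde f\circ L=\tilde f$, and replacing $\hat f$ by $\tilde f$ is exactly the formal change of coordinates allowed in the statement. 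Being in Jordan form, $L$ preserves the plane $P=\{x_i=0:i\neq j,\,j+1\}$, and writing $(u,v):=(x_j,x_{j+1})$ one has $L|_P(u,v)=(\lambda u+v,\lambda v)$. Setting $\varphi(u,v):=\hat f(0,\dots,0,u,v,0,\dots,0)$ and substituting $x_i=0$ ($i\neq j,j+1$) into $\hat f\circ L=\hat f$ gives the two‑variable identity $\varphi(\lambda u+v,\lambda v)=\varphi(u,v)$. We may assume $\varphi\not\equiv 0$, the case $\varphi\equiv 0$ being trivial.

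\textbf{Solving degree by degree.}
Since $L|_P$ is linear, the substitution $\varphi\mapsto\varphi\circ L|_P$ preserves the grading by total degree, so writing $\varphi=\sum_{d\ge 0}\varphi_d$ with $\varphi_d$ homogeneous of degree $d$, the invariance is equivalent to $\varphi_d\circ L|_P=\varphi_d$ for every $d$. Fix $d$ with $\varphi_d\neq 0$, write $\varphi_d=\sum_{b=0}^{d}a_b\,u^b v^{d-b}$, and let $b^\ast=\max\{b:a_b\neq 0\}$. Expanding $\varphi_d(\lambda u+v,\lambda v)$ and comparing the coefficient of $u^{b^\ast}v^{d-b^\ast}$ on both sides of $\varphi_d\circ L|_P=\varphi_d$ gives $a_{b^\ast}\lambda^d=a_{b^\ast}$, hence $\lambda^d=1$; if moreover $b^\ast\ge 1$, comparing the coefficient of $u^{b^\ast-1}v^{d-b^\ast+1}$ and using $\lambda^d=1$, $\lambda\neq 0$ forces $a_{b^\ast}=0$, a contradiction. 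Therefore $b^\ast=0$ and $\varphi_d=a_0 v^d$. (Equivalently: on the space of degree‑$d$ forms in $u,v$ the operator $\varphi\mapsto\varphi\circ L|_P$ is a single Jordan block of eigenvalue $\lambda^d$, so its fixed subspace is nonzero only when $\lambda^d=1$ and is then one‑dimensional, spanned by $v^d$, which indeed satisfies $v^d\circ L|_P=\lambda^d v^d=v^d$.)

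\textbf{Conclusion.}
Some $\varphi_d\neq 0$, so $\lambda$ is a root of unity; let $m\in\mathbb Z^{+}$ be its order. Every $d$ with $\varphi_d\neq 0$ then satisfies $\lambda^d=1$, i.e. $m\mid d$, and $\varphi_d=c\,v^d$ for a scalar $c$. Hence $\varphi(u,v)=\sum_{k\ge 0}c_k\,v^{mk}=l(v^m)$ with $l(t):=\sum_{k\ge 0}c_k\,t^k\in\hat{\mathcal{O}}_1$, which in the original variables reads $\hat f(0,\dots,0,x_j,x_{j+1},0,\dots,0)=l(x_{j+1}^m)$, as claimed.

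\textbf{Main obstacle.}
Almost everything is formal bookkeeping: the legitimacy of composing and restricting formal series and of splitting the invariance equation by homogeneous degree. The single step requiring a genuine argument is identifying the $L|_P$‑invariant homogeneous polynomials — the fixed vectors of the $d$‑th symmetric power of a $2\times 2$ Jordan block — which is a short leading‑coefficient computation (or follows from the indecomposability of $\mathrm{Sym}^d$ of a single Jordan block); I expect no real difficulty there beyond organizing the coefficient comparisons carefully.
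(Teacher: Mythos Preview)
Your proof is correct and follows the same overall reduction as the paper: formally linearize, restrict to the two–dimensional plane carrying the Jordan block, and compare coefficients in the identity $\varphi(\lambda u+v,\lambda v)=\varphi(u,v)$.

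The difference lies in how the coefficient comparison is organized. The paper writes the full recursion
\[
a_{i,j}=\sum_{k=0}^{j}\binom{i+k}{k}\lambda^{\,i+j-k}\,a_{i+k,\,j-k},
\]
then treats the cases $\lambda=1$ and $\lambda$ a primitive $m$-th root of unity separately, proving by induction on the second index that $a_{i,j}=0$ for all $i>0$. You instead exploit that $L|_P$ is linear and hence preserves total degree, split $\varphi$ into homogeneous pieces $\varphi_d$, and within each degree run a two-line leading-coefficient argument (equivalently, the observation that $\mathrm{Sym}^d$ of a single $2\times 2$ Jordan block is itself a single Jordan block, so its fixed space is at most one-dimensional and spanned by $v^d$). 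This buys you a uniform argument with no case split on $\lambda$ and no induction, at the cost of a slightly more abstract viewpoint; the paper's argument is more hands-on but correspondingly longer. Both arrive at exactly the same conclusion: the only surviving monomials are $v^{d}$ with $m\mid d$.
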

Observe that if the block is bigger its upper sub matrix $2\times 2$ is like the previous one, thus the proposition is true in the general case.
\begin{proof}
Is only necessary to consider the two dimensional case. Let 
\begin{align*}
G(x_1,x_2)&=(\lambda x_1+x_2,\lambda x_2)\text{ and }\\ \hat{f}\circ G(x_1,x_2)&=\sum_{|I|\geq 1}a_I(\lambda x_1+x_2)^i(\lambda x_2)^j=\hat{f}(x_1,x_2)=\sum_{|I|\geq
1}a_Ix_1^ix_2^j,\\
\text{then}\ a_{i,j}&=\sum_{k=0}^{j}C_{i+k,k}\lambda^{i+j-k}a_{i+k,j-k},\ \text{where}\ C_{l,m}=\binom {l}{m}.
\end{align*}
If $\lambda^j\neq 1$ for all $j\in\mathbb{N}$ then $a_{i,0}=a_{i,0}\lambda^i$ implies $a_{i,0}=0$ and $a_{i,1}=\lambda^{i+1}a_{i,1}+C_{i+1,1}\lambda^ia_{i+1,0}$ implies $a_{i,1}=0$, 
repeating this we get that $f\equiv 0$. Therefore, $\lambda^i=1$
for some $i$ such that $a_{i,0}\neq 0$, consider first the case $\lambda=1$,
\begin{align*}
    a_{i,0}&=a_{i,0},\\
    a_{i,1}&=a_{i,1}+C_{i+1,1}a_{i+1,0}\leadsto a_{i,0}=0\quad\text{for } i>0,\\ 
    a_{i,2}&=a_{i,2}+C_{i+1,1}a_{i+1,1}\leadsto a_{i,1}=0\quad\text{for } i>0,
\end{align*}
by induction, suppose that $a_{i,j}=0$ for $i>0$ and $j\leq n$ then
\begin{align*}
    a_{i,n+2}&=a_{i,n+2}+C_{i+1,1}a_{i+1,n+1}\leadsto a_{i,n+1}=0\quad\text{for } i>0,
\end{align*}
hence the only remaining terms are $a_{0,j}$ then $f(x_1,x_2)=l(x_2)$ as we wanted. In a similar way, if $\lambda^m=1$ but $\lambda^n\neq 1$ for $0<n<m$ with $m,n\in\mathbb{N}$,  
\begin{align*}
    a_{i,0}&=a_{i,0}\lambda^i,\quad\text{if }m\!\not|\ i\text{ then }a_{i,0}=0,   
\end{align*}
the next term is \qquad$a_{i,1}=a_{i,1}\lambda^{i+1}+C_{i+1,1}a_{i+1,0}\lambda^i$,\smallskip\\
if $m\ |\ i+1$ we have that $a_{i+1,0}=0$ and together with the previous step $a_{i,0}=0$ for all $i$. If $m\ \not|\ i+1$ we have that $a_{i,1}=0$, using the next term 
\[
  a_{i,2}=a_{i,2}\lambda^{i+2}+C_{i+1,1}a_{i+1,1}\lambda^{i+1},
\]
we can repeat the analysis. If  $m\ |\ i+2$ we have that $a_{i+1,1}=0$ and using the previous step $a_{i,1}=0$ for all $i$. We proceed by an induction argument, suppose that 
$a_{i,j}$ for $j\leq n$ and $i>0$ then 
\begin{align*}
    a_{i,n+1}&=a_{i,n+1}\lambda^{i+n+1}, \quad\text{if }m\!\not|\ (i+n+1) \text{ then } a_{i,n+1}=0,
\end{align*}
as above consider the next term 
\begin{align*}
    a_{i,n+2}&=a_{i,n+2}\lambda^{i+n+2}+C_{r+1,1}a_{i+1,n+1}\lambda^{i+n+1},
\end{align*}
if $m\ |\ (i+n+2)$ then $a_{i+1,n+1}=0$ and using the previous step (where we show that if $m$ does not divide the sum of the sub-indices of $a_{i,n+1}$ then $a_{i,n+1}=0$ ), 
we have $a_{i,n+1}=0$ for all $i$. Finally, for the case $i=0$ note that $a_{0,j}=a_{0,j}\lambda^{j}$ and we can not argue like above, therefore 
$\hat{f}(x_1,x_2)=\hat{l}(x_2^m)$ for 
$l\in\hat{\mathcal{O}}_1$.\par
The higher dimensional case works in the same way, because some part of $\hat{G}$ will be of the form $(\dots,\lambda x_j+x_{j+1},\dots,\lambda x_{j+k-1}+x_{j+k},\lambda 
x_{j+k},\dots)$, 
for a eigenvalue $\lambda$, and making all $x_i=0$ except for $x_{j+k-1}$ and $x_{j+k}$ we can apply the same analysis. Then, 
$\hat{f}(0,\dots,0,x_{j+k-1},x_{j+k},0,\dots,0)=\hat{l}(x_{j+k}^m)\text{ for }\hat{l}\in\hat{\mathcal{O}}_1$ .\par
Finally, if $\hat{G}\in\mathrm{\widehat{Diff}}(\C^n,0)$ is formally linearizable then, there is a formal change of coordinates such that $\hat{g}^{-1}\circ \hat{G}\circ
\hat{g}(x)=\mathrm{d}\hat{G}(0)x$ and we make the previous analysis over its linear part $G(x)=\mathrm{d}\hat{G}(0)x$.
\end{proof}
\begin{defi}
Let $\hat{f}_1\dots,\hat{f}_n\in \hat{\mathcal{O}}_n$. 
\begin{itemize}
 \item We say that $\hat{f}_1\dots,\hat{f}_n$ are \emph{generically transverse} if $\mathrm{d}\hat{f}_1\wedge\dots\wedge\mathrm{d}\hat{f}_n\not\equiv 0$.
 \item We say that $\hat{f}_1\dots,\hat{f}_n$ are \emph{transversally at the origin} if $(\mathrm{d}\hat{f}_1\wedge\dots\wedge\mathrm{d}\hat{f}_n)_0\neq 0$.
\end{itemize}
\end{defi}
\begin{rem}\label{rem_diago}
Observe that in dimension $2$ there can not exist $\hat{f}_1$ and $\hat{f}_2$ transversally independent such that $\hat{f}_i\circ \hat{G}=\hat{f}_i$ with $\hat{G}$ as in the 
proposition above. In a similar way for dimension $n$, there can not exist $\hat{f}_1,\dots,\hat{f}_n$ transversally at the origin such that $\hat{f}_i\circ \hat{G}=\hat{f}_i$ 
with $\hat{G}$ as in the proposition above, because each one satisfies 
\[
  \hat{f}_i(0,\dots,0,x_j,x_{j+1},0,\dots,0)=l_i(x_{j+1}^m)\text{ for some }l_i\in\hat{\mathcal{O}}_1,
\]
and then $\mathrm{d}\hat{f}_1\wedge\dots\wedge\mathrm{d}\hat{f}_n$ is $0$ restricted to the plane $\{x_j,x_{j+1}\}$, in particular 
$(\mathrm{d}\hat{f}_1\wedge\dots\wedge\mathrm{d}\hat{f}_n)_0= 0$.
\end{rem}
Now, using the propositions above we obtain another property, but in this occasion for a group of a formal diffeomorphism leaving invariant a set of generically transverse formal 
series.
\begin{defi}
For $\hat{f}\in\hat{\mathcal{O}}_n$, the \emph{invariance group of $\hat{f}$} is defined as
\[
    H(\hat{f})=\{\hat{G}\in\widehat{\mathrm{Diff}}(\C^n,0)\,|\,\hat{f}\circ\hat{G}=\hat{f}\},
\]
 and the \emph{invariance group of $\{\hat{f}_1\dots,\hat{f}_n\}$},
 \[
    H(\hat{f}_1\dots,\hat{f}_n)=\{\hat{G}\in\widehat{\mathrm{Diff}}(\C^n,0)\,|\,\hat{f}_i\circ\hat{G}=\hat{f}_i\quad\text{for}\quad i=1,\dots,n\}.  
 \]
\end{defi}
The following proposition together with the previous part is one of the key parts of our work.
\begin{pro}\label{ribón}
Let $\hat{f}_1\dots,\hat{f}_n\in \hat{\mathcal{O}}_n$ be generically transverse. Then the group $H(\hat{f}_1\dots,\hat{f}_n)$ is periodic (in 
particular linearizable and finite).
\end{pro}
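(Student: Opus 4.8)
The heart of the matter is the infinitesimal form of generic transversality, which I will call the \emph{rigidity lemma}: no nonzero formal vector field annihilates all of $\hat f_1,\dots,\hat f_n$. Indeed, if $\hat X=\sum_j\hat a_j\,\partial_{x_j}$ satisfies $\hat X(\hat f_i)=\sum_j\hat a_j\,\partial\hat f_i/\partial x_j=0$ for every $i$, then $(\hat a_1,\dots,\hat a_n)^{t}$ lies in the kernel of the matrix $M=(\partial\hat f_i/\partial x_j)_{i,j}$; but $\det M$ is, up to sign, the coefficient of $\mathrm{d}x_1\wedge\dots\wedge\mathrm{d}x_n$ in $\mathrm{d}\hat f_1\wedge\dots\wedge\mathrm{d}\hat f_n$, which is nonzero by hypothesis, so $M$ is invertible over the fraction field of $\hat{\mathcal{O}}_n$ and all $\hat a_j$ vanish. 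The same statement holds for $\hat f_1\circ h,\dots,\hat f_n\circ h$ and any formal diffeomorphism $h$, since these are again generically transverse.

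From this I would first pin down the shape of $H:=H(\hat f_1,\dots,\hat f_n)$. Viewing $H$ as the (pro-)algebraic subgroup of $\widehat{\mathrm{Diff}}(\C^n,0)$ cut out by the conditions $\hat f_i\circ\hat G=\hat f_i$, it contains the Jordan (Poincar\'e--Dulac) factors $\hat G_s,\hat G_u$ of each of its elements, and also the one-parameter unipotent subgroup $\{\exp(t\hat X)\}_{t\in\C}$ generated by $\hat G_u$. Differentiating $\hat f_i\circ\exp(t\hat X)=\hat f_i$ in $t$ and using the formal chain rule gives $\hat X(\hat f_i)=0$ for all $i$, so $\hat X=0$ by the rigidity lemma; hence $\hat G_u=\mathrm{id}$ and every $\hat G\in H$ equals its semisimple part, i.e. is formally linearizable with diagonalizable linear part (in particular no Jordan block of the kind ruled out in Remark~\ref{rem_diago} can occur). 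In particular an element of $H$ with trivial linear part is unipotent, hence trivial, so the linear-part homomorphism embeds $H\hookrightarrow GL(n,\C)$.

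Next I would determine the eigenvalues. Fix $\hat G\in H$ and a formal chart $h$ with $h^{-1}\hat G h=\mathrm{diag}(\lambda_1,\dots,\lambda_n)$. By Proposition~\ref{pro1} each $\hat f_i\circ h$ is a sum of $(\lambda_1,\dots,\lambda_n)$-resonant monomials, so all its exponents lie in the lattice $R=\{Q\in\mathbb{Z}^n:\Lambda^Q=1\}$. If $R$ had rank $<n$ there would be a nonzero $\ell=(c_1,\dots,c_n)\in\mathbb{Z}^n$ with $\ell\cdot Q=0$ for all $Q\in R$, and then the nonzero Euler-type vector field $\sum_j c_j x_j\,\partial_{x_j}$ would kill every monomial appearing in the $\hat f_i\circ h$, hence each $\hat f_i\circ h$, contradicting the rigidity lemma. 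So $R$ has rank $n$, and (exactly as in the proof of Proposition~\ref{pro1}: pick $n$ linearly independent $Q_1,\dots,Q_n\in R$, take logarithms of the relations $\Lambda^{Q_k}=1$, and separate the real and imaginary parts of the resulting linear system) every $\lambda_j$ is a root of unity. Hence $\hat G$ is conjugate to a finite-order diagonal matrix, so it is periodic; thus $H$ is a periodic group.

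The last step — and the one I expect to be the real obstacle — is to upgrade ``$H$ is periodic'' to ``$H$ is finite''; once that is done, Proposition~\ref{fingroup} gives that $H$ is analytically linearizable and isomorphic to a finite subgroup of $GL(n,\C)$, which in particular reconfirms periodicity. The most economical argument I can see again rests on the rigidity lemma, now read as ``$\mathrm{Lie}(H)=0$'': together with the fact that an element of $H$ with trivial $N$-jet is unipotent and hence trivial, the images $H_N$ of $H$ in the jet groups $\mathrm{Diff}^N(\C^n,0)$ form a tower of algebraic groups with injective transition maps and eventually vanishing dimension, so the tower stabilizes to a finite group isomorphic to $H$. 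A more hands-on alternative is to bound the order of each $\hat G\in H$ — which by Proposition~\ref{pro1} equals the index $[\mathbb{Z}^n:R_{\hat G}]$ of its resonance lattice — uniformly in terms of $\mathrm{ord}_0(\mathrm{d}\hat f_1\wedge\dots\wedge\mathrm{d}\hat f_n)$ and the $\mathrm{ord}_0\hat f_i$, and then invoke Burnside's theorem that a complex linear group of bounded exponent is finite; the subtle point there is that the linearizing chart $h$, and with it $R_{\hat G}$, varies with $\hat G$, so a uniform bound requires a careful analysis of how a linear change of coordinates can move the exponents of the leading forms of the $\hat f_i$.
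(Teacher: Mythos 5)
Your rigidity lemma is exactly the computation on which the paper's own proof rests (Proposition \ref{rib_orig} in Appendix \ref{demRib}): a formal vector field annihilating all the $\hat f_i$ lies in the kernel of the Jacobian matrix, which is invertible over the fraction field because $\mathrm{d}\hat f_1\wedge\dots\wedge\mathrm{d}\hat f_n\not\equiv 0$. Your element-by-element periodicity argument — Jordan factors of elements of $H$ stay in $H$ because $H$ is pro-algebraic, unipotent parts are exponentials of fields tangent to the $\hat f_i$ and hence trivial, and then Proposition \ref{pro1} plus the Euler-field trick forces the resonance lattice to have rank $n$ and the eigenvalues to be roots of unity — is sound, granting the standard pro-algebraic facts quoted in the appendix (Remark \ref{rem2.11}, Propositions \ref{pro2.2} and \ref{pro2.7}); it is a pleasant alternative to the paper, which obtains periodicity and linearizability only a posteriori, from finiteness, via Proposition \ref{fingroup}.

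The genuine gap is the finiteness, which is part of the statement and is precisely what the appendix proof establishes. Knowing that $H$ embeds into $GL(n,\C)$ by linear parts and that every element has finite order does not yield finiteness: a torsion subgroup of $GL(n,\C)$ can be infinite (all roots of unity in $\C^*$), Burnside needs bounded exponent and Schur needs finite generation, and you have neither. Your tower argument does not close this: since the first-jet map is already injective, the tower is constant, $H\cong\pi_1(H)$, so "eventually vanishing dimension" is not a consequence of anything proved — it is equivalent to the finiteness you are after. Nor does triviality of the formal Lie algebra force the finite-level groups to be zero-dimensional: for $n=1$, $\hat f=x^2$, the field $\X=x^k\,\partial/\partial x$ satisfies $\X(\hat f)\in\mathfrak{m}^{k+1}$, so the level-$k$ invariance groups $H_k$ of Remark \ref{rem2.11} have positive dimension for every $k$ although the only formal field with $\X(\hat f)=0$ is zero; and the images $\pi_N(H)$ are not known to be algebraic without further argument. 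What actually converts $\mathfrak g=0$ into finiteness is the structure theory the appendix imports: $H$ is pro-algebraic (Remark \ref{rem2.11}), its identity component $\overline{H}^z_0$ is generated by $\exp(\mathfrak g)$ (Proposition \ref{pro2.7}) and is therefore trivial, and $\overline{H}^z/\overline{H}^z_0$ is finite because it injects into the finite component group $G_1/G_{1,0}$ (Proposition \ref{pro2.6} and Remark \ref{rem2.8}). Alternatively, your tower idea could be repaired by showing that $\pi_1(H)$ equals the algebraic group $G_1$ (surjectivity of $\varprojlim G_k\to G_1$, i.e.\ the compatibility statements behind Lemma \ref{lem2.3}) and then using that an algebraic group all of whose elements are torsion is finite; but that surjectivity is again the pro-algebraic machinery, not something your sketch supplies, and the Burnside route indeed requires a uniform exponent bound you do not establish.
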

The demonstration of Proposition \ref{ribón} requires algebraic properties of groups of diffeomorphisms, in Appendix \ref{demRib} we give part of the supporting material and a sketch 
of the proof. Using the theory we have built so far, we can give a proof of the following particular case,
\begin{pro}\label{ribónpart}
Let $\hat{f}_1\dots,\hat{f}_n\in \hat{\mathcal{O}}_n$ be transverse at the origin. Then the group 
$H(\hat{f}_1\dots,\hat{f}_n)$ is periodic (in particular linearizable and finite).
\end{pro}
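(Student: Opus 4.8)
The plan is to reduce everything to the observation that, under the hypothesis of transversality at the origin, the $n$-tuple $\hat{F}:=(\hat{f}_1,\dots,\hat{f}_n)$ is itself a formal diffeomorphism. Indeed, the condition $(\mathrm{d}\hat{f}_1\wedge\dots\wedge\mathrm{d}\hat{f}_n)_0\neq 0$ says precisely that the linear forms $\mathrm{d}\hat{f}_1|_0,\dots,\mathrm{d}\hat{f}_n|_0$ are linearly independent, i.e. that the Jacobian matrix of $\hat{F}$ at the origin is nonsingular (in particular every $\hat{f}_i$ has nonzero linear part, hence is nonconstant). Replacing each $\hat{f}_i$ by $\hat{f}_i-\hat{f}_i(0)$, which does not change the invariance group since every $\hat{G}\in\widehat{\mathrm{Diff}}(\C^n,0)$ fixes $0$, we may assume $\hat{F}(0)=0$; then $\hat{F}\in\widehat{\mathrm{Diff}}(\C^n,0)$ by the formal inverse function theorem (equivalently, because $\widehat{\mathrm{Diff}}(\C^n,0)$ is a group under composition).

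With this in hand the argument is immediate. If $\hat{G}\in H(\hat{f}_1,\dots,\hat{f}_n)$, the $n$ scalar identities $\hat{f}_i\circ\hat{G}=\hat{f}_i$ assemble into the single vector identity $\hat{F}\circ\hat{G}=\hat{F}$ in $\widehat{\mathrm{Diff}}(\C^n,0)$; composing on the left with $\hat{F}^{-1}$ gives $\hat{G}=\mathrm{id}$. Hence $H(\hat{f}_1,\dots,\hat{f}_n)=\{\mathrm{id}\}$, which is trivially periodic, linearizable and finite — actually a stronger conclusion than stated. There is no real obstacle here: the only conceptual input is the formal inverse function theorem, and the only point needing a line of care is the bookkeeping that subtracting the constant terms $\hat{f}_i(0)$ leaves $H(\hat{f}_1,\dots,\hat{f}_n)$ untouched while turning $\hat{F}$ into a genuine germ of formal diffeomorphism.

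It is worth recording how this meshes with the structure theory already developed. Reading $\hat{F}\circ\hat{G}=\hat{F}$ at the linear level gives $\mathrm{d}\hat{f}_i|_0\circ\mathrm{d}\hat{G}_0=\mathrm{d}\hat{f}_i|_0$ for all $i$, and since the $\mathrm{d}\hat{f}_i|_0$ span the dual space this already forces $\mathrm{d}\hat{G}_0=I$; Remark \ref{rem_diago} and Propositions \ref{pro1}--\ref{pro2} are exactly the manifestations — for a non-diagonalizable linear part and for a nontrivially diagonal one, respectively — of why, under a transversality assumption, no element of $H(\hat{f}_1,\dots,\hat{f}_n)$ can have a "nontrivial" linear part. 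The direct argument via invertibility of $\hat{F}$ short-circuits all of this and, in contrast with the general case of Proposition \ref{ribón} (where $H$ can be a genuine nontrivial finite cyclic group of roots of unity), yields triviality outright.
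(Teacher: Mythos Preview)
Your argument is correct and considerably more direct than the paper's. You observe that, after subtracting constants, $\hat{F}=(\hat{f}_1,\dots,\hat{f}_n)$ lies in $\widehat{\mathrm{Diff}}(\C^n,0)$, so $\hat{F}\circ\hat{G}=\hat{F}$ forces $\hat{G}=\mathrm{id}$ by left-cancellation; this yields the stronger conclusion $H(\hat{f}_1,\dots,\hat{f}_n)=\{\mathrm{id}\}$. The paper does notice that $H:=\hat{F}$ is a formal diffeomorphism and even writes down $\hat{G}\circ H^{-1}=H^{-1}$, which already says $\hat{G}=\mathrm{id}$, but instead of stopping there it builds the invariant vector field $\X=(\mathrm{d}H)^{-1}H$, invokes Proposition~\ref{linea-Bro} to linearize, and then appeals to Propositions~\ref{pro1}--\ref{pro2} and Remark~\ref{rem_diago} to analyze the linear part. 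What the paper's route buys is a rehearsal of the machinery that is genuinely needed for Proposition~\ref{ribón} (the generically transverse case, where $\hat{F}$ is not invertible and the group can be a nontrivial finite group); what your route buys is a one-line proof and the sharper statement that in the transverse-at-the-origin case the invariance group is actually trivial.
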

For the proof of Proposition \ref{ribónpart} we need the following result from \cite{Brochero}, whose demonstration we put here to emphasize that is also valid in the formal case:
\begin{pro}\label{linea-Bro}
A group $\mathcal{G}\subset \mathrm{\widehat{Diff}}(\C^n,0)$ is linearizable if and only if there exists a vector field $\X = \mathcal{R} +\cdots$, where $\mathcal{R}$ is a radial 
vector field, such that $\X$ is invariant for every $\hat{G}\in \mathcal{G}$, i.e. $\hat{G}^*\X =\X$.
\end{pro}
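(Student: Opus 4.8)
The plan rests on two elementary facts about the radial vector field $\mathcal R$ (which, after rescaling, we take to be $\sum_{i=1}^{n}x_i\,\partial/\partial x_i$): it is invariant under every invertible linear map, $L^*\mathcal R=\mathcal R$ for all $L\in Gl(n,\C)$; and, conversely, a formal diffeomorphism preserving $\mathcal R$ must commute with the flow of $\mathcal R$, i.e. with all the homotheties $x\mapsto\lambda x$, and is therefore linear. The bridge between the two implications is that any formal vector field whose linear part equals $\mathcal R$ is formally conjugate to $\mathcal R$ by a change of coordinates tangent to the identity.

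For the direction ``$\mathcal G$ linearizable $\Rightarrow$ invariant field'', let $\hat h\in\mathrm{\widehat{Diff}}(\C^n,0)$ be such that $L_{\hat G}:=\hat h^{-1}\circ\hat G\circ\hat h\in Gl(n,\C)$ for every $\hat G\in\mathcal G$, and set $\X:=\hat h_*\mathcal R=(\hat h^{-1})^*\mathcal R$. From $\hat h^{-1}\circ\hat G=L_{\hat G}\circ\hat h^{-1}$ one computes $\hat G^*\X=(\hat h^{-1}\circ\hat G)^*\mathcal R=(\hat h^{-1})^*L_{\hat G}^*\mathcal R=(\hat h^{-1})^*\mathcal R=\X$, so $\X$ is $\mathcal G$-invariant; and since the linear part of a pushforward $\hat h_*Y$ is the conjugate, by $\mathrm d\hat h(0)$, of the linear part of $Y$, the linear part of $\X$ is $\mathrm d\hat h(0)\,I\,\mathrm d\hat h(0)^{-1}=I$, that is $\X=\mathcal R+\cdots$.

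For the converse, assume $\X=\mathcal R+\cdots$ is invariant under every $\hat G\in\mathcal G$. First I would linearize $\X$ itself: removing its higher-order terms one degree at a time leads to homological equations governed by the operator $\mathrm{ad}_{\mathcal R}$, which acts on a homogeneous vector-field monomial $x^Q\,\partial/\partial x_i$ as multiplication by $|Q|-1$; since $|Q|\ge 2$ for every term to be eliminated, $\mathrm{ad}_{\mathcal R}$ is invertible on the relevant spaces and one obtains $\hat h$ tangent to the identity with $\hat h^*\X=\mathcal R$. (Working formally, no small-divisor issue can arise; this is just the absence of resonances for the eigenvalue vector $(1,\dots,1)$ of a vector field, in contrast to what happens for diffeomorphisms.) Then the conjugate group $\mathcal G':=\hat h^{-1}\mathcal G\hat h$ leaves $\mathcal R=\hat h^*\X$ invariant, so each $\hat G'\in\mathcal G'$ commutes with every homothety $x\mapsto\lambda x$; writing $\hat G'=\sum_{k\ge 1}P_k$ in homogeneous components, $\hat G'(\lambda x)=\lambda\,\hat G'(x)$ forces $P_k=0$ for $k\neq 1$, hence $\hat G'$ is linear. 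Thus $\mathcal G'\subset Gl(n,\C)$ and $\mathcal G=\hat h\,\mathcal G'\,\hat h^{-1}$ is linearizable; moreover, since $\hat h$ fixes the $1$-jet, $\mathcal G'$ is exactly the group of linear parts of $\mathcal G$.

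The one place that demands care is the formal linearization of $\X$, together with the bookkeeping of the pullback/pushforward identities used in the first half; neither is deep. I would carry out the $\mathrm{ad}_{\mathcal R}$ computation explicitly rather than invoke Poincar\'e--Dulac, since it is short and it makes transparent why a \emph{radial} linear part --- and not an arbitrary semisimple one --- is precisely the hypothesis that makes the argument close up.
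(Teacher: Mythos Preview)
Your proof is correct and follows essentially the same approach as the paper's: for the forward direction you push forward $\mathcal R$ by the linearizing map, and for the converse you first formally linearize $\X$ and then show the conjugated group preserves $\mathcal R$ and is therefore linear. The only cosmetic differences are that the paper cites Poincar\'e's linearization theorem where you compute $\mathrm{ad}_{\mathcal R}$ by hand, and that the paper deduces linearity from the Euler identity $\mathrm{d}\Phi_z\cdot z=\Phi(z)$ (equivalently $\Phi^*\mathcal R=\mathcal R$) rather than from commutation with the homothety flow; these are the same argument in slightly different clothing.
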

\begin{proof}
\item[$(\Longrightarrow)$] Suppose that $\mathcal{G}$ is linearizable, i.e. there exists $g:(\C^n , 0) \to (\C^n , 0)$
such that $g\circ \mathcal{G}\circ g^{-1} =\{\mathrm{d}\hat{G}_0\,|\hat{G} \in \mathcal{G}\}.$ Since $(A(\cdot) )^*\mathcal{R} = \mathcal{R}$  for all $A\in Gl(n, \C)$ (by a direct 
calculation $(A(\cdot) )^*\mathcal{R}_z =\mathrm{d}A(\cdot)_{A^{-1}z}\mathcal{R}A^{-1}z= z$), in particular for every element $\hat{G}\in \mathcal{G}$ we have
\begin{align*}
\mathcal{R}_z=&\,(g\circ \hat{G}\circ g^{-1})^*\mathcal{R}_z=\mathrm{d}(g\circ \hat{G}\circ g^{-1})_{(g\circ \hat{G}^{-1}\circ g^{-1})(z)}\mathcal{R}((g\circ \hat{G}^{-1}\circ 
g^{-1})(z)),\\
z=&\,\mathrm{d}g_{g^{-1}(z)}\mathrm{d}\hat{G}_{\hat{G}^{-1}\circ g^{-1}(z)}\mathrm{d}g^{-1}_{(g\circ \hat{G}^{-1}\circ g^{-1})(z)}(g\circ \hat{G}^{-1}\circ g^{-1})(z),\\
\text{ taking }&z=g(y)\text{ and multiplying by }\mathrm{d}g^{-1}_{g(y)}\text{ we have, }\\
&\mathrm{d}g^{-1}_{g(y)}(g(y))=\,\mathrm{d}\hat{G}_{\hat{G}^{-1}(y)}\mathrm{d}g^{-1}_{(g\circ \hat{G}^{-1}(y))}(g\circ \hat{G}^{-1}(y)),
\end{align*}
denoting $\X=\mathrm{d}g^{-1}_{g(\cdot)}(g(\cdot))$ we have $\hat{G}^*\X=\X$. It is easy to see that $\X=\mathcal{R}+\cdots$. For this, suppose that
\begin{align*}
 g(z)&=Az+P_l(z)+P_{l+1}(z)+\cdots,\\
 g^{-1}(z)&=A^{-1}z+Q_\nu(z)+Q_{\nu+1}(z)+\cdots,
 \end{align*}
  where $A\in \mathcal{M}_n(\C)$ and $P_l,\,Q_\nu$ are polynomial vector fields of degree $l$ and $\nu$, then
 \begin{align*}
 \mathrm{d}g^{-1}_z&=A^{-1}+\mathrm{d}Q_\nu(z)+\mathrm{d}Q_{\nu+1}(z)+\cdots,\\
 \mathrm{d}g^{-1}_{g(z)}&=A^{-1}+\mathrm{d}Q_\nu(z)_{g(z)}+\mathrm{d}Q_{\nu+1}(z)_{g(z)}+\cdots,
 \end{align*}
 \begin{align*}
 \mathrm{d}g^{-1}_{g(z)}g(z)&=\big(A^{-1}+\mathrm{d}Q_\nu(z)_{g(z)}+\cdots\big)\big(Az+P_l(z)+\cdots \big)\\
 \X_z&=z+A^{-1}\big(P_l(z)+P_{l+1}(z)+\cdots\big)+\\
 &\qquad+\mathrm{d}Q_\nu(z)_{g(z)}\big(Az+P_l(z)+P_{l+1}(z)+\cdots \big)+\cdots
\end{align*}
The terms after $z$, if not $0$, are of degree greater than one. Thus, $\X=\mathcal{R}+\cdots$ as we wanted.
\item[$(\Longleftarrow)$] Since every eigenvalue of the linear part of $\X$ is $1$, then $\X$ is in the Poincaré
domain without resonances (additive resonances), therefore there exists a formal diffeomorphism (using Poincaré linearization theorem, \cite{Ilya_Yako} Theorem 4.3) $g:(\C^n , 0) \to 
(\C^ n , 0)$ such that $g^*\X = \mathcal{R}$, i.e. $\X = (\mathrm{d}g(\cdot) )^{-1}g(\cdot)$.\par
We claim that $g\circ \hat{G}\circ g^{-1} (y) = \mathrm{d}\hat{G}_0(y)$ for every $\hat{G}\in\mathcal{G}$. In fact, from the same
procedure as before we can observe that \[\mathcal{R}_z=\,(g\circ \hat{G}\circ g^{-1})^*\mathcal{R}_z.\]
For this note that $\hat{G}^*\X =\X\,\leadsto$
\begin{align*}
&\mathrm{d}\hat{G}_{\hat{G}^{-1}(y)}\X_{\hat{G}^{-1}(y)}=\X_z\\
&\mathrm{d}\hat{G}_{\hat{G}^{-1}(y)}\mathrm{d}g^{-1}_{g\circ \hat{G}^{-1}(y)}g\circ \hat{G}^{-1}(y)=\mathrm{d}g^{-1}_{g(y)}g(y)\\
&\text{taking }z=g(y), \text{we have}\\
&\mathrm{d}\hat{G}_{\hat{G}^{-1}\circ g^{-1}(z)}\mathrm{d}g^{-1}_{g\circ \hat{G}^{-1}\circ g^{-1}(z)}g\circ \hat{G}^{-1}\circ g^{-1}(z)=\mathrm{d}g^{-1}_z(z)
\end{align*}
Therefore,
\begin{align*}
(g\circ \hat{G}\circ g^{-1})^*\mathcal{R}_z&=\mathrm{d}(g\circ \hat{G}\circ g^{-1})_{g\circ \hat{G}^{-1}\circ g^{-1}(z)}\mathcal{R}(g\circ \hat{G}^{-1}\circ g^{-1}(z))\\
&=\mathrm{d}g_{g^{-1}(z)}\mathrm{d}\hat{G}_{\hat{G}^{-1}\circ g^{-1}(z)}\mathrm{d}g^{-1}_{g\circ \hat{G}^{-1}\circ g^{-1}(z)}g\circ \hat{G}^{-1}\circ g^{-1}(z)\\
&=\mathrm{d}g_{g^{-1}(z)}\mathrm{d}g^{-1}_z(z)\quad\text{(by the the previous computation)}\\
&=z.
\end{align*}
Now, if we suppose that $g\circ \hat{G}\circ g^{-1}(z) = Az + P_l(z) + P_{l+1}(z) +\cdots$, where $P_j(z)$ is
a polynomial vector field of degree $j$, then it is easy to prove that
\[(g \circ \hat{G} \circ g^{-1})^*\mathcal{R} = Az + lP_l (z) + (l + 1)P_{l+1} (z) +\cdots,\]
In order to prove it, observe that $(g\circ \hat{G}\circ g^{-1})^*\mathcal{R}_z=\mathcal{R}_z\,\leadsto$
\[\mathrm{d}(g\circ \hat{G}\circ g^{-1})_{g\circ \hat{G}^{-1}\circ g^{-1}(y)}\mathcal{R}(g\circ \hat{G}^{-1}\circ g^{-1}(y))=\mathcal{R}(y)\]
taking  $y=g\circ \hat{G}\circ g^{-1}(z)$ then
\begin{align*}
\mathrm{d}(g\circ \hat{G}\circ g^{-1})_z\mathcal{R}(z)&=\mathcal{R}(g\circ \hat{G}\circ g^{-1}(z)),\\
\mathrm{d}(g\circ \hat{G}\circ g^{-1})_zz&=g\circ \hat{G}\circ g^{-1}(z),\\
\text{by hypothesis}\quad\mathrm{d}(g\circ \hat{G}\circ g^{-1})_z&=A + \mathrm{d}(P_l)_z + \mathrm{d}(P_{l+1})_z +\cdots,\\
\mathrm{d}(g\circ \hat{G}\circ g^{-1})_zz&=Az + lP_l(z) + (l+1)P_{l+1}(z) +\cdots,\\
&=Az + P_l(z) + P_{l+1}(z) +\cdots.
\end{align*}
and therefore $P_j (z)\equiv 0$ for every $j\geq 2$.
\end{proof}
\begin{proof}[Proof of Proposition \ref{ribónpart}]
The idea is to use the above proposition, so that we need to find an invariant vector field $\X$. First, consider the formal map $H=(\hat{f}_1,\dots,\hat{f}_n)$, for each 
$\hat{G}\in\mathcal{G}$ by hypothesis $\hat{f}_i\circ \hat{G}=f_i$ then we have $H\circ \hat{G}=H$, and note that $H\in\mathrm{\widehat{Diff}}(\C^n,0)$ because 
$(\mathrm{d}\hat{f}_1\wedge\dots\wedge\mathrm{d}\hat{f}_n)_0\neq 0$. Thus, this implies $H\circ \hat{G}^{-1}=H$, $\hat{G}\circ H^{-1} =H^{-1}$ and 
$\mathrm{d}\hat{G}_{H^{-1}(\cdot)}\mathrm{d}H^{-1}_{(\cdot)}=\mathrm{d}H^{-1}_{(\cdot)}$.\smallskip\par Therefore define $\X=(\mathrm{d}H)^{-1}H=\mathrm{d}H^{-1}_{H(\cdot)}H(\cdot)$ 
which satisfies $\hat{G}^*\X =\X$,
\begin{align*}
\hat{G}^*\X_z&=\mathrm{d}\hat{G}_{\hat{G}^{-1}(z)}\X_{\hat{G}^{-1}(z)},\\
&=\mathrm{d}\hat{G}_{\hat{G}^{-1}(z)}\mathrm{d}H^{-1}_{H(\hat{G}^{-1}(z))}H(G^{-1}(z)),\\
&=\mathrm{d}\hat{G}_{\hat{G}^{-1}(z)}\mathrm{d}H^{-1}_{H(z)}H(z),\\
&=\mathrm{d}H^{-1}_{H(z)}H(z),\ \text{because }\mathrm{d}\hat{G}_{H^{-1}(H\circ \hat{G}^{-1}(z))}\mathrm{d}H^{-1}_{H\circ \hat{G}^{-1}(z)}=\mathrm{d}H^{-1}_{H\circ G^{-1}(z)},\\
\hat{G}^*\X_z&=\X_z.
\end{align*}
And, as in the proof of Proposition \ref{linea-Bro} we have that $\X=\mathcal{R}+\cdots$.\par
Then by the Proposition \ref{linea-Bro} we have that $\mathcal{G}$ is linearizable. Furthermore, this implies that $\mathcal{G}$ is in fact diagonalizable by Propositions 
\ref{pro1} and \ref{pro2}, and Remark \ref{rem_diago}, furthermore its diagonal form is made of roots of the unity because the transversally condition of 
$\{\hat{f}_i\}$ implies the existence of $n$ independent multi-indexes, which is the next step in the proof.\par 
Working for simplicity in dimension two, write $\hat{f}_1(x)=\sum_Ia_Ix^I$ and $\hat{f}_2(x)=\sum_Jb_Jx^J$ where $x=(x_1,x_2),\ I=(i,j),$ and $J=(r,s)$ then
\[\mathrm{d}\hat{f}_1\wedge\mathrm{d}\hat{f}_2=\big(\sum_{I,J}a_Ib_J(is-jr)x_1^{i+r-1}x_2^{j+s-1}\big)\mathrm{d}x_1\wedge\mathrm{d}x_2,\]
if there were no $I,J$ independent such that $a_Ib_J\neq 0$ we would have $\mathrm{d}\hat{f}_1\wedge\mathrm{d}\hat{f}_2\equiv 0$ contradicting the hypothesis, so there exists a 
couple $I_0=(i_0,j_0)$, $J_0=(r_0,s_0)$ with this condition. Consider $\hat{G}\in\mathcal{G}$ and $G=(\mathrm{d}\hat{G})_0$ its linear part given by a diagonal matrix with 
eigenvalues 
$\lambda_1,\lambda_2$, the conditions $\hat{f}_i\circ G=\hat{f}_i$ for $i=1,2$ implies $\lambda_1^{i_0}\lambda_2^{j_0}=1$ and $\lambda_1^{r_0}\lambda_2^{s_0}=1$ respectively and, as 
before this implies that both are roots of the unity. Indeed, the previous analysis is more subtle, because we have to consider $(\hat{f}_i\circ \hat{G})(g)=(\hat{f}_i\circ 
g)(g^{-1}\circ\hat{G}\circ g)=(\hat{f}_i\circ g)(G) =(\hat{f}_i\circ g)$ where $g$ is a formal diffeomorphism who diagonalizes $\mathcal{G}$, the result is the same because the 
$\hat{f}_i\circ g$ are generically transverse.\par
In general we have something like 
$\mathrm{d}\hat{f}_1\wedge\dots\wedge\mathrm{d}\hat{f}_n\not\equiv 0$ and $\hat{f}_i(x)=\sum_I{_{i}}a_Ix^I$ with 
$I=(i_1,\dots,i_n)$, but the associativity of the wedge product allow us to work in pairs, for instance 
\[\mathrm{d}\hat{f}_1\wedge\mathrm{d}\hat{f}_2=\sum_{r<s}\Big(\frac{\partial \hat{f}_1}{\partial x_r}\frac{\partial \hat{f}_2}{\partial x_s}-\frac{\partial \hat{f}_1}{\partial 
x_s}\frac{\partial \hat{f}_2}{\partial x_r}\Big)\mathrm{d}x_r\wedge\mathrm{d}x_s,\] each therm of the sum works like the previous case and at list one of them should be not zero 
meaning that it exist a couple $(i_r,i_s),(j_r,j_s)$ independent and with this $I=(i_1,\dots,i_r,\dots,i_s,\dots,i_n)$ and $J=(j_1,\dots,j_r,\dots,j_s,\dots,j_n)$ are independent and 
its coefficients are not zero $_{1}a_I{_{2}}a_J\neq 0$. Thus, the following sum is not zero,
\[_{1}a_I{_{2}}a_J\sum_{r<s}\begin{vmatrix}i_r&i_s\\j_r&j_s\end{vmatrix}x^{I+J-(e_r+e_s)}\mathrm{d}x_r\wedge\mathrm{d}x_s,\] where 
$I+J-(e_r+e_s)=(i_1+j_1,\dots,i_r+j_r-1,\dots,i_s+j_s-1,\dots,i_n+j_n)$. The wedge product with the next form, $\mathrm{d}\big(\hat{f}_3(x)\big)=\mathrm{d}(\sum_K{_{3}}a_Kx^K)$,  
will 
produce terms having $3\times 3$ matrices related to the multi-indexes $I,J$ and $K$, and obviously the dependence of $K$ with $I,J$ would imply that all of them are zero. This 
process continues implying the existence of $n$ independent multi-indexes such that $\lambda_1^{i_1}\cdots\lambda_n^{i_n}=1$ for each one of them, and the $\lambda_i$'s are roots of 
the unity as before.
   \par Therefore, there exists $N\in\mathbb{N}$ such that $G^N=I$ and then $\langle \hat{G}\rangle$ (i.e. the group generated by $\hat{G}$) is finite. It remains to prove that 
$\mathcal{G}$ is commutative, consider $\hat{G}_1,\hat{G}_2\in\mathcal{G}$ and note by $G_1,G_2$ their linear parts, then  
\begin{align*}
	\hat{G}_1\circ\hat{G}_2&=g(g^{-1}\circ\hat{G}_1\circ g)(g^{-1}\circ\hat{G}_2\circ g)g^{-1}\\
	&=g(G_1\circ G_2)g^{-1}\quad\text{they commute},\\
	&=g(G_2\circ G_1)g^{-1},\\
	&=\hat{G}_2\circ\hat{G}_1.
\end{align*}
\end{proof}
\chapter{On formal first integrals}\label{Chap3}
We will show that the existence of a formal first integral in our framework, implies the existence of a holomorphic one. 
\section{Preliminaries}
In this section we are strongly based in the notation and results of \cite{Cam-Sca2,Cam-Sca1} that we write next for the sake of
completeness.\par
\begin{defi}\label{generic}
We shall say that $\F(\X)$ is \emph{non-degenerate generic} if $\mathrm{d}\X(0)$ is non-singular, diagonalizable, and after some suitable change of coordinates $\X$ leaves invariant
the coordinate planes. Denote the set of germs of non-degenerate generic vector fields on $(\C^n , 0)$ by $\mathrm{Gen}\big(\mathfrak{X}(\C^n , 0)\big)$. Such vector fields
 after a change of coordinates can be written in the form
\begin{equation}\label{gen-vf}
  \X(x) = \lambda_1x_1 (1 + a_1(x))\frac{\partial}{\partial x_1}+\dots+\lambda_nx_n (1 + a_n(x))\frac{\partial}{\partial x_n},
\end{equation}
where $a_i\in\mathcal{M}_3$ for $i=1,\dots,n$.
\end{defi}
\begin{defi}\label{firstintegral}
We say that a germ of a holomorphic foliation $\F(\X)$ has a \emph{holomorphic first integral}, if there is a germ of a holomorphic map $F: (\C^n,0)\to (\C^{n-1},0)$ such that:
\begin{enumerate}[(a)]
\item $F$ is a submersion off some proper analytic subset. Equivalently if we write $F = (f_1 ,\dots, f_{n-1})$ in coordinate
functions, then the $(n-1)$-form $\mathrm{d}f_1\wedge\dots\wedge\mathrm{d}f_{n-1}$ is non-identically zero.
\item  The leaves of $\F(\X)$ are contained in level curves of $F$ .
\end{enumerate}
Further, a germ $f$ of a meromorphic function at the origin $0\in\C^n$ is called $\F(\X)$-\emph{invariant} if
the leaves of $\F(\X)$ are contained in the level sets of $f$. This can be precisely stated in terms of
representatives for $\F(\X)$ and $f$, but can also be written as $i_{\X}(\mathrm{d}f)=\X(f)\equiv 0$.\par
\end{defi}
We start with the following definition inspired by the definition of \emph{holomorphic fist integral} (Definition \ref{firstintegral}), though it will not be used until the end of 
the 
article is necessary to settle down the framework we use.
\begin{defi}[formal first integral]
We say that a germ of a holomorphic foliation $\F(\X)$, were $\X\in\mathfrak{X}(\C^n,0)$, has a \emph{formal first integral}, if there is a formal map 
$\hat{F}=(\hat{f}_1,\dots,\hat{f}_{n-1})$, with $\hat{f}_1,\dots,\hat{f}_{n-1}\in\hat{\mathcal{O}}_n$, such that:
\begin{enumerate}[(a)]
\item The formal $(n-1)$-form $\mathrm{d}\hat{f}_1\wedge\dots\wedge\mathrm{d}\hat{f}_{n-1}$ is non-identicaly zero.
\item $\X(\hat{F})\equiv 0$, (i.e. $\X(\hat{f}_i)\equiv 0$ for all $\hat{f}_i,\ i=1,\dots,n-1$ ).
\end{enumerate}
\begin{defi}[condition ($\star$)]\label{star}
Let $\X$ be a germ of a holomorphic vector field at the origin such that the origin $0\in\C^m , m\ge 3$ is a nondegenerate singularity of $\X$ (i.e. $\mathrm{d}\X(0)$ is 
non-singular). We say that $\X$ satisfies \emph{condition} $(\star)$ if there is a real line $L\subset \C$ through the origin, separating a certain eigenvalue $\lambda(\X)$ from the 
others.
If $\X$ satisfies ($\star$) we denote by $S_{\X}$ the smooth invariant curve associated to $\lambda(\X)$.
\end{defi}
\end{defi}
Though the methods we use in this chapter are, in general, independent of the dimension, our work will imply directly the condition ($\star$) only when $n=3$, in the remaining cases 
we have to include it as a hypothesis. This condition, together with the generic conditions of the vector field $\X$, is what allows to use the following well known result 
\cite{Eli-Il} whose 
demonstration can also be found in \cite{Reis}.
\begin{theorem}\label{rusos}
 Let $\X$ and $\mathcal{Y}$ be two vector fields in $\mathrm{Gen}\left(\mathfrak{X}(\C^n,0)\right)$ with an isolated singularity at the origin and satisfying condition ($\star$). Let 
$h_{\X}$ and $h_{\mathcal{Y}}$ be the holonomies of $\X$ and $\mathcal{Y}$ relatively to $S_{\X}$ and $S_{\mathcal{Y}}$, respectively. Then $\X$ and $\mathcal{Y}$ are analytically 
equivalent if and only if the holonomies $h_{\X}$ and $h_{\mathcal{Y}}$ are analytically conjugate.
\end{theorem}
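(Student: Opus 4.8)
The plan is to prove the two implications separately; the forward one is routine and the content lies in the converse.

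$(\Rightarrow)$ Suppose $\Phi$ is a germ of biholomorphism conjugating $\F(\X)$ and $\F(\mathcal{Y})$, so that $\mathrm{d}\Phi_0$ conjugates $\mathrm{d}\X(0)$ to $\mathrm{d}\mathcal{Y}(0)$ up to a nonzero scalar. The distinguished eigenvalue $\lambda(\X)$ is intrinsically characterised by condition $(\star)$ --- it is the one a real line through the origin separates from the others --- and that characterisation is unaffected by multiplying all eigenvalues by a fixed nonzero constant, hence it is preserved by $\Phi$. Therefore $\Phi$ carries $S_\X$ onto $S_\mathcal{Y}$; restricting $\Phi$ to transversals to the two foliations at a point $p\in S_\X$ and at $\Phi(p)\in S_\mathcal{Y}$, one obtains a germ of biholomorphism conjugating the generators of the respective holonomy groups, that is, $h_\X$ and $h_\mathcal{Y}$ are analytically conjugate.

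$(\Leftarrow)$ First I would put $\X$ and $\mathcal{Y}$ in the generic normal form \eqref{gen-vf} (relabelling so that $\lambda_1=\lambda(\X)$), so that the coordinate hyperplanes are invariant and $S_\X=S_\mathcal{Y}$ is the $x_1$-axis. Since $0$ is the only singularity of $\F(\X)$ on $S_\X$, the transverse holomorphic type of $\F(\X)$ along the punctured curve $S_\X\setminus\{0\}$ is governed by the holonomy $h_\X$, the generator of the holonomy representation of $\pi_1(S_\X\setminus\{0\})\simeq\mathbb{Z}$; this is where condition $(\star)$ enters, supplying the smooth curve $S_\X$ and the control of the dynamics transverse to it. Fix a transversal $\Sigma_\X$ to $\F(\X)$ at some $p\in S_\X\setminus\{0\}$, and let $\varphi$ be the given analytic conjugacy between $h_\X$ (acting on $\Sigma_\X$) and $h_\mathcal{Y}$, so $\varphi\circ h_\X=h_\mathcal{Y}\circ\varphi$. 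Then I would extend $\varphi$ along the leaves: for $q$ in a punctured fibered neighborhood $U\setminus(S_\X\cup H)$ of $S_\X$, where $H$ denotes the union of the remaining invariant hyperplanes, connect $q$ to $\Sigma_\X$ inside the leaf of $\F(\X)$ through $q$, transport the endpoint by $\varphi$, and continue along the corresponding leaf of $\F(\mathcal{Y})$; set $\Phi(q)$ equal to the point so obtained. The relation $\varphi\circ h_\X=h_\mathcal{Y}\circ\varphi$ is precisely what makes $\Phi$ independent of the leafwise path chosen, hence a well-defined holomorphic map conjugating $\F(\X)$ and $\F(\mathcal{Y})$ on $U\setminus(S_\X\cup H)$.

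The main obstacle will be to extend $\Phi$ biholomorphically across $S_\X\cup H$, and above all across the origin. On the smooth locus of each invariant hyperplane $\Phi$ is bounded, since the leafwise transport keeps the relevant leaves inside a fixed compact neighborhood, so the Riemann removable-singularity theorem extends $\Phi$ holomorphically across these codimension-one sets, while their pairwise intersections, of codimension $\ge 2$ in $\C^n$, are handled by Hartogs' theorem. Near the origin one has to show, using the normal form \eqref{gen-vf} together with condition $(\star)$ to control how the leaves accumulate at $0$, that $\Phi$ stays bounded as $q\to 0$; a final application of the Riemann extension theorem then removes the point $\{0\}$, and the extended $\Phi$ is the sought analytic equivalence between $\X$ and $\mathcal{Y}$. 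The genuinely delicate step --- and the only place where the hypotheses are essential --- is precisely this boundedness of $\Phi$ near the singularity, where condition $(\star)$ plays the role that hyperbolicity of the separatrix would play in more elementary situations.
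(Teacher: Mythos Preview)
The paper does not give its own proof of this theorem: it is quoted as a known result of Elizarov--Il'yashenko \cite{Eli-Il}, with a proof written out in \cite{Reis}, and is used thereafter as a black box. So there is no in-house argument to compare yours against; your outline is in fact the standard strategy from those references.

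That said, two substantive pieces of work are hidden behind your phrases ``punctured fibered neighborhood'' and ``$\Phi$ stays bounded''. First, for the leafwise extension to be defined on a genuine punctured neighborhood of $0$ you need that the $\F(\X)$-saturation of the transversal $\Sigma_\X$ fills a full neighborhood of the origin minus the complementary invariant hyperplane; this is exactly Proposition~1 of \cite{Reis} (cf.\ Lemma~\ref{lemA-dim3} here), and it is where condition $(\star)$ enters concretely, through the signs of $\mathrm{Re}(\lambda_j/\lambda(\X))$. Second, the boundedness of $\Phi$ near $S_\X\cup H$ and near $0$ does not follow from a soft compactness remark: one must estimate the holonomy transport explicitly, again using $(\star)$ to bound how far a point drifts along a leaf before hitting $\Sigma$; in \cite{Reis} this is done via integral estimates on the lifted paths. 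You have correctly located the delicate step but not supplied it, so as it stands your converse is an accurate road map rather than a proof. A minor addendum: once $\Phi$ extends holomorphically you should still argue it is a biholomorphism, typically by running the same construction with $\X$ and $\mathcal{Y}$ interchanged and checking the two maps are mutual inverses.
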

This theorem is basically the heart of the proof of $(3)\Leftrightarrow(4)$ in Theorem 1 of \cite{Cam-Sca1} whose statement is:
\begin{theorem}\label{existence}
Suppose that $\X\in\mathrm{Gen}(\mathfrak{X}(\C^3 , 0))$ satisfies condition $(\star)$ and let $S_{\X}$ be the axis associated to the separable eigenvalue of $\X$.\par Then, 
$\mathrm{Hol}(\F(\X), S_{\X}, \Sigma)$ is periodic (in particular linearizable and finite) if and only if $\F(\X)$ has a holomorphic first integral.
\end{theorem}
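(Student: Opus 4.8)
The plan is to establish the two implications of Theorem~\ref{existence} separately, using the invariance-group machinery of Chapter~\ref{Chap2} together with the rigidity result Theorem~\ref{rusos}.

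\textbf{The direction $(\Leftarrow)$.} Suppose $\F(\X)$ has a holomorphic first integral $F=(f_1,f_2)\colon(\C^3,0)\to(\C^2,0)$ with $\mathrm{d}f_1\wedge\mathrm{d}f_2\not\equiv 0$. First I would restrict everything to the transverse section $\Sigma$ through a point of the separatrix $S_{\X}$: the holonomy diffeomorphisms $h\in\mathrm{Hol}(\F(\X),S_{\X},\Sigma)$ permute the leaves of $\F(\X)$, hence preserve the level sets of $F$, so that $f_i|_{\Sigma}\circ h=f_i|_{\Sigma}$ for every holonomy generator. Since $\dim\Sigma=2$ and the two restricted functions $\hat g_1=f_1|_\Sigma$, $\hat g_2=f_2|_\Sigma$ are generically transverse (this needs a short argument: the foliation $\F(\X)$ is transverse to $\Sigma$ off a proper analytic set, and the fibers of $F$ cut $\Sigma$ in a one-dimensional family, so $\mathrm{d}\hat g_1\wedge\mathrm{d}\hat g_2\not\equiv 0$), Proposition~\ref{ribón} applies and gives that $\mathrm{Hol}(\F(\X),S_{\X},\Sigma)\subset H(\hat g_1,\hat g_2)$ is periodic, in particular linearizable and finite. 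This is the easy direction.

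\textbf{The direction $(\Rightarrow)$.} Assume $\mathrm{Hol}(\F(\X),S_{\X},\Sigma)$ is periodic, hence by Proposition~\ref{fingroup} (or its dimension-$2$ instance) finite, linearizable, and with diagonalizable linear parts; after a change of coordinates on $\Sigma$ the holonomy group is a finite cyclic group generated by a rotation $R=\mathrm{diag}(\mu_1,\mu_2)$ with $\mu_i$ roots of unity, say of order $N$. The idea is to construct, on the quotient-type model, a vector field $\mathcal{Y}\in\mathrm{Gen}(\mathfrak{X}(\C^3,0))$ satisfying condition $(\star)$ whose holonomy relative to $S_{\mathcal Y}$ is analytically conjugate to that of $\X$ and which manifestly has a holomorphic first integral; then Theorem~\ref{rusos} transports the first integral back to $\X$. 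Concretely, take the linear vector field $\mathcal{Y}_0$ whose holonomy on the transverse section is exactly $R$; since $R$ is a finite-order diagonal rotation, $\mathcal{Y}_0$ has monomial first integrals (the $R$-invariant monomials $x_1^{p_1}x_2^{p_2}x_3^{p_3}$ with suitable exponents), giving a holomorphic first integral $F_0$ of $\F(\mathcal Y_0)$. By Theorem~\ref{rusos}, $\X$ and $\mathcal Y_0$ are analytically equivalent, so pulling back $F_0$ by the conjugating biholomorphism yields a holomorphic first integral of $\F(\X)$.

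\textbf{Main obstacle.} The delicate point is \emph{realizing} a linearizing model vector field $\mathcal{Y}$ in $\mathrm{Gen}(\mathfrak{X}(\C^3,0))$ with prescribed finite holonomy and still satisfying $(\star)$, and checking that its holonomy is analytically (not merely formally) conjugate to $h_{\X}$ — Theorem~\ref{rusos} demands analytic conjugacy of the holonomies, so one must argue that a finite (hence linearizable) holonomy group is analytically conjugate to its linear model, which is where the finiteness obtained from periodicity is essential. A secondary technical subtlety is the passage between the ``first integral of the foliation on $\C^3$'' and ``invariant functions of the holonomy on the $2$-dimensional section'': one must verify that an $R$-invariant pair of functions on $\Sigma$ extends, via the flow of $\X$ (or via the suspension/Lie description of $\F(\X)$ near the separatrix), to a genuine first integral on a neighborhood of the origin in $\C^3$, and that the extension is holomorphic rather than merely formal — this is exactly the content being imported from \cite{Cam-Sca1,Cam-Sca2} and from Theorem~\ref{rusos}. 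I expect the bookkeeping to be routine once the correspondence ``periodic holonomy $\Leftrightarrow$ linearizable finite holonomy $\Leftrightarrow$ existence of holonomy-invariant transverse functions'' is set up cleanly.
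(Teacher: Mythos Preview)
The paper does not give its own proof of Theorem~\ref{existence}: it is quoted from \cite{Cam-Sca1}, and the only indication the paper provides is that Theorem~\ref{rusos} (Elizarov--Il'yashenko) is ``basically the heart of the proof''. Your $(\Rightarrow)$ direction implements precisely this hint, and is correct. One small sharpening: rather than ``the linear vector field whose holonomy is $R$'', take $\mathcal{Y}$ to be the \emph{linear part} of $\X$ itself. Then $\mathcal{Y}$ automatically lies in $\mathrm{Gen}(\mathfrak{X}(\C^3,0))$, satisfies $(\star)$ with the same separable eigenvalue, and its holonomy is exactly the linear part of $h_\X$; since $h_\X$ is periodic, Proposition~\ref{fingroup} makes it \emph{analytically} conjugate to its linear part, so Theorem~\ref{rusos} applies directly. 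The eigenvalue ratios are then rational (as $e^{2\pi i\lambda_j/\lambda_3}$ are roots of unity), so $\mathcal{Y}$ has monomial first integrals such as $x_1^{|k_3|}x_3^{|k_1|}$, $x_2^{|k_3|}x_3^{|k_2|}$, and these pull back under the analytic equivalence.

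Your $(\Leftarrow)$ direction via Proposition~\ref{ribón} is valid but heavier than necessary, and not how \cite{Cam-Sca1} proceeds. Since here $f_1,f_2$ are \emph{holomorphic} (not merely formal), the restriction $G=(f_1|_\Sigma,f_2|_\Sigma)\colon(\Sigma,0)\to(\C^2,0)$ is a finite holomorphic map (its Jacobian is $\mathrm{d}\hat g_1\wedge\mathrm{d}\hat g_2\not\equiv 0$, whose non-vanishing you correctly flag as needing the saturation argument from Lemma~\ref{lemA-dim3}/\cite{Reis}); the generic fiber has some fixed cardinality $d$, the holonomy $h$ permutes each fiber, hence $h^{d!}$ fixes a dense open set and equals the identity. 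This bypasses the pro-algebraic machinery of Appendix~\ref{demRib}. Your route does work, but it is worth knowing the elementary argument, since Proposition~\ref{ribón} is proved in the paper precisely to handle the \emph{formal} case (Theorem~\ref{theoA}) where this finiteness-of-fibers argument is unavailable.
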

In order to prove our result we show that having a formal first integral, gives enough properties to the vector field that Theorem \ref{existence} can be used.
\section{Algebraic criterion}
In this section we show that we can restrict ourselves to a vector fields written in a particular way.\par The following lemma and proposition are, at first glance, mostly $n$ 
dimensional versions of Lemma 1 and Proposition 1 in \cite{Cam-Sca1}. Nevertheless, there is a big difference
which turns out to be an important property as we will see later.
\begin{lem}\label{lem-alg}
Let $\Lambda=(\lambda_1,\dots,\lambda_n)\in\C^n\setminus 0$ and, let $N_{n-1\times n}$ be a matrix with entries in $\mathbb{N}$ and linearly independent lines, satisfying
\[N\Lambda^t=0\in\C^{n-1}.\] Then there are $k_1\dots,k_{n}\in\mathbb{Z}$ and $\lambda\in\C^*$ such that
\[(\lambda_1,\dots,\lambda_n)=(k_1\dots,k_{n})\lambda.\]
\end{lem}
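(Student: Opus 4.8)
The plan is to exploit that $N$ has rational (indeed natural) entries, so that $\ker N$ is defined over $\Q$, and that the hypothesis of $n-1$ linearly independent rows forces $\ker N$ to be one–dimensional; the vector $\Lambda$, lying in this kernel, is then automatically a scalar multiple of any fixed integer generator.

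First I would regard $N$ as a linear map $\C^n\to\C^{n-1}$. The assumption that its $n-1$ rows are linearly independent means $\operatorname{rank}N=n-1$, so by rank–nullity $\dim_{\C}\ker N=1$. Running Gaussian elimination over $\Q$ (which never leaves the field containing the entries) gives likewise $\dim_{\Q}\ker_{\Q}N=1$, and any nonzero rational solution $v\in\Q^n$ of $Nv^t=0$ spans $\ker N$ over $\C$ as well, since a single nonzero vector spans a one–dimensional space. Clearing denominators, write $v=(k_1/d,\dots,k_n/d)$ with $k_i\in\mathbb{Z}$ and $d\in\mathbb{Z}_{>0}$; then $K:=(k_1,\dots,k_n)=dv\in\mathbb{Z}^n$ is a nonzero integer vector with $\C\cdot K=\ker N$. (Equivalently, and more explicitly, one may take $K$ to be the vector whose $i$-th entry is $(-1)^i$ times the $(n-1)\times(n-1)$ minor of $N$ obtained by deleting the $i$-th column: Cramer's rule shows $NK^t=0$, and $K\neq0$ precisely because $\operatorname{rank}N=n-1$. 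This avoids invoking any base–change statement and makes integrality manifest.)

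Next, the hypothesis $N\Lambda^t=0$ says exactly $\Lambda\in\ker N=\C\cdot K$, so there is $\lambda\in\C$ with $\Lambda=\lambda K$, i.e. $(\lambda_1,\dots,\lambda_n)=(k_1,\dots,k_n)\lambda$. Finally, since $\Lambda\neq 0$ we must have $\lambda\neq 0$, so $\lambda\in\C^*$, which is the assertion.

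There is essentially no hard step: the only point requiring a word of care is the field–of–definition remark, namely that the kernel of an integer matrix gains no extra dimension upon extending scalars from $\Q$ to $\C$ and that a rational generator remains a generator over $\C$ — this is standard (invariance of rank under field extension), and the explicit minor construction above sidesteps it entirely. I would present the minor version in the actual write-up to keep the argument self-contained.
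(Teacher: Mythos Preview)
Your proof is correct and follows essentially the same approach as the paper: both arguments recognize that $\operatorname{rank}N=n-1$ forces a one-dimensional kernel, and both exhibit an integer generator via determinants (the paper uses Cramer's rule on a chosen invertible $(n-1)\times(n-1)$ submatrix, which is exactly your signed-minor construction specialized to one choice of columns). Your additional remark about invariance of rank under field extension is a clean way to phrase the underlying reason, but the concrete computation is the same.
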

\begin{proof}
The proof consists in the solution of a linear system, take
\[N=\begin{bmatrix}
n_{1\,1}&\dots&n_{1\,n-1}&n_{1\,n}\\
\vdots&\ddots&\vdots&\vdots\\
n_{n-1\,1}&\dots&n_{n-1\,n-1}&n_{n-1\,n}
\end{bmatrix}\text{ and } A=\begin{bmatrix}
n_{1\,1}&\dots&n_{1\,n-1}\\
\vdots&\ddots&\vdots\\
n_{n-1\,1}&\dots&n_{n-1\,n-1}
\end{bmatrix},\] the independence allows to take $n-1$ independent columns, suppose the first ones, and form the matrix $A$ which is invertible, thus multiplying by $A^{-1}$ the 
system
$N\Lambda^t=0$ we get,
\[\begin{bmatrix}
1&\dots&0&\tilde{k}_1\\
\vdots&\ddots&\vdots&\vdots\\
0&\dots&1&\tilde{k}_{n-1}
\end{bmatrix}
\begin{bmatrix}
\lambda_1\\\vdots\\\lambda_n
\end{bmatrix}=
\begin{bmatrix}
0\\\vdots\\0
\end{bmatrix}_{n-1\times 1}, \]
 and we have $n-1$ equation of the form $\lambda_i+\tilde{k}_i\lambda_n=0$, then
 \[(\lambda_1,\dots,\lambda_n)=(-\tilde{k}_1,\dots,-\tilde{k}_{n-1},1)\lambda_n,\]
 we know exactly who are the $\tilde{k}_i$'s, because they satisfy
 \[\begin{bmatrix}
n_{1\,1}&\dots&n_{1\,n-1}\\
\vdots&\ddots&\vdots\\
n_{n-1\,1}&\dots&n_{n-1\,n-1}\end{bmatrix}
\begin{bmatrix}
\tilde{k}_1\\\vdots\\\tilde{k}_{n-1}\end{bmatrix}=
\begin{bmatrix}
n_{1\,n}\\\vdots\\n_{n-1\,n},
\end{bmatrix},\]
and by the Cramer rule, $\tilde{k}_i=\frac{|A_i|}{|A|}$, where $|\cdot|$ means determinant and $A_i$ is the matrix $A$ changing the column $i$ by $[n_{1\,n}\,\dots\,n_{n-1\, n}]^t$.
Finally we get, \[(\lambda_1,\dots,\lambda_n)=(|A_1|,\dots,|A_{n-1}|,-|A|)\lambda,\] with $\lambda=-\lambda_n/|A|$ and $k_i=|A_i|,k_n=-|A|\in\mathbb{Z}$ for $i=1,\dots,n-1$ as we
wanted.
\end{proof}
The three dimensional case is especial because we know that $k_1\cdot k_2\cdot k_3<0$, so we can make one of them negative and the others positive by changing the $\lambda$. However, 
in dimension $n>3$, the only thing we know about the signs of the $k_i$ is that can not be all positive nor negative thanks to the condition $n_{1\,1}k_1+\dots+n_{1\,n}k_{n}=0$. Here 
we have an example in dimension $4$ where $k_1\cdot k_2\cdot k_3\cdot k_3>0$, take
\[N=\begin{bmatrix}
1&0&1&0\\
0&1&1&2\\
0&0&1&1\\
\end{bmatrix},\]
if satisfies $N\Lambda^t=0$ for some  $\Lambda=(\lambda_1,\lambda_2,\lambda_3,\lambda_4)$ then,
\[(\lambda_1,\lambda_2,\lambda_3,\lambda_4)=(-1,1,1-1)\lambda.\]
With this example we can also see that a vector field of Siegel type not necessarily satisfies condition ($\star$) while the contrary is always true.
\begin{pro}\label{pro-alg}
Suppose that $\X\in\mathrm{Gen}(\mathfrak{X}(\C^n , 0))$ has a formal first integral, then $\F(\X)$ can be given in local coordinates by a vector field of the form 
\begin{equation}\label{formalform}
\X(x) = k_1x_1 (1+ a_1(x))\frac{\partial}{\partial x_1}+\dots+k_nx_n (1 + a_n(x))\frac{\partial}{\partial x_n} 
\end{equation}
 where $k_1,\dots k_n\in\mathbb{Z}$ and $a_1,\dots,a_n\in\mathcal{M}_n$. In particular if $n=3$, $\X$ satisfies condition $(\star)$.
\end{pro}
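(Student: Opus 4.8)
The statement has two parts: the explicit form \eqref{formalform}, and condition $(\star)$ when $n=3$. The plan is to reduce both to an arithmetic fact about the eigenvalue vector $\Lambda=(\lambda_1,\dots,\lambda_n)$ of $\mathrm{d}\X(0)$ — namely that $\Lambda$ is a complex multiple of an integer vector — and then invoke Lemma \ref{lem-alg}. First I would fix analytic coordinates in which $\X=\sum_j\lambda_jx_j(1+a_j(x))\partial_{x_j}$ with $a_j\in\mathcal{M}_n$ and all $\lambda_j\neq 0$ (possible since $\X\in\mathrm{Gen}(\mathfrak{X}(\C^n,0))$), and write $\X_1=\sum_j\lambda_jx_j\partial_{x_j}$ for the linear part and $\langle I,\Lambda\rangle=i_1\lambda_1+\dots+i_n\lambda_n$. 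By hypothesis there are $\hat f_1,\dots,\hat f_{n-1}$ with $\X(\hat f_i)\equiv 0$ and $\mathrm{d}\hat f_1\wedge\dots\wedge\mathrm{d}\hat f_{n-1}\not\equiv 0$; the latter is equivalent to $\hat f_1,\dots,\hat f_{n-1}$ being algebraically independent over $\C$. The main step will be to produce an $(n-1)\times n$ matrix $N$ with entries in $\mathbb{N}$, linearly independent rows, and $N\Lambda^t=0$. Once this is done, Lemma \ref{lem-alg} gives $k_1,\dots,k_n\in\mathbb{Z}$ and $\lambda\in\C^*$ with $(\lambda_1,\dots,\lambda_n)=(k_1,\dots,k_n)\lambda$; replacing $\X$ by $\lambda^{-1}\X$ — which changes neither $\F(\X)$ nor the functions $a_j$ — puts $\X$ in the form \eqref{formalform}. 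For $n=3$, any nonzero row $(i_1,i_2,i_3)\in\mathbb{N}^3$ of $N$ satisfies $i_1k_1+i_2k_2+i_3k_3=0$, which forbids $k_1,k_2,k_3$ from having a common sign; hence exactly one of them, say $k_{j_0}$, has the opposite sign to the other two, all three eigenvalues $\lambda k_j$ lie on the real line $\mathbb{R}\lambda$ through $0$, and the real line $L=i\lambda\mathbb{R}$ separates $\lambda k_{j_0}$ from the other two. Thus $\X$ satisfies $(\star)$ with $\lambda(\X)=\lambda k_{j_0}$.

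To produce $N$ I would first pass to a normal form. Since the existence of a formal first integral of $\F(\X)$ is invariant under formal conjugacy and $\Lambda$ is coordinate independent, I may replace $\X$ by a Poincar\'e--Dulac normal form $\hat{\X}_{NF}=\X_1+R$, where $R$ is a sum of resonant monomial vector fields $x^I\partial_{x_j}$ (those with $\langle I,\Lambda\rangle=\lambda_j$) of degree $\geq 2$ and $[\X_1,\hat{\X}_{NF}]=0$; then $\hat{\X}_{NF}$ also possesses $n-1$ algebraically independent formal first integrals $\hat g_1,\dots,\hat g_{n-1}$. Next I would show each $\hat g=\hat g_i$ is a formal power series in $\Lambda$-resonant monomials, i.e. involves only $x^I$ with $\langle I,\Lambda\rangle=0$: decomposing $\hat g=\sum_w\hat g_w$ into $\X_1$-weight components (so $\hat g_w$ collects the monomials $x^I$ of $\hat g$ with $\langle I,\Lambda\rangle=w$, and $\X_1(\hat g_w)=w\hat g_w$), the relation $[\X_1,\hat{\X}_{NF}]=0$ forces $\hat{\X}_{NF}(\hat g_w)=0$ for every $w$; and for $w\neq 0$ the lowest-degree homogeneous part of $\hat{\X}_{NF}(\hat g_w)=w\hat g_w+R(\hat g_w)$ equals $w$ times the nonzero lowest-degree part of $\hat g_w$ (since $R$ strictly raises degree), whence $\hat g_w=0$.

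Finally I would run a rank argument. Set $G=\Lambda^{\perp}\cap\mathbb{Z}^n$, a lattice of some rank $r\leq n-1$ (strictly less than $n$ because $\Lambda\neq 0$). Every $\hat g_i$ lies in $\C[[\,x^I:I\in G\cap\mathbb{N}^n\,]]$, so for any $v\in(G\otimes\C)^{\perp}$ the Euler-type vector field $\xi_v=\sum_kv_kx_k\partial_{x_k}$ satisfies $\xi_v(x^I)=\langle I,v\rangle x^I=0$ for $I\in G$, hence $\xi_v(\hat g_i)=0$ for all $i$. If $r\leq n-2$, then $(G\otimes\C)^{\perp}$ has dimension $\geq 2$, so I may choose $v,w$ linearly independent there; the vector fields $\xi_v,\xi_w$ are linearly independent on $\{x_1\cdots x_n\neq 0\}$ and both annihilate every $\mathrm{d}\hat g_i$, hence annihilate the $(n-1)$-form $\mathrm{d}\hat g_1\wedge\dots\wedge\mathrm{d}\hat g_{n-1}$; but an $(n-1)$-form on $\C^n$ annihilated by two generically independent vector fields must vanish identically (writing it as $i_Y(\mathrm{d}x_1\wedge\dots\wedge\mathrm{d}x_n)$ forces $Y\parallel\xi_v$ and $Y\parallel\xi_w$, so $Y\equiv 0$), contradicting the algebraic independence of $\hat g_1,\dots,\hat g_{n-1}$. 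Therefore $r=n-1$, so $G\cap\mathbb{N}^n$ contains $n-1$ linearly independent multi-indices $I_1,\dots,I_{n-1}\in\mathbb{N}^n$ with $\langle I_k,\Lambda\rangle=0$; taking these as the rows of $N$ completes the main step.

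I would expect the normal-form reduction together with this rank argument to be the main obstacle: one must check carefully that formal conjugacy transports the hypothesis, that first integrals of a Poincar\'e--Dulac normal form are genuine power series in resonant monomials, and that "$n-1$ algebraically independent first integrals" forces $\Lambda^{\perp}$ to be a rational hyperplane. By contrast, the application of Lemma \ref{lem-alg}, the rescaling $\X\mapsto\lambda^{-1}\X$, and the sign analysis yielding $(\star)$ for $n=3$ are routine bookkeeping.
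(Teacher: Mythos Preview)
Your proof is correct and takes a genuinely different route from the paper's. Both arguments aim to produce $n-1$ linearly independent multi-indices $I\in\mathbb{N}^n$ with $\langle I,\Lambda\rangle=0$ and then invoke Lemma~\ref{lem-alg}, and both handle condition~$(\star)$ for $n=3$ by the same sign analysis on the $k_i$. The difference is in how the multi-indices are obtained. The paper works directly with $\X$ in the form \eqref{gen-vf}: it expands $\X(\hat f_i)\equiv 0$, reads off from the lowest-order jet that every multi-index $I$ in the initial homogeneous part of $\hat f_i$ satisfies $\sum_r i_r\lambda_r=0$, and then cites the wedge-product computation at the end of the proof of Proposition~\ref{ribónpart} to extract $n-1$ independent such multi-indices from $\mathrm{d}\hat f_1\wedge\dots\wedge\mathrm{d}\hat f_{n-1}\not\equiv 0$. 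You instead pass to a Poincar\'e--Dulac normal form, use $[\X_1,\hat\X_{NF}]=0$ to show that \emph{every} monomial in a formal first integral of $\hat\X_{NF}$ is $\Lambda$-resonant, and then run a lattice-rank argument with the Euler-type fields $\xi_v$ to force $\mathrm{rank}(\Lambda^\perp\cap\mathbb{Z}^n)=n-1$. The paper's path is shorter and avoids formal conjugacy but leans on the earlier proposition; your path is self-contained and yields the sharper intermediate fact that first integrals of the normal form are honest series in resonant monomials. One small point to tighten: from $\mathrm{rank}(G)=n-1$ alone it does not follow that $G\cap\mathbb{N}^n$ contains $n-1$ independent vectors; apply your rank argument instead to the sublattice generated by the multi-indices actually occurring in the $\hat g_i$ (these lie in $\mathbb{N}^n$ by construction), and the conclusion is immediate.
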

\begin{proof}
We are considering $\X\in\mathrm{Gen}(\mathfrak{X}(\C^n , 0))$, and by definition, suppose now that
$\hat{F}=(\hat{f}_1\dots,\hat{f}_{n-1})$ is the formal first integral, then $\X(\hat{f}_i)\equiv 0$ for $i=1,\dots,n-1$. If $\hat{f}_i(x)=\sum_{|I|>p_i}{_{i}}a_Ix^I$
then \[\frac{\partial \hat{f}_i}{\partial x_r}(x)=\sum_{|I|>p_i}(i_r){_{i}}a_Ix_1^{i_1}\cdots x_r^{i_r-1}\cdots x_n^{i_n},\] and
\begin{align*}
\X(\hat{f}_i)&=\sum_{r=1}^n\lambda_rx_r (1 + a_r(x))\big(\sum_{|I|>p_i}(i_r){_{i}}a_Ix_1^{i_1}\cdots x_r^{i_r-1}\cdots x_n^{i_n}\big),\\
&=\sum_{r=1}^n\sum_{|I|>p_i}i_r\lambda_r{_{i}}a_I(1 + a_r(x))x_1^{i_1}\cdots x_r^{i_r}\cdots x_n^{i_n},\\
&=\sum_{|I|>p_i}\sum_{r=1}^ni_r\lambda_r{_{i}}a_I(1 + a_r(x))x^I,\\
&=\sum_{|I|>p_i}a_I\Big(\sum_{r=1}^ni_r\lambda_r{_{i}}\Big)x^I + \sum_{|I|>p_i}a_I\Big(\sum_{r=1}^ni_r\lambda_r{_{i}}a_r(x)\Big)x^I,\\
J^{p_i}\X(\hat{f}_i)&=\sum_{|I|=p_i}a_I\Big(\sum_{r=1}^ni_r\lambda_r{_{i}}\Big)x^I=0,
\end{align*}
then $\sum_{r=1}^ni_r\lambda_r{_{i}}=0$ for each $I=(i_1,\dots,i_n)$ when $a_I\neq 0$. Now, as we show at the end of the proof of Proposition \ref{ribónpart} there are $n-1$ 
linearly independent $n$-tuples satisfying this condition and with them we can form the matrix $N$ of Lemma \ref{lem-alg}, and we are done.\par
\end{proof}
\section{Holonomy and formal first integrals}
We know that holonomy maps (by its construction) leave invariant the level sets of a holomorphic first integral. What we want to obtain is a similar invariant relation in the case of 
a formal one, for
simplicity we work in dimension $3$ but small changes are needed for the general case. Consider the foliation given by
\[\X(x_1,x_2,x_3)=px_1a_1(x)\frac{\partial}{\partial x_1}+qx_2a_2(x)\frac{\partial}{\partial x_2}+x_3\frac{\partial}{\partial x_3},\]
where $a_1,a_2\in \mathcal{M}_3$ and $p,q\in\mathbb{Q}$, be $S:= (x_1 = x_2 = 0)$ and $\Sigma := (x 3 = 1)$. Now consider the closed loop $\gamma :[0,1]\mapsto S$ given by $\gamma(t)
= (0,0, e^{2\pi it})$ and let $\overline{\Gamma}_{(x_1 ,x_2 )}(t) = (\Gamma_1 (x_1, x_2, t), \Gamma_2 (x_1, x_2, t), e^{2\pi it})$ be its lifting along the leaves of $\F(\X)$ starting
at $(x_1, x_2, 1)\in\Sigma$. In particular, the map $h\in\mathrm{Diff}(\C^2 , 0)$ given by $\overline{\Gamma}_{(x_1 ,x_2)}(1) = (h(x_1, x_2 ), 1)$ is a generator of
$\mathrm{Hol}(\F(\X), S, \Sigma)$. 
Since $\overline{\Gamma}_{(x_1 ,x_2 )}(t)$ belongs to a leaf of $\F(\X)$, then 
\[\frac{\partial}{\partial t}\overline{\Gamma}_{(x_1,x_2)}(t)=\alpha\X(\Gamma_1 (x_1, x_2, t), \Gamma_2 (x_1 , x_2, t ), e^{2\pi it}).\]
From this vector equation one has $2\pi i e^{2\pi it}=\alpha e^{2\pi it}$, thus $\alpha = 2\pi i$. Furthermore, 
\begin{align*}
\frac{\partial \Gamma_1}{\partial t}  &=2p\pi i\Gamma_1(x_1, x_2, t)a_1(\overline{\Gamma}),\\
\frac{\partial \Gamma_2}{\partial t}  &=2q\pi i\Gamma_2(x_1, x_2, t)a_2(\overline{\Gamma}).
\end{align*}
\begin{rem}
Note that by Proposition \ref{pro-alg}, we can take the vector field $\X$ in the form \eqref{formalform} and multiplying by $\big(-k_3(1+a_3(x))\big)^{-1}$ is obtained a the vector 
field 
like the one we are using in this section who defines the same foliation.
\end{rem}
Suppose that $\hat{F}=(\hat{f}_1,\hat{f}_2)$, with $\hat{f}_1,\hat{f}_2\in \hat{\mathcal{O}}_3$, is a  formal first integral of the foliation $\F(\X)$, this means 
that $\hat{f}_1$ and $\hat{f}_2$ are $\F(\X)$-invariant then,
\begin{align*}
0&=px_1a_1(x_1,x_2,x_3)\frac{\partial\hat{f}_1}{\partial x_1}+qx_2a_2(x_1,x_2,x_3)\frac{\partial\hat{f}_1}{\partial x_2}+x_3\frac{\partial\hat{f}_1}{\partial x_3},\\
&\quad\text{evaluating }\overline{\Gamma}\text{ and multiplying by }2\pi i,\\
0&=2\pi ip\Gamma_1a_1(\overline{\Gamma})\frac{\partial\hat{f}_1}{\partial x_1}\Big|_{\overline{\Gamma}}+2\pi iq\Gamma_2a_2(\overline{\Gamma})\frac{\partial\hat{f}_1}{\partial
x_2}\Big|_{\overline{\Gamma}}+2\pi ie^{2\pi it}\frac{\partial\hat{f}_1}{\partial x_3}\Big|_{\overline{\Gamma}},\\
0&=\frac{\partial\Gamma_1}{\partial t}\frac{\partial\hat{f}_1}{\partial x_1}\Big|_{\overline{\Gamma}}+\frac{\partial\Gamma_2}{\partial t}\frac{\partial\hat{f}_1}{\partial
x_2}\Big|_{\overline{\Gamma}}+\frac{\mathrm{d}}{\mathrm{d}t}(e^{2\pi it})\frac{\partial\hat{f}_1}{\partial x_3}\Big|_{\overline{\Gamma}},\\
0&=\frac{\partial}{\partial t}(\hat{f}_1\circ\overline{\Gamma}).
\end{align*}
The last line (which also has for $\hat{f}_2$) implies that $\hat{f}_1\circ\overline{\Gamma}$ is constant in $t$, then,
\begin{align*}
\hat{f}_1\circ\overline{\Gamma}(x_1,x_2,1)&=\hat{f}_1\circ\overline{\Gamma}(x_1,x_2,0),\\
\hat{f}_1(h(x_1,x_2),1)&=\hat{f}_1(x_1,x_2,1).
\end{align*}
In conclusion, we obtain the relation we were looking for: \[\hat{F}(h(x_1,x_2),1)=\hat{F}(x_1,x_2,1).\]
\begin{rem}\label{hol_rad}
  Note that the previous computation works in the same way if we use instead of $\gamma(t)$ a circle with small radius. Note also that we are using the formal chain rule Section 
\ref{chainrule}.
\end{rem}
\subsection{From formal to holomorphic first integral}
Now we are in conditions to prove our first main result:
\begin{theorem}\label{theoA}
Let $\F(\X)$ be the germ of a holomorphic foliation with $\X\in\mathrm{Gen}\left(\mathfrak{X}(\C^3,0)\right)$, if $\F(\X)$ possesses a formal first integral   then it also possesses a
holomorphic one.
\end{theorem}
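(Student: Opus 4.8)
The plan is to combine the algebraic normal form of Proposition \ref{pro-alg}, the holonomy invariance relation derived in the preceding section, the periodicity criterion of Proposition \ref{ribónpart}, and the analytic rigidity of Theorem \ref{rusos} (equivalently Theorem \ref{existence}). Concretely, suppose $\F(\X)$ has a formal first integral $\hat{F}=(\hat{f}_1,\hat{f}_2)$. First I would invoke Proposition \ref{pro-alg} to put $\X$ in the form \eqref{formalform} with integer eigenvalues $k_1,k_2,k_3$, so that in particular condition $(\star)$ holds for $n=3$; after multiplying by a unit we may assume $\X$ has the shape used in Section 3, namely $\X = px_1a_1\partial_{x_1}+qx_2a_2\partial_{x_2}+x_3\partial_{x_3}$ with $p,q\in\mathbb{Q}$, and we may take $S=S_{\X}=(x_1=x_2=0)$ with transversal $\Sigma=(x_3=1)$.

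Next I would use the computation carried out just before the theorem: the holonomy generator $h\in\mathrm{Diff}(\C^2,0)$ of $\mathrm{Hol}(\F(\X),S,\Sigma)$ satisfies $\hat{F}(h(x_1,x_2),1)=\hat{F}(x_1,x_2,1)$, i.e.\ writing $\hat{g}_i(x_1,x_2):=\hat{f}_i(x_1,x_2,1)\in\hat{\mathcal{O}}_2$ we get $\hat{g}_i\circ h=\hat{g}_i$ for $i=1,2$, so $h\in H(\hat{g}_1,\hat{g}_2)$. The point I must check here is that $\hat{g}_1,\hat{g}_2$ are \emph{generically transverse}, that is $\mathrm{d}\hat{g}_1\wedge\mathrm{d}\hat{g}_2\not\equiv 0$; this should follow from $\mathrm{d}\hat{f}_1\wedge\mathrm{d}\hat{f}_2\not\equiv 0$ together with the fact that on each leaf the $\hat f_i$ are constant and the foliation is transverse to $\Sigma$ off the coordinate hyperplanes — in the normal-form coordinates the $x_3$-direction is "tangent" to the level structure in a controlled way, so the restriction to $\{x_3=1\}$ does not kill the wedge. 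Granting this, Proposition \ref{ribónpart} applies and yields that $H(\hat{g}_1,\hat{g}_2)$, hence in particular the cyclic group generated by $h$, is periodic; equivalently $\mathrm{Hol}(\F(\X),S,\Sigma)$ is periodic (linearizable and finite), since the holonomy group here is the cyclic group $\langle h\rangle$.

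Then I would apply Theorem \ref{existence} directly: $\X\in\mathrm{Gen}(\mathfrak{X}(\C^3,0))$ satisfies $(\star)$ and $\mathrm{Hol}(\F(\X),S_{\X},\Sigma)$ is periodic, so $\F(\X)$ admits a holomorphic first integral. (Alternatively, one can bypass the statement of Theorem \ref{existence} and argue through Theorem \ref{rusos}: since $\langle h\rangle$ is finite, it is analytically conjugate to a finite group of linear maps, which is the holonomy of a linear vector field $\mathcal{Y}$ of the same resonant type that visibly has a holomorphic first integral given by resonant monomials; Theorem \ref{rusos} then gives an analytic equivalence $\X\sim\mathcal{Y}$, transporting the holomorphic first integral back to $\X$.) This finishes the proof.

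The main obstacle I anticipate is the transversality step: verifying that the restriction $(\hat g_1,\hat g_2)=(\hat f_1|_{x_3=1},\hat f_2|_{x_3=1})$ remains generically transverse in two variables. One has to rule out the degenerate possibility that $\mathrm{d}\hat f_1\wedge\mathrm{d}\hat f_2$ is nonzero only because of the $\mathrm{d}x_3$-components. I would handle this by noting that $\X(\hat f_i)\equiv 0$ lets one solve for $\partial\hat f_i/\partial x_3$ in terms of $\partial\hat f_i/\partial x_1,\partial\hat f_i/\partial x_2$ (since the $x_3$-coefficient of $\X$ is a unit times $x_3$), so $\mathrm{d}\hat f_1\wedge\mathrm{d}\hat f_2$ is, up to units, determined by its $\mathrm{d}x_1\wedge\mathrm{d}x_2$-part, which is exactly (a unit multiple of) $\mathrm{d}\hat g_1\wedge\mathrm{d}\hat g_2$ evaluated along $x_3=1$; hence $\mathrm{d}\hat g_1\wedge\mathrm{d}\hat g_2\not\equiv0$. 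A secondary routine point is checking that the holonomy group is genuinely the full cyclic group $\langle h\rangle$ and that "periodic generator" upgrades to "finite group", which is immediate for a cyclic group. Everything else is assembling results already proved in the excerpt.
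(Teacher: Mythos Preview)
Your overall strategy matches the paper's: normalise via Proposition~\ref{pro-alg}, pass to the holonomy invariance $\hat g_i\circ h=\hat g_i$ on $\Sigma$, deduce periodicity of the holonomy, and conclude via Theorem~\ref{existence}. The gap is at the periodicity step. You verify that $\hat g_1,\hat g_2$ are \emph{generically} transverse ($\mathrm{d}\hat g_1\wedge\mathrm{d}\hat g_2\not\equiv 0$) and then invoke Proposition~\ref{ribónpart}. But Proposition~\ref{ribónpart} requires transversality \emph{at the origin}, $(\mathrm{d}\hat g_1\wedge\mathrm{d}\hat g_2)_0\neq 0$, which is strictly stronger and in general fails here: for the linear field $\X=2x_1\,\partial_{x_1}+3x_2\,\partial_{x_2}-x_3\,\partial_{x_3}$ the pair $\hat f_1=x_1^2x_3^4,\ \hat f_2=x_2x_3^3$ is a valid formal first integral, yet the restrictions $\hat g_1=x_1^2,\ \hat g_2=x_2$ satisfy $(\mathrm{d}\hat g_1\wedge\mathrm{d}\hat g_2)_0=0$. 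The paper therefore appeals to Proposition~\ref{ribón}, whose hypothesis is precisely generic transversality; its proof (Appendix~\ref{demRib}) goes through the pro-algebraic closure of the invariance group and is not reducible to the elementary radial-vector-field argument behind Proposition~\ref{ribónpart}. As written, your argument does not close without that stronger input.

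On the transversality verification itself, your algebraic idea---use $\X(\hat f_i)=0$ to express $x_3\,\partial_{x_3}\hat f_i$ in terms of $\partial_{x_1}\hat f_i,\partial_{x_2}\hat f_i$, so that the $\mathrm{d}x_j\wedge\mathrm{d}x_3$ coefficients of $\mathrm{d}\hat f_1\wedge\mathrm{d}\hat f_2$ are multiples of the $\mathrm{d}x_1\wedge\mathrm{d}x_2$ coefficient---is a reasonable alternative to the paper's route, which instead composes with the lift $\overline\Gamma$ and uses that the saturate of $\Sigma$ fills an open set to derive a contradiction from $\mathrm{d}\tilde f_1\wedge\mathrm{d}\tilde f_2\equiv 0$. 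Your version shows the $\mathrm{d}x_1\wedge\mathrm{d}x_2$ coefficient is nonzero as a three-variable formal series; you should still justify that its ``restriction to $x_3=1$'' is well-defined and remains nonzero as a two-variable formal series (the paper first checks $a_{0,0,k}=0$ so that $\tilde f_i(x_1,x_2):=\hat f_i(x_1,x_2,1)$ makes sense, and its flow argument handles the non-vanishing).
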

\begin{proof}[Proof of Theorem \ref{theoA}]
By definition of formal first integral  $\mathrm{d}\hat{f}_1\wedge\mathrm{d}\hat{f_2}\neq 0$ and by Proposition 1 in \cite{Cam-Sca2}, the vector field $\X$ can be written as:
\[\X(x)=mx_1(1+a_1(x))\frac{\partial}{\partial x_1}+nx_2(1+a_2(x))\frac{\partial}{\partial x_2}-kx_3(1+a_3(x))\frac{\partial}{\partial x_3},\]
were $m,n,k\in\mathbb{Z}^{+}$ and $a_1,a_2,a_3\in\mathcal{M}_3$ in particular satisfies \emph{condition} $(\star)$.\par Observe that $\X(f_i)\equiv 0$, where $f_i(x)=\sum_I 
{}_ia_Ix^I$, written in the particular case where $x_1=x_2=0$ becomes 
\[
  x_3\big(1+a_3(0,0,x_3)\big)\sum_ka_{0,0,k}kx_3^{k-1}\equiv 0,
\]
we can suppose that $1+a_3(0,0,x_3)\not\equiv 0$ because the vector field has an isolated singularity at the origin. Therefore, $\sum_ka_{0,0,k}kx_3^k\equiv 0$ what implies that 
$a_{0,0,k}\equiv 0$. Using this, we can define formal series in two variables as $\tilde{f}(x_1,x_2):=\hat{f}_i(x_1,x_2,1)$, thus the equalities from the end of the previous 
section become $\tilde{f}_i(x_1,x_2)=\tilde{f}_i(h(x_1,x_2))$ for $i=1,2$.\\
We can use now the previous sections and Chapter \ref{Chap2}, but first, we have to guarantee that they are still generically transverse because in general 
$\mathrm{d}\hat{f}_1\wedge\mathrm{d}\hat{f}_2\not\equiv 0$ does not imply $\mathrm{d}(\hat{f}_1(x_1,x_2,1))\wedge\mathrm{d}(\hat{f}_2(x_1,x_2,1))\not\equiv 0$. If 
$\mathrm{d}\tilde{f}_1\wedge\mathrm{d}\tilde{f}_2\equiv 0$ then, for  $\overline{\Gamma}_{(x_1 ,x_2 )}(t) = (\Gamma_1 (x_1, x_2, t), \Gamma_2 (x_1, x_2, t), e^{2\pi 
it})$ as before, $\mathrm{d}(\hat{f}_1\circ \tilde{\Gamma})\wedge\mathrm{d}(\hat{f}_2\circ \tilde{\Gamma})\equiv 0$ because from the 
previous section we have that $\frac{\partial}{\partial t}(\hat{f}_i\circ\overline{\Gamma})=0$. Let us write this with more care,
\begin{equation}\label{forma}
 \mathrm{d}\hat{f}_1\wedge\mathrm{d}\hat{f}_2=\sum_{i<j}\Big(\frac{\partial\hat{f}_1}{\partial x_i}\frac{\partial\hat{f}_2}{\partial x_j}-\frac{\partial\hat{f}_1}{\partial 
x_j}\frac{\partial\hat{f}_2}{\partial x_i}\Big)\mathrm{d}x_i\wedge\mathrm{d}x_j,
\end{equation}
then $\mathrm{d}\tilde{f}_1\wedge\mathrm{d}\tilde{f}_2=\mathrm{d}(\hat{f}_1(x_1,x_2,1))\wedge\mathrm{d}(\hat{f}_2(x_1,x_2,1))$ is the first them of the sum \eqref{forma} evaluated in 
$(x_1,x_2,1)$. Now,
\[
 \mathrm{d}(\hat{f}_1\circ\tilde{\Gamma})\wedge\mathrm{d}(\hat{f}_2\circ\tilde{\Gamma})=\Big(\frac{\partial(\hat{f}_1\circ\tilde{\Gamma})}{\partial x_1}\frac{
\partial(\hat{f}_2\circ\tilde{\Gamma})}{\partial x_2}-\frac{\partial(\hat{f}_1\circ\tilde{\Gamma})}{\partial x_2}\frac{\partial(\hat{f}_2\circ\tilde{\Gamma})}{\partial 
x_1}\Big)\mathrm{d}x_1\wedge\mathrm{d}x_2,
\]
the other two terms in this sum disappear because they involve derivatives with respect to $t$. Taking into account that 
\[
 \frac{\partial}{\partial x_j}(\hat{f}_i\circ\tilde{\Gamma})=\frac{\partial\hat{f}_i}{\partial x_1}\Big|_{\tilde{\Gamma}}\frac{\partial \Gamma_1}{\partial 
x_j}+\frac{\partial\hat{f}_i}{\partial x_2}\Big|_{\tilde{\Gamma}}\frac{\partial \Gamma_2}{\partial x_j}\quad\text{for}\quad i,j\in\{1,2\}
\]
we have,
\begin{align*}
 \frac{\partial(\hat{f}_1\circ\tilde{\Gamma})}{\partial x_1}\frac{\partial(\hat{f}_2\circ\tilde{\Gamma})}{\partial x_2}&=\Big(\frac{\partial\hat{f}_1}{\partial 
x_1}\Big|_{\tilde{\Gamma}}\frac{\partial \Gamma_1}{\partial 
x_1}+\frac{\partial\hat{f}_1}{\partial x_2}\Big|_{\tilde{\Gamma}}\frac{\partial \Gamma_2}{\partial x_1}\Big)\Big(\frac{\partial\hat{f}_2}{\partial 
x_1}\Big|_{\tilde{\Gamma}}\frac{\partial \Gamma_1}{\partial 
x_2}+\frac{\partial\hat{f}_2}{\partial x_2}\Big|_{\tilde{\Gamma}}\frac{\partial \Gamma_2}{\partial x_2}\Big),\\
&=\frac{\partial\hat{f}_1}{\partial x_1}\Big|_{\tilde{\Gamma}}\frac{\partial\hat{f}_2}{\partial x_1}\Big|_{\tilde{\Gamma}}\frac{\partial 
\Gamma_1}{\partial x_1}\frac{\partial \Gamma_1}{\partial x_2}
+\frac{\partial\hat{f}_1}{\partial x_1}\Big|_{\tilde{\Gamma}}\frac{\partial\hat{f}_2}{\partial x_2}\Big|_{\tilde{\Gamma}}\frac{\partial 
\Gamma_1}{\partial x_1}\frac{\partial \Gamma_2}{\partial x_2}+\\
&\quad +\frac{\partial\hat{f}_1}{\partial x_2}\Big|_{\tilde{\Gamma}}\frac{\partial\hat{f}_2}{\partial x_1}\Big|_{\tilde{\Gamma}}\frac{\partial 
\Gamma_2}{\partial x_1}\frac{\partial \Gamma_1}{\partial x_2}+\frac{\partial\hat{f}_1}{\partial x_2}\Big|_{\tilde{\Gamma}}\frac{\partial\hat{f}_2}{\partial 
x_2}\Big|_{\tilde{\Gamma}}\frac{\partial \Gamma_2}{\partial x_1}\frac{\partial \Gamma_2}{\partial x_2},
\end{align*}
\begin{align*}
 \frac{\partial(\hat{f}_1\circ\tilde{\Gamma})}{\partial x_2}\frac{\partial(\hat{f}_2\circ\tilde{\Gamma})}{\partial x_1}&=\Big(\frac{\partial\hat{f}_1}{\partial 
x_1}\Big|_{\tilde{\Gamma}}\frac{\partial \Gamma_1}{\partial 
x_2}+\frac{\partial\hat{f}_1}{\partial x_2}\Big|_{\tilde{\Gamma}}\frac{\partial \Gamma_2}{\partial x_2}\Big)\Big(\frac{\partial\hat{f}_2}{\partial 
x_1}\Big|_{\tilde{\Gamma}}\frac{\partial \Gamma_1}{\partial 
x_1}+\frac{\partial\hat{f}_2}{\partial x_2}\Big|_{\tilde{\Gamma}}\frac{\partial \Gamma_2}{\partial x_1}\Big),\\
&=\frac{\partial\hat{f}_1}{\partial x_1}\Big|_{\tilde{\Gamma}}\frac{\partial\hat{f}_2}{\partial x_1}\Big|_{\tilde{\Gamma}}\frac{\partial 
\Gamma_1}{\partial x_1}\frac{\partial \Gamma_1}{\partial x_2}
+\frac{\partial\hat{f}_1}{\partial x_1}\Big|_{\tilde{\Gamma}}\frac{\partial\hat{f}_2}{\partial x_2}\Big|_{\tilde{\Gamma}}\frac{\partial 
\Gamma_1}{\partial x_2}\frac{\partial \Gamma_2}{\partial x_1}+\\
&\quad +\frac{\partial\hat{f}_1}{\partial x_2}\Big|_{\tilde{\Gamma}}\frac{\partial\hat{f}_2}{\partial x_1}\Big|_{\tilde{\Gamma}}\frac{\partial 
\Gamma_2}{\partial x_2}\frac{\partial \Gamma_1}{\partial x_1}+\frac{\partial\hat{f}_1}{\partial x_2}\Big|_{\tilde{\Gamma}}\frac{\partial\hat{f}_2}{\partial 
x_2}\Big|_{\tilde{\Gamma}}\frac{\partial \Gamma_2}{\partial x_1}\frac{\partial \Gamma_2}{\partial x_2},
\end{align*}
therefore
\[
 \mathrm{d}(\hat{f}_1\circ\tilde{\Gamma})\wedge\mathrm{d}(\hat{f}_2\circ\tilde{\Gamma})=\Big(\frac{\partial\hat{f}_1}{\partial 
x_1}\frac{\partial\hat{f}_2}{\partial x_2}\Big|_{\tilde{\Gamma}}-\frac{\partial\hat{f}_1}{\partial x_2}\frac{\partial\hat{f}_2}{\partial 
x_1}\Big|_{\tilde{\Gamma}} \Big)\Big(\frac{\partial \Gamma_1}{\partial x_1}\frac{\partial \Gamma_1}{\partial x_2}-\frac{\partial 
\Gamma_1}{\partial x_2}\frac{\partial \Gamma_1}{\partial x_1} \Big)\mathrm{d}x_1\wedge\mathrm{d}x_2.
\]
be $\hat{f}_i\circ\tilde{\Gamma}$ constant in $t$ implies that if $\mathrm{d}\tilde{f}_1\wedge\mathrm{d}\tilde{f}_2\equiv 0$ then 
$\mathrm{d}(\hat{f}_1\circ\tilde{\Gamma})\wedge\mathrm{d}(\hat{f}_2\circ\tilde{\Gamma})\equiv0$, the former was restricted to $\{x_3=1\}$ and the later take values on the saturate of 
a small transverse section $\Sigma$ contained in $\{x_3=1\}$, as can bee see in \cite{Reis} (Proposition 1.) or Lemma \ref{lemA-dim3}, sat$\Sigma$ contains a neighborhood of 
the separatrices, which means that $\mathrm{d}\hat{f}_1\wedge\mathrm{d}\hat{f}_2\equiv 0$ and this is a contradiction.\\
With this in mind, by Proposition \ref{ribón}, we have that $\mathrm{Hol}(\F(\X),S,\Sigma)$ is periodic because it preserves $\{\tilde{f}_1,\tilde{f}_2\}$ and, its generated by one
germ of diffeomorphism. Therefore, the Theorem \ref{existence} implies that $\F(\X)$ has a holomorphic first integral.
\end{proof}
As for arbitrary dimension we have:
\begin{theorem}\label{theoB}
Let $\F(\X)$ be the germ of a holomorphic foliation with $\X\in\mathrm{Gen}\left(\mathfrak{X}(\C^n,0)\right)$ satisfying condition ($\star$), if $\F(\X)$ possesses a formal first
integral then it also possesses a holomorphic one.
\end{theorem}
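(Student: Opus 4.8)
The plan is to carry out the proof of Theorem~\ref{theoA} in dimension $n$, replacing the three–dimensional normal form used there by Proposition~\ref{pro-alg} and the final appeal to Theorem~\ref{existence} by an appeal to Theorem~\ref{rusos} against an explicit integrable linear model. For $n=3$ the statement is Theorem~\ref{theoA}, so I concentrate on $n\ge 4$, where condition~$(\star)$ is a genuine hypothesis rather than an automatic consequence of Proposition~\ref{pro-alg}.

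First I would normalize the vector field. Since $\X\in\mathrm{Gen}(\mathfrak{X}(\C^n,0))$ has a formal first integral $\hat F=(\hat f_1,\dots,\hat f_{n-1})$, Proposition~\ref{pro-alg} writes $\F(\X)$ in local coordinates as $\X(x)=\sum_{r=1}^n k_r x_r(1+a_r(x))\frac{\partial}{\partial x_r}$ with $k_r\in\mathbb{Z}\setminus\{0\}$ and $a_r\in\mathcal{M}_n$. By $(\star)$ exactly one eigenvalue lies on the opposite side of a real line through $0$ from all the others; reordering, I assume this is the $n$-th one, so $k_n<0<k_1,\dots,k_{n-1}$, $S_{\X}=\{x_1=\dots=x_{n-1}=0\}$, and I take a transverse section $\Sigma\subset\{x_n=1\}$. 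Dividing $\X$ by the unit $k_n(1+a_n)$ I may moreover assume the $x_n$-component is $x_n\frac{\partial}{\partial x_n}$. Exactly as in Theorem~\ref{theoA}, restricting $\X(\hat f_i)\equiv 0$ to $S_{\X}$ and using that $1+a_n(0,\dots,0,x_n)$ is a unit shows each $\hat f_i$ is constant along $S_{\X}$, so the restrictions $\tilde f_i(x_1,\dots,x_{n-1}):=\hat f_i(x_1,\dots,x_{n-1},1)$ are well-defined formal series in $\C^{n-1}$; and the holonomy computation preceding Theorem~\ref{theoA} (lift the loop $t\mapsto(0,\dots,0,e^{2\pi it})$ to $\overline{\Gamma}$ along the leaves, observe $\frac{\partial}{\partial t}(\hat f_i\circ\overline{\Gamma})\equiv 0$, and use the formal chain rule of Section~\ref{chainrule}) gives the invariance relation $\tilde f_i\circ h=\tilde f_i$ for $i=1,\dots,n-1$, where $h\in\mathrm{Diff}(\C^{n-1},0)$ generates $\mathrm{Hol}(\F(\X),S_{\X},\Sigma)$.

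The first real difficulty is to check that $\tilde f_1,\dots,\tilde f_{n-1}$ are still \emph{generically transverse}, i.e. that the top form $\mathrm{d}\tilde f_1\wedge\dots\wedge\mathrm{d}\tilde f_{n-1}$ on $\Sigma$ is not identically zero. This is the $(n-1)$-function analogue of the Jacobian computation in the proof of Theorem~\ref{theoA}: expanding $\mathrm{d}(\hat f_1\circ\overline{\Gamma})\wedge\dots\wedge\mathrm{d}(\hat f_{n-1}\circ\overline{\Gamma})$ and discarding every wedge term containing $\mathrm{d}t$ (all of which vanish since each $\hat f_i\circ\overline{\Gamma}$ is independent of $t$), one finds that $\mathrm{d}\tilde f_1\wedge\dots\wedge\mathrm{d}\tilde f_{n-1}\equiv 0$ would force $\mathrm{d}\hat f_1\wedge\dots\wedge\mathrm{d}\hat f_{n-1}\equiv 0$ on $\mathrm{sat}\,\Sigma$, which contains a full neighbourhood of the separatrices by \cite{Reis} (Proposition~1) or (in the appropriate dimension) Lemma~\ref{lemA-dim3} — contradicting condition~(a) in the definition of a formal first integral. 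Granting this, $\langle h\rangle=\mathrm{Hol}(\F(\X),S_{\X},\Sigma)\subset H(\tilde f_1,\dots,\tilde f_{n-1})$, which is periodic by Proposition~\ref{ribón}; here one must use Proposition~\ref{ribón} rather than the easier Proposition~\ref{ribónpart}, since after restriction the $\tilde f_i$ are in general only generically transverse, not transverse at the origin. In particular $h$ is periodic, hence analytically linearizable, hence analytically conjugate to $\mathrm{d}h_0=\mathrm{diag}(e^{2\pi ik_1/k_n},\dots,e^{2\pi ik_{n-1}/k_n})$, the last identity following from a direct holonomy computation for the normalized $\X$.

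Finally I would compare $\X$ with the linear model $\mathcal{Y}(x)=\sum_{r=1}^n k_r x_r\frac{\partial}{\partial x_r}$. One checks readily that $\mathcal{Y}\in\mathrm{Gen}(\mathfrak{X}(\C^n,0))$ has an isolated singularity at $0$, satisfies $(\star)$ (same eigenvalues), its holonomy along the $x_n$-axis is the finite cyclic group generated by the linear map $h_{\mathcal{Y}}=\mathrm{diag}(e^{2\pi ik_r/k_n})_{r<n}=\mathrm{d}h_0$, and — because $k_n<0<k_1,\dots,k_{n-1}$ — the $n-1$ monomials $x_r^{|k_n|}x_n^{k_r}$ $(r=1,\dots,n-1)$ are functionally independent holomorphic first integrals of $\F(\mathcal{Y})$. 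Since $h_{\X}=h$ is analytically linearizable it is analytically conjugate to $h_{\mathcal{Y}}$, so Theorem~\ref{rusos} gives that $\X$ and $\mathcal{Y}$ are analytically equivalent; pulling the first integral of $\F(\mathcal{Y})$ back through this equivalence yields a holomorphic first integral of $\F(\X)$. The two steps I expect to be the main obstacles are the generic-transversality computation for the restricted series $\tilde f_i$ — routine but combinatorially heavier than the two-function case of Theorem~\ref{theoA} — and checking that the linear model $\mathcal{Y}$ simultaneously carries the correct (linear, finite) holonomy and a genuinely holomorphic, not merely meromorphic, first integral; it is precisely here, and in producing the invariant curve $S_{\X}$, that the hypothesis $(\star)$ is used.
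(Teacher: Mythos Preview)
Your proposal is correct and follows the paper's scheme up to and including the periodicity of the holonomy: normalize via Proposition~\ref{pro-alg}, restrict the formal first integral to the transversal $\Sigma$, check generic transversality of the $\tilde f_i$ by the saturation argument (the paper cites the same ingredients, \cite{Reis} Proposition~1), and apply Proposition~\ref{ribón} to conclude that $\mathrm{Hol}(\F(\X),S_{\X},\Sigma)=\langle h\rangle$ is periodic.

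Where you diverge is in the last step. The paper simply invokes Theorem~5 of \cite{Reb-Reis}, which is the $n$-dimensional analogue of Theorem~\ref{existence}: for $\X\in\mathrm{Gen}(\mathfrak{X}(\C^n,0))$ satisfying $(\star)$, periodic holonomy along $S_{\X}$ is equivalent to the existence of a holomorphic first integral. You instead unpack what that theorem presumably does internally: you exhibit the linear model $\mathcal{Y}=\sum k_rx_r\,\partial/\partial x_r$, compute that its holonomy is the linear part $\mathrm{d}h_0$, use the periodicity of $h$ (hence analytic linearizability, Proposition~\ref{fingroup}) to conjugate $h_{\X}$ to $h_{\mathcal{Y}}$, and then apply Theorem~\ref{rusos} to get an analytic equivalence $\F(\X)\simeq\F(\mathcal{Y})$ and pull back the explicit polynomial first integral $(x_r^{|k_n|}x_n^{k_r})_{r<n}$. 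This is more self-contained and makes transparent exactly where $(\star)$ enters beyond producing $S_{\X}$: it forces one $k_r$ to have the opposite sign from the others, which is precisely what makes those monomials \emph{holomorphic} rather than merely meromorphic. The paper's route is shorter because it outsources this construction; yours makes the mechanism visible but is otherwise the same argument, since the paper itself remarks that Theorem~\ref{rusos} is ``basically the heart of the proof'' of Theorem~\ref{existence}.
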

\begin{proof}[Proof of Theorem \ref{theoB}]
The proof goes on as the previous one but now we use Theorem 5 in \cite{Reb-Reis} which needs the condition $(\star)$.
\end{proof}
\chapter{Vector fields and Darboux's Theorem}\label{Chap4}
\section{Preliminaries}
Be $\F$ a foliation by curves in $\C P(n)$ and $L$ a leaf of $\F$.
\begin{defi}
 We say that $L$ \emph{is algebraic} if the closure $\overline{L}$ of $L$ in $\C P(n)$, is an algebraic subset of dimension $1$, i.e., an algebraic curve. In this case, we also say 
that $\overline{L}$ is \emph{an algebraic solution of $\F$}.
\end{defi}
\begin{rem}
 Be $\F$ a foliation in $\C P(n)$, whose singularities are isolated. Then, a leaf $L$ of $\F$ is an algebraic solution, if and only if, $\overline{L}$ is obtained from $L$  
by the adjunction of the singularities of $\F$ to which $L$ is adherent. 
\end{rem}
\begin{theorem}[Darboux's Theorem]\label{darboux}
 Let $\F$ be a foliation in $\C P(2)$ who possesses infinitely many algebraic solutions. Then $\F$ admits a rational first integral.
\end{theorem}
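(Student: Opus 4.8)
The plan is to encode the algebraic solutions as projective curves and turn "infinitely many of them" into a pencil of curves that trivializes the foliation. First I would fix a polynomial $1$-form $\omega = A\,dx + B\,dy$ (in an affine chart) defining $\F$, with $A,B$ coprime of degree $d$, so that an irreducible algebraic curve $\{f=0\}$ is $\F$-invariant precisely when $f \mid \omega \wedge df$, equivalently there is a polynomial cofactor $g_f$ (of degree $\le d-1$) with $\omega\wedge df = g_f\, f\, dx\wedge dy$, i.e. $df$ restricted to $\{f=0\}$ annihilates the foliation. The key finite-dimensionality observation is that the cofactor $g_f$ lives in the space of polynomials of degree $\le d-1$, which has dimension $N = \binom{d+1}{2}$; this bound is independent of $f$.

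Next I would exploit infinitude. Suppose $\{f_1=0\},\{f_2=0\},\dots$ are pairwise distinct irreducible invariant curves with cofactors $g_1,g_2,\dots$. Since the $g_i$ all lie in an $N$-dimensional vector space, once we have more than $N$ of them there is a nontrivial linear relation $\sum_{i=1}^{k} \lambda_i g_i = 0$ with not all $\lambda_i$ zero, $k \le N+1$. Using the multiplicativity of cofactors under products and the fact that $g_{f^a} = a\,g_f$ and $g_{fg} = g_f + g_g$, the relation $\sum \lambda_i g_i = 0$ says exactly that the rational function $F = \prod_i f_i^{\lambda_i}$ has $dF/F = \sum_i \lambda_i\, df_i/f_i$ proportional to $\omega$; more precisely $\omega \wedge dF = \bigl(\sum_i \lambda_i g_i\bigr) F\, dx\wedge dy = 0$, so $F$ is a (multivalued, in general) first integral whose level sets are leaves. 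The remaining task is to upgrade this to a genuine \emph{rational} first integral: if the $\lambda_i$ can be taken in $\mathbb{Z}$ (or $\mathbb{Q}$, clearing denominators) then $F$ itself is rational and we are done.

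The main obstacle is precisely forcing rationality of the exponents $\lambda_i$, and this is where genuine use of "infinitely many" (not just "more than $N$") is needed. The standard route: having one relation $\sum\lambda_i g_i = 0$ with complex $\lambda_i$, consider the map sending a finite set of invariant curves to the tuple of their cofactors in $\mathbb{C}^N$; with infinitely many invariant curves one produces infinitely many such relations, and an argument over $\mathbb{Q}$ (the cofactors of curves defined over a finitely generated field lie in a $\mathbb{Q}$-subspace, or: rationality of the period lattice of the corresponding logarithmic form, or: a dimension count forcing the closure of the invariant curves to fill the fibers of a map to $\mathbb{P}^1$) shows one can choose the relation with rational coefficients. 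Concretely I would argue that the foliation then has an invariant fibration: the pencil $\langle \prod f_i^{p_i}, \prod f_j^{q_j}\rangle$ generated by two independent integer relations defines a rational map $\mathbb{P}^2 \dashrightarrow \mathbb{P}^1$ whose fibers are $\F$-invariant, hence (after resolving base points and using that a foliation with infinitely many algebraic leaves cannot have all leaves distinct from the fibers) it is a rational first integral.

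An alternative, slicker packaging I would also keep in mind: pass to the space of degree-$e$ curves $\mathbb{P}^{M}$ for suitable $e$, note the invariant ones form a (constructible, in fact closed) subvariety $\Lambda$, and the hypothesis "infinitely many" forces $\dim\Lambda \ge 1$; a positive-dimensional family of invariant curves of bounded degree sweeps out $\mathbb{P}^2$ and organizes into a pencil, again producing the rational first integral. Either way, the soft finite-dimensionality of cofactors plus a rationality/compactness argument is the engine; the delicate point, and the step I expect to spend the most care on, is the passage from complex linear relations among cofactors to an honest rational (integer-exponent) first integral.
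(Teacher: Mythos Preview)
The paper does not supply its own proof of this statement: Darboux's Theorem is quoted in the Preliminaries of Chapter~4 as a classical result and is then \emph{used} (in the proof of Theorem~\ref{homogeneous} and the subsequent proposition) rather than proved. So there is nothing in the paper to compare your argument against line by line.

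That said, your proposal is the standard Darboux--Jouanolou strategy and is correct in outline: associate to each irreducible invariant curve its cofactor in the fixed finite-dimensional space of polynomials of degree $\le d-1$, and exploit linear dependence among cofactors to build a first integral of Darboux type. Your identification of the genuine difficulty --- promoting a complex linear relation $\sum\lambda_i g_i=0$ to one with integer exponents so that $\prod f_i^{\lambda_i}$ is honestly rational --- is exactly the point where Darboux's original argument stops and Jouanolou's refinement begins. One small correction of emphasis: you write that this step ``is where genuine use of `infinitely many' (not just `more than $N$') is needed,'' but in fact Jouanolou's theorem gives a \emph{finite} bound (of order $d(d+2)/2$) beyond which a rational first integral is forced; ``infinitely many'' is convenient overkill, not a necessity. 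Your alternative route via positive-dimensional families of invariant curves of bounded degree is also standard and perfectly viable; with infinitely many invariant curves, the pigeonhole step (infinitely many of some fixed degree) plus the observation that invariance is a Zariski-closed condition on the linear system does produce a pencil. Either packaging would be acceptable as a proof of the statement the paper merely cites.
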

\section{Vector fields with infinitely many invariant hypersurfaces}
\subsection{Homogeneous case}
\begin{defi}
  Let $\X\in\mathcal{X}(\C^3,0)$, we say that $\X$ is \emph{homogeneous of degree $\nu$} if $\X(x)=a_\nu(x)\frac{\partial}{\partial x_1}+b_\nu(x)\frac{\partial}{\partial 
x_2}+c_\nu(x)\frac{\partial}{\partial x_3}$ where $a_\nu,b_\nu$ and $c_\nu$ are homogeneous polynomials with same degree $\nu$ and without common factors.
\end{defi}
Note that if $\X$ is homogeneous of degree $\nu$ then $\X(\lambda x)=\lambda^{\nu}\X(x)$ for every $\lambda\in\C^*$, intuitively this means that along the line $\lambda x$ the vector 
field $\X$ points in the same direction allowing us to define a vector field $\tilde{\X}$ in 
the projective plane $\C P(2)$ as follows,\par
Remember that the usual differential structure of $\C P(2)$ is given by the atlas $\{(U_i,\varphi_i)\}_{i=1}^3$ where 
$U_i=\{[x_1;x_2;x_3]\in\C P(2)\,|\,x_i\neq 0\}$ and 
\begin{equation*}\begin{split}                 
  \varphi_1([x_1;x_2;x_3])=\Big(\frac{x_2}{x_1},\frac{x_3}{x_1}\Big)=(x,y),\\
  \varphi_2([x_1;x_2;x_3])=\Big(\frac{x_1}{x_2},\frac{x_3}{x_2}\Big)=(u,v),\\
  \varphi_3([x_1;x_2;x_3])=\Big(\frac{x_1}{x_3},\frac{x_2}{x_3}\Big)=(s,r).
\end{split}
\end{equation*}
Consider the projection
\begin{align*}
   \Pi:\C^3 \to\C P(2): (x_1,x_2,x_3)\to[(x_1;x_2;x_3)]=\{\lambda(x_1,x_2,x_3)\,|\,\lambda\in\C^*\}
\end{align*}
that in the first chart is written as $\Pi_1(x_1,x_2,x_3)=\varphi_1\circ\Pi(x_1,x_2,x_3)=(x,y)$. Putting all of this together, $\tilde{\X}$ in the first chart is: 
\begin{align*}
  \tilde{\X}_1(x,y)&=\Pi_1^*\X(x,y)\big|_{x_1=1}=\Big\{{\mathrm{d}\Pi_1}_{\Pi_1^{-1}(x,y)}\X\big(\Pi_1^{-1}(x,y)\big)\Big\}_{x_1=1},\\ 
   &=\begin{bmatrix}
		          -x&1&0\\
			  -y&0&1\\
		  \end{bmatrix}\X(1,x,y),
\end{align*}\vspace{-0.7cm}
\begin{align*}
	\tilde{\X}_1(x,y)=\big(b_\nu(1,x,y)-xa_\nu(1,x,y)\big)\frac{\partial}{\partial x}+\big(c_\nu(1,x,y)-ya_\nu(1,x,y)\big)\frac{\partial}{\partial y},		  
\end{align*}
in the same way for the other two charts. 
\begin{theorem}\label{homogeneous}
Let $\X$ be a germ of homogeneous vector field in $0\in\C^3$. Suppose that $\X$ leaves invariant infinitely many hypersurfaces passing through $0$ and in 
general position. Then, there exist a rational map $f:\C P(2)\to \C P(1)$ that is $\F(\X)$-\emph{invariant}  (i. e., $\X(f)\equiv 0$) this map is also call it a weak first integral 
of 
$\F(\X)$.
\end{theorem}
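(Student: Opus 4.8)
The plan is to push the whole situation down to $\C P(2)$ and then apply Darboux's Theorem (Theorem~\ref{darboux}). Since $\X$ is homogeneous of degree $\nu$, we have $\X(\lambda x)=\lambda^{\nu}\X(x)$, so the foliation $\F(\X)$ is invariant under the homotheties $x\mapsto\lambda x$ and therefore descends, through the projection $\Pi$, to a foliation $\tilde\F=\F(\tilde\X)$ of $\C P(2)$, given in the charts $\varphi_i$ by the polynomial vector fields $\tilde\X_i$ written above (after cancelling a possible common factor of their components, so that $\tilde\F$ has isolated singularities and Theorem~\ref{darboux} is applicable). Because $\Pi$ sends leaves of $\F(\X)$ onto open pieces of leaves of $\tilde\F$, any $\tilde\F$-invariant rational map $\C P(2)\to\C P(1)$ pulls back to a weak first integral of $\F(\X)$; thus it suffices to exhibit infinitely many \emph{algebraic solutions} of $\tilde\F$.

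The device linking the invariant hypersurfaces of $\X$ with algebraic curves of $\tilde\F$ is the tangent cone at $0$. If $S=\{h=0\}$ is an $\X$-invariant hypersurface, with $h$ reduced, then invariance means $\X(h)=h\cdot\lambda$ for a germ $\lambda$; writing $h=h_{k}+h_{k+1}+\cdots$ and $\lambda=\lambda_{0}+\lambda_{1}+\cdots$ in homogeneous components and using that $\X$ raises degrees exactly by $\nu-1$, the comparison of lowest homogeneous parts forces $\lambda_{0}=\cdots=\lambda_{\nu-2}=0$ and
\[
\X(h_{k})=h_{k}\,\lambda_{\nu-1}.
\]
Hence the initial form $h_{k}$ cuts out an $\X$-invariant algebraic cone, whose projectivization is a $\tilde\F$-invariant algebraic curve in $\C P(2)$. (Equivalently: among the $\X$-invariant hypersurfaces, those that are cones are exactly the ones invariant for the integrable plane field $\langle\X,\mathcal R\rangle$ spanned by $\X$ and the radial field $\mathcal R$ — integrable since $[\mathcal R,\X]=(\nu-1)\X$ — and these correspond bijectively to the $\tilde\F$-invariant algebraic curves of $\C P(2)$.) I would then invoke the \emph{general position} hypothesis on the family $S_{1},S_{2},\dots$ to conclude that the induced cones $\{h_{k}^{(i)}=0\}$, hence the corresponding curves in $\C P(2)$, cannot all lie in one fixed algebraic curve: otherwise infinitely many of the $S_{i}$ would be tangent at $0$ to a common algebraic hypersurface, against general position in the sense of Definition~\ref{genpos1}. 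Splitting into irreducible components, this produces infinitely many pairwise distinct algebraic solutions of $\tilde\F$.

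With that in hand, Darboux's Theorem yields a rational first integral $f\colon\C P(2)\to\C P(1)$ of $\tilde\F$, i.e. $\tilde\X(f)\equiv 0$; pulling back by $\Pi$ gives $\X(f\circ\Pi)\equiv 0$, so $f$ is the desired weak first integral of $\F(\X)$. The step I expect to be the genuine obstacle is the one where general position is used: one must be sure that infinitely many \emph{distinct} invariant hypersurfaces really do give rise to infinitely many \emph{distinct} invariant algebraic curves downstairs — that is, to infinitely many distinct tangent cones — and not to an infinite family all sharing a single tangent cone (in which case the non-conic invariant hypersurfaces would contribute no algebraic curve in $\C P(2)$ and Darboux could not be applied). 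Making the precise form of the general position assumption do exactly this is the crux; the remaining steps are the routine reduction to Darboux's Theorem.
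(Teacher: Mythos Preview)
Your proposal is correct and follows essentially the same strategy as the paper: pass from each invariant hypersurface to its tangent cone via the initial homogeneous part, project to $\C P(2)$ to obtain $\tilde\X$-invariant algebraic curves, apply Darboux, and pull back. The paper carries out the invariance of the projected curve and the vanishing $\X(f)\equiv 0$ by explicit chart computations rather than your more conceptual argument, and is equally terse at the general-position step you correctly flag as the crux.
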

\begin{proof}
The idea of the proof is to send the vector field to the complex projective space and show that it defines there a foliation with infinitely many algebraic leaves, then we use 
Darboux's Theorem \ref{darboux} to obtain a first integral for this vector field which is a weak first integral for the original one.\par 
 Suppose that $S:=\{g=0\}$, for an irreducible $g\in \mathcal{M}_3$, is an $\X$-invariant hypersurface which is equivalent to said that $g$ \emph{divides} 
$\X(g)$, noted as $g\,\big|\,\X(g)$, to see this if $x_0\in S$ and $\phi(T)$ is the integral curve of the 
vector field $\X$ with $\phi(0)=x_0$ defined in a neighborhood of $0\in\C$ then, 
\[
\begin{cases}
 g(\phi)=0,\\
 \X(\phi(T))=\phi'(T),
\end{cases}
\hfill\text{together they imply that}\quad\X(g)(\phi)=0.
\]
Therefore, as $\X(g)(\cdot)$ is a holomorphic function which is null restricted to $S$, it can be written as
\begin{equation}\label{divides}
 \X(g)(\cdot)=g(\cdot)h(\cdot),
\end{equation}
where $h\in\mathcal{O}_3$.\\
Remember that if $\kappa$ is the order of $g$ then $g=g_\kappa+g_{\kappa+1}+\cdots$ where $g_m$ is a homogeneous polynomial of degree $m$, thus by the linearity of $\X$ as a 
derivation operator we have that 
\[
 \X(g)=\X(g_\kappa)+\X(g_{\kappa+1})+\cdots,
\]
is also a sum of homogeneous polynomials, $\X(g_\kappa)$ is homogeneous of order $\nu+\kappa-1$, $\X(g_{\kappa+1})$ is homogeneous of order $\nu+\kappa$, etc., being $\nu$ the 
order of $\X$ as before. Obviously $h$ in \eqref{divides} can also be written as a sum of homogeneous polynomials and the degree of the first not null of them (the order 
of $h$) necessarily is $\nu-1$ by \eqref{divides}. Using this, \eqref{divides} can be rewritten in the following way, 
\begin{align*}
 \X(g_\kappa)+\X(g_{\kappa+1})+\cdots&=(g_\kappa+g_{\kappa+1}+\cdots)(h_{\nu-1}+h_\nu+\cdots),\\
			      &=g_\kappa h_{\nu-1}+\dots,
\end{align*}
which implies, by comparing the degree of the terms in both sides, that
\[
 \X(g_\kappa)=g_\kappa h_{\nu-1},
\]
in other words $g_\kappa\,|\, \X(g_\kappa)$, thereby $S_\kappa:=\{g_\kappa=0\}$ is an $\X$-invariant algebraic hypersurface.\\
Next, as we mention previously the homogeneity of $\X$ can be used to define a vector field $\tilde{\X}$ in $\C P(2)$, the same can be done with $g_\kappa$ and define a function 
$\tilde{g}_\kappa$ in $\C P(2)$ as follows,
\begin{align*}
 \tilde{g}_\kappa(x,y)&=\Pi_1^*g_\kappa|_{x_1=1},\\
		      &=g_\kappa(\Pi_1^{-1}(x,y))|_{x_1=1},\\
		      &=g_\kappa(1,x,y)
\end{align*}
analogously in the other two charts. Let us see that $\tilde{g}_\kappa\,|\, \tilde{\X}(\tilde{g}_\kappa)$, first we use the equality 
$g_\kappa(x_1,x_2,x_3)=x_1^{\kappa}g_\kappa(1,x_2/x_1,x_3/x_1)=x_1^{\kappa}g_\kappa(1,x,y)$ to calculate $\triangledown g_\kappa(1,x,y)$ in terms of $x_1,x_2$ and $x_3$, as below,
\begin{align*}
 \frac{\partial g_\kappa}{\partial x_1}&=\kappa x_1^{\kappa-1}g_\kappa+x_1^\kappa\Big(\frac{\partial g_\kappa}{\partial x}\frac{d x}{d x_1}+\frac{\partial g_\kappa}{\partial 
y}\frac{d y}{d x_1}\Big),\\
				       &=\kappa x_1^{\kappa-1}g_\kappa+x_1^{\kappa-1}\Big(-x\frac{\partial g_\kappa}{\partial x}-y\frac{\partial g_\kappa}{\partial 
y}\Big),\\
 \frac{\partial g_\kappa}{\partial x_2}&=x_1^\kappa\Big(\frac{\partial g_\kappa}{\partial x}\frac{d x}{d x_2}+\frac{\partial g_\kappa}{\partial 
y}\frac{d y}{d x_2}\Big),\\
				       &=x_1^{\kappa-1}\frac{\partial g_\kappa}{\partial x},\\
 \frac{\partial g_\kappa}{\partial x_3}&=x_1^\kappa\Big(\frac{\partial g_\kappa}{\partial x}\frac{d x}{d x_3}+\frac{\partial g_\kappa}{\partial 
y}\frac{d y}{d x_3}\Big),\\
				       &=x_1^{\kappa-1}\frac{\partial g_\kappa}{\partial y},
\end{align*}
if we set $x_1=1$ they become,
\[
 \frac{\partial g_\kappa}{\partial x_1}=\kappa g_\kappa+\Big(-x\frac{\partial g_\kappa}{\partial x}-y\frac{\partial g_\kappa}{\partial 
y}\Big),\quad
 \frac{\partial g_\kappa}{\partial x_2}=\frac{\partial g_\kappa}{\partial x},\quad
 \frac{\partial g_\kappa}{\partial x_3}=\frac{\partial g_\kappa}{\partial y},
\]
second, keep in mind that $\X(g_\kappa)=a_\nu\frac{\partial g_\kappa}{\partial x_1}+b_\nu\frac{\partial g_\kappa}{\partial 
x_2}+c_\nu\frac{\partial g_\kappa}{\partial x_3}=g_\kappa h_\nu$ in particular for $x_1=1$ now, $\tilde{g}_\kappa\,|\, \tilde{\X}(\tilde{g}_\kappa)$ is consequence of the previous 
considerations, 
\begin{align*}
 \tilde{\X}(\tilde{g}_\kappa)&=\begin{bmatrix}
				 -x&1&0\\
				 -y&0&1\\
			      \end{bmatrix}\X(1,x,y)\cdot\triangledown g_\kappa(1,x,y),\\
			      &=\big(-xa_\nu+b_\nu\big)\frac{\partial g_\kappa}{\partial x}+\big(-ya_\nu+c_\nu\big)\frac{\partial g_\kappa}{\partial y},\\
			      &=a_\nu\Big(-x\frac{\partial g_\kappa}{\partial x}-y\frac{\partial g_\kappa}{\partial y}\Big)+b_\nu\frac{\partial g_\kappa}{\partial 
x}+c_\nu\frac{\partial g_\kappa}{\partial y},\\
			      &=-\kappa a_\nu g_\kappa+\Big(a_\nu\frac{\partial g_\kappa}{\partial x_1}+b_\nu\frac{\partial g_\kappa}{\partial 
x_2}+c_\nu\frac{\partial g_\kappa}{\partial x_3}\Big),\\
			      &=-\kappa a_\nu g_\kappa+g_\kappa h_\nu,\\  
\tilde{\X}(\tilde{g}_\kappa)&=g_\kappa\big(-\kappa a_\nu+h_\nu\big),
\end{align*}
where all the functions are evaluated in $(1,x,y)$.\par 
Thus, $\{\tilde{g}_\kappa=0\}$ is an algebraic curve $\tilde{\X}$-invariant. The same argument is valid with any of the infinitely many $\X$-invariant hypersurfaces and the fact 
that there are infinitely many in general position implies that there exist infinitely many algebraic curves $\tilde{\X}$-invariant, then by 
Darboux's Theorem, $\tilde{\X}$ posseses a rational first integral $f:\C P(2)\to \C P(1)$. Only remains to see that $f$ is $\F(\X)$-invariant, this is equivalent to verify that 
$\X(f)\equiv 0$, which is the next an final step in the proof.\par
We can think $f$ as a function in $\C^3$ constant along the directions $f(\lambda x)=f(x)$ in other words, homogeneous of order $0$. So, as we did before with $g_\kappa$, $f$ can be 
written as $f(x_1,x_2,x_3)=f(1,x_2/x_1,x_3/x_1)=f(1,x,y)$ and by derivation,     
\[
 \frac{\partial f}{\partial x_1}=-\frac{x}{x_1}\frac{\partial f}{\partial x}-\frac{y}{x_1}\frac{\partial f}{\partial 
y},\quad
 \frac{\partial f}{\partial x_2}=\frac{1}{x_1}\frac{\partial f}{\partial x},\quad
 \frac{\partial f}{\partial x_3}=\frac{1}{x_1}\frac{\partial f}{\partial y},
\]
using that $\tilde{\X}_1(f)=(-xa_\nu+b_\nu)\frac{\partial f}{\partial x}+(-ya_\nu+c_\nu)\frac{\partial f}{\partial y}\equiv 0$ where all the functions are evaluated in $(1,x,y)$, we 
can calculate
\begin{align*}
    \X(f)&=a_\nu(x_1,x_2,x_3)\frac{\partial f}{\partial x_1}+b_\nu(x_1,x_2,x_3)\frac{\partial f}{\partial x_2}+c_\nu(x_1,x_2,x_3)\frac{\partial f}{\partial x_3},\\
	 &=x_1^{\nu}\Big(a_\nu(1,x,y)\frac{\partial f}{\partial x_1}+b_\nu(1,x,y)\frac{\partial f}{\partial x_2}+c_\nu(1,x,y)\frac{\partial f}{\partial x_3}\Big), \\
	 &=x_1^{\nu-1}\Big(a_\nu(1,x,y)\big( -x\frac{\partial f}{\partial x}-y\frac{\partial f}{\partial y}\big)+b_\nu(1,x,y)\frac{\partial f}{\partial x}\\
	 &\hspace{5,5cm}+c_\nu(1,x,y)\frac{\partial f}{\partial y}\Big),\\
	 &=x_1^{\nu-1}\Big((-xa_\nu+b_\nu)\frac{\partial f}{\partial x}+(-ya_\nu+c_\nu)\frac{\partial f}{\partial y}\Big)\equiv0,\\
    \X(f)&\equiv0.\qedhere
\end{align*}
\end{proof}\noindent
In order to conclude the homogeneous case is important to note that the previous method does not produce two weak first integrals transversally independent, because both 
of them are first integrals of $\tilde{\X}$ then in $\C^3$ they have the same level sets.  
\subsection{Generalities on blow-ups.}
Suppose that $\X(x)=a(x_1,x_2,x_3)\frac{\partial}{\partial x_1}+b(x_1,x_2,x_3)\frac{\partial}{\partial x_2}+c(x_1,x_2,x_3)\frac{\partial}{\partial x_3}$, where
$a,b,c\in\mathcal{O}_3$ are given by $a(x)=\sum_{|I|\geq p_1} a_Ix^I,\ 
b(x)=\sum_{|J|\geq p_2} b_Jx^J$ and $c(x)=\sum_{|K|\geq p_3} c_Kx^K$ . If $\varphi_1$ is the first chart of the blow-up, we note 
$E\circ\varphi^{-1}_1(z_1,z_2,z_3)=(z_1,z_1z_2,z_1z_3)$ simply by $E_1(z)$,  $a(E_1(z))$ by $a(z)$ and in the same way $b(z),\ c(z)$. Observe that in this chart the 
divisor, $D:=E^{-1}(0)=\C P(2)$, is given by $\{z_1=0\}$.\par
Using this notation we calculate $\tilde{\X}(z)=(\mathrm{d}E_1^{-1})_{E_1(z)}\X\big(E_1(z)\big)$,
\[ \mathrm{d}E_1=\begin{bmatrix}
              1&0&0\\
              z_2&z_1&0\\
              z_3&0&z_1\\
             \end{bmatrix},\quad 
   \mathrm{d}E_1^{-1}=\frac{1}{z_1^2}\begin{bmatrix}
              z_1^2&0&0\\
              -z_1z_2&z_1&0\\
              -z_1z_3&0&z_1\\
             \end{bmatrix},           
            \]
thus, 
\[\tilde{\X}(z)=\frac{1}{z_1^2}\begin{bmatrix}
              z_1^2&0&0\\
              -z_1z_2&z_1&0\\
              -z_1z_3&0&z_1\\
             \end{bmatrix}\cdot\begin{bmatrix} a(z)\\ b(z)\\ c(z)\end{bmatrix}, \]
\begin{align*}
\tilde{\X}(z)&=a(z)\frac{\partial}{\partial z_1}+\frac{1}{z_1}(-z_2a(z)+b(z))\frac{\partial}{\partial z_2}
+\frac{1}{z_1}(-z_3a(z)+c(z))\frac{\partial}{\partial z_3}\\           
&=\big(z_1^{\nu}j^{\nu}a(1,z_2,z_3)+z_1^{\nu+1}(\dots)\big)\frac{\partial}{\partial z_1}+\\
&\quad\ \big(-z_2z_1^{\nu-1}j^{\nu}a(1,z_2,z_3)+z_1^{\nu-1}j^{\nu}b(1,z_2,z_3)+z_1^{\nu}(\dots)\big)\frac{\partial}{\partial z_2}+\\
&\quad\ \big(-z_3z_1^{\nu-1}j^{\nu}a(1,z_2,z_3)+z_1^{\nu-1}j^{\nu}c(1,z_2,z_3)+z_1^{\nu}(\dots)\big)\frac{\partial}{\partial z_3},\\          
\tilde{\X}(z)&=z_1^{\nu}j^{\nu}a(1,z_2,z_3)\frac{\partial}{\partial z_1}+\\
&\quad\ z_1^{\nu-1}\big(-z_2j^{\nu}a(1,z_2,z_3)+j^{\nu}b(1,z_2,z_3)\big)\frac{\partial}{\partial z_2}+\\
&\quad\ z_1^{\nu-1}\big(-z_3z_1^{\nu-1}j^{\nu}a(1,z_2,z_3)+j^{\nu}c(1,z_2,z_3)\big)\frac{\partial}{\partial z_3}+z_1^{\nu}(\dots),     
\end{align*}
where $j^{\nu}(\cdot)$ means the $\nu$-jet and $\nu=\mathrm{min}\{p_1,p_2,p_3\}$ then, supposing that $x_2j^\nu a\neq x_1j^\nu b$ or $x_3j^\nu a\neq x_1j^\nu c$ (i.e., $0$ is a not 
dicritic singularity \cite{VFields-Cano}) in that case we can define in the first chart of $D$
\[\tilde{X}_D(z_2,z_3):=\big((z_1^{\nu-1})^{-1}\tilde{\X}(z)\big)_{z_1=0}\] and, we
have that
\begin{equation}\label{firstchart}
 \begin{split}
  \tilde{X}_D(z_2,z_3)=\big(-z_2j^{\nu}a(1,z_2,z_3)+j^{\nu}b(1,z_2,z_3)\big)\frac{\partial}{\partial z_2}+\\
   \big(-z_3j^{\nu}a(1,z_2,z_3)+j^{\nu}c(1,z_2,z_3)\big)\frac{\partial}{\partial z_3},  
 \end{split}
\end{equation}
to write $\tilde{X}_D$ in the others chart, that we will note $\tilde{X}_D(s,t)$ and $\tilde{X}_D(u,v)$, for simplicity, remember that\newpage
\begin{wrapfigure}{l}{0.5\textwidth}
  \vspace{-20pt}
  \begin{center}
    \resizebox{5cm}{!}{\input{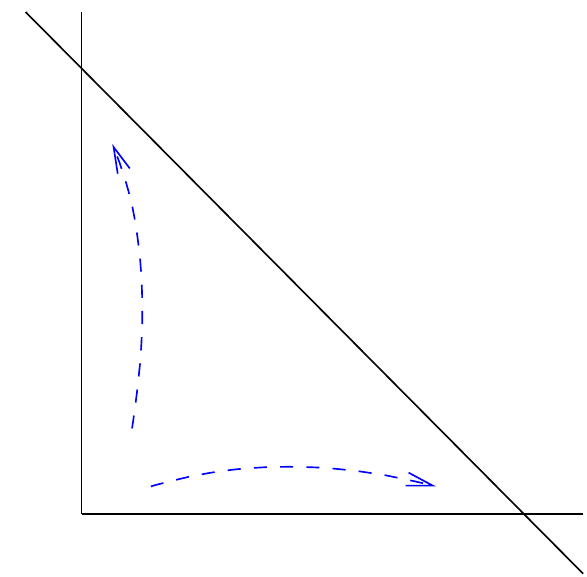_t}}
    \caption{Change of Charts}
    \label{explo7}
  \end{center}
  \vspace{-90pt}
\end{wrapfigure}
where, 
\begin{align*}
\varphi_{21}(z_2,z_3)&=(u,v)\\
 u&=1/z_2\\
 v&=z_3/z_2,
\end{align*}and
\begin{align*}
 \varphi_{31}(z_2,z_3)&=(r,s)\\
 r&=z_2/z_3\\
 s&=1/z_3,\\
\end{align*}\vspace{0.52cm}\\
hence,
\begin{align*}
\tilde{X}_D(u,v)&=u^{\nu-1}\mathrm{d}\varphi_{21}\tilde{X}_D(\varphi_{21}^{-1}(u,v))\quad\text{and}\\ 
\tilde{X}_D(r,s)&=s^{\nu-1}\mathrm{d}\varphi_{31}\tilde{X}_D(\varphi_{31}^{-1}(r,s)), 
\end{align*}
using that 
\[ \mathrm{d}\varphi_{21}=\begin{bmatrix}
                        -u^2 & 0\\
                         -uv & u\\                         
                        \end{bmatrix}\ \text{ and }\  
 \mathrm{d}\varphi_{31}=\begin{bmatrix}
                          s & -rs\\
                          0 & -s^2\\
                        \end{bmatrix},              
\]
we have,
\begin{align*} 
\tilde{X}_D(u,v)&=u^\nu\Big(-uj^{\nu}b\Big(1,\frac{1}{u},\frac{v}{u}\Big)+j^{\nu}a\Big(1,\frac{1}{u},\frac{v}{u}\Big)\Big)\frac{\partial}{\partial u}+\\
&\quad\ u^\nu\Big(-vj^{\nu}b\Big(1,\frac{1}{u},\frac{v}{u}\Big)+j^{\nu}c\Big(1,\frac{1}{u},\frac{v}{u}\Big)\Big)\frac{\partial}{\partial v},\\
\end{align*}
and
\begin{align*} 
\tilde{X}_D(r,s)&=s^\nu\Big(-rj^{\nu}c\Big(1,\frac{r}{s},\frac{1}{s}\Big)+j^{\nu}b\Big(1,\frac{r}{s},\frac{1}{s}\Big)\Big)\frac{\partial}{\partial r}+\\
&\quad\ s^\nu\Big(-sj^{\nu}c\Big(1,\frac{r}{s},\frac{1}{s}\Big)+j^{\nu}a\Big(1,\frac{r}{s},\frac{1}{s}\Big)\Big)\frac{\partial}{\partial s},\\
\tilde{X}_D(r,s)&=\Big(-rj^{\nu}c(s,r,1)+j^{\nu}b(s,r,1)\Big)\frac{\partial}{\partial r}+\\
&\quad\ \Big(-sj^{\nu}c(s,r,1)+j^{\nu}a(s,r,1)\Big)\frac{\partial}{\partial s},
\end{align*}
Observe that $\tilde{X}_D(z_2,z_3)$ is a polynomial vector field of degree $\leq\nu+1$ leaving $D$ invariant.
\subsection{General case}
\begin{lem}
If a vector field $\X\in\mathcal{X}(\C^3,0)$ leaves invariant a hypersurface passing through $0$, then its first jet $\X_\nu$  leaves invariant an algebraic hypersurface passing 
through $0$.  
\end{lem}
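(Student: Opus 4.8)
The plan is to mimic the first half of the proof of Theorem~\ref{homogeneous}, where an invariant hypersurface of a \emph{homogeneous} vector field was turned into an invariant \emph{algebraic} hypersurface by comparing homogeneous components; here the homogeneous vector field is replaced by the first jet $\X_\nu$ of $\X$. So let $S=\{g=0\}$, with $g\in\mathcal{M}_3$, $g\not\equiv 0$, be the $\X$-invariant hypersurface through the origin. Since $0\in S$, the point $0$ lies in at least one irreducible component of $S$, and that component is again $\X$-invariant, so we may assume $g$ irreducible. As recalled in the proof of Theorem~\ref{homogeneous}, invariance of $S$ is equivalent to $g\mid \X(g)$; hence I would write $\X(g)=g\,h$ with $h\in\mathcal{O}_3$.

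Next I would expand everything into homogeneous parts: $g=g_\kappa+g_{\kappa+1}+\cdots$ with $g_\kappa\not\equiv 0$ homogeneous of degree $\kappa\ge 1$, $\X=\X_\nu+\X_{\nu+1}+\cdots$ with $\X_\nu\not\equiv 0$ (this is exactly the first jet), and $h=h_\mu+h_{\mu+1}+\cdots$. Since $\X_i$ acts as a derivation that raises degree by $i-1$, the term $\X_i(g_j)$ is homogeneous of degree $i+j-1$, so the lowest-degree homogeneous part of $\X(g)=\sum_{i,j}\X_i(g_j)$ is $\X_\nu(g_\kappa)$, of degree $\nu+\kappa-1$, whereas the lowest-degree part of $g\,h$ is $g_\kappa h_\mu$, of degree $\kappa+\mu$.

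Comparing the lowest-degree homogeneous components on the two sides of $\X(g)=g\,h$ then finishes the proof: if $\X_\nu(g_\kappa)\not\equiv 0$ then necessarily $\mu=\nu-1$ and $\X_\nu(g_\kappa)=g_\kappa h_{\nu-1}$, while if $\X_\nu(g_\kappa)\equiv 0$ the relation $g_\kappa\mid \X_\nu(g_\kappa)$ is trivial; in both cases $g_\kappa\mid \X_\nu(g_\kappa)$, which, by the same divisibility criterion used in Theorem~\ref{homogeneous}, says that $\{g_\kappa=0\}$ is an algebraic hypersurface invariant by $\X_\nu$, and it passes through $0$ because $g_\kappa$ is homogeneous of degree $\kappa\ge 1$.

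The argument is essentially bookkeeping with graded pieces, so there is no serious obstacle; the only two points worth stating explicitly are the degenerate case $\X_\nu(g_\kappa)\equiv 0$ (where the degree comparison is vacuous but the conclusion is automatic, e.g. when $g$ is already a first integral of $\X$), and the fact that one does not need $g_\kappa$ to be irreducible --- it is enough that $\{g_\kappa=0\}$ be invariant as the zero set of a nonzero homogeneous polynomial, which is precisely what $g_\kappa\mid \X_\nu(g_\kappa)$ encodes.
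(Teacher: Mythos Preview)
Your argument is correct and follows the same route as the paper: reduce to an irreducible $g$, use the divisibility criterion $\X(g)=g\,h$, expand $\X$, $g$, $h$ into homogeneous pieces, and compare lowest-degree components to obtain $\X_\nu(g_\kappa)=g_\kappa h_{\nu-1}$, hence $g_\kappa\mid \X_\nu(g_\kappa)$. You are in fact slightly more careful than the paper, which tacitly assumes $\X_\nu(g_\kappa)\not\equiv 0$ when asserting that the order of $h$ is $\nu-1$; your explicit treatment of the degenerate case $\X_\nu(g_\kappa)\equiv 0$ closes that small gap.
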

\begin{proof}
 The argument is similar to the one in the first part of the demonstration of Theorem \ref{homogeneous}. Let $S=\{g=0\}$, for $g\in\mathcal{M}_3$ irreducible, be a $\X$-invariant 
hypersurface, then it exist $h\in\mathcal{M}_3$ such that $\X(g)=gh$. The three of them, $\X,g$ and $h$ can be written as a sum of homogeneous terms,
\begin{align*}
 \X&=\X_\nu+\X_{\nu+1}+\cdots,\\
 g&=g_\kappa+g_{\kappa+1}+\cdots,\\
 h&=h_{\nu-1}+h_\nu+\cdots,
\end{align*}
 the equality $\X(g)=gh$ implies that the order of $h$ is $\nu-1$, and by comparing both sides of
\[
  \X_\nu(g_\kappa+g_{\kappa+1}+\cdots)+\X_{\nu+1}(g_\kappa+\cdots)+\cdots=(g_\kappa+\cdots)(h_{\nu-1}+\cdots),
\]
we get that $\X_\nu(g_\kappa)=g_\kappa h_{\nu-1}$.
\end{proof}
In what follows, we note by $\tilde{\X}$ the push-back of the vector field $\X$ by the blow-up $E:\tilde{\C^3}\to\C^3$ at the origin and $\tilde{X}_D$ its restriction to the divisor 
and we have,
\begin{pro}
  Let $\F(\X)$ be the germ of a holomorphic foliation with $\X\in\mathfrak{X}(\C^3,0)$ having a isolated not dicritic singularity at $0$. If there exist infinitely many 
$\X$-invariant analytic 
hypersurfaces passing through $0$ and in general position then $\tilde{X}_D$ possesses a rational first integral.
 \end{pro}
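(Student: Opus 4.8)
The plan is to reduce the statement to the homogeneous case already handled in Theorem~\ref{homogeneous}, via the lemma proved just above. First I would expand $\X=\X_\nu+\X_{\nu+1}+\cdots$ into homogeneous components, $\nu$ being the order of $\X$, so that $\X_\nu=a_\nu\frac{\partial}{\partial x_1}+b_\nu\frac{\partial}{\partial x_2}+c_\nu\frac{\partial}{\partial x_3}$ is a homogeneous vector field of degree $\nu$. Comparing the expression~\eqref{firstchart} for $\tilde{X}_D$ in the first chart of the divisor with the formula for the projectivization $\tilde{\X}_1$ of a homogeneous vector field recalled before Theorem~\ref{homogeneous} — here $j^\nu a(1,z_2,z_3)$, $j^\nu b(1,z_2,z_3)$, $j^\nu c(1,z_2,z_3)$ are precisely the dehomogenizations of $a_\nu,b_\nu,c_\nu$ — one sees that $\tilde{X}_D$ is nothing but the vector field induced on $D=\C P(2)$ by the homogeneous field $\X_\nu$. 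The non-dicritic hypothesis is exactly what guarantees that $\X_\nu$ is not a multiple of the radial vector field, equivalently that $\tilde{X}_D$ is a genuine nonzero polynomial vector field on $D$ (i.e. that $D$ is $\tilde{\X}$-invariant); a possible common factor of $a_\nu,b_\nu,c_\nu$ is harmless, since it does not change the foliation.

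Next, for each of the given $\X$-invariant hypersurfaces $S=\{g=0\}$, with $g\in\mathcal{M}_3$ irreducible of order $\kappa$, the preceding lemma gives $\X_\nu(g_\kappa)=g_\kappa h_{\nu-1}$, so the tangent cone $\{g_\kappa=0\}$ is an algebraic hypersurface invariant under $\X_\nu$. I would then copy the computation in the proof of Theorem~\ref{homogeneous} almost verbatim (the no-common-factor assumption there is not used in that computation): setting $\tilde{g}_\kappa(z_2,z_3):=g_\kappa(1,z_2,z_3)$ and using the identities for $\partial g_\kappa/\partial x_i$ at $x_1=1$ together with $\X_\nu(g_\kappa)=g_\kappa h_{\nu-1}$, one obtains $\tilde{X}_D(\tilde{g}_\kappa)=\tilde{g}_\kappa\cdot\bigl(-\kappa\,a_\nu(1,\cdot,\cdot)+h_{\nu-1}(1,\cdot,\cdot)\bigr)$, hence $\tilde{g}_\kappa\mid\tilde{X}_D(\tilde{g}_\kappa)$ and $\{\tilde{g}_\kappa=0\}$ is an algebraic solution of $\F(\tilde{X}_D)$ on $\C P(2)$. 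Geometrically this is just the statement that the strict transform of the $\tilde{\X}$-invariant hypersurface $S$ meets the $\tilde{\X}$-invariant divisor $D$ along the projectivized tangent cone $\{\tilde{g}_\kappa=0\}$, which is therefore invariant for the restricted foliation $\tilde{X}_D$.

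Running this construction over the whole infinite family of $\X$-invariant hypersurfaces in general position, one produces — as in Theorem~\ref{homogeneous} — infinitely many pairwise distinct algebraic solutions of $\F(\tilde{X}_D)$, and then Darboux's Theorem~\ref{darboux} applied to $\tilde{X}_D$ on $\C P(2)$ yields a rational first integral, which is the claim. I expect the main obstacle to be the general-position step: several distinct $\X$-invariant hypersurfaces may share the same tangent cone, so one must deduce from the general-position hypothesis that infinitely many \emph{distinct} projectivized tangent cones arise — this is the one point where that assumption is genuinely used, and it should be handled exactly as in the proof of Theorem~\ref{homogeneous}. A secondary, more routine point is to verify in all three charts that $\tilde{X}_D$ coincides with the projectivization of $\X_\nu$ and that the non-dicritic condition keeps it nontrivial.
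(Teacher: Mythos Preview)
Your proposal is correct and follows essentially the same route as the paper. The only organizational difference is that the paper applies Theorem~\ref{homogeneous} to $\X_\nu$ as a black box to obtain a rational $f$ with $\X_\nu(f)\equiv 0$, and then verifies by a direct chain-rule computation that this same $f$ satisfies $\tilde{X}_D(f)\equiv 0$; you instead make the identification ``$\tilde{X}_D$ equals the projectivization of $\X_\nu$'' explicit upfront and re-run the Darboux argument directly on $\tilde{X}_D$ --- the two are equivalent, since that identification is precisely what the paper's final computation is checking.
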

\begin{proof}
  The previous lemma, together with Theorem \ref{homogeneous} implies that $\X_\nu$ possesses a weak first integral, now remembering the previous section, $\tilde{\X}$ in the first 
chart of the blow-up is given by $\tilde{\X}(z)=(\mathrm{d}E_1^{-1})_{E_1(z)}\X(E_1(z))$ where 
  $E_1(z_1,z_2,z_3)=(z_1,z_1z_2,z_1z_3)$, $E_1^{-1}(x_1,x_2,x_3)=(x_1,x_2/x_1,x_3/x_1)$, $z_1=x_1,\ z_2=x_2/x_1,\ z_3=x_3/x_1$ and
\[
 (\mathrm{d}E_1^{-1})_x=\begin{bmatrix}
			   1&0&0\\
			  -x_2/x_1^2&1/x_1&0\\
			  -x_3/x_1^2&0&1/x_1\\
			\end{bmatrix},\quad 
(\mathrm{d}E_1^{-1})_{E_1(z)}=\frac{1}{z_1}\begin{bmatrix}
			   z_1&0&0\\
			  -z_2&1&0\\
			  -z_3&0&1\\
			\end{bmatrix}, 
\]
if $\X(x)=a(x)\frac{\partial}{\partial x_1}+b(x)\frac{\partial}{\partial x_2}+c(x)\frac{\partial}{\partial x_3}$ then 
\begin{align*}
 \tilde{\X}(z)&=a(z)\frac{\partial}{\partial z_1}+\frac{1}{z_1}\big(-z_2a(z)+b(z)\big)\frac{\partial}{\partial z_2}+\frac{1}{z_1}\big(-z_3a(z)+c(z)\big)\frac{\partial}{\partial 
z_3},\\
	      &=z_1^{\nu}a_\nu(z)\frac{\partial}{\partial z_1}+z_1^{\nu-1}\big(-z_2a_\nu(z)+b_\nu(z)\big)\frac{\partial}{\partial 
z_2}+\\
	      &\hspace{2,25cm}+z_1^{\nu-1}\big(-z_3a_\nu(z)+c_\nu(z)\big)\frac{\partial}{\partial z_3}+z_1^\nu(\dots)
\end{align*}
and $\tilde{\X}_D(z)=\big[(z_1^{\nu-1})^{-1}\tilde{\X}(z)\big]_{z_1=0}$ thus,
\[
 \tilde{\X}_D(z)=\big(-z_2a_\nu(z)+b_\nu(z)\big)\frac{\partial}{\partial z_2}+\big(-z_3a_\nu(z)+c_\nu(z)\big)\frac{\partial}{\partial z_3},
\]
Now, as we mention before, there exist $f:\C P(2)\to\C P(1)$ such that $\X_\nu(f)\equiv 0$, i.e., $a_\nu(x)\frac{\partial f}{\partial x_1}+b_\nu(x)\frac{\partial f}{\partial 
x_2}+c_\nu(x)\frac{\partial f}{\partial x_3}\equiv0$, and we proceed as in the end of the proof of Theorem \ref{homogeneous},
\begin{align*}
 \tilde{\X}_D(f)&=\big(-z_2a_\nu(z)+b_\nu(z)\big)\frac{\partial f}{\partial z_2}+\big(-z_3a_\nu(z)+c_\nu(z)\big)\frac{\partial f}{\partial z_3},\\
                &=-z_2a_\nu(z)\frac{\partial f}{\partial z_2}-z_3a_\nu(z)\frac{\partial f}{\partial z_3}+b_\nu(z)\frac{\partial f}{\partial z_2}+c_\nu(z)\frac{\partial f}{\partial 
z_3},\\
		&=x_1\Big(a_\nu(z)\frac{\partial f}{\partial x_1}+b_\nu(z)\frac{\partial f}{\partial x_2}+c_\nu(z)\frac{\partial f}{\partial x_3}\Big),\\
\tilde{\X}_D(f)&\equiv0.
\end{align*}
In the part above we use the following notation $a_\nu(x)=a_\nu(x_1,x_2,x_3)=z_1^{\nu}a_\nu(1,z_2,z_3)=z_1^{\nu}a_\nu(z)$, and that $f(x_1,x_2,x_3)=f(1,z_2,z_3)$ which implies by 
derivation,
\[
 \frac{\partial f}{\partial x_1}=-\frac{z_2}{x_1}\frac{\partial f}{\partial z_2}-\frac{z_3}{x_1}\frac{\partial f}{\partial 
z_3},\quad
 \frac{\partial f}{\partial x_2}=\frac{1}{x_1}\frac{\partial f}{\partial z_2},\quad
 \frac{\partial f}{\partial x_3}=\frac{1}{x_1}\frac{\partial f}{\partial z_3}.
\]
\end{proof}
\begin{rem}[about condition ($\star$) ]
 The condition ($\star$) was defined in Definition \ref{star} and is it possible to choose a vector $v$ such that Re$\Big(\frac{\lambda_i}{v}\Big)$ has different sign for the 
eigenvalue $\lambda_i$ that can be separated. 
 \begin{figure}[htbp]
 \begin{center}
    \resizebox{5cm}{!}{\input{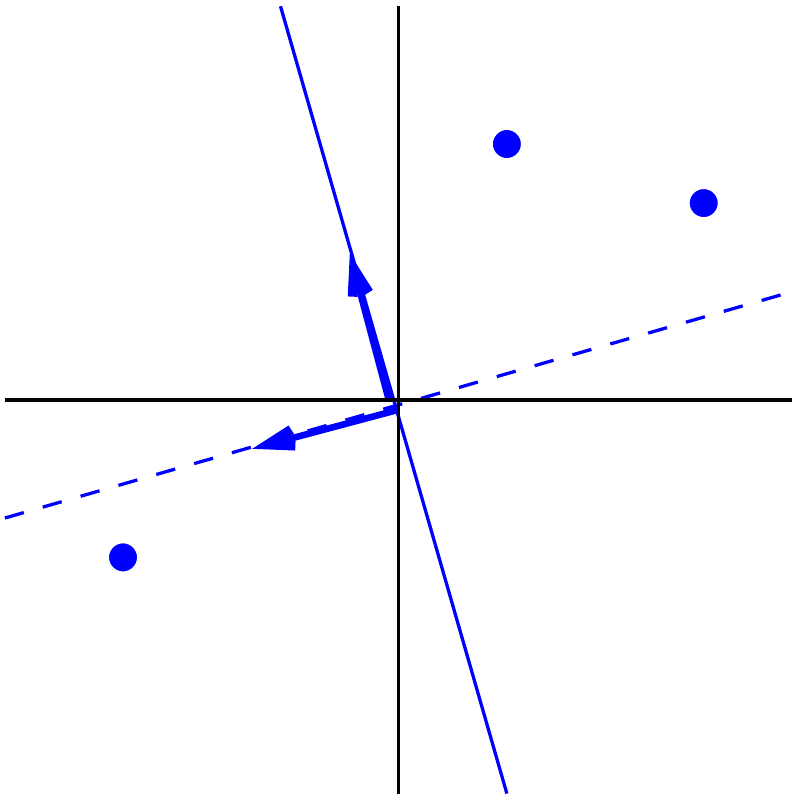_t}}
     \caption{Condition $(\star)$ with $l$ the line separating $\lambda_3$.}
      \label{Vchap3}
\end{center}
\end{figure}
\end{rem}
\begin{theorem}
 Let $\F(\X)$ be the germ of a holomorphic foliation with $\X\in\mathrm{Gen}\left(\mathfrak{X}(\C^3,0)\right)$ and satisfying condition ($\star$). Then $\F(\X)$ has a holomorphic 
first integral if, and only if, the leaves of $\F(\X)$ are closed off the singularity and there exist infinitely many $\X$-invariant analytic hypersurfaces passing through $0$ and in 
general position.
\end{theorem}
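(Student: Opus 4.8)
I would prove the equivalence through its two implications; $(\Rightarrow)$ is a direct check, and for $(\Leftarrow)$ the idea is to use the closedness of the leaves to put $\X$ in a Siegel normal form and to control the holonomy $h$ of $\F(\X)$ along $S_{\X}$, to use the invariant hypersurfaces to produce enough $h$--invariant analytic curves on a transversal, to conclude via the finiteness machinery of Chapter~\ref{Chap1} that $h$ is periodic, and finally to invoke Theorem~\ref{existence}.

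\textbf{$(\Rightarrow)$.} If $F=(f_1,f_2)$ is a holomorphic first integral, then the leaves lie in the analytic sets $F^{-1}(c)$, so every leaf $L$ satisfies $\overline L\setminus\mathrm{Sing}(\F)=L$, i.e. is closed off the singularity. With $d_i=\mathrm{ord}_0 f_i\ge 1$, the functions $g_{\alpha,\beta}=\alpha f_1^{\,d_2}+\beta f_2^{\,d_1}$, $(\alpha:\beta)\in\C P(1)$, vanish at $0$, are $\X$--invariant since $\X(g_{\alpha,\beta})=\alpha d_2 f_1^{\,d_2-1}\X(f_1)+\beta d_1 f_2^{\,d_1-1}\X(f_2)\equiv 0$, and cut out hypersurfaces through $0$; because $f_1,f_2$ are functionally independent their homogeneous leading parts move with $(\alpha:\beta)$, so these hypersurfaces are pairwise distinct and in general position.

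\textbf{$(\Leftarrow)$.} Since $\X\in\mathrm{Gen}(\mathfrak{X}(\C^3,0))$, the coordinate planes are $\X$--invariant, and on each of the two containing $S_{\X}$ the foliation $\F(\X)$ restricts to a germ of foliation in $\C^2$ with isolated singularity whose leaves are still closed off $\{0\}$; by Mattei--Moussu \cite{M-M} this forces the holonomy of $S_{\X}$ inside those planes to have finite orbits, hence (Theorem~\ref{fund-theo}) to be periodic. Therefore the multipliers $\zeta_1,\zeta_2$ of the holonomy $h$ of $\F(\X)$ along $S_{\X}$ are roots of unity, and together with condition $(\star)$ this places $\X$ in the form $\X=mx_1(1+a_1)\partial_{x_1}+nx_2(1+a_2)\partial_{x_2}-kx_3(1+a_3)\partial_{x_3}$ with $m,n,k\in\mathbb{Z}^{+}$, $a_i\in\mathcal{M}_3$, $S_{\X}=\{x_1=x_2=0\}$ (in accordance with the preceding proposition, which already supplies a rational first integral of $\tilde X_D$). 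By the holonomy computation of Chapter~\ref{Chap3}, $\mathrm{Hol}(\F(\X),S_{\X},\Sigma)=\langle h\rangle$ with $\mathrm{d}h_0=\mathrm{diag}(\zeta_1,\zeta_2)$ and $\zeta_i^{\,k}=1$. The closedness of the leaves also gives that $h$ has finite orbits near the base point $q_0$ of $\Sigma$: an infinite orbit $\{h^n(p)\}$ would accumulate at some $q'$ in a compact neighbourhood of $q_0$, and since $q'$ lies on a leaf distinct from the one through $p$ — any leaf near $q'$ if $q'\neq q_0$, and the separatrix $S_{\X}\setminus\{0\}$ (which is a leaf, not a singular point) if $q'=q_0$ — that latter leaf would fail to be closed off the singularity. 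Now set $F=h^{k}$: it has finite orbits, $\mathrm{d}F_{q_0}=I$, and it fixes pointwise the two curves in which the coordinate planes through $S_{\X}$ meet $\Sigma$ (on which $h$ is periodic of order dividing $k$). The remaining invariant hypersurfaces meet $\Sigma$ in infinitely many further $F$--invariant analytic curves, in general position, all of whose points are $F$--periodic; taking those that pass through $q_0$, on each of them $F$ is periodic (Theorem~\ref{fund-theo}) with multiplier an eigenvalue of $\mathrm{d}F_{q_0}=I$, hence $1$, so $F$ restricts to the identity there. Thus $\mathrm{Fix}(F)$ is an analytic set containing infinitely many distinct curves, whence $\mathrm{Fix}(F)=(\C^2,0)$, i.e. $h^{k}=\mathrm{id}$; so $\mathrm{Hol}(\F(\X),S_{\X},\Sigma)$ is finite and Theorem~\ref{existence} yields a holomorphic first integral.

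\textbf{Main obstacle.} The delicate points are: (i) making rigorous the implication ``leaves closed off the singularity $\Rightarrow$ $h$ has finite orbits'', which is the genuinely foliation-theoretic step — unlike the diffeomorphism arguments of Chapter~\ref{Chap1} — and rests on the fact that a generic singularity has finitely many separatrices and that a separatrix minus the singular point is a leaf; and (ii) checking that infinitely many of the given invariant hypersurfaces contain $S_{\X}$, so that their traces on $\Sigma$ are curves through $q_0$ yielding genuine elements of $\mathrm{Diff}(\C,0)$ to which Theorem~\ref{fund-theo} applies. Should (ii) resist, the traces should instead be fed into Theorems~\ref{BroN} or \ref{Ard&Bro2}, or one can argue on the blow-up by extending the rational first integral of $\tilde X_D$ off the divisor using closedness of the leaves and then blowing down.
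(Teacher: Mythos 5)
Your overall route for the hard implication is the same as the paper's: reduce everything to showing that $\mathrm{Hol}(\F(\X),S_{\X},\Sigma)$ is finite, use the closedness of the leaves to get finite orbits of the holonomy generator, use the invariant hypersurfaces to produce holonomy-invariant analytic curves on the transversal $\Sigma$, and then quote Theorem \ref{existence}; the converse direction you add is routine and the paper does not even write it. The preliminary normalization you perform (restricting to the invariant coordinate planes, invoking Theorem \ref{fund-theo} to make the multipliers of $h$ roots of unity and to put $\X$ in Siegel integer form) is not needed in the paper's argument, which works directly with the generic form and condition $(\star)$.

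The genuine gap is exactly your obstacle (ii), and it is not a technical afterthought: it is the substantive content of the paper's proof. The paper proves that, apart from hypersurfaces containing the $x_1$- or $x_2$-axis, every $\X$-invariant hypersurface $S=\{g=0\}$ contains $S_{\X}$. The argument is: from $\X(g)=gh$, restrict to each coordinate axis and compare lowest-order coefficients, obtaining $\lambda_1 i_0=\lambda_2 j_0=\lambda_3 k_0$ whenever the three extremal coefficients $b_{i_0,0,0},b_{0,j_0,0},b_{0,0,k_0}$ are nonzero (and the analogous identities for pairs); dividing by the direction $v$ furnished by condition $(\star)$ and taking real parts gives a sign contradiction, since $\mathrm{Re}(\lambda_3/v)>0$ while $\mathrm{Re}(\lambda_1/v),\mathrm{Re}(\lambda_2/v)<0$. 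Combined with Proposition 1 of \cite{Reis} (the saturation of $\Sigma$ together with $\{x_3=0\}$ fills a neighborhood of $0$), this yields infinitely many $h$-invariant curves on $\Sigma$ passing through the base point, and then the finiteness criterion of Chapter \ref{Chap1} (Theorem \ref{BroN}) applies. Note that condition $(\star)$ is used here in an essential way, not only to make Theorem \ref{existence} available. Your fallback does not circumvent this: Theorem \ref{BroN} and its variants in Chapter \ref{Chap1} all require the invariant curves to be germs \emph{at the origin} of the transversal, so you still need precisely the statement you left unproven.

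A secondary flaw: your final step ``$F=h^{k}$ is periodic on each invariant curve with multiplier an eigenvalue of $\mathrm{d}F_{q_0}=I$, hence the identity there'' is only correct for smooth traces. If the trace is a singular branch of multiplicity $a$, the map induced on a Puiseux parameter of its normalization can have multiplier a nontrivial $a$-th root of unity even though $\mathrm{d}F_{q_0}=I$, so periodicity on the branch does not immediately give pointwise fixing, and your identification $\mathrm{Fix}(F)\supset$ (infinitely many curves) breaks down. The paper avoids this issue entirely by feeding finite orbits plus the invariant curves into Theorem \ref{BroN} rather than trying to exhibit a fixed-point set of full dimension by hand. (Your sketch for obstacle (i), closed leaves implying finite orbits of $h$, is essentially the standard Mattei--Moussu accumulation argument and is in fact more explicit than what the paper writes, though you should allow for the accumulation point lying on the same leaf, which is excluded because a closed leaf is proper and so meets a transversal in a discrete set.)
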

 \begin{proof}
We are considering $\X\in\mathrm{Gen}(\mathfrak{X}(\C^n , 0))$, and by definition (see \eqref{gen-vf}), after a change of coordinates it can be written in the form
\begin{equation}\label{theform}
  \X(x) = \lambda_1x_1 \big(1 + a_1(x)\big)\frac{\partial}{\partial x_1}+\lambda_2x_2 \big(1 + a_2(x)\big)\frac{\partial}{\partial x_2}+\lambda_3x_3 \big(1 + 
  a_3(x)\big)\frac{\partial}{\partial x_3}
\end{equation}
this vector field is in the conditions of Theorem \ref{existence} just remaining to prove that the holonomy respect to the distinguished axis of $\X$ (noted $S_\X$ as before)  
is periodic, remember that $S_\X$ is the invariant manifold associated to the eigenvalue that can be separated, in this case assume that is $\lambda_3$. We can calculate 
$\mathrm{Hol}(\F(\X), S_{\X}, \Sigma)$ 
\begin{figure}[htbp]
 \begin{center}
    \resizebox{10cm}{!}{\input{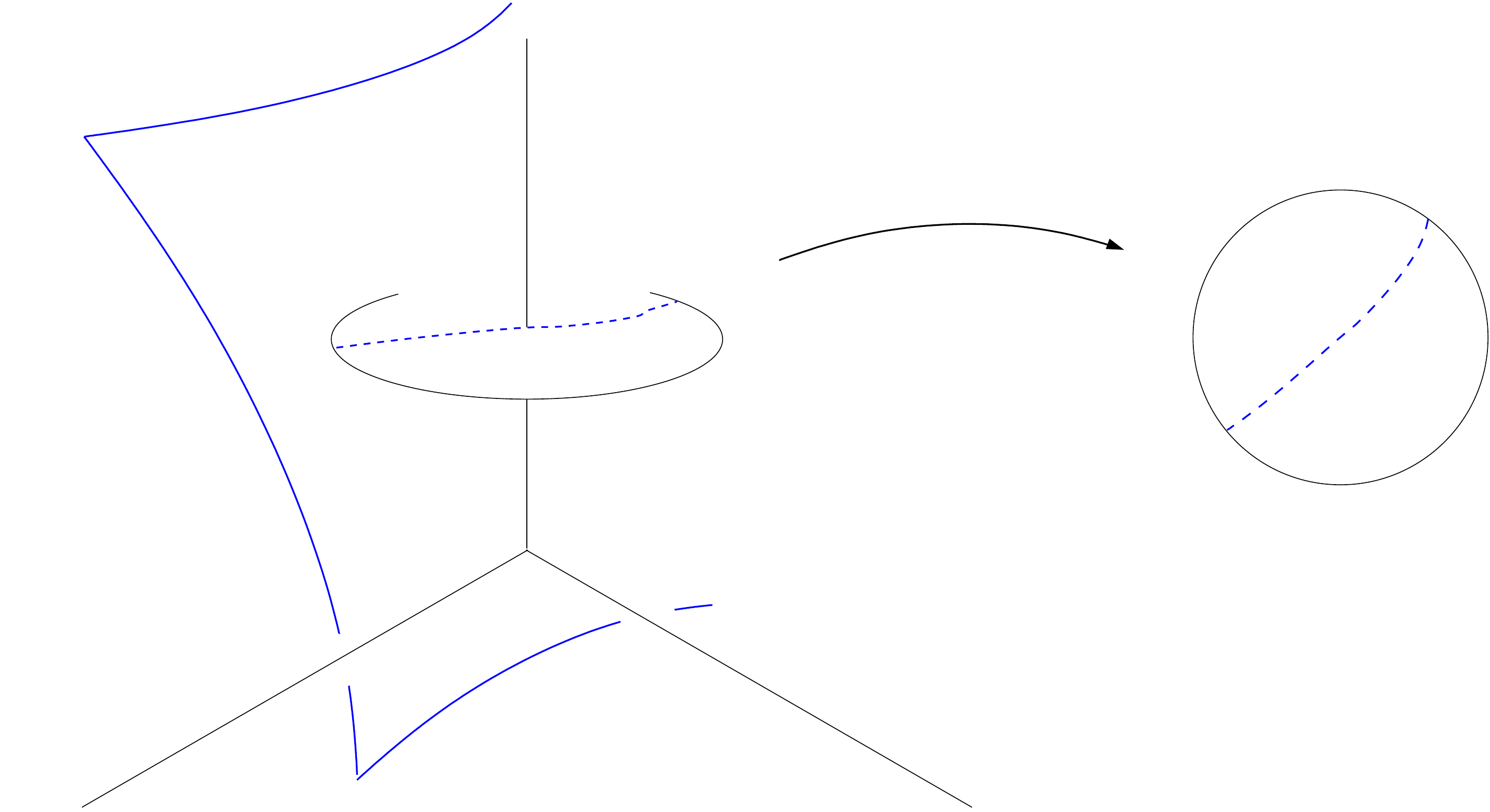_t}}
     \caption{Holonomy of $S_{\X}$ }
      \label{Vchap1}
\end{center}
\end{figure}
taking a small transversal section $\Sigma$ to $S_\X$ in some point $z_0$ close to the origin and diffeomorphic to a ball in $\C^2$.\\
Observe first that if $z_0$ is close enough to the origin the saturate of $\Sigma$ together with the hyperplane $\{x_3=0\}$ contains a neighborhood of the origin (see 
Proposition 1 \cite{Reis}), this means that every $\X$-invariant hypersurfaces distinct to $\{x_3=0\}$ necessarily cuts $\Sigma$ because as it contains $0$ then it cuts the 
saturate of $\Sigma$ and by its $\X$-invariance it contains also the leaves coming through $\Sigma$. Furthermore, we can guarantee that infinitely many not only cut $\Sigma$ 
but contain the $x_3$ axis, in  order to see this take $S=\{g=0\}$ a $\X$-invariant hypersurface given by the zero set of $g(x)=\Sigma_{|I|\geq\nu}b_{I}x^I$ then 
$\X(g)(x)=g(x)h(x)$, where $h(x)=\Sigma_{I}c_{I}x^I$, using \eqref{theform} to write this equation in therms of the series, we have 
\begin{align*}
 \Sigma_{|I|\geq\nu}&\big[\lambda_1i\big(1+a_1(x)\big)+\lambda_2j\big(1+a_2(x)\big)+\lambda_3k\big(1+a_3(x)\big)\big]b_Ix^I=\\
 &\Big(\Sigma_{|I|\geq\nu}b_{I}x^I\Big)\Big(\Sigma_{I}c_{I}x^I\Big),
\end{align*}
making $x_2=x_3=0$ we get
\begin{align*}
 \Sigma_{i\geq i_0}\lambda_1i\big(1+a_1(x_1,0,0)\big)b_{i,0,0}x_1^{i}=\Big(\Sigma_{i\geq i_0}b_{i,0,0}x_1^{i}\Big)\Big(\Sigma_{i}c_{i,0,0}x_1^i\Big),
\end{align*}
in a similar way for $x_1=x_2=0$ and $x_1=x_3=0$, and by comparing the first terms in both sides,
\begin{align*}
 \lambda_1i_0b_{i_0,0,0}&=b_{i_0,0,0}c_{0},\\
 \lambda_2j_0b_{0,j_0,0}&=b_{0,j_0,0}c_{0},\\
 \lambda_3k_0b_{0,0,k_0}&=b_{0,0,k_0}c_{0}.
\end{align*}
Remember that our intention is to show that $g(0,0,x_3)=\Sigma_{k\geq k_0}b_{0,0,k}x_3^{k}\equiv 0$ (because this implies that the $x_3$ axis belongs to $S$) for this is enough to 
show that $b_{0,0,k_0}$=0 because in theory it is the first not null term. If $c_0=0$ then 
$b_{0,0,k_0}=0$ given that $b_0=b_{0,0,0}=g(0)=0$, by hypothesis $0\in S$, then $k_0>0$ and we are done. If $c_0\neq 0$, suppose first that the three $b_{i_0,0,0}$, $b_{0,j_0,0}$ and 
$b_{0,0,k_0}$ are not zero then,
\[
 \lambda_1i_0=\lambda_2j_0=\lambda_3k_0,
\]
dividing by the vector $v$ as in fig. \ref{Vchap3} 
and comparing the real parts we have 
\[
 \mathrm{Re}\Big(\frac{\lambda_1}{v}\Big)i_0=\mathrm{Re}\Big(\frac{\lambda_2}{v}\Big)j_0=\mathrm{Re}\Big(\frac{\lambda_3}{v}\Big)k_0\quad (\rightarrow\leftarrow),
\]
this is a contradiction because $v$ can be chosen so that $\mathrm{Re}\big(\frac{\lambda_3}{v}\big)>0$ and the other two are negative. Hence, at least one of $b_{i_0,0,0}$, 
$b_{0,j_0,0}$ and $b_{0,0,k_0}$ has to be zero, the same analysis shows that $b_{i_0,0,0}\cdot b_{0,0,k_0}\neq 0$ or $b_{0,j_0,0}\cdot b_{0,0,k_0}\neq 0$ can not happen. Thus, any 
hypersurface not containing one of the axis $x_1$ or $x_2$ necessarily contains the axis $x_3$, this shows that infinitely many $\X$-invariant hypersurfaces  
cut $\Sigma$ forming $G$-invariants analytic curves (calling $G$ the holonomy map) as in fig. \ref{Vchap1}
in a such way that if we think in $\Sigma$ as a ball in $\C^2$, 
each one of those $G$-invariants curves contains $0$. \medskip\\    
Therefore $G$ generates a finite group according to Theorem \ref{BroN}, and this implies the existence of a holomorphic first integral for $\F(\tilde{\X})$ in 
some neighborhood of $0$.
\end{proof}
\chapter{Complete stability theorem for foliations with singularities}\label{Chap5} 
In this chapter is used the following result about closed leaves of holomorphic foliations,
\begin{theorem}[\cite{ghys1}]\label{teo_ghys}
 Let $\F$ be a holomorphic foliation (possibly singular) of codimension $1$ in a compact and connected complex manifold. Then $\F$ has a finite number of closed leaves unless 
it possesses a meromorphic first integral, in which case all the leaves are closed.     
\end{theorem}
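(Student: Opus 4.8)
The plan is to separate off the substantive implication --- that infinitely many closed leaves force a meromorphic first integral --- and to dispatch the converse clause first. Suppose $\F$ has a meromorphic first integral $f\colon M\dashrightarrow\mathbb{P}^1$. Away from the indeterminacy locus $I(f)$ and the critical locus of $f$, the leaves of $\F$ are the connected components of the fibres $f^{-1}(c)$, so each is an open dense subset of the smooth part of an irreducible component $W$ of the analytic hypersurface $\overline{f^{-1}(c)}$ (analytic by Remmert--Stein, compact since $M$ is); as $W$ is $\F$-invariant one has $\mathrm{Sing}(W)\subset\mathrm{Sing}(\F)$ (at a point of $\mathrm{Sing}(W)\setminus\mathrm{Sing}(\F)$ the flow-box theorem would force $W$ to be smooth there), so such a leaf is closed. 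A short stratification argument over $I(f)$ and the finitely many reducible fibres then extends closedness to \emph{every} leaf of $\F$.

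For the main implication, take pairwise distinct closed leaves $L_1,L_2,\dots$. A closed leaf differs from its closure by at most the finitely many points of $\mathrm{Sing}(\F)$ to which it is adherent, so each $V_k:=\overline{L_k}$ is an irreducible $\F$-invariant analytic hypersurface. Present $\F$ on an open cover $\{U_j\}$ by holomorphic $1$-forms $\omega_j$ with $\omega_j=g_{jk}\omega_k$ on overlaps, i.e.\ by a section $\omega$ of $\Omega^1_M\otimes N$, where $N=N_\F$ is the normal bundle, subject to $\omega\wedge d\omega=0$. Writing $V_k\cap U_j=\{h_{k,j}=0\}$ with $h_{k,j}$ reduced, invariance of $V_k$ yields $\omega_j\wedge dh_{k,j}=h_{k,j}\,\theta_{k,j}$ with $\theta_{k,j}$ holomorphic; the cochain $(\theta_{k,j})_j$ fails to glue by exactly a $1$-cocycle representing $\omega\wedge c_1(\mathcal{O}_M(V_k))$ in $H^1(M,\Omega^1_M\otimes N)$, and $c_1(\mathcal{O}_M(V_k))$ lies in the finitely generated group $\mathrm{NS}(M)$.

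The heart of the argument is to turn the infinitude of the $V_k$ into a globally defined object. Since $\mathrm{NS}(M)$ has finite rank, after reindexing there are integers $n_1,\dots,n_r$, not all zero, with $\sum_i n_i\,c_1(\mathcal{O}_M(V_i))=0$ (pass to a multiple to kill torsion). Then the naive sum $\sum_i n_i\,\frac{dh_i}{h_i}$, corrected by a $0$-cochain of holomorphic $1$-forms whose coboundary cancels that of the first sum --- possible precisely because the Chern classes sum to zero in $H^1(M,\Omega^1_M)$ --- becomes a \emph{global} closed meromorphic $1$-form $\eta$ with logarithmic poles along $\bigcup_i V_i$, residues $n_i\in\mathbb{Z}$, and $\omega\wedge\eta\in H^0(M,\Omega^2_M\otimes N)$. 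If $\omega\wedge\eta\equiv0$ and the holomorphic part of $\eta$ carries no additional periods, then $\exp\!\big(\int\eta\big)$ descends to a non-constant $\F$-invariant meromorphic function, that is, a meromorphic first integral. If no choice of relation produces an honest first integral this way, one invokes the Douady space of $M$: the countably many $V_k$ must then accumulate on a positive-dimensional irreducible component $T$ of the space of compact hypersurfaces, all of whose members are $\F$-invariant, and the map $M\dashrightarrow T$ sending $x$ to the member through $x$ is, after Stein factorisation, a meromorphic first integral.

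The main obstacle is this final dichotomy and its bookkeeping. Passing from a $\mathbb{Z}$-linear relation among Chern classes to an \emph{exact}, or at least residue-rational and period-trivial, closed $1$-form tangent to $\F$ requires controlling $\mathrm{Pic}^0(M)$ and the space $H^0(M,\Omega^1_M)$, neither of which need vanish on a general compact $M$ (on $\mathbb{P}^n$ both are trivial, which is what makes Jouanolou's version cleaner); and the complementary branch, where a single relation does not suffice, must be converted into a genuine positive-dimensional family of invariant hypersurfaces inside the cycle space, which is where compactness of $M$ and the structure theory of Douady/Barlet spaces are essential. The remaining ingredients --- closed leaf $\Rightarrow$ invariant hypersurface, finite-dimensionality of the cohomology groups involved, and $\F$-invariance of the members of an accumulating family --- are routine by comparison.
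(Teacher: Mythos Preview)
The paper does not prove this theorem. Theorem~\ref{teo_ghys} is quoted from Ghys \cite{ghys1} as an external input and is then \emph{applied} inside the proof of Theorem~\ref{Brun&Ghys}; no argument for it appears anywhere in the text. So there is no ``paper's own proof'' to compare your proposal against.

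That said, your outline is in the spirit of Ghys's actual argument (which in turn extends Jouanolou's $\mathbb{P}^n$ theorem to arbitrary compact complex manifolds): pass from closed leaves to invariant compact hypersurfaces, exploit finite-dimensionality of the relevant cohomology to force $\mathbb{Z}$-linear relations among their classes, and convert such a relation into a closed logarithmic $1$-form tangent to $\F$ whose primitive exponentiates to a meromorphic first integral. Where your sketch is honest about being incomplete is exactly where the work lies: on a general compact $M$ neither $\mathrm{Pic}^0(M)$ nor $H^0(M,\Omega^1_M)$ need vanish, so a relation in $\mathrm{NS}(M)$ does not by itself produce a logarithmic form with integral residues and no extra periods; Ghys handles this by working with a suitable finite-dimensional quotient in which each new invariant hypersurface contributes, so that infinitely many of them force a non-trivial relation \emph{at the level of closed logarithmic $1$-forms tangent to $\F$}, not merely at the level of Chern classes. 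Your fallback via the Douady/Barlet space is not how Ghys proceeds and, as you note, would require its own substantial argument. In short: the strategy is right, the acknowledged gaps are real, and since the thesis only cites the result there is nothing further to match.
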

to obtain a stability theorem (Theorem \ref{Brun&Ghys}) for a special kind of codimension 1 foliations with singularities in a compact, connected and complex analytic two dimensional 
variety. We will state the result of this chapter in Section \ref{mainresult} after some definitions.    
 \begin{defi}[\cite{Barth,gunning2}]
  A \emph{divisor} $\mathcal{D}$ on a compact complex manifold $M$, is a formal sum $\mathfrak{d}_p=\sum_jk_j\mathbf{V}_j$ where $k_j\in\mathbb{Z}$ and $\{\mathbf{V}_j\}_j$ 
is a 
locally finite sequence of irreducible hypersurfaces on $M$, where locally finite means that every point has a neighborhood which meets only finitely many $\mathbf{V}_j$'s.
 \end{defi}
\section{Holonomy and virtual holonomy groups}
 Let now $\F$ be a holomorphic foliation with (isolated)  singularities on a complex surface $M$ (we have in mind here, the result of a reduction of singularities process). Denote 
by $\mathrm{Sing}(\F)$ the singular set of $\F$. Given a leaf $L_0$ of $\F$ we choose any base point $p\in L_0\subset M \setminus \mathrm{Sing}(\F)$ and a transverse disc 
$\Sigma_p\subset M$ to $\F$ centered at $p$. The holonomy group of the leaf $L_0$ with respect to the disc $\Sigma_p$ and to the base point $p$ is image of the representation 
$\mathrm{Hol}\colon \pi_1(L_0,p) \to \mathrm{Diff}(\Sigma_p,p)$ obtained by lifting closed paths in $L_0$ with base point $p$, to paths in the leaves of $\F$,  starting at points 
$z\in \Sigma_p$, by means of a transverse fibration to $\F$ containing the disc $\Sigma_p$ (\cite{livCamNet}). Given a point $z \in \Sigma_p$ we denote the leaf through $z$ by $L_z$.
Given a closed path $\gamma \in \pi_1(L_0,p)$ we denote by $\tilde \gamma_z$ its lift to the leaf $L_z$ and starting (the lifted path) at the point $z$. Then the image of the 
corresponding holonomy map is $h_{[\gamma]}(z)=\tilde \gamma_z(1)$, i.e., the final point of the lifted path $\tilde \gamma_z$.
This defines a diffeomorphism germ map  $h_{[\gamma]} \colon (\Sigma_p, p) \to (\Sigma_p,p)$ and also a group homomorphism $\mathrm{Hol} \colon \pi_1(L_0,p) \to 
\mathrm{Diff}(\Sigma_p,p)$. The image $\mathrm{Hol}(\F,L_0,\Sigma_p,p)\subset \mathrm{Diff}(\Sigma_p,p)$ of such homomorphism is called the {\it holonomy group} of the leaf $L_0$ 
with 
respect to
$\Sigma_p$ and $p$. By considering any parametrization $z\colon (\Sigma_p,p) \to (\mathbb D,0)$ we may identify (in a non-canonical way) the holonomy group with a subgroup of 
$\mathrm{Diff}(\mathbb C,0)$. It is clear from the construction  that the maps in the holonomy group preserve the leaves of the foliation. Nevertheless, this property can be shared 
by 
a
larger group that may therefore contain more information about the foliation in a neighborhood of the leaf.
The {\it virtual holonomy group} of the leaf with respect to the transverse section $\Sigma_p$ and base point $p$ is defined as (\cite{cam-neto-sad}, \cite{camacho-scarduaasterisque})
\[
\mathrm{Hol}^{\mathrm{virt}} (\F, \Sigma_p,p)=\{ f \in\mathrm{Diff}
(\Sigma_p, p) \big| L_z =  L_{f(z)},
\forall z\in (\Sigma_p, p) \}
\]
The virtual holonomy group contains the holonomy group and consists of the map germs that preserve the leaves of the foliation. Fix now a germ of holomorphic foliation with a 
singularity at the origin $0\in \mathbb C^2$, with a  representative $\F(U)$ as above. Let $\Gamma$ be a separatrix of $\F$. By Newton-Puiseaux parametrization theorem, the topology 
of $\Gamma$  is the one of a disc. Further, $\Gamma\setminus \{0\}$  is biholomorphic to a punctured disc $\mathbb D^*= \mathbb D\setminus \{0\}$. In particular,  we may choose a 
loop $\gamma\in \Gamma\setminus\{0\}$ generating the (local) fundamental group $\pi_{1}(\Gamma\setminus\{0\})$. The corresponding holonomy map $h_{\gamma}$ is defined in terms of a 
germ of complex diffeomorphism at the origin of a local disc $\Sigma$ transverse to $\mathcal{F}$ and centered at a non-singular point $q\in \Gamma\setminus\{0\}$. This map is 
well-defined up to conjugacy by germs of holomorphic diffeomorphisms, and is generically referred to as \textit{local holonomy} of the separatrix $\Gamma$.

\section{Main result}\label{mainresult}
\begin{theorem}\label{Brun&Ghys}
 Be $\F$ a holomorphic foliation of codimension 1 on a compact, connected and complex analytic two dimensional variety $M$. If the following conditions are satisfied,
\begin{itemize}
 \item The virtual holonomy is finite,
 \item There exist a $\F$-invariant divisor $\mathcal{D}$ of $M$ containing the separatrices of a singularity $p$ of $\F$,
 \item Other singularities (if any) in a separatriz $L_i$ of $p$ are isolated, dicritical and $L_i$ meets a dicritical component of its 
resolution.
\end{itemize}
Then $\F$ has a meromorphic first integral.
\end{theorem}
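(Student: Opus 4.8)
The plan is to exhibit infinitely many closed leaves of $\F$ and then to conclude with Ghys' Theorem \ref{teo_ghys}, which asserts that a holomorphic foliation on a compact connected complex surface has only finitely many closed leaves unless it possesses a meromorphic first integral; so it suffices to produce infinitely many closed leaves. As a preliminary step I would perform a reduction of singularities of $\F$ along $\mathcal{D}$, replacing $M$ by the surface obtained after the blow-ups. This does not change the virtual holonomy groups, which are invariant under holomorphic conjugation, and it does not affect the existence of a meromorphic first integral, so we may assume that each singularity of $\F$ on $\mathcal{D}$ is either reduced or of the dicritical type described in the third hypothesis.

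The second step is to analyse the holonomy of the components of $\mathcal{D}$. Since $\mathcal{D}$ is $\F$-invariant, each of its irreducible components is the closure of a leaf, and the separatrices $L_i$ of $p$ are among them; fix one such component $L_0$ through $p$, a base point $q_0\in L_0\setminus\mathrm{Sing}(\F)$ and a transverse disc $\Sigma$ at $q_0$. The fundamental group of $L_0\setminus\mathrm{Sing}(\F)$ is generated by loops around the singular points of $\F$ lying on $L_0$, and the holonomy of each such loop belongs to the virtual holonomy group $\mathrm{Hol}^{\mathrm{virt}}(\F,\Sigma,q_0)$, which is finite by the first hypothesis. Hence every local holonomy of $L_0$ — around $p$ and around any other singular point on $\mathcal{D}$ — is a periodic germ of diffeomorphism; moreover, around a dicritical point of a separatrix $L_i$, the hypothesis that $L_i$ meets a dicritical component of the resolution forces the corresponding local holonomy to be trivial, because near a dicritical component the foliation is a holomorphic fibration transverse to that component.

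The third step is to upgrade this to the assertion that a whole neighbourhood of $\mathcal{D}$ is filled up by closed leaves. Around the reduced singular points of $\F$ on $\mathcal{D}$, periodicity of the holonomy together with the Mattei--Moussu theorem \cite{M-M} yields a local holomorphic first integral, so that nearby leaves close up and remain confined to an arbitrarily small neighbourhood of $\mathcal{D}$; around the dicritical points the transverse fibration of the resolution plays the same confining role. Local Reeb stability along the regular part of $\mathcal{D}$ and the connectedness of $\mathcal{D}$ let these local pictures match up, producing a neighbourhood $U$ of $\mathcal{D}$ in $M$ such that every leaf meeting $U$ winds around $\mathcal{D}$ a bounded number of times; since $\overline{U}$ is compact, each such leaf is compact, hence a closed leaf of $\F$ in $M$. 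These leaves are parametrised by a finite quotient of the transverse disc $\Sigma$, so there are infinitely many of them, and Ghys' Theorem \ref{teo_ghys} then provides a meromorphic first integral for $\F$.

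The hard part is the confinement in the third step: one must rule out that a leaf entering a thin tubular neighbourhood of $\mathcal{D}$ escapes through one of the singular points, and this is exactly where the three hypotheses are used together — the invariance of $\mathcal{D}$, the finiteness of the virtual holonomy (which, via Mattei--Moussu, controls the reduced singular points), and the dicritical condition of the third item (which, via the fibration structure of the resolution, controls the remaining ones). Making this precise requires a careful reduction-of-singularities bookkeeping compatible with the virtual holonomy, together with a verification that the blown-up foliation still satisfies all three hypotheses so that the argument can be run upstairs.
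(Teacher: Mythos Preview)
Your proposal is correct and follows the same strategy as the paper: manufacture infinitely many compact leaves in a neighbourhood of the separatrix set and then invoke Ghys' Theorem~\ref{teo_ghys}, with Mattei--Moussu controlling the non-dicritical singularity, the transverse fibration of a dicritical component controlling the other $p_j$, and Reeb stability gluing along the regular part. The one organizational difference is that the paper does \emph{not} reduce the singularity $p$: it applies Mattei--Moussu directly at $p$ (finitely many separatrices since they lie in $\mathcal D$, and finite holonomy), and only blows up the dicritical points $p_j$ on the $L_i$. This sidesteps precisely the bookkeeping you flag in your last paragraph, since no new exceptional components are introduced over $p$ and the three hypotheses need not be re-verified upstairs; the descent of the meromorphic first integral from $\tilde M$ to $M$ is then immediate because $\tilde\F$ and $\F$ agree off the exceptional divisors of the $p_j$.
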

\begin{proof}
  Suppose that  $\{L_i\}_i^r$ are the separatrices at $p$, we know that $r<\infty$ because $\{L_i\}_i^r\subset\mathcal{D}$, we also know that the number of singularities 
$p_j\in\{\overline{L}_i\}_i^r$ is finite because infinitely many would belong to one $L_i$ and they would accumulate, in contradiction with the hypothesis. Note $\tilde{M}$ the 
manifold obtained from $M$ after a resolution of the dicritical singularities $\{p_j\}_j$, and note $\tilde{\F}$ the associated foliation. Remember that, $\tilde{\F}$ coincides with 
$\F$ in $\tilde{M}\setminus D$, where $D$ is the union of the exceptional divisors $D_j$ one for each dicritic singularity, each $D_j$ is a finite union of projective 
lines, the singularities of $\tilde{\F}$ in $D$ are simples, and dicritic divisors in $D_j$ does not have singularities nor tangency points. Fixing a $i$, 
if $p_j$ is a dicritic singularity in $L_i$, by hypothesis $L_i$ meets a dicritical component of its resolution, locally in one chart $U_j$, of that component, the 
restriction of $L_i$ is one of the coordinates axis, the leaves are transversal to the other one and the induced foliation $\tilde{\F}_{U_j}$ is not singular.\\
Now, take a small neighborhood $U_0$ of $p$ and consider the induced foliation $\tilde{\F}_{U_0}$ note that the condition over the holonomy and the 
number of separatrices allow us to apply the classic Mattei-Moussu's theorem (see \cite{M-M}) to the induced foliation at $\tilde{\F}_{U_0}$. Therefore, there exist a neighborhood 
$U_0'\subset U_0$, containing $p$, such that the leaves of $\tilde{\F}_{U_0'}$ are the level sets of some holomorphic function $f:U_0',p\to f(U_0'),0\subset \C$, then they 
are close off $p$.\\
Consider a set $V\subset\overline{V}\subset U_0'$ and for each leaf $L_i$ a relative open neighborhood $V_i\subset L_i$ of $p$ such that $V_i\subset L_i\cap V$ and for each $p_j$ a 
relative open neighborhood $\tilde{V}_j$. As $L_i\setminus( 
V_i\cup\tilde{V}_j)$ is compact, it can be cover by finite many trivializing chats, call $W_i$ the union of all of 
them together with the charts for the dicritical components mentioned above. Using the 
finiteness of the holonomy we can construct a fundamental system $\mathcal{W}_i$ of $\tilde{\F}_{W_i}$-saturated neighborhoods of ($W_i$-relatively) closed leaves. By construction, 
each fundamental system intersect $U_0'$ which is $\tilde{\F}_{U_0'}$-saturated by closed leaves, thus the leaves of $\tilde{\F}$ contained in the union of some 
representative of each $\mathcal{W}_i$ with $U_0'$ are compact in $\tilde{M}$. Therefore, there exist infinitely many compact leaves and according to Theorem \ref{teo_ghys} this 
implies that 
$\tilde{\F}$ has a meromorphic first integral, as $\tilde{\F}\setminus D$ and $\F\setminus\{p_j\}_j$ coincide, it is also meromorphic first integral of $\F$.    
\end{proof}
\chapter{First integrals around the separatrix set}\label{Chap6}
\section{Introduction}
One of the key stones in the theory of holomorphic foliations is the article \cite{M-M}, where is presented the following important result about the existence of holomorphic
first integrals,
\begin{theorem}\label{M&M}
Let $\F$ be a germ in $0\in\C^2$ of holomorphic foliation of codimension $1$. Suppose that:
\begin{enumerate}
\item $\mathrm{Sing}\,(\F)=\{0\}$.
\item There are only finite many separatices $S_k$.
\item The leaves  are closed off the origin.
\end{enumerate}
Then, there exist a neighborhood $V$ of $0$, such that $\F|_V$ has a holomorphic first integral.
\end{theorem}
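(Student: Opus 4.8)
The plan is to follow the classical route of Mattei--Moussu: desingularize, control the holonomy of the exceptional divisor, manufacture local first integrals and glue them, and finally push the result down to $(\C^2,0)$. First I would apply reduction of singularities (Seidenberg) to obtain a proper modification $\pi\colon(\tilde M,E)\to(\C^2,0)$, composition of finitely many point blow-ups, such that the pulled-back foliation $\tilde\F=\pi^{*}\F$ has only reduced (elementary) singularities, all lying on the exceptional divisor $E=\pi^{-1}(0)$. Hypothesis (2) forces the resolution to be \emph{non-dicritical}: every irreducible component $D_j$ of $E$ is $\tilde\F$-invariant, since a dicritical component would yield a one-parameter family of separatrices of $\F$. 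Moreover $E$ is connected and its dual graph is a tree, a fact I will use in the gluing step.

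Next I would transport hypothesis (3) upstairs. Since $\pi$ restricts to a biholomorphism $\tilde M\setminus E\to V'\setminus\{0\}$ for a small neighborhood $V'$ of $0$, the leaves of $\tilde\F$ off $E$ are precisely the strict transforms of the leaves of $\F$ off the origin, hence closed in $\tilde M\setminus E$. A standard accumulation argument then shows that, for a disc $\Sigma$ transverse to a component $D_j$ at a regular point, the holonomy pseudogroup has finite orbits on a subdisc; in particular the holonomy group of $D_j\setminus\mathrm{Sing}(\tilde\F)$, a finitely generated subgroup of $\mathrm{Diff}(\C,0)$, has an invariant neighborhood of $0\in\Sigma$ in which every point is periodic for every element (using Theorem~\ref{fund-theo}). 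By Lemma~\ref{lemFab} (with $k=1$) this holonomy group is finite, hence analytically linearizable and cyclic, and therefore there is a holomorphic submersion on a neighborhood of $D_j\setminus\mathrm{Sing}(\tilde\F)$ whose fibers are the leaves, given in a transverse coordinate $w$ by $w\mapsto w^{n_j}$.

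The main work, and the step I expect to be the real obstacle, is the local analysis at the reduced singularities of $\tilde\F$ lying on $E$ together with the coherent gluing. At a reduced singularity $q\in D_j$ (or a corner $q\in D_j\cap D_\ell$) the periodicity of the neighbouring holonomy forces $q$ to be of resonant linearizable type, so $\tilde\F$ has near $q$ a holomorphic first integral which is a monomial in coordinates adapted to the invariant curves through $q$; one must then check that, up to a suitable power, this germ is compatible with the first integrals already built on the adjacent components. Exploiting that $E$ is a connected tree, I would propagate a single holomorphic function $\tilde f$ across all components: starting from one $D_j$ and adjusting by a power of the local first integral at each corner so that the germs agree on overlaps. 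This yields a holomorphic $\tilde f$ on a neighborhood of $E$ with $\mathrm d\tilde f\not\equiv 0$ whose level sets are the leaves of $\tilde\F$; since each $D_j$ is $\tilde\F$-invariant and connected, $\tilde f$ is constant on $D_j$, and as $E$ is connected all these constants coincide, so we may normalize $\tilde f|_E\equiv 0$.

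Finally I would descend. The function $\tilde f\circ\pi^{-1}$ is holomorphic on $V'\setminus\{0\}$ and bounded, because $\tilde f$ is continuous on the compact set $\pi^{-1}(\overline{V'})$; by Riemann's removable singularity theorem it extends to a holomorphic function $f$ on $V'$ with $f(0)=0$. The leaves of $\F$ are contained in the level sets of $f$, and $\mathrm d f\not\equiv 0$ since $\tilde f$ is non-constant, so after shrinking to $V\subset V'$ the map $f$ is a holomorphic first integral of $\F|_V$. Everything outside Paragraph~3 is essentially formal; the delicate points are the coherent choice of the monomial exponents along the resolution tree and the verification that periodicity of the holonomy indeed linearizes the reduced singularities with the required resonances.
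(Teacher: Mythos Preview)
Your approach is correct and follows the original Mattei--Moussu strategy from \cite{M-M}: desingularize via Seidenberg, use finiteness of separatrices to get non-dicriticality, show the holonomy of each divisor component is finite via Theorem~\ref{fund-theo} and Lemma~\ref{lemFab}, build local monomial first integrals at the reduced singularities, glue along the resolution tree, and push down by Riemann extension. The paper, however, presents Moussu's later geometric proof from \cite{Moussu}, which avoids desingularization entirely. It works directly in a small ball $B$: Lemma~A uses Reeb stability and transversality of the leaves with the boundary sphere $\partial B$ to build an $\overline\F$-invariant neighborhood $V$ of the separatrix set $S$; Lemma~B then endows the leaf space $V^*/\F_{V^*}$ with a Riemann surface structure (via the local models $z\mapsto z^{n(a)}$ coming from finite holonomy) and shows, by a covering-space and boundary-type argument, that it is biholomorphic to the punctured disc $\D^*$. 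The quotient map thus becomes a bounded holomorphic function on $V\setminus S$ and extends across $S$. Your route is the classical, more algebraic one and is the version most readers know; the paper's route is more direct and its real point here is to isolate the transversality-with-spheres ingredients that the author then tries to replay in dimension three in Section~6.3 (Lemma~A$'$), so the choice of method is not incidental.
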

Years latter in \cite{Moussu}, one of their authors revisited this result in order to create a new proof, a simpler and more geometric one. In the first part of this chapter we 
present this proof with the aim to emphasize that to start the construction we do not require a small neighborhood but its transversality with the separatrices.\par
The second part contains two minor results product of an unsuccessful attempt to give a proof of Theorem \ref{existence} repeating Moussu's technique \cite{Moussu}.
\section{Moussu proof of Theorem \ref{M&M}}
Throughout this chapter we identify $\C^2$ with $\mathbb{R}^{4}$ (together with the euclidean norm $\|\ \|$) and use the notation:
\begin{itemize}
\item $B$ for the open ball in $0$ of radius $r$ in $\mathbb{R}^4$, $\partial B$ the sphere of radius $r$ and $\overline{B}$ the closure of $B$.
\item $\F_r$, $\partial\F$ and $\overline{\F}$ for the foliations induced by $\F$ in $B$, $\partial B$ and $\overline{B}$ respectively.
\end{itemize} 
Note that $\partial\F$ is a foliation by curves (real dimension $1$) with singularities where the leaves of $\F$ are tangent to the sphere $\partial B$, and the 
intersection of the separatices of $\F$ with $\overline{B}$ is the union $S$ of $l$ irreducible curves $S_k$, $1\leq k\leq l$. The border $\partial S_k=S_k\cap\partial B$ of $S_k$ is 
a smooth curve homeomorphic to a circle and $S_k^*=S_k\setminus\{0\}$ is a leaf of $\overline{\F}$.\par 
The property needed to carry on this proof is the transversality of the separatrices with the sphere $\partial B_r$ but it is 
possible to consider a function $g$ on some neighborhood $U$ of $0$ with a Morse critical point at $0$ of index $0$, so that its non-critical levels are diffeomorphic to spheres and 
rewrite \cite{Moussu} using the level sets of $g$ instead of spheres.\par  
The proof is divided in two parts:
\begin{enumerate}[A.]
\item Construction a neighborhood $V$ of the origin.
\item Study of the quotient space $V/\F_V$.
\end{enumerate} 
The neighborhood $V$ of the origin would be analogous to a "Milnor neighborhood'', see \cite{sing_Mil} and \cite{MSV} for its definition and properties, and also see 
\cite{Sea-Lim} where the spheres used in the previous two references are replaced for level sets of a map $g$ as the one above.  
\begingroup 
\setcounter{tmp}{\value{theorem}}
\setcounter{theorem}{0} 
\renewcommand\thetheorem{\Alph{theorem}}

\begin{lem}\label{lemA}
Exist a neighborhood $V$ of $S$ in $\overline{B}$ such that, $V$ is $\overline{\F}$-invariant and the leaves in $V$ cut $\partial B$ transversally. 
\end{lem}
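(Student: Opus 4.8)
The plan is to realize $V$ as a union of a small $\overline{\F}$-saturated ball about the origin together with one saturated tube around each punctured separatrix $S_k^{*}=S_k\setminus\{0\}$, and to first reduce the transversality requirement to a statement concerning only the outer sphere $\partial B$. First I would record the consequence of the standing transversality hypothesis: $\partial S:=S\cap\partial B$ is a finite disjoint union of smooth circles disjoint from the tangency locus $T:=\mathrm{Sing}(\partial\F)$ (the set of points of $\partial B$ where a leaf of $\F$ is tangent to $\partial B$). Since $T$ and $\partial S$ are compact and disjoint, and transversality of $\overline{\F}$ to $\partial B$ is an open condition, there is a relatively open $U\supset\partial S$ in $\overline{B}$ with $\overline{U}\cap\partial B$ disjoint from $T$; hence \emph{any} leaf of $\overline{\F}$ meeting $\partial B$ inside $U$ meets it transversally. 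It therefore suffices to build an $\overline{\F}$-invariant neighborhood $V$ of $S$ in $\overline{B}$ such that $L\cap\partial B\subseteq U$ for every leaf $L\subseteq V$.

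Next I would establish periodicity of the separatrix holonomies. For each $k$ pick a nonsingular $q_k\in S_k^{*}$, a germ of transverse disc $(\Sigma_k,q_k)$, and let $h_k\in\mathrm{Diff}(\Sigma_k,q_k)$ be the holonomy of a loop generating $\pi_1(S_k^{*})\cong\mathbb{Z}$. If $z\in\Sigma_k$ is close to $q_k$ then $L_z\neq S_k^{*}$, so $q_k\notin L_z$; were the $h_k$-orbit of $z$ infinite it would accumulate at $q_k$, i.e.\ $L_z$ would accumulate at the point $q_k\in\overline{B}\setminus\{0\}$, contradicting hypothesis (3) that the leaves are closed off the origin. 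Thus $h_k$ has finite orbits, so by Theorem \ref{fund-theo} it is periodic: shrinking $\Sigma_k$ we may assume $h_k^{\,n_k}=\mathrm{id}$ on $\Sigma_k$ for some $n_k\in\mathbb{N}$. Because $h_k$ is periodic, the $\overline{\F}$-saturation $V_k$ of a small subdisc $\Sigma_k'\subset\Sigma_k$ about $q_k$ is a genuine finite-sheeted tubular neighborhood of $S_k^{*}$; since $S_k^{*}$ is transverse to every sphere $\partial B_\rho$, $0<\rho\le r$ (which follows from the cone structure of the analytic arc $S_k$ and is in any case part of the standing set-up), openness of transversality along the compact arc $S_k\cap\{\varepsilon\le\|x\|\le r\}$ lets us take $\Sigma_k'$ small enough that $V_k$ is transverse to $\partial B$ with $V_k\cap\partial B\subseteq U$.

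Finally I would glue these tubes to a saturated neighborhood of $0$: take a small ball $B_\varepsilon$, replace it by its $\overline{\F}$-saturation, and set $V=(\text{saturated }B_{\varepsilon'})\cup\bigcup_k V_k$ for suitable radii $\varepsilon'\le\varepsilon$. The hard part — and, I expect, the only genuine obstacle — is to show that for $\varepsilon$ small this is actually a neighborhood of $0$ whose leaves cannot escape the tubes, i.e.\ that any leaf entering $B_\varepsilon$ and later reaching radius $r$ must shadow one of the finitely many separatrices and hence exit through some $V_k$ inside $U$. This is the crux, and it is where hypotheses (2) and (3) are used decisively, together with the periodicity just obtained: I would argue by contradiction and compactness, showing that a sequence of leaves $L_n$ meeting $B_{1/n}$ and meeting $\partial B$ at points $y_n\notin U$ would, after passing to a subsequence $y_n\to y\in\partial B\setminus U$, force a leaf or minimal set clustering both at $0$ and at $y$ and not contained in $S$, contradicting that the leaves are closed off the origin with $\mathrm{Sing}(\F)$ and the set of separatrices finite. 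Granting this, $V$ is $\overline{\F}$-invariant, is a neighborhood of $S$ in $\overline{B}$, and each of its leaves meets $\partial B$ inside $U$, hence transversally, which is the assertion of the lemma.
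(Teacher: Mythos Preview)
Your construction differs from Moussu's. He does not glue a saturated ball about the origin with tubes along the separatrices; instead he saturates a \emph{single} tube $T_1(\epsilon_1)$ around one boundary circle $\partial S_1\subset\partial B$ and shows (i) this saturation meets $\partial B$ only inside $\bigcup_k T_k(\epsilon)$, and (ii) it is already a neighborhood of $0$. Step (ii) uses a trick you do not have: since the set of leaves with nontrivial holonomy is countable, one can choose $\epsilon_1$ so that the circle $C_{\epsilon_1}=J_1(\partial\mathbb D_{\epsilon_1}\times\{1\})$ meets only leaves with \emph{trivial} holonomy; Reeb stability then makes the saturate of $C_{\epsilon_1}$ a smooth real hypersurface bounding an open set containing $0$.

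More importantly, your ``hard part'' argument has a genuine gap. You claim that if $L_n$ meets $B_{1/n}$ and $\partial B$ at $y_n\to y\notin U$, then some leaf or minimal set must cluster both at $0$ and at $y$. But nothing forces $L_y$ to approach $0$: the $L_n$ may wander near the origin while $L_y$ stays bounded away, and no limit-of-leaves argument manufactures a single invariant set containing both $0$ and $y$. What is missing is precisely the paper's Affirmation~1: \emph{every} leaf $L$ transverse to $\partial B$---not only the separatrices---has finite holonomy (apply Theorem~\ref{fund-theo} to $L$, using that leaves are closed off $0$), hence by Reeb a fundamental system of $\overline{\F}$-invariant neighborhoods. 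With this in hand the contradiction is immediate: if $L_y\pitchfork\partial B$, then for large $n$ the leaf $L_{y_n}$ lies in a small invariant tube about $L_y$ and cannot reach $B_{1/n}$; if $L_y$ is tangent to $\partial B$ at $y$, replace $r$ by $r+\delta$ to restore transversality and repeat. You established periodicity only for the separatrix holonomies $h_k$, but the compactness argument needs it for the limit leaf $L_y$, which by construction is not a separatrix.
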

Let us set $V^*=V\setminus{S}$, $\F_{V^*}$ the foliation induced by $\F$ in $V^*$, $V^*/\F_{V^*}$ the quotient space and $q_{V^*}$ the quotient map ($q_{V^*}:V^*\rightarrow
V^*/\F_{V^*}$).
\begin{lem}\label{lemB}
Exist a homeomorphism \[h:V^*/\F_{V^*}\to\D^*(=\D_1\setminus\{0\})\] such that $h\circ q_{V^*}=p_{V^*}$ is holomorphic.
\end{lem}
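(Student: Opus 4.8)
The plan is to endow $Q:=V^*/\F_{V^*}$ with a structure of connected Riemann surface for which $q_{V^*}$ is a holomorphic submersion, and then to identify $Q$ conformally with $\D^*$ by comparing it with a transverse disc to one of the separatrices. For the first point I would cover $V^*$ by foliated charts (plaque boxes); a local transversal $\Sigma$ in such a box maps homeomorphically onto an open subset of $Q$, and a change of transversal is realized by a holonomy germ, which is a biholomorphism between open subsets of $\C$. These charts make $q_{V^*}$ holomorphic \emph{once we know $Q$ is Hausdorff}. Hausdorffness is exactly where the third hypothesis of Theorem \ref{M&M} (leaves closed off the origin) and the construction of $V$ in Lemma \ref{lemA} enter: the leaves of $\F_{V^*}$ are closed in $V^*$, and the transversality of the leaves in $V$ to $\partial B$ keeps them from being ``infinitely close'', so two distinct leaves can be separated by $\overline{\F}$-saturated open sets. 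Connectedness of $Q$ is immediate, since $V^*=V\setminus S$ is the complement of a complex analytic curve in the connected set $V$.

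To identify $Q$ with $\D^*$ I would fix a holomorphic disc $\Sigma$ transverse to $\overline{\F}$, centred at a point $p\in S_1^*=S_1\setminus\{0\}$, so small that $\Sigma\cap S=\{p\}$, $0\notin\overline{\Sigma}$, and the holonomy germ $h_1$ of a generator of $\pi_1(S_1^*)\cong\mathbb{Z}$ is defined on all of $\Sigma$; put $\Sigma^*=\Sigma\setminus\{p\}\cong\D^*$. First, every leaf $L$ of $\F_{V^*}$ meets $\Sigma^*$: since $V$ is a tube around $S$ (Lemma \ref{lemA}) one has $\overline{L}\cap S\neq\emptyset$; as the singularity is isolated and $0\in\overline{S_k}$ for each $k$, the closure $\overline{L}$ contains $0$ and hence, by the local structure of leaf closures at a singular point, all of $S_1$, so $L$ accumulates on $p$ and, being transverse to $\Sigma$, meets $\Sigma^*$. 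Thus $q_{V^*}|_{\Sigma^*}\colon\Sigma^*\to Q$ is onto, and its fibres are finite: for a leaf $L$, the set $L\cap\overline{\Sigma}$ is closed in $B\setminus\{0\}$ by the closedness hypothesis, contained in the compact $\overline{\Sigma}$, and discrete by transversality, hence finite. Therefore the (finitely generated) group $H_1\subset\mathrm{Diff}(\C,0)$ realizing the leaf-equivalence on $\Sigma^*$ — which contains $\langle h_1\rangle$ and is the virtual holonomy group of $S_1^*$ — has finite orbits, so by Theorem \ref{fund-theo} and Lemma \ref{lemFab} it is finite, hence cyclic, and by Proposition \ref{fingroup} analytically linearizable: in suitable coordinates on $\Sigma$ one has $H_1=\langle z\mapsto e^{2\pi i/N}z\rangle$ for some $N\ge 1$. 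Consequently $Q$ is biholomorphic to $\Sigma^*/H_1$, and the map $z\mapsto z^N$ induces, after rescaling the radius, a biholomorphism $h\colon Q\to\D^*$; then $p_{V^*}:=h\circ q_{V^*}\colon V^*\to\D^*$ is holomorphic with the leaves of $\F_{V^*}$ as fibres, which is the assertion.

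The two steps I expect to require genuine work are: (i) the Hausdorffness of $Q$, which is not formal and is precisely what the ``Milnor tube'' $V$ of Lemma \ref{lemA}, with leaves transverse to $\partial B$, together with closedness of leaves off the origin, is designed to provide; and (ii) the claim that a transversal to the single branch $S_1$ is a \emph{global} transversal and that the induced leaf-equivalence on $\Sigma^*$ is realized by a finitely generated group of germs — it is here that the several separatrices $S_1,\dots,S_l$ get merged into the single puncture of $\D^*$, via the fact that the closure of every leaf of $\F_{V^*}$ contains the whole separatrix set $S$. Everything else — the foliated charts with holonomy transition maps, the computation $\Sigma^*/(\mathbb{Z}/N)\cong\D^*$, and the verification that $q_{V^*}$ is a submersion — is routine.
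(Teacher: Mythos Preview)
Your outline diverges from the paper's proof at the crucial identification step, and the divergence hides a real gap.

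Both you and the paper put a Riemann surface structure on $Q=V^*/\F_{V^*}$ and use that a transversal $\Sigma^*\cong\D^*$ surjects onto $Q$. From there the paper does \emph{not} claim that the leaf-equivalence on $\Sigma^*$ is the orbit equivalence of a group. Instead it shows that $q\colon\D^*\to Q$ is a proper finite \emph{branched} covering: at a point $a\in\Sigma^*$ whose leaf $L_a$ has holonomy of order $n(a)$, the map $q$ locally looks like $z\mapsto z^{n(a)}$, and a priori $n(a)>1$ can occur. The identification $Q\cong\D^*$ is then obtained by Riemann surface theory: $Q$ is not simply connected (a boundary-type argument rules out $\D$ and $\C$), and the finite branched cover from $\D^*$ forces $\pi_1(Q)$ to be infinite cyclic, whence $Q\cong\D^*$.

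Your shortcut asserts that the leaf-equivalence on $\Sigma^*$ is exactly the orbit equivalence of the virtual holonomy group $H_1$ of $S_1^*$, that $H_1$ is finitely generated, and hence (via Theorem~\ref{fund-theo} and Lemma~\ref{lemFab}) finite cyclic and linearizable, giving $Q\cong\D^*/(\mathbb Z/N)\cong\D^*$. Two things are unjustified here. First, the virtual holonomy group is by definition a group of \emph{germs} at $p$; nothing guarantees it is finitely generated, so Lemma~\ref{lemFab} does not apply, and a subgroup of $\mathrm{Diff}(\C,0)$ in which every element is periodic need not be finite. Second, even granting finiteness and linearization, a rotation group acts \emph{freely} on $\D^*$, so your model forces all fibres of $q$ to have the same cardinality $N$; this contradicts the presence of branch points $a$ with $n(a)>1$, which the paper explicitly allows. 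In other words, your argument tacitly assumes that every leaf in $V^*$ has trivial holonomy and that $q\colon\Sigma^*\to Q$ is a regular (Galois) cover — facts that only become clear \emph{after} the first integral is constructed, not before. The paper's branched-cover and fundamental-group argument is precisely what handles this possibility.
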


\setcounter{theorem}{\thetmp}
\renewcommand\thetheorem{\arabic{theorem}}
\endgroup

\begin{proof}[Proof of Theorem \ref{M&M}]
  Therefore, $p_{V^*}$ is holomorphic and bounded, and $S$ is an analytic set of cod $1$ (see \cite{gunning1}), then $p_{V^*}$ extents as a holomorphic first integral in $V$.
\end{proof}
\subsection{Proof of the lemmas}
\begin{proof}[Proof of Lemma \ref{lemA}]
The proof follows from the following affirmations: 
\begin{aff}\label{aff1}
If $L$ is a leaf of $\F$ transverse to $\partial B$. Then, it exist a fundamental system of neighborhoods $\overline{\F}$-invariant of $L$ in $\overline{B}$.
\end{aff}
-If L is transverse to $\partial B$ using the fact that leaves in $\overline{\F}$ are compact off $0$ and by the Theorem \ref{fund-theo} (finite orbits $\Leftrightarrow$ periodicity) 
the
holonomy of $L$ is finite. We can use Reeb's Theorem in $(L,\F)$ showing the affirmation.\par 
Now, we have that for each $k$, the curve $S_k\cap\partial B$ possesses a neighborhood where $\partial\F$ is a transversally holomorphic foliation without singularities.
$S_k\cap\partial B$ is compact with finite holonomy, applying Reeb's in $(\partial S_k,\partial\F)$ we have:
\begin{aff}\label{aff2}
For $k=1,2,\dots,l$, the leaf $\partial S_k$ of $\partial\F$ possesses a tubular neighborhood $T_k(\epsilon)$ in $\partial B$\[J_k:\D_\epsilon\times S^1\to T_k(\epsilon),\] such that
$J_k^{-1}(\partial\F)$ is the suspension of a periodic rotation in $\D$.\par 
$T_k(\epsilon)$ is $\partial\F$-invariant and $T_k(\epsilon')=J_k(\D_{\epsilon'}\times S^1)$ with $0<\epsilon'<\epsilon$ forms a fundamental system of neighborhoods of $\partial S_k$
in $\partial B$. In addition $T_k(\epsilon)$ is transverse to $\F$.
\end{aff}
\begin{aff}\label{aff3}
It exist $0<\epsilon'<\epsilon$ such that the intersection of $\partial B$ with the $\overline{\F}$-saturated $V(\epsilon')$ of $T_1(\epsilon')$ is contained in $T(\epsilon)=\cup
T_k(\epsilon)$.
\end{aff}
-By contradiction, take a sequence $\{a_k\}_k$ of points in $T_1(\epsilon)$ such that $a_k\to a\in\partial S_1$ and satisfying $L_{a_k}\cap\partial B\not\subset T(\epsilon)$ where
$L_{a_k}$ is the leaf in $\overline{\F}$ passing by $a_k$. Take $b_k$ a point in $(L_{a_k}\cap\partial B)\setminus T(\epsilon)$, then $\{b_k\}_k$ is a sequence in a compact thus
$b_k\to b$ (using the same notation for a subsequence), if $L_b$ is transverse to $\partial B$ then by affirmation \ref{aff1} $L_b$ is far from $S_1$ ($\rightarrow\leftarrow$).\\
If $L_b$ is not transverse to $\partial B$ we can take a sphere of radius $1+\delta$, $\delta>0$, and apply affirmation \ref{aff1} again ($\rightarrow\leftarrow$).  
\begin{aff}\label{aff4}
It exist $0<\epsilon_1<\epsilon'$ such that $V(\epsilon_1)=V$, the $\overline{\F}$-saturate of $T_1(\epsilon_1)$, is a neighborhood of $0$ in $\overline{B}$.
\end{aff} 
-The pseudo-group of holonomy is generated by a enumerable set of biholomorphisms with finitely many non trivial fixed points. The set of leaves of $\overline{\F}$ with non-trivial
holonomy is numerable (see \cite{Godbillon} proposition 2.7, pag. 96) so we can choose $\epsilon_1$ such that $0<\epsilon_1<\epsilon'$ and the leaves cutting
$J_1(\partial\mathbb{D}_1\times\{1\})=C_{\epsilon_1}$ have trivial holonomy. Again, the compactness of the leaves allows to apply Reeb stability theorem. For all $a\in C_{\epsilon_1}$
the leaf $L_a$ in $\overline{\F}$ through $a$ possesses a $\overline{\F}$-saturated tubular neighborhood:
\[J_a:\tau_a\times L_a\to T(L_a),\]
such that $J^{-1}(\overline{\F})$ is foliated by fibers $z\times L_a$, where $\tau_a$ is a small curve transverse to $\overline{\F}$ through $a$ contained in $T_1(\epsilon')$. In
particular the $\overline{\F}$-saturate of $\nu_a=\tau_a\cap C_{\epsilon_1}$ is $C^{\infty}$-diffeomorphic to the product $\nu_a\times L_a$ and the saturated of $C_{\epsilon_1}$ is a
$C^{\infty}$-hypersurface (whose boundary is contained in $\partial B$) fibered over $S^1$. By construction, is the boundary of $V=V(\epsilon_1)$ the $\overline{\F}$-saturated of
$T_1(\epsilon_1)$.        
\end{proof}
\begin{rem}
The construction in the previous affirmations does not work with infinitely many separatrices, because infinitely many implies that all leaves are separatrices and for 
instance in Affirmation \ref{aff4} cannot be avoided find separatrices cutting $C_{\epsilon_1}$ (for a small $\epsilon_1$) and then its saturate does not bound a neighborhood of 
$0$. 
\end{rem}
\begin{proof}[Proof of Lemma \ref{lemB}]
We are going to endow $\tilde{\Delta}:=V^*/\F_{V^*}$ (leaves space) with a Riemann surface structure and show that $\tilde{\Delta}$ and $\D^*$ are biholomorphic.\par 
Note first that by Affirmation \ref{aff1}, $\tilde{\Delta}$ is Hausdorff, with the topology induced by the quotient map $q_V:V^*\to V^*/\F_{V^*}$. Note also, that $V^*$ is the
saturation of $T_1(\epsilon_1)$ which is the same that the saturation of $J_1(\D_{\epsilon_1}\times 1)=:\Delta^*_{\epsilon_1}$ and $\Delta^*_{\epsilon_1}$ is transverse to $\F_{V^*}$
and diffeomorphic to $\D^*$, in $\Delta^*_{\epsilon_1}$ we consider the topology induced by this diffeomorphism. With this in mind we can identify
\[V^*/\F_{V^*}=q_{V^*}(V^*)=q_{V^*}(\Delta^*_{\epsilon_1})\]\[ \tilde{\Delta}=q(\Delta^*_{\epsilon_1}).\]
Now, take $a\in\Delta^*_{\epsilon_1}$ and $L_a$ the leaf in $\F_{V^*}$ passing by $a$. Observe that $L_a$ is compact, transverse to $\partial B$ and with finite holonomy. This
holonomy is a subgroup of the group of rotations centered at $0\in\C$, and is isomorphic to $\mathbb{Z}/n(a)\mathbb{Z}$, with $n(a)\in\mathbb{Z}$. Applying Reeb's to ($L_a,\F_{V^*}$)
we find a neighborhood $\F_{V^*}$-invariant of $L_a$ that can be thought as the $\F_{V^*}$-saturated of $\Delta_a$, a neighborhood of $a$ in $\Delta_{\epsilon_1}^*$, where $\Delta_a$
is biholomorphic to $\D_1$ (again, in $\Delta_a$ we consider the induced topology), this neighborhood is biholomorphically conjugated to $\mathbb{D}_1\times L_a$, having a first
integral $z\to z^{n(a)}$.\par Therefore, there exist a biholomorphism $\varphi_a:\mathbb{D}_1\to\Delta_a$, and a homeomorphism $g_a:q(\Delta_a)\to\mathbb{D}_1$ such that
\begin{displaymath}
    \xymatrix{
        \mathbb{D}_1 \ar[r]^{\varphi_a}& \Delta_a\ar[d]^{q} \\
        &\ar[ul]^{g_a} q(\Delta_a)}
\end{displaymath}
Where $g_a$ can be defined by 
\begin{equation}\label{g_a} 
  g_a\circ q\circ \varphi_a (z)=z^{n(a)},
\end{equation}
We have that $\{g_a\}_{a\in\Delta^*_{\epsilon_1}}$ is an atlas
that define a differentiable structure in $\tilde{\Delta}$ (therefore, $\tilde{\Delta}$ is a real manifold of dimension two i.e. locally a surface ).
\begin{itemize}
\item $\tilde{\Delta}=\bigcup q(\Delta_a)\quad \checkmark$.
\item If $q(\Delta_a)\cap q(\Delta_b)\neq\emptyset$, $g_a\left(q(\Delta_a)\cap q(\Delta_b)\right)$ and $g_b\left(q(\Delta_a)\cap q(\Delta_b)\right)$ are open sets and 
$g_b\circ g_a^{-1}:g_a\left(q(\Delta_a)\cap q(\Delta_b)\right)\to g_b\left(q(\Delta_a)\cap q(\Delta_b)\right)$ is a biholomorphism $\checkmark$.
\end{itemize}
To see the second one, note that $q(\Delta_a)\cap q(\Delta_b)$ is intersection of two open sets and because $g_a$ is a homeomorphism $g_a\left(q(\Delta_a)\cap 
q(\Delta_b)\right)=\big[\varphi_a^{-1}\circ q_a^{-1}\big(q(\Delta_a)\cap q(\Delta_b)\big)\big]^{n(a)}$ is open. Finally, 
\[g_b\circ g_a^{-1}(\cdot)=(g_b\circ q)(g_a\circ q)^{-1}(\cdot),\]
and \ref{g_a} implies that $g_b\circ g_a^{-1}(\cdot)$ is a biholomorphism.\medskip\\
Then by construction the quotient map $q$ (in fact $q_{V^*}$) is holomorphic (because $g_a\circ q$ is holomorphic, we are thinking $\tilde{\Delta}$ as a manifold). Observe that
$q:\Delta^*_{\epsilon_1}\equiv \D^*\to\tilde{\Delta}$ is a branched lifting whose branching points correspond to the points $a$ such that $n(a)>1$. In addition, $q$ is proper then,
is a branched lifting with finitely many leaves.\par
Now, $\tilde{\Delta}$ can not be simply connected because in that case it would be biholomorphic to $\mathbb{D}_1$ or $\C$ and the preimage of its boundary $S^1$ (hyperbolic) or
$\{\infty\}$ (parabolic) necessarily has to be $\partial\mathbb{D}_1$ and $0$, which are of different kind. Therefore, $\tilde{\Delta}$ is not simply connected.\par
In order to show that $\pi_1(\tilde{\Delta})$ is generated by one element, take a point $q(a)$ and two different elements $\alpha,\,\beta\in\pi_1(\tilde{\Delta},q(a))$ and due to the
fact that $q$ is a finite covering, there exist $l,s\in\mathbb{Z}$ such that the lifting in $a$ of $\alpha^l,$ and $\beta^s$ are closed curves homotopic to the same generator of
$\pi(\D^*,a)$, then,  they are homotopic. Thus $\tilde{\Delta}$ is a surface with monogenous fundamental group and is homeomorphic to $\mathbb{D^*}$. 
If $B_1$, $B_2$ are the boundaries of $\tilde{\Delta}$ we have that $q^{-1}(B_i)$ is a boundary of $\mathbb{D}^*$ of the same class that $B_i$. Therefore $\mathbb{D}^*$ and
$\tilde{\Delta}$ are biholomorphic. 
\end{proof}
\section{Generic vector fields in dimension $n$}
This section is dedicated to show our attempt to prove Theorem \ref{existence} following the proof in the section above.
\subsection{Attempt to a geometric proof of Theorem \ref{existence}}
As before, we divided the proof in two parts: 
\begin{enumerate}[A'.]
\item Construction a neighborhood $V$ of the origin.
\item Study of the quotient space $V/\F_V$.
\end{enumerate} 
We succeeded to prove the first part i.e., we build a invariant neighborhood $V$ of the separatrices (in this case the distinguished axis and the dicritical hyperplane Proposition 
\ref{separatices}) that can be seen as the saturated of a transverse section to the distinguished axis. Is important to mention that this was already done in \cite{Reis} (Proposition 
1.) and unlike it we need the hypothesis of the leaves be closed, our proof is more geometric except by the implicid used of the following proposition   .\par
Fix a small enough ball $B=B_r^{2n}$ centered in $0\in\C^n (\cong \mathbb{R}^{2n})$ contained in an open set $U$ where the germ of generic vector field 
$\X\in\mathrm{Gen}(\mathfrak{X}(\C^n,0))$ is defined.
\begin{pro}\label{separatices}
 If $\X\in\mathrm{Gen}(\mathfrak{X}(\C^n,0))$ satisfies condition ($\star$) (see Definition \ref{star}) then the separatrices of $\F(\X)$ are $S_{\X}$ and the leaves contained in the 
dicritic hyperplane.  
\end{pro}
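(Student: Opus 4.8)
The plan is to put $\X$ in the normal form of Definition~\ref{generic}, read off the two candidate pieces $S_{\X}$ and the dicritical hyperplane $H$, and then show by a computation along a parametrization that no separatrix can escape $S_{\X}\cup H$. The only input needed from condition $(\star)$ is a non-resonance of the shape $\lambda_i/\lambda_j\notin\mathbb{R}_{>0}$.

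First I would choose coordinates $(x_1,\dots,x_n)$ in which $\X$ leaves the coordinate hyperplanes invariant, so that
\[
  \X(x)=\sum_{j=1}^{n}\lambda_j x_j\bigl(1+a_j(x)\bigr)\frac{\partial}{\partial x_j},\qquad a_j\in\mathcal{M}_n,\qquad \mathrm{d}\X(0)=\mathrm{diag}(\lambda_1,\dots,\lambda_n).
\]
Write $\lambda(\X)=\lambda_n$ for the eigenvalue separated by the real line $L$; since $L$ separates it from \emph{the others} and $S_{\X}$ is required to be a smooth curve, $\lambda_n$ is simple, $S_{\X}=\{x_1=\dots=x_{n-1}=0\}$ is the associated axis, and $H:=\{x_n=0\}$ is the only coordinate hyperplane \emph{not} containing $S_{\X}$ — the dicritical hyperplane; note that the linear part of $\X|_{H}$ has eigenvalues $\lambda_1,\dots,\lambda_{n-1}$, all lying in the same open half-plane bounded by $L$. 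The inclusion ``$\supseteq$'' is immediate: $S_{\X}$ and $H$ are $\F(\X)$-invariant, hence $S_{\X}$ together with every leaf lying in $H$ (the axes $\{x_i=0,\ i\neq k\}$ with $k<n$, and the whole dicritical family if $H$ carries one) are separatrices.

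For the converse, let $\Gamma$ be a separatrix, i.e.\ an irreducible germ of $\F(\X)$-invariant analytic curve through $0$, and take a Puiseux parametrization $t\mapsto\Gamma(t)=(\gamma_1(t),\dots,\gamma_n(t))$, so that $\Gamma'(t)=c(t)\,\X(\Gamma(t))$ for a meromorphic scalar $c$. Comparing lowest-order terms one sees, as for any non-degenerate singularity, that the tangent vector $v$ of $\Gamma$ (its leading coefficient vector) is an eigenvector of $\mathrm{d}\X(0)$; since $\lambda_n$ is simple, either $v$ is a multiple of $e_n$, or $v$ lies in an eigenspace attached to some $\lambda_k$ with $k<n$, in which case $\gamma_n$ has strictly higher order than a suitably chosen $\gamma_k$. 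I claim $\gamma_j\equiv0$ for all $j<n$ in the first case (so $\Gamma=S_{\X}$) and $\gamma_n\equiv0$ in the second (so $\Gamma\subset H$). Both follow from the same identity: from $\gamma_i'/\gamma_i=c\,\lambda_i(1+a_i\!\circ\!\Gamma)$ one gets, for any pair with $\gamma_i,\gamma_j\not\equiv0$,
\[
  \Bigl(\log\tfrac{\gamma_i}{\gamma_j}\Bigr)'=\frac{\gamma_j'}{\gamma_j}\Bigl(\frac{\lambda_i(1+a_i\!\circ\!\Gamma)}{\lambda_j(1+a_j\!\circ\!\Gamma)}-1\Bigr).
\]
If $\gamma_i$ were not $\equiv0$ while $\gamma_j$ has the minimal order $m$, the residue at $t=0$ of the left side is the positive integer $\ell=\mathrm{ord}\,\gamma_i-m\ge1$, while that of the right side is $m(\lambda_i/\lambda_j-1)$ (using $a_\bullet(0)=0$); hence $\lambda_i/\lambda_j=1+\ell/m\in\mathbb{R}_{>0}$. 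Taking $i<n,\ j=n$ in the first case and $i=n,\ j=k$ in the second, this contradicts $(\star)$, which forces $\lambda_i/\lambda_n\notin\mathbb{R}_{>0}$ and $\lambda_n/\lambda_k\notin\mathbb{R}_{>0}$ for $i,k<n$ (two numbers strictly separated by $L$ cannot have a positive real ratio). The claim, and the proposition, follow.

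The main obstacle is the bookkeeping in this last step: justifying that $\Gamma'=c\,\X\!\circ\!\Gamma$ holds with meromorphic $c$, that the tangent cone of an irreducible invariant curve at a non-degenerate singularity is a single eigenline (which in particular makes $S_{\X}$ genuinely smooth because $\lambda(\X)$ is simple), and that the logarithmic derivatives above have exactly the stated principal parts — after which everything collapses to the clean non-resonance $\lambda_i/\lambda_j\notin\mathbb{R}_{>0}$ read off from the half-plane picture behind condition $(\star)$. An equivalent route, if one prefers a black box, is to write $\Gamma$ as a graph over the appropriate axis and invoke the Briot--Bouquet uniqueness theorem (as in \cite{Ilya_Yako}), the obstruction being the very divisibility $\lambda_i/\lambda_j\in\mathbb{Z}_{\ge1}$ excluded by $(\star)$.
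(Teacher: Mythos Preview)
Your argument is correct and follows a genuinely different route from the paper's. The paper proceeds dynamically: it rescales $\X$ so that the distinguished component integrates to a pure exponential $x_n(t)=x_n(0)e^{(\lambda_n/v)t}$, then follows a \emph{real-time} trajectory on the putative separatrix and uses the sign dichotomy $\mathrm{Re}(\lambda_i/v)<0$ for $i<n$ versus $\mathrm{Re}(\lambda_n/v)>0$ (read directly off the separating line) to force $|x_n(t)|\to\infty$ while $|x_i(t)|\to 0$, contradicting $\gamma(t)\to 0$. Your approach is instead formal--algebraic: a Puiseux parametrization together with the residue identity for $(\log(\gamma_i/\gamma_j))'$ extracts the exact relation $\lambda_i/\lambda_j=m_i/m_j\in\mathbb{Q}_{>0}$ between eigenvalue ratios and vanishing orders, which the half-plane geometry of $(\star)$ excludes at once. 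Your route is sharper (it produces a rational resonance rather than a sign inequality), sidesteps the somewhat delicate issue of singling out a real flow direction on a complex leaf along which convergence to $0$ actually occurs, and is written uniformly in $n$ whereas the paper's computation is carried out for $n=3$; the paper's version, in return, makes the hyperbolic splitting behind condition $(\star)$ visibly geometric.
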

\begin{proof}
 Remember that a generic vector field can be written in the form \eqref{gen-vf}
  \[
	\X(x) = \lambda_1x_1 (1 + a_1(x))\frac{\partial}{\partial x_1}+\lambda_2x_2 (1 + a_2(x))\frac{\partial}{\partial x_2}+\lambda_3x_3 (1 + a_3(x))\frac{\partial}{\partial x_3},  
  \]  
  where $a_i\in\mathcal{M}_3$ for $i=1,2,3$, and it can be chosen $v$ such that Re$(\lambda_1/v),$ Re$(\lambda_2/v)<0$ e Re$(\lambda_3/v)>0$. Also, as 
$a_3(0)=0$ we know that for $|x|$ small $|a_3(x)|<\epsilon$ thus $|1+a_3(x)|\geq|1-|a_3(x)||\geq 1-|a_3(x)|>1-\epsilon$ and the function $\frac{1+a_i(x)}{1+a_3(x)}$ is 
holomorphic, take $1+\tilde{a}_i(x)=\frac{1+a_i(x)}{1+a_3(x)}$, suppose that $|\tilde{a}_i(x)|\leq\frac{\mathrm{Re}(\lambda_i/v)}{2|\lambda_i/v|}$ and write 
$\X$ like 
  \[
	\X(x) = \frac{\lambda_1}{v}x_1 (1 + \tilde{a}_1(x))\frac{\partial}{\partial x_1}+\frac{\lambda_2}{v}x_2 (1 + \tilde{a}_2(x))\frac{\partial}{\partial 
x_2}+\frac{\lambda_3}{v}x_3\frac{\partial}{\partial x_3},  
  \]  
  Now, if $\gamma(T)=(x_1(T),x_2(T),x_3(T))$ is a separatrix of $\F(\X)$ not contained in the hyperplane $x_3=0$ or in the $x_3$ axis. We know that $\gamma$ is 
$\F(\X)$-invariant then $\X(\gamma)=\gamma'$ which is equivalent to $x'_i(T)=(\lambda_i/v)x_i(T) (1 + \tilde{a}_i(\gamma(T)))$ for $i=1,2$ and $x'_3(T)=(\lambda_3/v)x_i(T)$. Consider 
the case where $T=t\in\mathbb{R}$ hence $\gamma(t)$ is a curve with real dimension one, take $\gamma(0)\neq 0$ as its initial point and $\lim_{t\to\infty}\gamma(t)=0$ therefore 
\begin{align*}
 x_i(t)=x_i(0)e^{\frac{\lambda_i}{v}t +\frac{\lambda_i}{v}\int_0^t\tilde{a}_i(\gamma(t))\mathrm{d}t}
\end{align*}
for $i=1,2$ and $x_3(t)=x_3(0)e^{\frac{\lambda_3}{v}t}$. Now, taking norms 
 \[
  |x_i(t)|=|x_i(0)|e^{\mathrm{Re}\big(\frac{\lambda_i}{v}\big)t +\mathrm{Re}\big(\frac{\lambda_i}{v}\int_0^t\tilde{a}_i(\gamma(t))\mathrm{d}t\big)},
 \]
 considering the upper quotes
\begin{align*}
   \mathrm{Re}\Big(\frac{\lambda_i}{v}\int_0^t\tilde{a}_i(\gamma(t))\mathrm{d}t\Big)&\leq \big|\frac{\lambda_i}{v}\big|\int_0^t|\tilde{a}_i(\gamma(t))|\mathrm{d}t,\\
   &\leq \frac{1}{2}|\mathrm{Re}(\lambda_i/v)|t,
\end{align*}
   we have that $|x_i(t)|\leq|x_i(0)|e^{\frac{1}{2}\mathrm{Re}(\lambda_i/v)t}$ and this goes to $0$ when $t\to\infty$. On the other hand 
$|x_3(t)|=|x_3(0)|e^{\mathrm{Re}(\frac{\lambda_3}{v})t}$ and goes to $\infty$ because Re$(\lambda_3/v)>0$. In conclusion, $\gamma$ can not be as we supposed and it has to be 
contained in the hyperplane $x_3=0$ or in the $x_3$ axis.

\end{proof}

\begingroup 
\setcounter{tmp}{\value{theorem}}
\setcounter{theorem}{0} 
\renewcommand\thetheorem{\Alph{theorem}'}

\begin{lem}\label{lemA-dim3}
There exists open sets $V$ with $V\subset\overline{B}$ such that, $V$ is a neighborhood $\overline{\F}$-invariant of $S$ (the union of separatices).
\end{lem}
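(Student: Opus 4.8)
The plan is to transpose to dimension three the construction of Moussu used for Lemma~\ref{lemA}, working now with transverse bidiscs in place of transverse discs and with the separatrix configuration described in Proposition~\ref{separatices}. First I would fix coordinates in which $\X$ has the generic form~\eqref{gen-vf}, so that $S=S_{\X}\cup H$ with $S_{\X}=\{x_1=x_2=0\}$ the smooth invariant axis attached to the separable eigenvalue and $H=\{x_3=0\}$ the invariant (dicritical) hyperplane of Proposition~\ref{separatices}. In the notation of that proposition $\mathrm{Re}(\lambda_3/v)$ has sign opposite to $\mathrm{Re}(\lambda_1/v)$ and $\mathrm{Re}(\lambda_2/v)$, so $\X$ is transverse to every small sphere along $S_{\X}\setminus\{0\}$ and, after shrinking $r$, I may assume $S_{\X}$ meets $\partial B_\rho$ transversally for all $0<\rho\le r$.

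\textbf{Step 1: a neighbourhood of $H$.} The restriction $\F(\X)|_H$ is a germ of foliation by curves on the complex surface $(H,0)$, with an isolated singularity at $0$, finitely many separatrices, and all leaves closed off $0$ by hypothesis; Lemma~\ref{lemA} applied on $H$ (where one may also use Theorem~\ref{fund-theo}) therefore yields an $\overline{\F}|_H$-invariant neighbourhood $W$ of the separatrix set of $\F(\X)|_H$ in $H\cap\overline B$, with leaves transverse to $\partial B$. Each leaf $L\subset W$ is compact; since all ambient leaves near $L$ are compact too (closedness off $0$), $L$ is a stable leaf of $\overline\F$ with finite holonomy, and the Reeb stability theorem (\cite{livCamNet}) provides an $\overline\F$-invariant tubular neighbourhood of $L$ in $\overline B$. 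Running over a fundamental system of the $W$'s produces an $\overline\F$-invariant open set $\hat W\subset\overline B$ that is a neighbourhood of $H\cap\overline B$, origin included.

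\textbf{Step 2: a neighbourhood of $S_{\X}$.} This is the three-dimensional version of Affirmations~\ref{aff1}--\ref{aff4}. Picking $z_0\in S_{\X}$ near $0$ and a transverse bidisc $\Sigma\cong\D^2$ at $z_0$, the local holonomy of the compact leaf with boundary $S_{\X}\cap\overline B$ is cyclic and, by closedness of all nearby leaves, finite; Reeb stability then gives a tube $J\colon\D^2_\epsilon\times S^1\to T_\epsilon\subset\partial B$ around $\partial S_{\X}=S_{\X}\cap\partial B$ on which $\partial\F$ is the suspension of a finite-order linear map, the $T_{\epsilon'}$ ($0<\epsilon'<\epsilon$) forming a fundamental system of $\partial\F$-invariant neighbourhoods of $\partial S_{\X}$ transverse to $\F$ (analogue of Affirmation~\ref{aff2}). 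One then repeats Affirmations~\ref{aff3} and~\ref{aff4}: choose $\epsilon'<\epsilon$ so that the trace on $\partial B$ of the $\overline\F$-saturate $V(\epsilon')$ of $T_{\epsilon'}$ lies in $T_\epsilon$ (argue by contradiction with a sequence of boundary points, using the stability of transverse leaves for spheres of radius $r$ and $r+\delta$), and then $\epsilon_1<\epsilon'$ so that the leaves meeting $J(\partial\D^2_{\epsilon_1}\times\{1\})$ have trivial holonomy, which is possible as the set of leaves with nontrivial holonomy is countable (\cite{Godbillon}); Reeb stability then shows $V(\epsilon_1)$ is a $C^\infty$ manifold with corners fibred over $S^1$ and an $\overline\F$-invariant neighbourhood of $S_{\X}\setminus\{0\}$.

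\textbf{Conclusion and main obstacle.} Then $V:=\hat W\cup V(\epsilon_1)$ is the desired $\overline\F$-invariant neighbourhood of $S$. The delicate point is not the stable-leaf analogue of Affirmation~\ref{aff1}, which transfers with only cosmetic changes, but the control of the saturation near the singular point: with a two-dimensional transverse section one must preclude that a separatrix-avoiding leaf, although closed off $0$, drifts toward $S$ along $\partial B$ in an uncontrolled way --- this is exactly where the hypothesis that \emph{all} leaves are closed off $0$ is used (Proposition~1 of \cite{Reis} dispenses with it but only reaches the weaker $\mathrm{sat}(\Sigma)$-statement). A secondary issue is the compatibility of the Reeb neighbourhoods $\hat W$ and $V(\epsilon_1)$ along the leaf through $0$, and checking, via the ``every nearby leaf compact'' principle, that the holonomies appearing in Steps~1 and~2 are genuinely finite and not merely of finite orbits.
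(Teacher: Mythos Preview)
Your Step~1 contains a genuine gap. Under condition~($\star$) the eigenvalues $\lambda_1,\lambda_2$ lie on the same side of the separating line, so $\lambda_1/\lambda_2\notin\mathbb{R}_{<0}$ and the restricted foliation $\F(\X)|_H$ on the hyperplane $H=\{x_3=0\}$ is in the Poincar\'e domain. Consequently \emph{every} leaf of $\F|_H$ accumulates at the origin, i.e.\ every leaf is a separatrix; this is precisely why the paper calls $H$ the ``dicritical hyperplane'' (Proposition~\ref{separatices}). Your assertion that $\F|_H$ has finitely many separatrices is therefore false, and Lemma~\ref{lemA} cannot be invoked --- indeed the Remark immediately following the proof of Lemma~\ref{lemA} notes explicitly that the construction breaks down with infinitely many separatrices. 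The subsequent thickening argument also stumbles: the leaves of $\F|_H$ are not compact in $\overline B$ (they accumulate at $0$), so ``each leaf $L\subset W$ is compact'' fails and Reeb stability is not available in the form you use it.

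The paper's argument avoids this entirely by \emph{not} building a separate neighbourhood of $H$. It works only with the saturate of a transverse bidisc $T_1(\epsilon_1)\cong B^4_{\epsilon_1}\times S^1$ around $K_1=S_\X\cap\partial B$, but --- and this is the point your Step~2 misses --- it introduces a second tube $T_2(\epsilon)$ on $\partial B$ around the boundary of $H$, and the analogue of Affirmation~\ref{aff3} asserts that the sphere-trace of the saturate $V(\epsilon')$ is contained in $T_1(\epsilon)\cup T_2(\epsilon)$, not in $T_1(\epsilon)$ alone. This is necessary because leaves through $T_1(\epsilon')$ genuinely approach $H$ (that is the content of Proposition~\ref{separatices}); your version of Affirmation~\ref{aff3}, which confines the trace to $T_\epsilon$ only, would be false. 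Once both tubes are in place, the analogue of Affirmation~\ref{aff4} shows that the single saturate $V(\epsilon_1)$ of $T_1(\epsilon_1)$ is already a neighbourhood of $0$, hence of all of $S$; no union with a second piece $\hat W$ is needed. In short: drop Step~1, and in Step~2 add the second tube $T_2(\epsilon)$ around $\partial H$ to the target of your Affirmation~\ref{aff3} analogue.
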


\setcounter{theorem}{\thetmp}
\renewcommand\thetheorem{\arabic{theorem}}
\endgroup

\begin{proof}[Proof of Lemma \ref{lemA-dim3}]
In this paragraph we used some of the arguments of the proof of Lemma 2. 
in \cite{livCamNet} pag. 66. First observe that $\partial L=L\cap \partial B$ is a closed set of real dimension one, and each connected component in $\partial L$ is diffeomorphic to 
the circle $S^1$. Suppose that 
$K\subset\partial L$ is one of this connected components, consider neighborhoods $U_k\supset W_k$ of $K$, $U_K$ open in $\C^n$ and $W_K$ open in $L$, where $W_K$ can be taken as a 
finite union of plates because $K\subset L$ is a compact subset of a leaf. As $\partial B$ intersects $W_K$ transversally, we can choose $U_K$ small enough such that for every $x\in 
U_K$ the leaf of $\F|_{U_K}$ through $x$ meets $\partial B$ transversally.\\ 
Continuing with this argument, if there exist $K_1$ and $K_2$ as above, we can use the same technique of 
the construction of the holonomy map to show that there exit and homeomorphism between transversal sections to $W_{K_1}$ and $W_{K_2}$ contained respectively in $U_{K_1}$ and 
$U_{K_2}$. This homeomorphism shows that we can find an invariant neighborhood of $L$ of leaves transversal to $\partial B$ in $\partial B\cap U_{K_1}$ and 
$\partial B\cap  U_{K_2}$.\\
In what follows we will use the notation $K_1=S_{\X}\cap \partial B$ where $S_{\X}$ is the distinguished axis of the generic vector field $\X$, be $U_{K_1}$ as before 
take $T_1(\epsilon)\subset U_{K_1}$ a set containing $K_1$ and diffeomorphic to $B^4_{\epsilon_1}\times S^{1}$ ($J_1(B^4_{\epsilon_1}\times S^{1})=T_1(\epsilon)$) and 
\[T_2(\epsilon_2)=\{x\in\C^3\,\big|\, |x_1|^2+|x_2|^2=1,\ |x_3|\leq\epsilon_2\}\]
\begin{aff}\label{aff3-dim3}
It exist $0<\epsilon'<\epsilon$ such that the intersection of $\partial B$ with the $\overline{\F}$-saturated $V(\epsilon')$ of $T_1(\epsilon')$ is contained in $T(\epsilon)=
T_1(\epsilon)\cap T_2(\epsilon)$.
\end{aff}
-By contradiction, take a sequence $\{a_k\}_k$ of points in $T_1(\epsilon)$ such that $a_k\to a\in K_1$ and satisfying $L_{a_k}\cap\partial B\not\subset T(\epsilon)$ where
$L_{a_k}$ is the leaf in $\overline{\F}$ through $a_k$. Take $b_k$ a point in $(L_{a_k}\cap\partial B)\setminus T(\epsilon)$, then $\{b_k\}_k$ is a sequence in a compact thus
$b_k\to b\in\partial B$ (using the same notation for a subsequence), if $L_b$ is transverse to $\partial B$ then we can use the previous paragraph supposing that $b$ belongs to some 
$K_2$, then it exist an invariant neighborhood of $L_b$ of leaves transversal to $\partial B$ in $\partial B\cap U_{K_1}$ and $\partial B\cap  U_{K_2}$, this implies that $L_b$ is 
far from $S_{\X}$.  ($\rightarrow\leftarrow$).\\
If $L_b$ is not transverse to $\partial B$ we can take a sphere of radius $1+\delta$, $\delta>0$, and proceed as above.  
\begin{aff}\label{aff4-dim3}
It exist $0<\epsilon_1<\epsilon'$ such that $V(\epsilon_1)=V$, the $\overline{\F}$-saturate of $T_1(\epsilon_1)$, is a neighborhood of $0$ in $\overline{B}$.
\end{aff} 
-The pseudo-group of holonomy is generated by a enumerable set of biholomorphisms with finitely many non trivial fixed points. The set of leaves of $\overline{\F}$ with non-trivial
holonomy is numerable (see \cite{Godbillon} proposition 2.7, pag. 96) so we can choose $\epsilon_1$ such that $0<\epsilon_1<\epsilon'$ and the leaves cutting
$J_1(\partial B^4_{\epsilon_1}\times\{1\})=C_{\epsilon_1}$ have trivial holonomy and the compactness of the leaves allows to apply Reeb stability theorem. For all $a\in 
C_{\epsilon_1}$
the leaf $L_a$ in $\overline{\F}$ through $a$ possesses a $\overline{\F}$-saturated tubular neighborhood:
\[J_a:\tau_a\times L_a\to T(L_a),\]
such that $J^{-1}(\overline{\F})$ is foliated by fibers $z\times L_a$, where $\tau_a$ is a small curve transverse to $\overline{\F}$ through $a$ contained in $T_1(\epsilon')$. In
particular the $\overline{\F}$-saturate of $\nu_a=\tau_a\cap C_{\epsilon_1}$ is $C^{\infty}$-diffeomorphic to the product $\nu_a\times L_a$ and the saturated of $C_{\epsilon_1}$ is a
$C^{\infty}$-hypersurface (whose boundary is contained in $\partial B$) fibered over $S^1$. By construction, is the boundary of $V=V(\epsilon_1)$ the $\overline{\F}$-saturated of
$T_1(\epsilon_1)$.        
\end{proof}
We would like to have the analogous of Lemma \ref{lemB}, something like: 
\begin{quote}
\emph{"Lemma B': Exist a homeomorphism \[h:V^*/\F_{V^*}\to B^*(=B\setminus\{0\})\] such that $h\circ q_{V^*}=p_{V^*}$ is holomorphic."}
\end{quote}
In order to proof such lemma it would be necessary to understand the topology of the space of leaves $q(V^*)=V^*/\F$. We know that $q(V^*)=q\big(J_1(B_{\epsilon'}\times\{1\})\big)$ 
is a Hausdorff space (because the leaves we are considering are closed) but the big difference is that in dimension two it can be shown that the $q(V^*)$ is biholomorphic to $\D^*$, 
in our case would ${B^4}^*$, using machinery like the Riemann map and fundamental group which do not exist (or are not as useful) in greater dimension. Our 
intention of repeat Moussu's proof in dimension three was unsuccessful but it helped us to achieve a better understanding of our problem.
\begin{appendices}
\chapter{Algebraic properties of groups of diffeomorphisms}\label{demRib}
Here we give a sketch of the proof of Proposition \ref{ribón}. We start by introducing some notations, definitions and results needed for this purpose, they mainly come from    
\cite{Rib1}, we also recommend \cite{MarteloScá,MarteloRib}.  
\section{Preliminaries}
Given an element $\phi\in\mathrm{Diff}(\C^n , 0)$ we consider its action in the space
of $k$-jets. More precisely we consider the element $\phi_k\in\mathrm{GL}(\mathfrak{m}/\mathfrak{m}^{k+1})$ defined by 
\begin{align*}
  \mathfrak{m}/\mathfrak{m}^{k+1}&\overset{\phi_k}{\to}\mathfrak{m}/\mathfrak{m}^{k+1}\\
  g+\mathfrak{m}^{k+1}&\mapsto g\circ\phi+\mathfrak{m}^{k+1}
\end{align*}
where $\mathfrak{m}/\mathfrak{m}^{k+1}$ can be interpreted as a finite dimensional complex vector space. In this point of view diffeomorphisms are interpreted as
operators acting on function spaces.
\begin{defi}
 We define $D_k=\{\phi_k : \phi\in\mathrm{Diff}(\C^n , 0)\}$.
\end{defi}
The natural projections $\pi_{k,l} : D_k \to D_l$ for $k\geq l$ define a projective system and hence we can consider the projective limit $\varprojlim D_k$, it is the so called group 
of 
formal diffeomorphisms.
\begin{defi}
 Let $G$ be a subgroup of $\widehat{\mathrm{Diff}}(\C^n,0)$. We define $G_k$ as the smallest algebraic subgroup of $D_k$ containing $\{\varphi_k :\varphi\in G \}$. 
\end{defi}
\begin{defi}
 Let $G$ be a subgroup of $\widehat{\mathrm{Diff}}(\C^n,0)$. We define $\overline{G}^z$ as $\varprojlim_{k\in\mathbb{N}} G_k$, more precisely $\overline{G}^z$ is the subgroup 
of $\widehat{\mathrm{Diff}}(\C^n,0)$ defined by 
\[
  \overline{G}^z=\{\varphi\in\widehat{\mathrm{Diff}}(\C^n,0):\ \varphi_k\in G_k\ \forall\,k\in\mathbb{N} \}
\]
 We say that $\overline{G}^z$ is the pro-algebraic closure of $G$. We say that $G$ is \emph{pro-algebraic} if $G=\overline{G}^z$
\end{defi}
\begin{pro}\label{pro2.2}
  Let $\phi\in\widehat{\mathrm{Diff}}(\C^n,0)$. Then $\phi$ is unipotent if and
only if $j^1\phi$ is unipotent. 
\end{pro}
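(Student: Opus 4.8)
The plan is to unwind the definition of unipotence in $\widehat{\mathrm{Diff}}(\C^n,0)$. By definition $\phi$ is unipotent precisely when the operator $\phi_k\in\mathrm{GL}(\mathfrak{m}/\mathfrak{m}^{k+1})$ is unipotent for every $k\in\mathbb{N}$, whereas $j^1\phi$ being unipotent means exactly that $\phi_1$ is unipotent; and $\phi_1$ is (conjugate to) the pull-back action of the linear part $\mathrm{d}\phi_0\in\mathrm{GL}(n,\C)$ on linear forms, so $\phi_1$ is unipotent if and only if $\mathrm{d}\phi_0$ is. One implication is then immediate: if $\phi$ is unipotent, then in particular $\phi_1$ is unipotent, i.e. $j^1\phi$ is unipotent. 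Everything therefore rests on the converse.

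For the converse, assume $\mathrm{d}\phi_0$ is unipotent, i.e. all its eigenvalues $\lambda_1,\dots,\lambda_n$ equal $1$. Conjugating $\phi$ by a linear automorphism conjugates each $\phi_k$ and does not change unipotency, so I may first perform a linear change of coordinates putting $\mathrm{d}\phi_0$ in upper-triangular (Jordan) form. The key step is to describe the matrix of $\phi_k$ on $\mathfrak{m}/\mathfrak{m}^{k+1}$ in the basis of monomials $x^Q$ with $1\le|Q|\le k$, ordered first by total degree $|Q|$ and, within a fixed degree, by the (reverse-)lexicographic order compatible with the triangularization. I would show that in this ordering the matrix of $\phi_k$ is upper triangular with diagonal entry attached to $x^Q$ equal to $\lambda^Q=\lambda_1^{q_1}\cdots\lambda_n^{q_n}$: indeed $\phi^*(x^Q)$ has lowest-degree part $(\mathrm{d}\phi_0^{\,*}x)^Q$, and since $\mathrm{d}\phi_0^{\,*}$ is triangular on linear forms this degree-$|Q|$ contribution is $\lambda^Q x^Q$ plus monomials that come later in the chosen order of that degree, while the nonlinear terms of $\phi$ produce only monomials of degree $>|Q|$, hence later still.

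Granting this, when $\lambda_1=\dots=\lambda_n=1$ every diagonal entry $\lambda^Q$ equals $1$, so $\phi_k-\mathrm{Id}$ is strictly upper triangular and thus nilpotent; hence $\phi_k$ is unipotent for every $k$, which is precisely the statement that $\phi$ is unipotent. The main obstacle is the bookkeeping in that key step: one must choose the monomial ordering carefully enough that the degree-$|Q|$ part coming from $(\mathrm{d}\phi_0^{\,*}x)^Q$ is genuinely (weakly) triangular with the correct diagonal, and then verify that the contributions from the nonlinear part of $\phi$ land strictly above the diagonal. Once the triangular normal form of $\phi_k$ is established the rest is routine linear algebra. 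Alternatively, one may phrase the same argument by recalling that the spectrum of $\phi_k$ on $\mathfrak{m}/\mathfrak{m}^{k+1}$ is $\{\lambda^Q:1\le|Q|\le k\}$ and invoking the characterization of unipotent operators as those whose spectrum is $\{1\}$.
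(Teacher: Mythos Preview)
Your argument is correct. The paper, however, does not supply its own proof of this proposition: it is stated in the appendix as one of the preliminary results imported from \cite{Rib1} (see the opening sentence of Appendix~\ref{demRib}), so there is nothing to compare against on the paper's side.

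For the record, your approach is the standard one and matches what one finds in the cited literature. The cleanest way to phrase the key step, and to avoid the bookkeeping you flag about monomial orderings, is the filtration argument you allude to at the end: since $\phi$ preserves $\mathfrak m$, the operator $\phi_k$ preserves the filtration $\mathfrak m\supset\mathfrak m^2\supset\cdots\supset\mathfrak m^{k+1}$ of $\mathfrak m/\mathfrak m^{k+1}$, and the induced action on each graded piece $\mathfrak m^{d}/\mathfrak m^{d+1}\cong\mathrm{Sym}^d\big((\C^n)^*\big)$ depends only on $\mathrm d\phi_0$ and has spectrum $\{\lambda^Q:|Q|=d\}$. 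Hence the spectrum of $\phi_k$ is $\{\lambda^Q:1\le|Q|\le k\}$, which equals $\{1\}$ precisely when $\lambda_1=\cdots=\lambda_n=1$; over $\C$ this is equivalent to unipotence. This sidesteps any need to fix a particular monomial order or to worry about whether the triangular form is upper or lower.
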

\begin{lem}\label{lem2.3}
 Let $H_k$ be an algebraic subgroup of $D_k$ for $k\in\mathbb{N}$. Suppose that $\pi_{l,k}(H_l)\subset H_k$ for all $l\geq k\geq 1$. Then $\varprojlim_{k\in\mathbb{N}}H_k$ is a 
pro-algebraic subgroup of $\widehat{\mathrm{Diff}}(\C^n,0)$. Moreover the natural map $\varprojlim H_j\to H_k$ is surjective for any $k\in\mathbb{N}$ if $\pi_{l,k}(H_l)=H_k$ for all 
$l\geq k\geq 1.$ 
\end{lem}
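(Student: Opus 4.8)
The plan is to identify $\varprojlim_{k\in\mathbb{N}}H_k$ with a concrete subgroup of $\widehat{\mathrm{Diff}}(\C^n,0)$ and then verify the two assertions separately. Since $\pi_{l,k}(H_l)\subset H_k$ for all $l\geq k$, the family $(H_k,\pi_{l,k}|_{H_l})$ is an honest projective system, and under the identification $\widehat{\mathrm{Diff}}(\C^n,0)=\varprojlim_k D_k$ one has
\[
  \varprojlim_{k\in\mathbb{N}}H_k\;=\;G:=\{\varphi\in\widehat{\mathrm{Diff}}(\C^n,0):\ \varphi_k\in H_k\ \text{for all}\ k\in\mathbb{N}\}.
\]
First I would check that $G$ is a subgroup: the map $\varphi\mapsto\varphi_k$ is a group (anti-)homomorphism $\widehat{\mathrm{Diff}}(\C^n,0)\to D_k$, since $(\varphi\circ\psi)_k=\psi_k\circ\varphi_k$ and $(\varphi^{-1})_k=(\varphi_k)^{-1}$; hence each $\{\varphi:\varphi_k\in H_k\}$ is the preimage of a subgroup of $D_k$, and $G$ is the intersection of these sets over $k$.

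Next, to see that $G$ is pro-algebraic, recall that by definition $\overline{G}^z=\{\varphi\in\widehat{\mathrm{Diff}}(\C^n,0):\varphi_k\in G_k\ \forall k\}$, where $G_k$ is the smallest algebraic subgroup of $D_k$ containing $\{\varphi_k:\varphi\in G\}$. The inclusion $G\subset\overline{G}^z$ is automatic. For the reverse inclusion I would use that, by the very definition of $G$, the generating set $\{\varphi_k:\varphi\in G\}$ is contained in the algebraic subgroup $H_k$; minimality of $G_k$ then forces $G_k\subset H_k$ for every $k$, whence $\overline{G}^z=\{\varphi:\varphi_k\in G_k\ \forall k\}\subset\{\varphi:\varphi_k\in H_k\ \forall k\}=G$. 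Thus $G=\overline{G}^z$, that is, $G$ is pro-algebraic.

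For the \emph{moreover} part, assume in addition that $\pi_{l,k}(H_l)=H_k$ for all $l\geq k\geq 1$, fix $k$ and $h\in H_k$, and let me build a coherent sequence $(h_l)_{l\geq 1}$ with $h_l\in H_l$ and $h_k=h$. For $l<k$ there is no freedom: set $h_l:=\pi_{k,l}(h)$, which lies in $H_l$ since $\pi_{k,l}(H_k)=H_l$. For $l>k$ I would proceed by recursion: given $h_{l-1}\in H_{l-1}$, the surjectivity of $\pi_{l,l-1}\colon H_l\to H_{l-1}$ --- which is exactly the hypothesis $\pi_{l,l-1}(H_l)=H_{l-1}$ --- yields some $h_l\in H_l$ with $\pi_{l,l-1}(h_l)=h_{l-1}$. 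The transitivity relations $\pi_{m,k}\circ\pi_{l,m}=\pi_{l,k}$ ensure that $(h_l)_l$ is coherent, hence determines $\varphi\in\varprojlim_l D_l=\widehat{\mathrm{Diff}}(\C^n,0)$ with $\varphi_l=h_l\in H_l$ for all $l$; in particular $\varphi\in G$ and $\varphi_k=h$, so the natural projection $\varprojlim_l H_l\to H_k$ is surjective.

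The only step with any real content is the recursion in the last paragraph, and even there the work is essentially handed to us by the equality hypothesis: the surjections $H_l\to H_{l-1}$ let one lift a jet one level at a time, and the assembled coherent sequence is automatically a formal diffeomorphism because $\varprojlim_l D_l$ is the full group of formal diffeomorphisms (its first projection lands in $D_1=\mathrm{GL}$, so the linear part is invertible). Everything else is a direct unwinding of the definitions of $G_k$, $\overline{G}^z$ and \emph{pro-algebraic}, so I do not anticipate any serious obstacle.
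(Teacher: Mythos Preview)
The paper does not actually prove this lemma: it is quoted in the appendix as a preliminary result from \cite{Rib1}, so there is no in-paper proof to compare against. Your argument is the standard one and is correct. The identification $\varprojlim_k H_k=\{\varphi:\varphi_k\in H_k\ \forall k\}$ is just the definition of the inverse limit inside $\widehat{\mathrm{Diff}}(\C^n,0)=\varprojlim_k D_k$; the key inequality $G_k\subset H_k$ follows exactly as you say from the minimality of $G_k$ among algebraic subgroups containing the $k$-jets of $G$, and this immediately gives $\overline{G}^z\subset G$. For the surjectivity statement, your recursive lifting is legitimate because the index set is $\mathbb{N}$: with countably many levels and surjective one-step bonding maps $\pi_{l,l-1}\colon H_l\twoheadrightarrow H_{l-1}$, dependent choice produces a coherent sequence, and no Mittag-Leffler or compactness hypothesis is needed. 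Nothing is missing.
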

The group $G$ is a projective limit of algebraic groups and closed in the Krull topology by definition. Since $G_k$ is an algebraic group
of matrices and in particular a Lie group, we can define the connected component $G_{k,0}$ of the identity in $G_k$. We also consider the set $G_{k,u}$ of
unipotent elements of $G_k$.
\begin{pro}\label{pro2.6}
 Let $G$ be a subgroup of $\widehat{\mathrm{Diff}}(\C^n,0)$. Then we have $\overline{G}^z_0=\{\varphi\in\overline{G}^z\,:\,\varphi_1\in G_{1,0}\}$. Moreover $\overline{G}^z_0$ is 
pro-algebraic.
\end{pro}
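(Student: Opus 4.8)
The plan is to realise $\overline{G}^z_0$ as the projective limit $\varprojlim_k G_{k,0}$ of the identity components, and then to read off both the stated characterisation and the pro-algebraicity from Lemma \ref{lem2.3}. The first point to establish is that the identity components are compatible with the truncations, i.e.\ $\pi_{l,k}(G_{l,0})=G_{k,0}$ for all $l\ge k\ge 1$. I would first note that $\pi_{l,k}(G_l)=G_k$: indeed $\pi_{l,k}^{-1}(G_k)$ is an algebraic subgroup of $D_l$ containing $\varphi_l$ for every $\varphi\in G$, hence it contains $G_l$ by minimality, so $\pi_{l,k}(G_l)\subset G_k$; conversely $\pi_{l,k}(G_l)$ is an algebraic subgroup of $D_k$ containing each $\varphi_k$, so it contains $G_k$. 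Thus $\pi_{l,k}\colon G_l\to G_k$ is a surjective morphism of linear algebraic groups, and since the image of the identity component under a morphism of algebraic groups is the identity component of the image, we get $\pi_{l,k}(G_{l,0})=G_{k,0}$.

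With this in hand, Lemma \ref{lem2.3} applies to the family $H_k=G_{k,0}$: each $G_{k,0}$ is the identity component of the algebraic group $G_k$, hence a closed algebraic subgroup of $D_k$, and we have just shown $\pi_{l,k}(G_{l,0})=G_{k,0}$. The lemma then yields that $\varprojlim_k G_{k,0}$ is a pro-algebraic subgroup of $\widehat{\mathrm{Diff}}(\C^n,0)$, which is precisely the asserted pro-algebraicity of $\overline{G}^z_0$.

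It remains to prove the characterisation $\varprojlim_k G_{k,0}=\{\varphi\in\overline{G}^z:\varphi_1\in G_{1,0}\}$, equivalently $\pi_{k,1}^{-1}(G_{1,0})\cap G_k=G_{k,0}$ for every $k$. The inclusion $\subset$ is immediate: if $\varphi_k\in G_{k,0}$ then $\varphi_1=\pi_{k,1}(\varphi_k)\in\pi_{k,1}(G_{k,0})=G_{1,0}$ by the first step. For the reverse inclusion, set $N_k=\ker(\pi_{k,1}\colon G_k\to G_1)$; every element of $N_k$ has trivial linear part, hence is unipotent (Proposition \ref{pro2.2}), so $N_k$ is a unipotent algebraic group and therefore connected, whence $N_k\subset G_{k,0}$. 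Now let $g\in G_k$ with $\pi_{k,1}(g)\in G_{1,0}$; choosing $g_0\in G_{k,0}$ with $\pi_{k,1}(g_0)=\pi_{k,1}(g)$ (possible because $\pi_{k,1}(G_{k,0})=G_{1,0}$) gives $gg_0^{-1}\in N_k\subset G_{k,0}$, so $g\in G_{k,0}$. Consequently, $\varphi\in\overline{G}^z$ with $\varphi_1\in G_{1,0}$ forces $\varphi_k\in G_{k,0}$ for all $k$, i.e.\ $\varphi\in\varprojlim_k G_{k,0}=\overline{G}^z_0$, and conversely.

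The only delicate point is the connectedness of $N_k$ in the last step: the whole content of the proposition is that membership in $\overline{G}^z_0$ is detected at the level of linear parts, and this works precisely because the jets tangent to the identity form a connected (unipotent) group, which is where Proposition \ref{pro2.2} enters. Everything else is formal manipulation with the projective system together with Lemma \ref{lem2.3} and the standard behaviour of identity components under morphisms of linear algebraic groups.
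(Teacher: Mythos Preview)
The paper does not supply a proof of this proposition; it is quoted without demonstration in the appendix as background material from Rib\'on \cite{Rib1}. Your argument is correct and is essentially the standard one from that reference: identify $\overline{G}^z_0$ with $\varprojlim_k G_{k,0}$, verify the compatibility $\pi_{l,k}(G_{l,0})=G_{k,0}$ via the surjectivity $\pi_{l,k}(G_l)=G_k$, invoke Lemma~\ref{lem2.3} for pro-algebraicity, and obtain the first-jet characterisation from the connectedness of the unipotent kernel $N_k=\ker(\pi_{k,1}|_{G_k})$.

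One small remark: your appeal to Proposition~\ref{pro2.2} for the unipotence of elements of $N_k$ is slightly imprecise, since that proposition is stated for elements of $\widehat{\mathrm{Diff}}(\C^n,0)$ rather than for elements of $D_k$. The underlying fact you need is that any $g\in D_k$ with $\pi_{k,1}(g)=\mathrm{id}$ acts trivially on each graded piece $\mathfrak{m}^j/\mathfrak{m}^{j+1}$ (because this action is the symmetric power of the action on $\mathfrak{m}/\mathfrak{m}^2$), hence $(g-\mathrm{id})$ is nilpotent on $\mathfrak{m}/\mathfrak{m}^{k+1}$. This is of course the same mechanism behind Proposition~\ref{pro2.2}, so the point is purely cosmetic.
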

\begin{rem}\label{rem2.8}
 Let $G$ be a solvable subgroup of $\mathrm{Diff}(\C^n,0)$. Since membership in $\overline{G}^z_0$ and $\overline{G}^z_u$ can be checked out in the first jet, these groups
 have finite codimension in $\overline{G}^z$. Indeed the kernels of the natural maps 
 \[
   \overline{G}^z\to G_1/G_{1,u}\text{ and }\overline{G}^z\to G_1/G_{1,0}
 \]
 are equal to $\overline{G}^z_u$ and $\overline{G}^z_0$ respectively by Propositions \ref{pro2.2} and \ref{pro2.6}. In particular $\overline{G}^z/\overline{G}^z_0$ is a finite group.
\end{rem}
\begin{pro}[Proposition 2. \cite{MarteloRib}]\label{pro2.7}
 Let $G\subset\widehat{\mathrm{Diff}}(\C^n,0)$ be a group. Then $\mathfrak{g}$ is equal to $\{\X\in\hat{\mathfrak{X}}(\C^n,0):\mathrm{exp}(t\X)\in\overline{G}^z\ \forall t\in\C\}$ 
and $\overline{G}^z_0$ is generated by the set $\{\mathrm{exp}(\X):\X\in\mathfrak{g}\}$. Moreover if $G$ is unipotent then the map 
\[
  \mathrm{exp}:\mathfrak{g}\to\overline{G}^z
\]
is a bijection and $\mathfrak{g}$ is a Lie algebra of nilpotent formal vector fields. 
\end{pro}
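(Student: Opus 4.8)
The plan is to reduce the whole statement, level by level, to classical facts about the finite dimensional linear algebraic groups $G_k\subset D_k\subset\mathrm{GL}(\mathfrak{m}/\mathfrak{m}^{k+1})$, and then to reassemble the levels through the projective system $\pi_{l,k}\colon G_l\to G_k$ and Lemma \ref{lem2.3}. Recall that the Lie algebra $\mathfrak{g}$ recovers $\mathfrak{g}_k:=\mathrm{Lie}(G_k)$ at level $k$, i.e. $\mathfrak{g}=\varprojlim_k\mathfrak{g}_k$. I would first record the elementary compatibilities: since the image of an algebraic group under a morphism of algebraic groups is closed and $\pi_{l,k}$ sends the $l$-jets of $G$ onto the $k$-jets of $G$, one gets $\pi_{l,k}(G_l)=G_k$, hence $\pi_{l,k}(G_{l,0})=G_{k,0}$ and $d\pi_{l,k}(\mathfrak{g}_l)=\mathfrak{g}_k$; in particular the projections $\mathfrak{g}\to\mathfrak{g}_k$ and (by Lemma \ref{lem2.3}) $\overline{G}^z_0\to G_{k,0}$ are onto. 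The only flow-level input I need is that, for $\X\in\hat{\mathfrak{X}}(\C^n,0)$, the $k$-jet of $\exp(\X)$ is $\exp(\X_k)$, where $\X_k$ is the derivation induced by $\X$ on $\mathfrak{m}/\mathfrak{m}^{k+1}$.

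For the first equality I would use that a (connected component of a) linear algebraic group over $\C$, viewed as a complex Lie group, satisfies $\mathrm{Lie}(G_k)=\{X:\exp(tX)\in G_k\ \forall t\in\C\}$. Then $\X\in\mathfrak{g}$ holds iff $\X_k\in\mathfrak{g}_k$ for every $k$, iff $\exp(t\X_k)=(\exp(t\X))_k\in G_k$ for all $k$ and all $t$, iff $\exp(t\X)\in\overline{G}^z$ for all $t\in\C$. For the ``generated by'' statement, note first that if $\X\in\mathfrak{g}$ then $\exp(\X)\in\overline{G}^z$ by the previous step, while $\exp(\X_1)\in G_{1,0}$ (it lies on the one-parameter subgroup $t\mapsto\exp(t\X_1)$), so $\exp(\X)\in\overline{G}^z_0$ by Proposition \ref{pro2.6}. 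For the converse I would work in a single jet: given $\varphi\in\overline{G}^z_0$, the connected complex Lie group $G_{1,0}$ is generated by $\exp(\mathfrak{g}_1)$ (which contains a neighbourhood of the identity), so $\varphi_1=\exp(Y_1)\cdots\exp(Y_m)$ with $Y_i\in\mathfrak{g}_1$; lifting each $Y_i$ to $\widetilde{Y}_i\in\mathfrak{g}$ (surjectivity of $\mathfrak{g}\to\mathfrak{g}_1$) and setting $\psi=\exp(\widetilde{Y}_1)\cdots\exp(\widetilde{Y}_m)\in\overline{G}^z_0$, one gets $\psi_1=\varphi_1$, so $\varphi\psi^{-1}$ is tangent to the identity. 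Every formal diffeomorphism tangent to the identity is $\exp(\X)$ for a unique formal vector field $\X$ with vanishing linear part (Baker--Campbell--Hausdorff terminates along the filtration by order of vanishing), and then each $\X_k$ is a nilpotent operator with $\exp(\X_k)=(\varphi\psi^{-1})_k\in G_{k,0}$; since a unipotent element of a linear algebraic group over $\C$ lies in a unique one-parameter unipotent subgroup of the group, this forces $\exp(t\X_k)\in G_k$ for all $t$, i.e. $\X_k\in\mathfrak{g}_k$, whence $\X\in\mathfrak{g}$. Thus $\varphi=\exp(\X)\exp(\widetilde{Y}_1)\cdots\exp(\widetilde{Y}_m)$ belongs to the group generated by $\{\exp(\X):\X\in\mathfrak{g}\}$.

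For the unipotent case I would invoke Proposition \ref{pro2.2}: if $G$ is unipotent then so is $\overline{G}^z$, hence every $G_k$ is a unipotent linear algebraic group and $\exp\colon\mathfrak{g}_k\to G_k$ is an isomorphism of varieties, with polynomial inverse $\log$ (again because BCH terminates in characteristic zero). Passing to the projective limit gives that $\exp\colon\mathfrak{g}=\varprojlim_k\mathfrak{g}_k\to\overline{G}^z=\varprojlim_k G_k$ is a bijection. That $\mathfrak{g}$ is a Lie algebra is inherited from the $\mathfrak{g}_k=\mathrm{Lie}(G_k)$ under the bracket, and each $\X\in\mathfrak{g}$ has nilpotent linear part $\X_1$ (the Lie algebra of a unipotent group consists of nilpotent operators), so $\X$ is a nilpotent formal vector field.

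The parts that are not mere bookkeeping, and where I expect the real work to sit, are the inputs from the theory of linear algebraic groups over $\C$ --- that $\mathrm{Lie}(G_k)$ is detected by one-parameter subgroups, that a connected such group is generated by the images of its one-parameter subgroups, and the uniqueness of the one-parameter unipotent subgroup through a unipotent element --- together with the bijectivity of $\exp$ between formal vector fields with vanishing linear part and formal diffeomorphisms tangent to the identity. Among these, I expect the most delicate step to be the identification of $\ker(\overline{G}^z_0\to G_{1,0})$ with $\exp$ of elements of $\mathfrak{g}$, since it is what lets one bypass any uniform bound on the number of exponential factors; everything else then follows by feeding these facts through Lemma \ref{lem2.3} and the truncation identity $(\exp\X)_k=\exp(\X_k)$.
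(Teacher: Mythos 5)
The thesis itself contains no proof of this proposition: it is quoted as Proposition 2 of \cite{MarteloRib} and used as a black box in the appendix, so there is no in-paper argument to measure yours against; I can only assess your proposal and compare it with the cited source. Judged on that basis, your jet-by-jet reduction is correct and is essentially the strategy of Martelo--Rib\'on: the truncation identity $(\exp(t\X))_k=\exp(t\X_k)$, the characterization $\mathrm{Lie}(G_k)=\{X:\exp(tX)\in G_k\ \forall t\in\C\}$, surjectivity of the transition maps $\mathfrak g_l\to\mathfrak g_k$ coming from $\pi_{l,k}(G_l)=G_k$, generation of $G_{1,0}$ by exponentials, and, crucially, the observation that an element of $\overline{G}^z$ tangent to the identity is $\exp(\X)$ where each $\X_k$ is the nilpotent logarithm of a unipotent element of $G_k$, so that the Zariski closure of the cyclic group it generates lies in $G_k$, giving $\exp(t\X_k)\in G_k$ for all $t$ and hence $\X\in\mathfrak g$; this is precisely what removes any need for a uniform bound on the number of exponential factors across jet levels. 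Two small points you should make explicit. First, the thesis never defines $\mathfrak g$, so your reading $\mathfrak g=\varprojlim_k\mathrm{Lie}(G_k)$ (the definition used in \cite{MarteloRib}) must be stated, otherwise the first assertion of the proposition reads as a tautology. Second, in the unipotent case the passage from ``every element of $G$ is unipotent'' to ``$G_k$ is a unipotent algebraic group'' is not given by Proposition \ref{pro2.2} alone: you need the remark that the unipotent elements form a Zariski closed subset of $D_k$, so the Zariski closure of the (already group) image of $G$ in $D_k$, which is $G_k$, consists of unipotent elements; with that inserted, the rest of your unipotent argument (algebraic $\exp\colon\mathrm{Lie}(G_k)\to G_k$ an isomorphism, compatibility with the projective system) goes through.
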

\begin{rem}\label{rem2.11}
 Invariance properties typically define pro-algebraic groups. Let us present an example. Consider $f_1,\dots,f_n\in\hat{\mathcal{O}}_n$ and 
 \[
 G=\{\varphi\in\widehat{\mathrm{Diff}}(\C^n,0)\,|\quad f_j\circ\varphi\equiv f_j\quad\forall\,1\leq j\leq n\}
\]
We define
\[
  H_k = \{A\in D_k : A(f_j + \mathfrak{m}^{k+1}) = f_j + \mathfrak{m}^{k+1}\ \forall 1\leq j\leq p\}
\]
 for $k\in\mathbb{N}$. It is clear that $H_k$ is an algebraic subgroup of $D_k$ for $k\in\mathbb{N}$. Moreover we have $\pi_{l,k}(H_l)\subset H_k$ for $l\geq k\geq 1$. Since 
$f\circ\phi-f = 0$ is equivalent to $f\circ\phi-f\in\mathfrak{m}^k$ for any $k\in\mathbb{N}$, the group $\varprojlim H_k$ is equal to $G$. Moreover $G$ is pro-algebraic by Lemma 
\ref{lem2.3}.
\end{rem}

\section{proof of Proposition \ref{ribón}}
\begin{pro}\label{rib_orig}
Let us consider $n$ elements $f_1,\dots,f_n$ of the field of fractions of $\hat{\mathcal{O}}_n$. Suppose $\mathrm{d}f_1\wedge\dots\wedge \mathrm{d}f_n\not\equiv 0$. Then the group
\[
 G=\{\varphi\in\widehat{\mathrm{Diff}}(\C^n,0)\,|\quad f_j\circ\varphi\equiv f_j\quad\forall\,1\leq j\leq n\}
\]
is finite.
\end{pro}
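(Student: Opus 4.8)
The plan is to prove Proposition~\ref{rib_orig} through the pro-algebraic closure formalism of \cite{Rib1} collected above. Note first that the elementary argument behind Proposition~\ref{ribónpart} is unavailable here: that proof manufactured an invariant vector field $\X=(\mathrm{d}H)^{-1}H$ out of $H=(f_1,\dots,f_n)$ and applied Proposition~\ref{linea-Bro}, but the present hypothesis $\mathrm{d}f_1\wedge\dots\wedge\mathrm{d}f_n\not\equiv 0$ only makes $H$ a generic submersion, not a formal diffeomorphism, so $\mathrm{d}H$ need not be invertible in $\widehat{\mathrm{Diff}}(\C^n,0)$. Instead I would exploit that $G$ is cut out by invariance relations, hence is pro-algebraic, kill its identity component by a differentiation argument, and then use that the group of connected components of an algebraic group is finite.

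First I would check that $G$ is pro-algebraic, i.e. $G=\overline{G}^z$. Writing each $f_j=p_j/q_j$ with $p_j,q_j\in\hat{\mathcal{O}}_n$, the relation $f_j\circ\varphi\equiv f_j$ is equivalent to $(p_j\circ\varphi)\,q_j-(q_j\circ\varphi)\,p_j\equiv 0$ in $\hat{\mathcal{O}}_n$, that is, to this element lying in $\mathfrak{m}^{k}$ for every $k$. For fixed $k$ this last condition depends only on the $k$-jet of $\varphi$ and is polynomial in its coefficients, so it defines an algebraic subgroup $H_k\subset D_k$ with $\pi_{l,k}(H_l)\subset H_k$ for $l\ge k$ and $\varprojlim_k H_k=G$. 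By Lemma~\ref{lem2.3}, $G$ is a pro-algebraic subgroup of $\widehat{\mathrm{Diff}}(\C^n,0)$; this is exactly the argument of Remark~\ref{rem2.11}, adapted to meromorphic $f_j$.

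Next I would show that the Lie algebra $\mathfrak{g}$ of $G$ is trivial. If $\X\in\mathfrak{g}$, then by Proposition~\ref{pro2.7} one has $\mathrm{exp}(t\X)\in\overline{G}^z=G$ for all $t\in\C$, hence $f_j\circ\mathrm{exp}(t\X)\equiv f_j$ for every $j$ and $t$; differentiating at $t=0$ and using the formal chain rule (Section~\ref{chainrule}) gives $\X(f_j)\equiv 0$ for $j=1,\dots,n$, i.e. the coefficient vector of $\X$ lies in the kernel of the formal Jacobian matrix $\mathrm{d}F$, $F=(f_1,\dots,f_n)$. Multiplying on the left by the adjugate of $\mathrm{d}F$ yields $J\cdot\X\equiv 0$, where $J=\det\mathrm{d}F$ is the coefficient of $\mathrm{d}x_1\wedge\dots\wedge\mathrm{d}x_n$ in $\mathrm{d}f_1\wedge\dots\wedge\mathrm{d}f_n$, hence $J\not\equiv 0$ by hypothesis; since $\hat{\mathcal{O}}_n$ is an integral domain, $\X\equiv 0$. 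Thus $\mathfrak{g}=\{0\}$, so by Proposition~\ref{pro2.7} the identity component $\overline{G}^z_0$, being generated by $\{\mathrm{exp}(\X):\X\in\mathfrak{g}\}$, equals $\{\mathrm{id}\}$. Then Proposition~\ref{pro2.6} shows the natural map $G=\overline{G}^z\to G_1/G_{1,0}$ has kernel $\overline{G}^z_0=\{\mathrm{id}\}$, hence is injective; since $G_1$ is an algebraic subgroup of $D_1=\mathrm{GL}(n,\C)$ it has finitely many connected components, so $G_1/G_{1,0}$, and therefore $G$, is finite. Finally, a finite subgroup of $\widehat{\mathrm{Diff}}(\C^n,0)$ is formally linearizable and isomorphic to a finite subgroup of $\mathrm{GL}(n,\C)$ by (the formal version of) Proposition~\ref{fingroup}, which gives the parenthetical assertion, periodicity of every element being automatic.

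I expect the main obstacle to be the second step: one must be certain that $\mathrm{exp}(t\X)$ genuinely preserves each $f_j$, which needs the identification $\overline{G}^z=G$ from the first step together with the precise description of $\mathfrak{g}$ in Proposition~\ref{pro2.7}, and one must verify that the differentiation and the adjugate manipulation are legitimate over the formal (meromorphic) local ring — this is what the formal chain rule of Section~\ref{chainrule} secures. Everything else is bookkeeping within the projective system $(D_k)_k$ and the cited structure results from \cite{Rib1}.
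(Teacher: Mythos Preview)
Your proposal is correct and follows essentially the same route as the paper's proof: show $G$ is pro-algebraic, differentiate the invariance relation along a one-parameter subgroup to obtain $\X(f_j)\equiv 0$, use the nonvanishing of $\det(\mathrm{d}F)$ to force $\X\equiv 0$, hence $\mathfrak{g}=\{0\}$ and $\overline{G}^z_0=\{\mathrm{id}\}$, and conclude finiteness from the finiteness of the component group. Your treatment is in fact slightly more careful than the paper's in two places: you clear denominators to handle genuinely meromorphic $f_j$ (the paper simply invokes Remark~\ref{rem2.11}, which is stated only for $f_j\in\hat{\mathcal{O}}_n$), and you invoke Proposition~\ref{pro2.6} rather than Remark~\ref{rem2.8} for the last step, thereby avoiding the solvability hypothesis that Remark~\ref{rem2.8} carries. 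One small point to tighten: in your first step you should check that the jet-level sets $H_k$ you define are actually \emph{subgroups} of $D_k$, as Lemma~\ref{lem2.3} requires; if this is not immediate for non-unit denominators, you can instead observe that $G\subset\overline{G}^z=\varprojlim G_k\subset\varprojlim H_k=G$, which yields $G=\overline{G}^z$ without needing each $H_k$ to be a group.
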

\begin{proof}
 We have that $G$ is pro-algebraic by Remark \ref{rem2.11}. Consider an element $\X=\sum_{j=1}^n aj\partial/\partial x_j$ in the Lie algebra $L(G)$ of $G$. By definition we have 
\[
    f_j\circ\mathrm{exp}(t\X)\equiv f_j\ \forall t\in\C\Longrightarrow \X(f_j)=\lim_{t\to 0}\frac{f_j\circ\mathrm{exp}(t\X)-f_j}{t}\equiv 0
\]
for any $1\leq j\leq n$. The property $\X(f_j)=0$ for any $1\leq j\leq n$ is equivalent to
\[
      \begin{pmatrix}
	\frac{\partial f_1}{\partial x_1}&\frac{\partial f_1}{\partial x_2}&\cdots&\frac{\partial f_1}{\partial x_n}\\
	\frac{\partial f_2}{\partial x_1}&\frac{\partial f_2}{\partial x_2}&\cdots&\frac{\partial f_2}{\partial x_n}\\
	\vdots&\vdots&\ddots&\vdots\\
	\frac{\partial f_n}{\partial x_1}&\frac{\partial f_n}{\partial x_2}&\cdots&\frac{\partial f_n}{\partial x_n}
      \end{pmatrix}  
      \begin{pmatrix}
	a_1\\ a_2\\ \vdots\\ a_n
      \end{pmatrix}=
       \begin{pmatrix}
	0\\ 0\\ \vdots\\ 0
      \end{pmatrix}.
\]
Since $\mathrm{d}f_1\wedge\dots\wedge \mathrm{d}f_n\not\equiv 0$, the $n\times n$ matrix in the previous equation has
a non-vanishing determinant and then $\X\equiv 0$. Hence $L(G)$ is trivial and $\overline{G}_0^Z$ is the trivial group by Proposition \ref{pro2.7}. Since $G/\overline{G}^z_0$ is 
finite by Remark \ref{rem2.8}, $G$ is finite.
\end{proof}
\end{appendices}    
\bibliographystyle{plain}

\begin{thebibliography}{10}

\bibitem{Arnold}
V.~I. Arnol{\cprime}d.
\newblock {\em Geometrical methods in the theory of ordinary differential
  equations}, volume 250 of {\em Grundlehren der Mathematischen Wissenschaften
  [Fundamental Principles of Mathematical Sciences]}.
\newblock Springer-Verlag, New York, second edition, 1988.
\newblock Translated from the Russian by Joseph Sz{\"u}cs [J{\'o}zsef M.
  Sz{\H{u}}cs].

\bibitem{Barth}
Wolf~P. Barth, Klaus Hulek, Chris A.~M. Peters, and Antonius Van~de Ven.
\newblock {\em Compact complex surfaces}, volume~4 of {\em Ergebnisse der
  Mathematik und ihrer Grenzgebiete. 3. Folge. A Series of Modern Surveys in
  Mathematics [Results in Mathematics and Related Areas. 3rd Series. A Series
  of Modern Surveys in Mathematics]}.
\newblock Springer-Verlag, Berlin, second edition, 2004.

\bibitem{Brochero}
Fabio~Enrique Brochero~Mart{\'{\i}}nez.
\newblock Groups of germs of analytic diffeomorphisms in {$(\Bbb C^2,0)$}.
\newblock {\em J. Dynam. Control Systems}, 9(1):1--32, 2003.

\bibitem{cam-neto-sad}
C.~Camacho, A.~Lins~Neto, and P.~Sad.
\newblock Foliations with algebraic limit sets.
\newblock {\em Ann. of Math. (2)}, 136(2):429--446, 1992.

\bibitem{camacho-scarduaasterisque}
C{\'e}sar Camacho and Bruno Azevedo~Sc{\'a}rdua.
\newblock Complex foliations with algebraic limit sets.
\newblock {\em Ast\'erisque}, (261):xi, 57--88, 2000.
\newblock G{\'e}om{\'e}trie complexe et syst{\`e}mes dynamiques (Orsay, 1995).

\bibitem{livCamNet}
C{\'e}sar Camacho and Alcides Lins~Neto.
\newblock {\em Teoria geom\'etrica das folhea\c c\ oes}, volume~9 of {\em
  Projeto Euclides [Euclid Project]}.
\newblock Instituto de Matem\'atica Pura e Aplicada, Rio de Janeiro, 1979.

\bibitem{Cam-Sca2}
L.~{C{\^a}mara} and B.~{Scárdua}.
\newblock Closed orbits and integrability for singularities of complex vector
  fields in dimension three.
\newblock {\em arXiv:1407.4560}, July 2014.

\bibitem{Cam-Sca1}
Leonardo C{\^a}mara and Bruno Sc{\'a}rdua.
\newblock On the integrability of holomorphic vector fields.
\newblock {\em Discrete Contin. Dyn. Syst.}, 25(2):481--493, 2009.

\bibitem{VFields-Cano}
Felipe Cano.
\newblock Blowings-up of vector fields.
\newblock In {\em Lecture notes on o-minimal structures and real analytic
  geometry}, volume~62 of {\em Fields Inst. Commun.}, pages 1--41. Springer,
  New York, 2012.

\bibitem{Cremermap}
Hubert Cremer.
\newblock Zum {Z}entrumproblem.
\newblock {\em Math. Ann.}, 98(1):151--163, 1928.

\bibitem{Eli-Il}
P.~M. Elizarov and Yu.~S. Il{\cprime}yashenko.
\newblock Remarks on orbital analytic classification of germs of vector fields.
\newblock {\em Mat. Sb. (N.S.)}, 121(163)(1):111--126, 1983.

\bibitem{ghys1}
{\'E}tienne Ghys.
\newblock \`{A} propos d'un th\'eor\`eme de {J}.-{P}. {J}ouanolou concernant
  les feuilles ferm\'ees des feuilletages holomorphes.
\newblock {\em Rend. Circ. Mat. Palermo (2)}, 49(1):175--180, 2000.

\bibitem{Godbillon}
C.~Godbillon.
\newblock {\em Feuilletages:}.
\newblock Progress in mathematics. Birkh{\"a}user Verlag, 1991.

\bibitem{MSV}
X.~Gomes-Mont, J.~Seade, and A.~Verjovsky.
\newblock On the topology of a holomorphic vector field in a neighborhood of an
  isolated singularity.
\newblock {\em Funktsional. Anal. i Prilozhen.}, 27(2):22--31, 96, 1993.

\bibitem{gunning1}
Robert~C. Gunning.
\newblock {\em Introduction to holomorphic functions of several variables.
  {V}ol. {I}}.
\newblock The Wadsworth \& Brooks/Cole Mathematics Series. Wadsworth \&
  Brooks/Cole Advanced Books \& Software, Pacific Grove, CA, 1990.
\newblock Function theory.

\bibitem{gunning2}
Robert~C. Gunning.
\newblock {\em Introduction to holomorphic functions of several variables.
  {V}ol. {II}}.
\newblock The Wadsworth \& Brooks/Cole Mathematics Series. Wadsworth \&
  Brooks/Cole Advanced Books \& Software, Monterey, CA, 1990.
\newblock Local theory.

\bibitem{Ilya_Yako}
Yulij Ilyashenko and Sergei Yakovenko.
\newblock {\em Lectures on analytic differential equations}, volume~86 of {\em
  Graduate Studies in Mathematics}.
\newblock American Mathematical Society, Providence, RI, 2008.

\bibitem{Sea-Lim}
Beatriz Lim{\'o}n and Jos{\'e} Seade.
\newblock Morse theory and the topology of holomorphic foliations near an
  isolated singularity.
\newblock {\em J. Topol.}, 4(3):667--686, 2011.

\bibitem{MarteloScá}
M.~Martelo and B.~Sc{\'a}rdua.
\newblock Integrating factors for groups of formal complex diffeomorphisms.
\newblock {\em J. Dyn. Control Syst.}, 19(2):195--235, 2013.

\bibitem{MarteloRib}
Mitchael Martelo and Javier Rib{\'o}n.
\newblock Derived length of solvable groups of local diffeomorphisms.
\newblock {\em Math. Ann.}, 358(3-4):701--728, 2014.

\bibitem{M-M}
J.-F. Mattei and R.~Moussu.
\newblock Holonomie et int\'egrales premi\`eres.
\newblock {\em Ann. Sci. \'Ecole Norm. Sup. (4)}, 13(4):469--523, 1980.

\bibitem{sing_Mil}
John Milnor.
\newblock {\em Singular points of complex hypersurfaces}.
\newblock Annals of Mathematics Studies, No. 61. Princeton University Press,
  Princeton, N.J.; University of Tokyo Press, Tokyo, 1968.

\bibitem{Moussu}
R.~Moussu.
\newblock Sur l'existence d'int\'egrales premi\`eres holomorphes.
\newblock {\em Ann. Scuola Norm. Sup. Pisa Cl. Sci. (4)}, 26(4):709--717, 1998.

\bibitem{PM}
Ricardo P{\'e}rez~Marco.
\newblock Solution compl\`ete au probl\`eme de {S}iegel de lin\'earisation
  d'une application holomorphe au voisinage d'un point fixe (d'apr\`es
  {J}.-{C}.\ {Y}occoz).
\newblock {\em Ast\'erisque}, (206):Exp.\ No.\ 753, 4, 273--310, 1992.
\newblock S{\'e}minaire Bourbaki, Vol. 1991/92.

\bibitem{PM1}
Ricardo P{\'e}rez~Marco.
\newblock Sur les dynamiques holomorphes non lin\'earisables et une conjecture
  de {V}. {I}. {A}rnol\cprime d.
\newblock {\em Ann. Sci. \'Ecole Norm. Sup. (4)}, 26(5):565--644, 1993.

\bibitem{PM2}
Ricardo P{\'e}rez-Marco.
\newblock Fixed points and circle maps.
\newblock {\em Acta Math.}, 179(2):243--294, 1997.

\bibitem{Raissy}
Jasmin Raissy.
\newblock Linearization of holomorphic germs with quasi-{B}rjuno fixed points.
\newblock {\em Math. Z.}, 264(4):881--900, 2010.

\bibitem{Reb-Reis}
Julio~C. Rebelo and Helena Reis.
\newblock A note on integrability and finite orbits for subgroups of {D}iff
  {$(\Bbb C^n,0)$}.
\newblock {\em Bull. Braz. Math. Soc. (N.S.)}, 46(3):469--490, 2015.

\bibitem{Reis}
Helena Reis.
\newblock Equivalence and semi-completude of foliations.
\newblock {\em Nonlinear Anal.}, 64(8):1654--1665, 2006.

\bibitem{Rib1}
J.~{Rib{\'o}n}.
\newblock {The solvable length of groups of local diffeomorphisms}.
\newblock {\em ArXiv e-prints}, \url{http://arxiv.org/pdf/1406.0902v2}, June 2014.

\bibitem{Fabio}
Fabio Santos and Bruno Scardua.
\newblock Stability of complex foliations transverse to fibrations.
\newblock {\em Proc. Amer. Math. Soc.}, 140(9):3083--3090, 2012.

\bibitem{BS-closedorbits}
B.~Sc{\'a}rdua.
\newblock Singularities of complex vector fields having many closed orbits.
\newblock {\em J. Singul.}, 8:69--82, 2014.

\end{thebibliography}
\def\cprime{$'$} \def\cprime{$'$}

\end{document}